\declaretheorem{theorem}
\declaretheorem[sibling=theorem]{lemma}
\declaretheorem{claim}
\title{Obstructions for three-coloring and list three-coloring $H$-free graphs}
\author[1]{Maria Chudnovsky\footnote{Partially supported
by NSF grant DMS-1550991 and U.S. Army Research Office grant W911NF-16-1-0404.}}
\author[2]{Jan Goedgebeur\footnote{Supported by a Postdoctoral Fellowship of the Research Foundation Flanders (FWO).}}
\author[3]{Oliver Schaudt}
\author[4]{Mingxian Zhong}
\affil[1]{\emph{\small Princeton University, Princeton, NJ 08544,
USA. E-mail:~mchudnov@math.princeton.edu}}
\affil[2]{\emph{\small Ghent University, Ghent, Belgium. E-mail: jan.goedgebeur@ugent.be}}
\affil[3]{\emph{\small RWTH Aachen University, Aachen, Germany. E-mail: schaudt@mathc.rwth-aachen.de}}
\affil[4]{\emph{\small Columbia University, New York, NY 10027,
USA. E-mail: mz2325@columbia.edu}}
\date{}
\begin{document}

\maketitle

\begin{center}
\emph{In loving memory of Ella.}
\end{center}~\\

\begin{abstract}
A graph is $H$-free if it has no induced subgraph isomorphic to $H$.
We characterize all graphs $H$ for which there are only finitely many minimal
non-three-colorable $H$-free graphs. Such a characterization was previously 
known only in the case when $H$ is connected.
This solves a problem posed by Golovach \emph{et al.}
As a second result, we characterize all graphs $H$ for which there are only 
finitely many $H$-free minimal obstructions for list 3-color\-ability.\\
%

\noindent\textbf{Keywords:} graph coloring, critical graph, induced subgraph.
\end{abstract}

%
%
%
%

%
%

\section{Introduction}

A \emph{$k$-coloring} of a graph $G=(V,E)$ is a mapping $c : V \to \{1,\ldots,k\}$ such that $c(u) \neq c(v)$ for all edges $uv \in E$.
If a $k$-coloring exists, we say that $G$ is \emph{$k$-colorable}.
The related decision problem -- does a given input graph admit a $k$-coloring? -- is called the \emph{$k$-colorability problem}; it is one of the most famous NP-complete problems.
Let $L$ be a mapping that maps each vertex of $G$ to a subset of $\{1,\ldots,k\}$.
We say that the pair $(G,L)$ is \emph{colorable} if there is a $k$-coloring $c$ of $G$ with $c(v) \in L(v)$ for each $v \in V(G)$.
The \emph{list $k$-colorability problem} is the following: given a pair $(G,L)$ with $L(v) \subseteq \{1,\ldots,k\}$ for each $v \in V(G)$, decide whether $(G,L)$ is colorable.
Note that the list $k$-colorability problem generalizes both the $k$-colorability problem and the precoloring extension problem.
In the $k$-colorability problem we have $|L(v)|=k$ for all $v \in V(G)$, while in the precoloring extension problem we have $|L(v)|\in \{1,k\}$ for all $v \in V(G)$.
In this paper we study the minimal obstructions for $k$-colorability and list $k$-colorability: minimal subgraphs that prevent a graph from being $k$-colorable or list $k$-colorable.

Let $H$ and $ G$ be graphs. We say that $H$ is an {\em induced subgraph} of $G$
if $V(H) \subseteq V(G)$, and $u,v \in V(H)$ are adjacent in $H$ if and
only if $u,v$ are adjacent in $G$. For   $X \subseteq V(G)$, we denote by
$G|X$ the induced subgraph of $G$ with vertex set $X$, and we say that
$X$ {\em induces} $G|X$. If $G|X$ is isomorphic to $H$, we say that 
{\em $X$ is an $H$ in $G$}.
If $X \neq V(G)$, we say that $G|X$ is a {\em proper} induced subgraph of $G$.
We say that $G$ is \emph{$k$-chromatic} if it is $k$-colorable but not 
$(k-1)$-colorable.
A graph is called \emph{$(k+1)$-vertex-critical} if it is $(k+1)$-chromatic, but every induced proper subgraph is $k$-colorable.
For example, the class of $3$-vertex-critical graphs is the family of all odd cycles.
In view of the NP-hardness of the $k$-colorability problem for $k\geq 3$, there is little hope of giving a characterization of the $(k+1)$-vertex-critical graphs that is of use in algorithmic applications.
The picture changes if one restricts the structure of the graphs under consideration, and the aim of this paper is to describe this phenomenon.

We use the following notation. Given two graphs $G$ and $H$, we say that
$G$ {\em contains} $H$ if some induced subgraph of $G$ is isomorphic to $H$.
If $G$ does not contain $H$, we say that $G$ is {\em $H$-free}.
For a family $\mathcal{H}$ of graphs, $G$ is {\em $\mathcal{H}$-free}
if $G$ is $H$-free for every $H \in \mathcal{H}$.
Moreover, we write $G+H$ for the graph that is the disjoint union of $G$ and 
$H$.
For all $t$, let $P_t$ denote the {\em path} on $t$ vertices, which is the graph with vertex set $\{p_1, \ldots, p_t\}$ such that $p_i$ is adjacent to $p_j$ if 
and only if $|i-j|=1$.

In an earlier paper, we proved the following theorem, solving a problem posed by Golovach \emph{et al.}~\cite{GJPS15} and answering a question of Seymour~\cite{SeyPriv}.

\begin{theorem}[Chudnovsky \emph{et al.}~\cite{CGSZ15}]\label{thm:old}
Let $H$ be a connected graph.
There are only finitely many 4-vertex-critical $H$-free graphs if and only if $H$ is an induced subgraph of $P_6$.
\end{theorem}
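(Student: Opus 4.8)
The plan is to prove the two implications separately.

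\textbf{The ``only if'' direction.} I would show: if $H$ is connected and not an induced subgraph of $P_6$, then there are infinitely many $4$-vertex-critical $H$-free graphs. The key reduction is that such an $H$ contains, as an induced subgraph, some member of $\mathcal{F}=\{K_{1,3},P_7,C_3,C_4,C_5,C_6,C_7\}$: if $H$ has a triangle it contains $C_3$; otherwise, if $H$ has a vertex of degree at least $3$ then its neighborhood, being independent, contains $K_{1,3}$; otherwise $H$ has maximum degree at most $2$, hence is a path or a cycle — a path that is not an induced subgraph of $P_6$ contains $P_7$, a cycle of length in $\{3,\ldots,7\}$ is one of the $C_i$, and a cycle of length at least $8$ contains $P_7$ on seven consecutive vertices. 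Since every $F$-free graph is $H$-free whenever $F$ is an induced subgraph of $H$, it suffices to exhibit, for each $F\in\mathcal{F}$, an infinite family of $4$-vertex-critical $F$-free graphs. For $F\in\{C_3,\ldots,C_7\}$ a single family works: by Erd\H{o}s's theorem on girth and chromatic number there are graphs of arbitrarily large girth with chromatic number $4$; an induced subgraph that is $4$-chromatic but contains no $4$-chromatic proper induced subgraph is $4$-vertex-critical and keeps large girth, hence is $\{C_3,\ldots,C_7\}$-free, and infinitely many pairwise non-isomorphic graphs arise this way. For $F=K_{1,3}$ one may take the squares $C_n^2$ with $n\ge 7$ and $n\equiv 1\pmod 3$: every neighborhood has independence number $2$ (so $C_n^2$ is claw-free), it has chromatic number $4$ (its independence number is $\lfloor n/3\rfloor<n/3$), and deleting any vertex leaves a $3$-colorable graph, so $C_n^2$ is $4$-vertex-critical. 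For $F=P_7$ one supplies an explicit infinite family of $4$-vertex-critical $P_7$-free graphs — for instance one obtained from copies of $K_4$ by repeated Haj\'os/Ore compositions performed around a single fixed vertex, so that every vertex stays within distance two of it and no induced $P_7$ is created; producing this last family is the only non-routine point of this direction.

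\textbf{The ``if'' direction.} If $H$ is an induced subgraph of $P_6$, then every $H$-free graph is $P_6$-free, so the $4$-vertex-critical $H$-free graphs form a subclass of the $4$-vertex-critical $P_6$-free graphs, and it suffices to bound $|V(G)|$ for a $4$-vertex-critical $P_6$-free graph $G$. The approach combines coloring constraints with $P_6$-free structure. From $4$-vertex-criticality: $G$ is connected with minimum degree at least $3$; if $\omega(G)\ge 4$ then $G=K_4$, so we may assume $\omega(G)\le 3$; the vertices of degree exactly $3$ induce a Gallai forest (each block a clique or an odd cycle); and $G$ contains no reducible configuration — fixing a $3$-coloring of $G-v$, the neighbors of $v$ receive all three colors, and Kempe-chain recoloring arguments (a shortest alternating two-colored path is chordless, hence has at most six vertices when $G$ is $P_6$-free) tightly constrain the situation around every vertex. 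From $P_6$-free structure: after splitting off the subcase $\omega(G)\le 2$ and, within $\omega(G)=3$, distinguishing whether $G$ has an induced $C_5$, one invokes dominating-set theorems for $P_6$-free graphs to obtain a bounded-size dominating ``core'' $D$ with every vertex of $G$ at distance at most two from $D$. The crux is to bound the number of vertices of each ``attachment type'' to $D$: if two such vertices attach to $D$ in the same way and are interchangeable outside $D$, then the reducibility constraints above force $G$ to be $3$-colorable or to contain a forbidden configuration; hence each type occurs boundedly often, and since there are boundedly many types, $|V(G)|$ is bounded. A finite computer search over graphs up to that bound then identifies the complete list of obstructions and confirms the bound.

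\textbf{Main obstacle.} The difficulty lies almost entirely in the structural half of the ``if'' direction. $P_6$-free graphs are still a rich class — cographs, complements of triangle-free graphs, the Petersen graph, blow-ups of $C_5$, and more — so extracting a small dominating core and then enumerating and bounding all attachment types is a lengthy case analysis, and interlocking it with the coloring-side reducibility (so that every would-be large configuration is genuinely destroyed by criticality) is where the real work is. The ``only if'' direction is routine apart from the explicit $P_7$-free construction.
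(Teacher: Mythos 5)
There are two genuine gaps, one in each direction. First, in your ``only if'' direction the reduction of a connected $H\not\subseteq P_6$ to the family $\{K_{1,3},C_3,\dots,C_7,P_7\}$ is fine, and your families for the cycles (Erd\H{o}s high-girth critical graphs) and for the claw ($C_n^2$ with $3\nmid n$, or line graphs of $4$-regular bipartite graphs of large girth, which is what the paper uses) are correct. But the $P_7$-free family, which you yourself flag as the only non-routine point, is exactly the piece you do not supply, and the sketch you offer would not work as stated: Haj\'os/Ore compositions do not in general preserve $4$-vertex-criticality (they preserve non-$3$-colorability, not the property that every proper induced subgraph is $3$-colorable), and keeping every vertex within distance two of a fixed vertex does not prevent induced $P_7$'s --- small diameter is compatible with arbitrarily long induced paths. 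The known way to close this is Pokrovskiy's explicit construction $G_r$ on $v_0,\dots,v_{3r}$ (Section~\ref{sec:necessity} of this paper), which is $4$-vertex-critical and even $2P_2+P_1$-free, hence $P_7$-free; without such an explicit family this direction is incomplete.

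Second, your ``if'' direction is an outline rather than a proof. The listed ingredients (minimum degree $3$, $\omega\le 3$, Kempe chains, a bounded dominating core in a $P_6$-free graph) are reasonable, but the crucial step --- ``each attachment type to $D$ occurs boundedly often because two vertices of the same type are interchangeable'' --- is precisely where all the work lies, and nothing in the proposal substantiates it; vertices at distance two from $D$ with identical neighborhoods in $D$ can still interact with each other and with the rest of the graph in many inequivalent ways. The cited proof in \cite{CGSZ15} is a long case analysis producing the explicit list of $80$ obstructions, and the present paper obtains the finiteness differently: it passes to the list setting, bounds minimal list-obstructions whose lists have size at most two via the propagation-path machinery (Lemma~\ref{lem:propagationpath} and Lemma~\ref{lem:size2}), and then reduces general lists to that case by precoloring a structured connected dominating subgraph obtained from the Camby--Schaudt theorem (Lemma~\ref{lem:3downto2}). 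Either route requires a substantial argument that your sketch does not yet contain, so as written the ``if'' direction is not established. (Also note the final ``computer search to identify the complete list'' is unnecessary for the statement, which only asserts finiteness.)
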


In view of our result, Golovach \emph{et al.}~\cite{GJPS15} posed the problem of extending the above theorem to a complete dichotomy for arbitrary graphs $H$.
While this seems to be an incremental question at first sight, it requires 
entirely different machinery to be settled.

A second natural generalization of Theorem~\ref{thm:old} is to go from 4-vertex critical graphs to minimal obstructions to list 3-colorability.
This more technical generalization is motivated, among other things, by
a theorem of Jansen and Kratsch~\cite{JK13}, that 
says that if there is a finite list of $H$-free minimal obstructions  to list-3-colorability, then a polynomial kernelization of the 3-coloring problem exists when parameterized by the number of vertices needed to hit all induced copies of $H$. This application is outside of the scope of this paper, and so we skip the precise definitions.

\subsection{Our contribution}

We answer both questions mentioned above.
We obtain the following dichotomy theorem which now fully settles the problem of characterizing all graphs $H$ for which there are only finitely many 4-vertex-critical $H$-free graphs.

\begin{restatable}{theorem}{secondthm}
	\label{thm:coloring-characterization}%
Let $H$ be a graph.
There are only finitely many $H$-free 4-vertex-critical graphs if and only if $H$ is an induced subgraph of $P_6$, $2P_3$, or $P_4 + kP_1$ for some $k \in \mathbb N$.
\end{restatable}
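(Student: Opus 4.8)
\medskip
\noindent\textbf{Proof plan.}
I would prove the two implications separately, reducing each to a few core cases; throughout I use that if $A$ is an induced subgraph of $B$, then every $A$-free graph is $B$-free. For the \emph{sufficiency} direction (only finitely many obstructions): since every $H$-free graph is $G'$-free whenever $H$ is an induced subgraph of $G'$, it is enough to prove finiteness for $H = P_6$, $H = 2P_3$, and $H = P_4 + kP_1$ (for each $k \in \mathbb N$). The case $H = P_6$ is Theorem~\ref{thm:old}. For the other two I would use one common skeleton. Let $G$ be a 4-vertex-critical $H$-free graph. If $G$ has no induced $P_3$, then $G$ is a disjoint union of cliques, and if $G$ has no induced $P_4$, then $G$ is a cograph; in either case 4-vertex-criticality forces $G = K_4$. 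Otherwise fix an induced $P_3$ (when $H = 2P_3$), respectively an induced $P_4$ (when $H = P_4 + kP_1$), on a vertex set $Q$, and set $R := G - N[Q]$. Since $R$ is a proper induced subgraph of $G$, we have $\chi(R) \le 3$. If $H = P_4 + kP_1$, then also $\alpha(R) \le k-1$ (an independent $k$-set in $R$ together with $Q$ would induce $P_4 + kP_1$), so $|V(R)| \le \chi(R)\,\alpha(R) \le 3(k-1)$. If $H = 2P_3$, then $R$ is $P_3$-free, since an induced $P_3$ in $R$ together with $Q$ would induce $2P_3$; hence $R$ is a disjoint union of cliques, each with at most $\chi(R) \le 3$ vertices. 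In both cases it then remains to bound $|N[Q]|$, and in the $2P_3$ case also the number of clique components of $R$.

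\emph{Necessity (infinitely many obstructions).} Arguing the contrapositive, suppose $H$ is not an induced subgraph of $P_6$, $2P_3$, or any $P_4 + kP_1$. Since every $G'$-free graph is $H$-free whenever $G'$ is an induced subgraph of $H$, it is enough to exhibit one induced subgraph $G'$ of $H$ admitting infinitely many 4-vertex-critical $G'$-free graphs. If $H$ is not a linear forest, then either $H$ contains a cycle, hence a shortest cycle $C_g$ (necessarily induced), or $H$ is a forest with a vertex of degree at least $3$, hence contains an induced claw $K_{1,3}$; in either case this is a connected graph that is not an induced subgraph of $P_6$, so by Theorem~\ref{thm:old} there are already infinitely many 4-vertex-critical graphs free of it, and all of them are $H$-free. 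If $H$ is a linear forest with a component having at least $7$ vertices, then $P_7$ is an induced subgraph of $H$, and Theorem~\ref{thm:old} again applies. In the remaining case $H$ is a linear forest each of whose components has at most $6$ vertices; a short case analysis on the number of components with at least two vertices and the size of the largest component shows that any such $H$ that is not an induced subgraph of $P_6$, $2P_3$, or any $P_4 + kP_1$ must contain $2P_2 + P_1$ as an induced subgraph. Hence the one genuinely new construction needed is an infinite family of 4-vertex-critical $(2P_2 + P_1)$-free graphs, which I would produce explicitly — for instance via an iterated Haj\'os-type construction seeded by $K_4$ — verifying at each step that 4-vertex-criticality and $(2P_2 + P_1)$-freeness are preserved.

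\emph{Main obstacle.} The reductions above and the $\chi(R)\,\alpha(R)$-type estimates are routine; the heart of the sufficiency direction is the final step, bounding $N[Q]$ (and, for $2P_3$, the number of residual cliques). This calls for a delicate, case-heavy analysis of the adjacency patterns (``types'') of the remaining vertices on the fixed small set $Q$: $H$-freeness is used to rule out too many vertices of a given type or too many residual cliques, while 4-vertex-criticality is exploited in full — in particular, that no vertex may be deleted without dropping $\chi$, and that in every 3-coloring of $G - v$ the neighborhood $N(v)$ receives all three colors. I expect this to culminate in an explicit bound on $|V(G)|$, after which the finitely many small residual graphs are verified directly, where convenient by computer. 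On the necessity side, the only nonroutine ingredient is the $(2P_2 + P_1)$-free infinite family.
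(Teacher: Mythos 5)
Your reductions are the easy half and they do match the paper: your linear-forest case analysis is exactly Lemma~\ref{graphs}, and your observation that a $P_4$ (or $P_3$) $Q$ together with the bounded remainder $R=G\setminus N[Q]$ gives a small dominating set is precisely how the paper's Lemma~\ref{lem:P4+kP1} begins. The genuine gap is the step you defer to a ``delicate, case-heavy analysis of adjacency patterns on $Q$'': bounding $|N[Q]|$ (and, for $2P_3$, the number of residual cliques) is the actual content of the theorem, and type-counting on $Q$ cannot do it. Arbitrarily many vertices of $N(Q)$ can have the same trace on $Q$, and neither $H$-freeness nor 4-vertex-criticality eliminates them by counting: criticality only lets you discard a vertex whose \emph{entire} neighborhood and list are dominated by another's, not one that merely repeats a pattern on the small set $Q$ (compare how much work the known $P_5$- and $P_6$-free classifications already required). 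The paper supplies a concrete mechanism that your plan lacks: it transfers the problem to list 3-coloring, precolors the (semi-)dominating set in all $3^{O(k)}$ ways and updates lists three times (Lemmas~\ref{precolor}, \ref{update4} and~\ref{P_4}), so that every remaining list has size at most two, and then bounds minimal list-obstructions with lists of size at most two via the propagation-path machinery (Lemma~\ref{lem:propagationpath} combined with Lemma~\ref{lem:size-2-P4} for $P_4+kP_1$, and Lemma~\ref{lem:2P3-list-size-2} for $2P_3$). The $2P_3$ case has an additional twist your outline would miss: there are \emph{infinitely} many $2P_3$-free minimal list-obstructions (Section~\ref{sec:list-necessity}), so the straightforward list reduction fails there and the paper must engineer the extra hypothesis (b) of Lemma~\ref{lem:2P3-list-size-2} through a carefully chosen sequence of precolorings (Lemma~\ref{lem:2P3-reduction}). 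Without some replacement for this machinery, your sufficiency argument has a hole exactly where the difficulty lies.

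On the necessity side, invoking Theorem~\ref{thm:old} for cycles, claws and $P_7$ is fine (and a little slicker than re-citing the explicit constructions), and your reduction of the remaining linear-forest cases to $2P_2+P_1$ is correct; but the one construction you actually need is left unproved. It is far from obvious that an iterated Haj\'os-type construction seeded by $K_4$ stays $(2P_2+P_1)$-free, and you give no argument; the paper instead uses Pokrovskiy's graphs $G_r$ (Lemma~\ref{lem:nec-col}) and verifies $(2P_2+P_1)$-freeness by exploiting the essentially unique 3-coloring of $G_r\setminus v_0$. As it stands, both the sufficiency bound and the $(2P_2+P_1)$-free infinite family are promissory notes rather than proofs.
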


The tools  used in~\cite{CGSZ15} to prove Theorem~\ref{thm:old} were tailored specifically for the $P_6$-free case and do not generalize well, while our new approach is significantly more powerful.
The idea is to transfer the problem to the more general list setting and solve it there, showing that there is a constant $C$ such that minimal obstructions 
have bounded size at most $C$.
This generality comes at a certain cost: the upper bound we get is far from 
sharp.

Our second main result is the analogue of Theorem~\ref{thm:coloring-characterization} in the list setting.
To state it, we need the following concept.

We call a pair $(G,L)$ with $L(v) \subseteq \{1,2,3\}$, $v \in V(G)$, a \emph{minimal list-obstruction} if $(G,L)$ is not colorable but for all proper induced 
subgraphs $A$ of $G$ the pair $\left(A,L\right)$ is colorable.
Here and throughout the article, if $H$ is an induced subgraph of $G$, then by
$(H,L)$ we mean $H$ with the list system $L$ restricted to $V(H)$.
We prove the following theorem.

\begin{restatable}{theorem}{thirdthm}\label{thm:list-characterization}
Let $H$ be a graph. There are only finitely many $H$-free minimal list-obstructions if and only if $H$ is an induced subgraph of $P_6$, or of $P_4 + kP_1$ for some $k \in \mathbb N$.
\end{restatable}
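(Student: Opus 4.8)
The plan is to prove the two implications separately; the finiteness statement for $P_6$-free graphs is the technical core, while an infinite family of $2P_3$-free minimal list-obstructions is the main new ingredient on the other side.

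\emph{The ``only if'' direction.} Suppose $H$ is not an induced subgraph of $P_6$ nor of $P_4+kP_1$ for any $k$. A short case analysis of linear forests shows that then $H$ either (i) is not a linear forest, or (ii) contains an induced $2P_3$, or (iii) contains an induced $2P_2+P_1$. For (i), I would use an explicit family of path list-obstructions: for each $n$ take $P_n=v_1v_2\cdots v_n$, set $L(v_1)=\{1\}$, give each internal vertex $v_i$ the $2$-element list that forces its colour once $v_{i-1}$ is coloured — arranged so that the forced colours run $1,2,3,1,2,3,\dots$ — and let $L(v_n)$ be the singleton clashing with the forced colour of $v_{n-1}$; one checks directly that this is a minimal list-obstruction, and it is $H$-free because a path has no induced subgraph that fails to be a linear forest. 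For (ii) I would exhibit a new infinite family of $2P_3$-free minimal list-obstructions; such graphs cannot come from $4$-vertex-critical graphs and must use the lists essentially — indeed the naive dense candidates (complete bipartite or complete multipartite graphs with restricted lists) are all $2P_3$-free but, by a pigeonhole argument over the six ways to split $\{1,2,3\}$ between two anticomplete parts, yield only finitely many minimal list-obstructions, so a more careful gadget is needed. For (iii), any such $H$ that is not already covered by (ii) is distinct from $2P_3$ and hence excluded from the ``good'' list of Theorem~\ref{thm:coloring-characterization}, so the infinitely many $H$-free $4$-vertex-critical graphs produced there are, with the all-$\{1,2,3\}$ list system, $H$-free minimal list-obstructions.

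\emph{The ``if'' direction.} Since having only finitely many $H$-free minimal list-obstructions passes to induced subgraphs of $H$, it suffices to prove it for $H=P_6$ and for $H=P_4+kP_1$ with $k$ fixed; for $k\le 1$ the latter follows from the former, because $P_6$ has an induced $P_4+P_1$ and so every $(P_4+P_1)$-free graph is $P_6$-free. Two reductions apply throughout: a minimal list-obstruction $(G,L)$ is connected, and, since an induced $K_4$ is already not list-colourable from $\{1,2,3\}$, either $G=K_4$ or $G$ is $K_4$-free; and by propagating lists of size at most $1$ we may assume $|L(v)|\in\{2,3\}$ for every $v$. For $H=P_6$ I would analyse the structure of $P_6$-free graphs — via their domination properties and the substructures excluded by minimality — to find a dominating set $W$ whose induced subgraph has bounded ``complexity'' (bounded size, or a bounded number of vertex types). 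Fixing one of the boundedly many list-colourings of $W$ shrinks the lists on the rest of $G$; since every such partial colouring must fail to extend, the residual instance is essentially one with lists of size at most $2$ — where list-$3$-colourability is $2$-SAT-like — and combining this with $K_4$-freeness and a Ramsey-type bound forces the part of $G$ not pinned down by $W$, hence $G$ itself, to have at most some absolute number of vertices.

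For $H=P_4+kP_1$ with $k\ge 2$ I would distinguish two cases. If $G$ has no induced $P_4$ it is a cograph; writing the connected graph $G$ as a join $G_1\vee G_2$, a proper list-colouring amounts to splitting the three colours between $G_1$ and $G_2$ with one side receiving at most one colour, and an induction over the cotree together with minimality bounds $|V(G)|$. If $G$ has an induced $P_4$ on a vertex set $Q$, then $G-N[Q]$ has no independent set of size $k$ (otherwise $Q$ together with it would induce $P_4+kP_1$), so it is $K_4$-free with independence number less than $k$ and hence has at most $R(4,k)-1$ vertices; it then remains to bound $|N(Q)|$, which I would do by splitting $N(Q)$ into the at most $15$ classes determined by the neighbourhood in $Q$ and capping each class — a large class contains a large independent set, from which one either assembles an induced $P_4+kP_1$ (when the class's common neighbourhood in $Q$ is a proper subset of $Q$) or, if the class is complete to $Q$, derives a contradiction with minimality by a variant of the complete-bipartite pigeonhole above.

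I expect the main obstacle to be the $P_6$-free bound: extracting from the comparatively loose structure of $P_6$-free graphs enough leverage to turn minimality into an \emph{absolute} size bound, rather than merely polynomial-time tractability. The secondary difficulty, of a more hands-on kind, is constructing the explicit infinitely many $2P_3$-free minimal list-obstructions for the ``only if'' direction, since the obvious dense families stay minimal only for boundedly many vertices.
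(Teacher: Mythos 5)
Your skeleton is right where it is easy: the linear-forest case analysis matches the paper's Lemma~\ref{graphs}, reusing the $2P_2+P_1$-free $4$-vertex-critical graphs with full lists is exactly what the paper does, and your cycling-list path gadget for non-linear-forest $H$ is correct and is in fact a simpler route for claws and cycles than the paper's (which goes through Lemma~\ref{lem:nec-col}, i.e.\ line graphs of $4$-regular bipartite graphs of large girth and Erd\H{o}s-type high-girth critical graphs). But the proposal has genuine gaps at the two places where the theorem actually lives. First, for $H=2P_3$ you must exhibit infinitely many $2P_3$-free minimal list-obstructions, and you explicitly defer this (``a more careful gadget is needed''). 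This is not a routine step, and without it the ``only if'' direction fails at exactly the case that distinguishes Theorem~\ref{thm:list-characterization} from Theorem~\ref{thm:coloring-characterization}. The paper's family $H_r$ (Section~\ref{sec:list-necessity}) is a path $v_1$-$\dots$-$v_{3r-1}$ with singleton lists at both ends, two-element lists cycling along the path, and extra chords $v_iv_j$ precisely when $i\le j-2$, $i\equiv 2$ and $j\equiv 1 \pmod 3$; the chords are what destroy every induced $2P_3$ while leaving the instance minimally uncolorable, and both properties require proof.

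Second, the ``if'' direction is only sketched at the level of naming ingredients, and two of its steps would fail as described. For $P_6$: ``lists of size at most $2$ are $2$-SAT-like'' yields polynomial-time solvability but no bound on minimal obstructions --- minimal unsatisfiable $2$-SAT instances are arbitrarily long implication chains, which are precisely the propagation paths of Section~\ref{sec:smalllists}, and bounding their length in $P_6$-free graphs is the paper's Lemma~\ref{lem:size2} (computer-aided; the computer-free variant needs a complete-bipartite extraction plus repeated Ramsey arguments). Moreover, the dominating set supplied by Theorem~\ref{structure} is $P_4$-free but of \emph{unbounded} size, so ``fix one of the boundedly many colourings of $W$'' is not available; reducing from that dominating structure to the size-two case is the bulk of Section~\ref{sec:sufficiency-P6} (the $C_5$-free/$C_5$ dichotomy, Claims~\ref{stable}, \ref{bipartite1}, \ref{reducecomponents}, and Lemmas~\ref{precolor}, \ref{update4}, \ref{P_4}). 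For $P_4+kP_1$ your plan deviates from the paper and has a concrete hole: capping the neighbourhood classes of $N(Q)$ does not follow from Ramsey plus minimality. A large stable class whose neighbourhood in $Q$ is, say, $\{v_1\}$ yields no induced $P_4+kP_1$ (all its members are adjacent to $v_1$, so they cannot serve as the $kP_1$ anticomplete to any $P_4$ through $Q$), and its members need not have equal neighbourhoods or lists in $G$, so the domination/minimality pigeonhole does not remove them. The paper instead extracts a dominating set of size at most $k+3$ from $Q$ plus a maximal stable set outside $N(Q)$ and runs the precolouring/updating machinery on top of the size-two bound of Lemma~\ref{lem:size-2-P4}; your route would need its own substitute for that entire step.
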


Note that there are infinitely many $2P_3$-free minimal list-obstructions while there are only finitely many 4-vertex-critical $2P_3$-free graphs.
Thus, Theorem~\ref{thm:coloring-characterization} is not a special case of Theorem~\ref{thm:list-characterization}.
Moreover, the fact that there are only finitely many $P_6$-free minimal list-obstructions does not follow from Theorem~\ref{thm:old}.

\subsection{Previous work}

It is known that the $k$-colorability problem is NP-hard for $k \geq 3$ on $H$-free graphs unless if $H$ is a disjoint union of paths~\cite{Hol-color,K-L-col-g,K-K-T-W-col-g,L-G-color}.
This motivates the study of graph classes in which a path is forbidden as an induced subgraph in the context of the complexity of the $k$-colorability problem.
Regarding the computational complexity of the 3-colorability problem, the state of the art is the polynomial time algorithm to decide whether a $P_7$-free graph admits a 3-coloring~\cite{BCMSSZ15}. 
The algorithm actually solves the harder list 3-colorability problem.
For any $k\geq 8$, it is not known whether deciding $3$-colorability for a $P_k$-free graph is polynomial time solvable or not.

There are quite a few results regarding the number of critical $H$-free graphs.
To describe these results, consider the following definition.
If $H$ is a graph, a \emph{$(k+1)$-critical $H$-free graph} is a graph $G$  that is $H$-free, $(k+1)$-chromatic, and every $H$-free proper (not necessarily induced) subgraph of $G$ is $k$-colorable.
Note that there are finitely many $(k+1)$-critical $H$-free graphs if and only if there are finitely many $(k+1)$-vertex-critical $H$-free 
graphs \cite{Hoang2}. 
These critical graphs are of special interest since they form a canonical \textit{no-certificate} for $k$-colorability. Given a decision problem, a solution algorithm is called \emph{certifying} if it provides, together with the yes/no decision, a polynomial time verifiable certificate for this decision. (A canonical \textit{yes-certificate} would be a $k$-coloring of the graph; since most existing graph coloring algorithms are constructive they can hence easily provide a yes-certificate. A canonical no-certificate would be a $(k+1)$-critical subgraph of bounded size).
For all $t$, we let $C_t$ denote the  {\em cycle} on $t$ vertices,
that is a graph with vertex set $c_1, \ldots, c_t$ and such that 
$c_i$ is adjacent to $c_j$ if and only if $|i-j| \in \{1,t-1\}$.

Bruce \emph{et al.}~\cite{BHS09} proved that there are exactly six 4-critical $P_5$-free graphs.
Later, Maffray and Morel~\cite{MM12}, by characterizing the 4-vertex-critical $P_5$-free graphs, designed a linear time algorithm to decide 3-colorability of $P_5$-free graphs.
Randerath et al.~\cite{randerath_04} have shown that the only 4-critical $(P_6,C_3)$-free graph is the Gr\"otzsch graph.
More recently, Hell and Huang~\cite{hell_14,Hell-survey-coloring} proved that there are exactly four 4-critical $(P_6,C_4)$-free graphs.
They also proved that there are only finitely many $k$-critical $(P_6,C_4)$-free graphs, for all $k$.
As mentioned earlier, we proved Theorem~\ref{thm:old} which says that there are only finitely many 4-vertex-critical $P_6$-free graphs, namely 80.

Recently~\cite{GS15}, two of the authors of this paper developed an enumeration algorithm to automate the case analysis performed in the proofs of the results mentioned above.
Using this algorithm, it was shown that there are only finitely many $4$-critical $(P_7,C_k)$-free graphs, for both $k=4$ and $k=5$.
Since there is an infinite family of $(P_7,C_6,C_7)$-free graphs, only the case of $(P_7,C_3)$-free graphs remains open. 
It was also shown that there are only finitely many $4$-critical $(P_8,C_4)$-free graphs. 
For more details on this line of research we recommend the two excellent survey papers by Hell and Huang~\cite{Hell-survey-coloring} and Golovach \emph{et al.}~\cite{GJPS15}.

\subsection{Structure of the paper}

In Section~\ref{sec:prelims} we state the relevant definitions and the notation used in later sections.

In Section~\ref{sec:smalllists} 
we develop the  concept of the so-called \emph{propagation path}, which is 
the main tool in showing  that there are only finitely many $H$-free minimal list-obstructions (for the right choices of $H$) with lists of size at most 2.
In particular, we show that for every minimal list-obstruction with lists of
size at most 2, we can delete at most four vertices so that what remains  is the union of four propagation paths.

In Section~\ref{sec:sufficiency-P6} we prove that there are only finitely many $P_6$-free minimal list-obstructions.
We split the proof into two parts. In the first part we show that there are only finitely many $P_6$-free minimal list-obstructions where every list is of size at most 2, which amounts to studying $P_6$-free propagation paths. This step
has a computer-aided proof. We also have a computer-free proof of this fact, 
but it is tedious, and we decided to only include a sketch of it. 
In the second part of the proof we reduce the general problem to the case 
solved in the first part.
Here we rely on a structural analysis, making use of a structure theorem for 
$P_t$-free graphs.

Using a similar approach, in Section~\ref{sec:sufficiency-2P3} we prove that there are only finitely many $2P_3$-free 4-vertex-critical graphs (of course, certain modifications are needed, because the list version is false in this case).
In Section~\ref{sec:sufficiency-P4+kP1} we show that there are only finitely many $P_4+kP_1$-free minimal list-obstructions.

In Section~\ref{sec:necessity} we prove the necessity in the statement of Theorem~\ref{thm:coloring-characterization} and Theorem~\ref{thm:list-characterization}, providing infinite families of $H$-free 4-vertex-critical graphs and minimal list-obstructions.

Our main results are proven in Section~\ref{sec:together} where we put together the  results mentioned above.

\section{Preliminaries}\label{sec:prelims}

All graphs in this paper are finite and simple.
Let $G$ and $H$ be graphs and let $X$ be a subset of $V(G)$. 
We denote 
by $G\setminus X$ the graph $G|(V(G) \setminus X)$.
If $X=\{v\}$ for some $v \in V(G)$, we write $G \setminus v$ instead of $G \setminus \{v\}$.
If $G|X$ is isomorphic to $H$, then we say that \emph{$X$ is an $H$ in $G$}.
We write $G_1+\ldots+G_k$ for the disjoint union of graphs $G_1,\ldots,G_k$.
The \emph{neighborhood} of a vertex $v\in V(G)$ is denoted by $N_G(v)$
(when there is no danger of confusion, we sometimes write $N(v)$).
For a vertex set $S$, we use $N(S)$ to denote $(\bigcup\limits_{v\in S}N(v)) \setminus S$.

For $n\geq 0$, we denote by \emph{$P_n$ the chordless path on $n$ vertices}. 
For $n\geq 3$, we denote by \emph{$C_n$ the chordless cycle on $n$ vertices}.
By convention, when explicitly describing a path or a cycle, we always list the vertices in order. 
Let $G$ be a graph. 
When $G|\{p_1,\ldots,p_n\}$ is the path $P_n$, we say that \emph{$p_1$-$\ldots$-$p_n$ is a $P_n$ in $G$}. 
Similarly, when $G|\{c_1,c_2,\ldots,c_n\}$ is the cycle $C_n$, we say that \emph{$c_1$-$c_2$-$\ldots$-$c_n$-$c_1$ is a $C_n$ in $G$}. 
A \emph{Hamiltonian path} is a path that contains all vertices of $G$.

Let $A$ and $B$ be disjoint subsets of $V(G)$. 
For a vertex $b\in V(G)\setminus A$, we say that \emph{$b$ is complete to $A$} if $b$ is adjacent to every vertex of $A$, and that \emph{$b$ is anticomplete to $A$} if $b$ is non-adjacent to every vertex of $A$. 
If every vertex of $A$ is complete to $B$, we say \emph{$A$ is complete to $B$}, and if every vertex of $A$ is anticomplete to $B$, we say that \emph{$A$ is anticomplete to $B$}. 
If $b\in V(G)\setminus A$ is neither complete nor anticomplete to $A$, we say that \emph{$b$ is mixed on $A$}. 
The \emph{complement} $\overline{G}$ of $G$ is the graph with vertex set $V(G)$ such that two vertices are adjacent in $\overline{G}$ if and only if they are non-adjacent in $G$. 
If $\overline{G}$ is connected we say that $G$ is \emph{anticonnected}. 
For $X \subseteq V(G)$, we say that $X$ is {\em connected} if $G|X$ is connected, and that $X$ is anticonnected if $G|X$ is anticonnected. 
A \emph{component} of $X \subseteq V(G)$ is a maximal connected subset of $X$, and an \emph{anticomponent} of $X$ is a maximal anticonnected subset of $X$. 
We write {\em component of $G$} to mean a component of $V(G)$.
A subset $D$ of $V(G)$ is called a \emph{dominating set} if every vertex in $V(G)\setminus D$ is adjacent to at least one vertex in $D$; in this case we also say that $G|D$ is a \emph{dominating subgraph} of $G$. 

A \emph{list system $L$} of a graph $G$ is a mapping which assigns each vertex $v\in V(G)$ a finite subset of $\mathbb{N}$, denoted by $L(v)$. 
A \emph{subsystem} of a list system $L$ of $G$ is a list system $L'$ of $G$ such that $L'(v)\subseteq L(v)$ for all $v\in V(G)$. 
We say a list system $L$ of the graph $G$ has \emph{order} $k$ if $L(v)\subseteq \{1,\ldots,k\}$ for all $v\in V(G)$. 
In this article, we will only consider list systems of order $3$.
Notationally, we write $(G,L)$ to represent a graph $G$ and a list system $L$ of $G$. 
We say that $c$, a coloring of $G$, is an \emph{$L$-coloring} of $G$, or a \emph{coloring of $(G,L)$} provided $c(v)\in L(v)$ for all $v\in V(G)$. 
We say that $(G,L)$ is \emph{colorable}, if there exists a coloring of $(G,L)$. 
A \emph{partial coloring} of $(G,L)$  is a mapping $c:U \to \mathbb N$ such that $c(u) \in L(u)$ for all $u \in U$, where $U$ is a subset of $V(G)$.
Note that here we allow for edges $uv$ of $G|U$ with $c(u)=c(v)$.
If there is no such edge, we call $c$ \emph{proper}.
	
Let $G$ be a graph and let $L$ be a list system of order $3$ for $G$.
We say that $(G,L)$ is a \emph{list-obstruction} if $(G,L)$ is not colorable.
As stated earlier, we call $(G,L)$ a \emph{minimal list-obstruction} if, in addition,  $(G\setminus x,L)$ is colorable for every vertex $x\in G$.

Let $(G,L)$ be a list-obstruction.
We say a vertex $v\in V(G)$ is \emph{critical} if $G\setminus v$ is $L$-colorable and \emph{non-critical} otherwise.
If we repeatedly delete non-critical vertices of $G$ to obtain a new 
graph, $G'$ say, such that 
$(G',L)$ is a minimal list obstruction, we say that
$(G',L)$ is a minimal list obstruction \emph{induced} by $(G,L)$.

Let $u,v$ be two vertices of a list-obstruction $(G,L)$.
We say that $u$ \emph{dominates} $v$ if $L(u) \subseteq L(v)$ and $N(v) \subseteq N(u)$.
It is easy to see that if there are such vertices $u$ and $v$ in $G$, then $(G,L)$ is not  a minimal list-obstruction.
We frequently use this observation without further reference.


\subsection{Updating lists}

Let $G$ be a graph and let $L$ be a list system for $G$.
Let $v,w \in V(G)$ be adjacent, and assume that $|L(w)|=1$.
To \emph{update  the list of $v$ from $w$} means to delete
from  $L(v)$ the unique element of $L(w)$.
If the size of the list of $v$ is reduced to one, we sometimes say that
$v$ is {\em colored}, and refer to the unique element in the list of $v$
as the {\em color} of $v$.
Throughout the paper, we make use of distinct updating procedures to reduce 
the sizes of the lists, and we define them below.

If $P=v_1$-\ldots-$v_k$ is a path and $|L(v_1)|=1$, then to \emph{update from $v_1$ along $P$} means to update $v_2$ from $v_1$ if possible,  then to  update $v_3$ from $v_2$ if possible, and so on. When $v_k$ is updated from $v_{k-1}$, we stop the updating.

Let $X\subseteq V(G)$ such that $|L(x)| \leq 1$ for all $x\in X$. 
For a subset $A\subseteq V(G)\setminus X$, we say that we \emph{update the lists of the vertices in $A$ with respect to $X$} if we update each $a \in A$ from each $x \in X$. 
We say that we \emph{update the lists with respect to $X$} if $A=V(G)\setminus X$. 
Let $X_0=X$ and $L_0=L$. 
For $i \geq 1$ define  $X_i$ and $L_i$ as follows. $L_i$ is the list system 
obtained from $L_{i-1}$ by updating with respect to $X_{i-1}$. 
Moreover, $X_i=X_{i-1} \cup \{v \in V(G) \setminus X_{i-1} : |L_i(v)| \leq 1\text{ and } |L_{i-1}(v)|>1\}$.
We say that $L_i$ is obtained from $L$ by {\em updating with respect to  
	$X$ $i$ times}.
If $X=\{w\}$  we say that $L_i$ is obtained by {\em updating with respect to $w$
	$i$ times}. If for some $i$, $W_i=W_{i-1}$ and $L_i=L_{i-1}$, we say
that $L_i$ was  obtained from $L$  by {\em updating exhaustively with respect 
to $X$ (or $w$)}. For simplicity, if $X$ is an induced subgraph of $G$, by updating with respect to $X$ we mean updating with respect to $V(X)$.  
We also adopt the following convention. If for some $i$,
if two vertices of $X_{i-1}$ with the same list are adjacent, or
$L_i(v) = \emptyset$ for some $v$, we set
$L_i(v)=\emptyset$ for every $v \in V(G) \setminus X_i$.  Observe that in this 
case $(G|X_i, L_i)$ is not colorable, and so we have preserved at least one
minimal list obstruction induced by $(G,L)$.

\section{Obstructions with lists of size at most two}\label{sec:smalllists}

The aim of this section is to provide an upper bound on the order of the $H$-free minimal list-obstructions in which every list has at most two entries.
Let us stress the fact that we restrict our attention to lists which are (proper) subsets of $\{1,2,3\}$.
Before we state our lemma, we need to introduce the following technical definition.

Let $(G,L)$ be a minimal list-obstruction such that $|L(v)|\le 2$ for all $v \in V(G)$.
Let $P=v_1$-$v_2$-\ldots-$v_k$ be a path in $G$, not necessarily induced.
Assume that $|L(v_1)|\ge 1$ and $|L(v_i)|=2$ for all $i \in \{2,\ldots,k\}$.
Moreover, assume that there is a color $\alpha \in L(v_1)$ such that if we give color $\alpha$ to $v_1$ and update along $P$, we obtain a coloring $c$ of $P$. Please note that $c$ may not be a coloring of the graph $G|V(P)$.
For $i \in\{2,\ldots,k\}$ with $L(v_i)=\{\beta,\gamma\}$ and $c(v_i)=\beta$ we 
define the {\em shape} of $v_i$ to be $\beta \gamma$, and denote it by
$S(v_i)$.
If every edge $v_iv_j$ (of $G$) with $3 \le i < j \le k$ and $i \le j-2$ is such that 
\begin{equation}\label{eqn:conditions0}
\mbox{$S(v_i)=\alpha\beta$ and $S(v_j)=\beta\gamma$,} 
\end{equation}
where $\{1,2,3\} = \{\alpha,\beta,\gamma\}$, then we call $P$ a \emph{propagation path} of $G$ and say that \emph{$P$ starts with color $\alpha$}.
As we prove later,~\eqref{eqn:conditions0} implies that the updating process from $v_1$ along $P$ to $v_k$ cannot be shortcut via any edge $v_iv_j$ with $3 \le i < j \le k$ and $i \le j-2$.

The next lemma shows that, when bounding the order of our list-obstructions, we may concentrate on upper bounds on the size of propagation paths.

\begin{lemma}\label{lem:propagationpath}
Let $(G,L)$ be a minimal list-obstruction, where $|L(v)| \leq 2$ and $L(v) \in \{1,2,3\}$  for every $v \in V(G)$.
Assume that all propagation paths in $G$ have at most $\lambda$ vertices for some $\lambda \ge 20$.
Then $|V(G)| \leq 4\lambda+4$.
\end{lemma}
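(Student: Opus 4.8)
The plan is to show that all but at most four vertices of $G$ can be covered by the vertex sets of at most four propagation paths, and then invoke the hypothesis that each such path has at most $\lambda$ vertices. First I would observe that since $(G,L)$ is a minimal list-obstruction with all lists of size at most $2$, there must exist a vertex, or a small set of vertices, whose list has size exactly $1$, or whose coloring can be fixed, so that the exhaustive updating process from that vertex reaches everything; more precisely I want to identify a bounded ``seed set'' $Z$ with $|Z| \le 4$ such that when we delete $Z$ and update, the lists that get reduced to size one propagate along paths that are exactly propagation paths in the sense of the definition. The natural choice of $Z$: pick any vertex $v$, and since $(G \setminus v, L)$ is colorable while $(G,L)$ is not, colour $G \setminus v$; this colouring, restricted appropriately, together with $v$ itself, should give a contradiction unless the structure is forced. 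Alternatively — and I think this is the cleaner route — pick an edge $xy$ whose existence is guaranteed, delete it (or contract information about it), and use the fact that $(G,L)$ minimal forces every vertex to be ``reachable'' by updating from a constant number of starting points.

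The key steps, in order, would be: (1) establish that there is a set $Z$ of at most four vertices such that, after assigning the appropriate colour to each vertex of $Z$ (using colourability of proper induced subgraphs) and running the exhaustive updating procedure with respect to $Z$, every vertex of $G \setminus Z$ eventually gets a list of size $1$ — this uses minimality crucially, since if some vertex never got coloured we could extract a smaller obstruction or a valid colouring. (2) For each such vertex $v \in V(G) \setminus Z$, trace back a shortest ``update chain'' witnessing how $v$'s list was reduced to size one: this gives a path from some vertex of $Z$ to $v$ along which the colouring propagates. (3) Show these update chains, suitably pruned, are propagation paths: the condition~\eqref{eqn:conditions0} on chords $v_iv_j$ must hold, because if it failed, the updating could be ``shortcut'' — but we took a shortest chain, so no such shortcut exists, which is exactly what rules out the forbidden shape configuration. (4) Since there are at most $|Z| \le 4$ starting vertices, and the union of the chains emanating from each starting vertex is contained in (the vertex set of) a single propagation path — or can be arranged to be covered by one such path per seed vertex after a careful argument about merging chains — we cover $V(G) \setminus Z$ by at most four propagation paths, each of size $\le \lambda$, giving $|V(G)| \le 4\lambda + 4$.

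The main obstacle I anticipate is step (4): a priori the update chains from a single seed vertex $z$ form a tree, not a path, so one needs to argue that this tree is in fact a path, or that at most four chains total suffice to cover everything, or that branching is irrelevant because a branch vertex could be routed through. This is presumably where the constant $4$ and the technical hypothesis $\lambda \ge 20$ enter: one likely shows that the updating ``frontier'' from the seeds, at the moment the obstruction becomes apparent, is concentrated along a constant number of paths, and any extra branches either don't get coloured (contradicting step 1) or can be absorbed. A secondary subtlety is verifying that a shortest update chain genuinely has no chord violating~\eqref{eqn:conditions0}: one must carefully check, shape by shape, that a chord $v_iv_j$ not of the form $(\alpha\beta,\beta\gamma)$ would let us update $v_j$ (or some later vertex) one step earlier, contradicting shortestness — this is the routine-but-fiddly case analysis on the three possible shapes that I would not grind through here. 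I would also keep in mind the convention about setting lists to $\emptyset$ when a conflict arises, since that is what guarantees the updating preserves a minimal list-obstruction and hence cannot terminate ``harmlessly.''
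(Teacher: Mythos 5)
The central gap is your step (3): shortestness of an update chain does not rule out all chords forbidden by condition~\eqref{eqn:conditions0}. A chord $v_iv_j$ with $c(v_i)=c'(v_j)$ (where $c'$ denotes the unused colour of the list) does give a shortcut and is excluded as you say, but a chord with $c'(v_i)=c'(v_j)$ does not shorten the updating in any way, so a shortest chain can perfectly well contain one, and then the chain is not a propagation path. In the paper this is exactly the delicate point: such chords are excluded not by shortestness but by an extremal choice of the whole path system (first minimize the maximum of the two path lengths, then the minimum) combined with a re-rooting argument --- if $c'(v_i)=c'(v_j)$ occurs in the initial segment one re-roots at $v_j$ and contradicts the extremal choice, and if it occurs in the final segment one reverses the tail through the conflict vertex $v_s$ --- and even then the conclusion only holds after discarding a bounded prefix, which is precisely why each of the two paths splits into \emph{two} propagation paths and why the constant $4$ appears. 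Your picture of four seed vertices each emitting one propagation path does not match this mechanism and is not substantiated.

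Your steps (1) and (4) are also unproved, and (1) is false as stated. When every list has size two there is no ``appropriate colour'' for a seed: updating from a fixed colouring of a small set $Z$ need not colour all of $V(G)\setminus Z$. What minimality actually yields (Claims~\ref{clm:improper}--\ref{clm:V1=2} in the paper) is that the set $V_1$ of vertices with singleton lists satisfies $|V_1|\le 2$, and in the case $V_1=\emptyset$ one fixes a single arbitrary vertex $x$ and runs the updating \emph{twice}, once for each colour in $L(x)$; only the union of the two processes covers $V(G)$. Finally, the tree-versus-path difficulty you flag in (4) is real and is resolved in the paper by a device your proposal lacks: take the two branches of the updating tree that end at the two endpoints of the monochromatic edge and traverse the second branch backwards; the concatenation is a single (not necessarily induced) walk along which updating from the seed still produces the conflict, and minimality then forces this walk (together with its counterpart for the other colour) to exhaust $V(G)$. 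Without these three ingredients the proposed argument does not go through.
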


In the next section we prove the above lemma.
First we show  that if $G$ is a minimal list-obstruction in which every list contains at most two colors, then $V(G)=V_1 \cup V_2$, where $|V_1 \cap V_2| \geq 1$, and for some $v \in V_1 \cap V_2$, each $G|V_i$ has a Hamiltonian path $P_i$
starting at $v$. Moreover, if $L(v)=\{c_1,c_2\}$, then for every 
$i \in \{1,2\}$ giving $v$ the color $c_i$ and updating along $P_i$, 
results in a pair of adjacent vertices of $G$ receiving the same list of size 
one. Then  we prove that the edges of $G$ that are not the edges of
$P_1,P_2$ are significantly restricted, and consequently each $P_i$ is 
(almost) the union of two propagation paths, thus proving the lemma.

\subsection{Proof of Lemma~\ref{lem:propagationpath}}\label{sec:propagationpath}

Let $(G=(V,E),L)$ be a minimal list-obstruction such that $|L(v)|\le 2$ and
$L(v) \subseteq \{1,2,3\}$ for all $v \in V$.
If there is a vertex with an empty list, then this is the only vertex of $G$ and we are done.
So, we may assume that every vertex of $G$ has a non-empty list.
Let $V_1=\{v \in V : |L(v)|=1\}$ and $V_2=\{v \in V : |L(v)|=2\}$.

\begin{claim}\label{clm:improper}
Let $x \in V$ and $\alpha \in L(x)$ be arbitrary.
Assume we give color $\alpha$ to $x$ and update exhaustively in the graph $(G|(V_2\cup\{x\}),L)$. 
Let $c$ be partial coloring thus obtained.
For each $y \in V_1$ that did not receive a color so far, let $c(y)$ be the unique color in $L(y)$.
Then there is an edge $uv$ such that $c(u)=c(v)$.
\end{claim}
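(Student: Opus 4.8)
The plan is to argue by contradiction. Suppose there is no edge $uv$ with $c(u)=c(v)$; I will show that $(G,L)$ is colorable, contradicting the hypothesis that it is a (minimal) list-obstruction.

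First I would analyze the structure of $c$ under this assumption. Since all updating takes place inside $G|(V_2\cup\{x\})$, a list can drop to size $0$ only if, at some step, we delete the last color from an already-colored vertex $w$; but that deletion is caused by a neighbor of $w$ being colored with the same color, i.e.\ by a monochromatic edge between two colored vertices, which we are assuming does not occur. Hence no list becomes empty, the bookkeeping convention from Section~\ref{sec:prelims} is never triggered, and every vertex $u$ left uncolored by the updating lies in $V_2$ and still carries its original two-element list $L(u)$ (its list cannot have shrunk to size $1$ without $u$ being colored, nor to size $0$). Moreover, because the updating is exhaustive, every colored neighbor $v\in V_2\cup\{x\}$ of such a $u$ satisfies $c(v)\notin L(u)$, as otherwise $c(v)$ would have been deleted from $L(u)$. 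Write $U$ for the set of uncolored vertices; note $x\notin U$, so $U\subsetneq V(G)$.

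Next I would quantify the cost of coloring $U$. Define a list system $L^{*}$ on $G|U$ by $L^{*}(u)=L(u)\setminus\{c(y):y\in V_1,\ y\sim u\}$. The observations above show that any proper $L^{*}$-coloring $d^{*}$ of $G|U$ glues onto $c$ to give a proper $L$-coloring of all of $G$: edges inside the domain of $c$ are properly colored by hypothesis, edges inside $U$ by properness of $d^{*}$, edges from $U$ to $V_1$ are handled by the definition of $L^{*}$, and an edge from $u\in U$ to a colored vertex $v\in V_2\cup\{x\}$ is fine because $c(v)\notin L(u)\supseteq L^{*}(u)$. So it suffices to prove that $(G|U,L^{*})$ is colorable. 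Suppose not. Then $(G|U,L^{*})$ induces a minimal list-obstruction $(G|U_0,L^{*})$ with $\varnothing\neq U_0\subseteq U$. Put $Y=(N(U_0)\cap V_1)\setminus\{x\}$ and consider $(G|(U_0\cup Y),L)$. Since $x\notin U_0\cup Y$, this is a proper induced subgraph of $G$, hence colorable, say by $d$; and as $|L(y)|=1$ for $y\in Y$ we have $d(y)=c(y)$. Now $d$ restricted to $U_0$ is a proper $L^{*}$-coloring of $G|U_0$: the only subtlety is a $V_1$-neighbor of some $u\in U_0$ equal to $x$, but then the updating of $u$ from $x$ (which happens, since $x\in V_2\cup\{x\}$) together with $u\in U$ forces $\alpha=c(x)\notin L(u)$, so that constraint is vacuous. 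This contradicts that $(G|U_0,L^{*})$ is not colorable. Hence $(G|U,L^{*})$, and therefore $(G,L)$, is colorable — the desired contradiction.

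The step I expect to be the main obstacle is the second paragraph: establishing, from the ``no monochromatic edge'' hypothesis, that the updating inside $G|(V_2\cup\{x\})$ halts without emptying any list and leaves the uncolored vertices with untouched lists, since this is exactly what makes the gluing argument and the choice of $L^{*}$ work. The remainder is a routine minimality argument, the only fussy point being to keep the vertex $x$ out of the proper induced subgraph in the last step.
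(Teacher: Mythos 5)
Your proof is correct and follows essentially the same route as the paper's: both argue by contradiction assuming $c$ is proper, use that exhaustive updating forces $c(v)\notin L(u)$ for every colored $v\in V_2\cup\{x\}$ adjacent to an uncolored $u$, apply minimality of $(G,L)$ to a proper induced subgraph avoiding $x$ that contains the uncolored vertices together with the relevant $V_1$-vertices, and glue the resulting coloring with $c$. Your list system $L^{*}$ and the detour through an induced minimal obstruction $G|U_0$ merely repackage what the paper does directly by coloring $(G|(U\cup V_1)\setminus x,L)$.
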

\begin{proof}
Let us give color $\alpha$ to $x$ and update exhaustively, but only considering vertices and edges in the graph $(G|(V_2\cup\{x\}),L)$.
We denote this partial coloring by $c$.
For each $y \in V_1$ that did not receive a color so far, let $c(y)$ be the unique color in $L(y)$.
For a contradiction, suppose that this partial coloring $c$ is proper.

Since $G$ is an obstruction, $c$ is not a coloring of $G$, meaning there are still vertices with two colors left on their list.
We denote the set of these vertices by $U$.
By minimality of $G$, we know that both graphs $(G',L'):=(G\setminus U,L)$ and $(G'',L''):=(G|(U\cup V_1) \setminus x,L)$ are colorable and have at least one vertex.

Let $c'$ be the coloring of $G'$ such that $c'(u)=c(u)$ for all $u \in V(G')$, and let $c''$ be a coloring of $G''$.
It is clear that $c'$ and $c''$ agree on the vertices in $V(G')\cap V(G'')=V_1\setminus \{x\}$.
Moreover, if $v \in (V_2 \setminus U) \cup \{x\}$ and $u \in U$ such that $uv \in E(G)$, then $c(v) \not \in L(u)$. 
Since $c'(v)=c(v)$ for every $v \in V(G')$, we deduce that $c'(v) \neq c''(u)$ for every $uv \in E(G)$ with
$u \in U$ and $v \in (V_2 \setminus U) \cup \{x\}$. 
Consequently, we found a coloring of $(G,L)$, a contradiction.
\end{proof}

\begin{claim}\label{clm:V1small}
It holds that $|V_1|\le 2$.
\end{claim}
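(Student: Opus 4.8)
The plan is to show $|V_1| \le 2$ by a direct argument contradicting the minimality of $(G,L)$ if $|V_1| \ge 3$. Recall that $V_1$ is the set of vertices with a list of size one, and $V_2$ the set of vertices with a list of size two; since lists are subsets of $\{1,2,3\}$ of size one, the vertices of $V_1$ carry at most three distinct colors. The key mechanism to exploit is Claim~\ref{clm:improper}: for any $x \in V$ and any $\alpha \in L(x)$, coloring $x$ with $\alpha$, updating exhaustively inside $(G|(V_2 \cup \{x\}), L)$, and then extending by the forced colors on $V_1$ always produces a monochromatic edge. I want to use this to derive a contradiction once $V_1$ is large.

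First I would dispose of the trivial structure: if two vertices of $V_1$ are adjacent and carry the same color, then $(G,L)$ is not colorable at those two vertices alone, so by minimality $G$ has exactly two vertices and we are done. Hence I may assume the coloring of $V_1$ by its forced colors is proper within $G|V_1$. Next, the natural target is to locate a vertex $x$ and a color $\alpha \in L(x)$ for which the monochromatic edge guaranteed by Claim~\ref{clm:improper} can be \emph{avoided} — which would be the desired contradiction. The idea is that if $|V_1| \ge 3$, then $V_1$ itself already uses all colors in a constrained way or contains a dominated/redundant vertex, and one can pick a convenient starting vertex in $V_1$ (taking $x \in V_1$ with $\alpha$ its unique color costs nothing, since then the "update exhaustively in $(G|(V_2\cup\{x\}),L)$" step is exactly the update with respect to $x$) so that the forced colors on the remaining $V_1$ vertices together with the propagated colors on $V_2$ constitute a proper coloring of all of $G$.

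Concretely, I expect the argument to run as follows. Suppose $v_1, v_2, v_3 \in V_1$ are distinct. Since the induced coloring on $V_1$ is proper, no monochromatic edge lies inside $V_1$. By Claim~\ref{clm:improper} applied with $x = v_1$ and $\alpha = c(v_1)$, any monochromatic edge $uv$ must have at least one endpoint in $V_2$. One then analyses where such an edge can be: both endpoints in $V_2$, or one endpoint in $V_2 \setminus \{\text{updated}\}$ and one in $V_1$. The heart of the matter is a counting/pigeonhole step on the three colors: with three vertices of $V_1$, after updating $V_2$ exhaustively with respect to any one of them, the other two still enforce enough constraints that one can reroute the propagation — essentially, one shows that the "bad" edge produced starting from $v_1$ would be resolved if we instead started from $v_2$ or $v_3$, and conversely, so no single choice can consistently fail unless $(G\setminus x, L)$ is already non-colorable for the chosen $x$, violating minimality. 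Formally this likely takes the shape: pick the $x \in V_1$ whose color is used least among $V_1$; then the forced colors on $V_1 \setminus \{x\}$ plus propagation give a coloring unless some $V_2$ vertex is squeezed between two conflicting $V_1$ neighbors, and that configuration is itself ruled out by applying minimality to $G \setminus x$.

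The main obstacle, and where I would spend the most care, is handling the interaction between the exhaustive-update region and the vertices of $V_1$ that are \emph{not} picked as the start: after updating with respect to $x$, some vertices of $V_2$ may become colored, and these colored vertices may clash with the forced colors on $V_1 \setminus \{x\}$ or with each other, and that clash is exactly the content of Claim~\ref{clm:improper}. So the real work is to show that, when $|V_1| \ge 3$, there is a choice of $x \in V_1$ for which every such potential clash can be absorbed — i.e., that the update process from $x$ never forces a color onto a $V_2$ vertex that is incompatible with the rest of the fixed coloring, because if it did, deleting a suitable one of the other two vertices of $V_1$ and re-running would contradict minimality. I anticipate this boils down to a short case check on the at-most-three colors available to $V_1$, combined with repeated appeals to the domination observation (a vertex $u$ with $L(u) \subseteq L(v)$ and $N(v) \subseteq N(u)$ cannot both exist in a minimal list-obstruction) to eliminate the configurations that do not immediately yield a contradiction.
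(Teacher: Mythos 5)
Your proposal leans on the right lemma (Claim~\ref{clm:improper}) and the right overall target (a contradiction with minimality when $|V_1|\ge 3$), but its central step would fail. You plan to choose $x\in V_1$ cleverly (least-used color, a pigeonhole on the three colors, the domination observation) so that the monochromatic edge guaranteed by Claim~\ref{clm:improper} is \emph{avoided} and the forced colors on $V_1$ together with the propagated colors on $V_2$ give a proper coloring of all of $G$. This is unattainable: Claim~\ref{clm:improper} holds for \emph{every} choice of $x$ and $\alpha$, so no choice of starting vertex makes the clash disappear, and since $(G,L)$ is an obstruction no assignment from the lists can properly color all of $G$. Likewise your ``formal shape'' --- that the bad configuration ``is ruled out by applying minimality to $G\setminus x$'' --- does not work: deleting $x$ removes the source of the propagation, so the forced colors, and hence the clash, need not survive in $G\setminus x$; minimality only tells you $G\setminus x$ is colorable, which is no contradiction.

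The argument the paper uses needs no special choice of $x$ and no counting: take any $x\in V_1$, give it its unique color and update exhaustively inside $G|(V_2\cup\{x\})$. The clash from Claim~\ref{clm:improper} cannot lie inside $V_2\cup\{x\}$ (else $G|(V_2\cup\{x\})$ is non-colorable and minimality would force $V(G)=V_2\cup\{x\}$, i.e.\ $|V_1|=1$), and cannot lie inside $V_1\setminus\{x\}$ (else $G|(V_1\setminus\{x\})$ is a non-colorable proper induced subgraph). Hence it joins some colored $u\in V_2\cup\{x\}$ to some $v\in V_1\setminus\{x\}$ with $L(v)=\{c(u)\}$; since all the colors involved are forced, $G|(V_2\cup\{x,v\})$ is not colorable, so minimality gives $V(G)=V_2\cup\{x,v\}$, i.e.\ $V_1=\{x,v\}$, contradicting $|V_1|\ge 3$. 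Your passing remark about ``deleting a suitable one of the other two vertices of $V_1$'' is the germ of exactly this step (delete $w\in V_1\setminus\{x,v\}$; the update is confined to $G|(V_2\cup\{x\})$ and the clash involves only one vertex of $V_1\setminus\{x\}$, so it persists in $G\setminus w$), but in your write-up it is left unjustified and subordinated to the unworkable ``choose $x$ so the clash is absorbed'' plan, so as it stands the proof has a genuine gap.
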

\begin{proof}
Suppose that $|V_1| \geq 3$, and let $x \in V_1$ and $\alpha \in L(x)$.
Let us give color $\alpha$ to $x$ and update exhaustively, but only considering vertices and edges in the graph $(G|(V_2\cup\{x\}),L)$.
We denote this partial coloring by $c$.

Since $G$ is minimal, there is no edge $uv$ with $u,v \in V_2 \cup \{x\}$ and $c(u) = c(v)$.
Since $(G|(V_1 \setminus \{x\}),L)$ is colorable by the
minimality of $G$, and since $|L(v)|=1$ for every 
$v \in V_1 \setminus \{x\}$, it follows that there is an edge $uv$ with
$u \in V_2 \cup \{x\}$ and $v \not \in V_2 \cup \{x\}$ such that 
$L(v)=\{c(u)\}$. It follows that  
$(G|(V_2 \cup \{x,v\}),L)$ is not colorable, and so by the
minimality of $G$, $V_1=\{x,v\}$, as required. 
This proves the first claim.
\end{proof}

Next we prove that, loosely speaking, $G$ is the union of at most two paths, starting at a common vertex, updating along each of which yields an
improper partial coloring.
Depending on the cardinality of $V_1$, we arrive at three different situations which are described by the following three claims.

\begin{claim}\label{clm:V1=0}
Assume that $|V_1|=0$, and pick $x \in V$ arbitrarily.
Let us say that $L(x)=\{1,2\}$.
For $\alpha=1,2$ there is a path $P^{\alpha}=v_1^{\alpha}$-\ldots-$v_{k_{\alpha}}^{\alpha}$, not necessarily induced, with the following properties.
\begin{enumerate}[(a)]
	\item If we give color $\alpha$ to $x$ and update along $P^{\alpha}$, then all vertices of $P^{\alpha}$ will be colored.
	\item Assume that $v^{\alpha}_i$ gets colored in color $\gamma_i$, $i=1,\ldots,k_{\alpha}$.
				Then there is an edge of the form $v^{\alpha}_iv^{\alpha}_j$ with $\gamma_i=\gamma_j$.
	\item $V = V(P^1) \cup V(P^2)$.
\end{enumerate}	
\end{claim}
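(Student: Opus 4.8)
The plan is to reduce the claim to a statement about exhaustive updating along a cleverly chosen path. Since $V_1 = \emptyset$, every vertex has a list of size exactly $2$. Fix $x$ with $L(x) = \{1,2\}$, fix $\alpha \in \{1,2\}$, give $x$ the color $\alpha$, and update exhaustively with respect to $x$ in the whole graph $(G,L)$. By Claim~\ref{clm:improper} (applied with $V_2 = V \setminus \{x\}$ and no $V_1$-vertices to worry about), the resulting partial coloring is improper: there is an edge $uv$ with $c(u) = c(v)$. The key observation is that every vertex that received a color during exhaustive updating did so because of a \emph{chain} of forced colorings emanating from $x$; I would make this precise by building a ``witness tree'' rooted at $x$, where each colored vertex $w \neq x$ has a parent that was colored at an earlier stage, is adjacent to $w$, and whose color forced $w$'s color. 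The set of colored vertices, together with these parent edges, forms a tree $T \subseteq G$ rooted at $x$.

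Next I would extract the path $P^{\alpha}$. Let $uv$ be an edge with $c(u) = c(v)$ as guaranteed above; both $u$ and $v$ are colored, hence lie in $T$. Consider the paths in $T$ from $x$ to $u$ and from $x$ to $v$, and let $z$ be their last common vertex (their meeting point). The concatenation of the $z$--$u$ branch (reversed), through $z$, up to $x$, and then down the $z$--$v$ branch is a path in $G$; actually I want to route it through $x$, so take $P^{\alpha}$ to be the $x$-to-$u$ branch of $T$ followed by $\ldots$ — more carefully, I would take the walk from $u$ up to $x$ along $T$ and then from $x$ down to $v$ along $T$, which is a path precisely because the only shared vertex of the two branches between $x$ and the meeting point is handled by choosing $z$ correctly; if the two branches share an initial segment $x$--$z$, I instead start the path at $z$ — but the claim wants the path to begin at a vertex where updating starts, so I would simply observe that $z$ was colored by the forced-coloring chain from $x$, hence I can still start updating from $x$: if $z \neq x$, prepend the $x$--$z$ branch and note that updating from $x$ colors $z$ with its color in $c$ and then proceeds along both branches. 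The cleanest formulation: let $P^{\alpha}$ be the unique path in the tree $T$ between $u$ and $v$; this is a path in $G$, and since all of its vertices are colored in $c$, updating from $x$ colors every vertex of $T$, in particular properties (a) holds once we argue updating \emph{along $P^{\alpha}$} reproduces the restriction of $c$; property (b) holds because $u,v$ are the endpoints of $P^{\alpha}$ (or interior vertices joined by the extra edge $uv$) with $c(u) = c(v)$. For property (c), I would argue that \emph{every} vertex of $G$ is colored by exhaustive updating from $x$: if some vertex $w$ never gets a color, then removing all uncolored vertices would leave a colorable graph whose coloring extends $c$ — but this contradicts either minimality of $(G,L)$ or the fact that $(G,L)$ is an obstruction, by the same argument as in Claim~\ref{clm:improper}; hence $V(T) = V$, and we then enlarge $P^{\alpha}$ (or rather take two branches) so that $V(P^1) \cup V(P^2) = V$. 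In fact, a single $P^{\alpha}$ need not cover $V$, but taking $P^1$ and $P^2$ corresponding to the two choices of $\alpha$, and arguing that between them all vertices appear, gives (c); if necessary one appends a spanning structure, but I expect the intended reading is that $P^{\alpha}$ can be chosen to be a Hamiltonian path of $G$, using that $T$ spans $G$ and any spanning tree has a path — no, a spanning tree need not be a path, so the honest statement is that we pick $P^{\alpha}$ to be a path in $T$ through $x$ realizing the monochromatic edge, and (c) is obtained by the pair $\{P^1, P^2\}$ jointly, which requires a short additional argument that the two trees $T_1, T_2$ (one per $\alpha$) cover $V$.

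The main obstacle I anticipate is property (c): making sure the two paths together cover all of $V(G)$. The witness tree for a fixed $\alpha$ spans $G$ (by the obstruction/minimality argument), but a path inside it generally does not span $G$, so I cannot get (c) from one path alone. The resolution I would pursue is structural: use minimality to show that $G$ has \emph{no} vertex outside $V(P^1) \cup V(P^2)$ by a deletion argument — if $w \notin V(P^1) \cup V(P^2)$, then $(G \setminus w, L)$ is colorable; take such a coloring and try to modify the colorings of $P^1$ and $P^2$ to cover $w$, deriving a coloring of $(G,L)$. Alternatively, and more in the spirit of the surrounding proof, I would iterate: after extracting $P^1$, consider the graph induced on the uncolored-or-problematic vertices and repeat, showing the process terminates with at most two paths because $|V_1| = 0$ forces a symmetric two-sided structure around $x$. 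I expect the paper handles this by choosing $P^{\alpha}$ to be a maximal path in $T$ rather than an arbitrary one, so that (c) follows from $T$ spanning $G$ plus the observation that the two maximal branches of $T$ at $x$ (for the two colors, which produce related trees) together use every vertex — this last point being where the real care is needed.
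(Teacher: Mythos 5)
Your setup (the witness tree rooted at $x$, the monochromatic edge guaranteed by Claim~\ref{clm:improper}, and the two tree-branches to its endpoints) matches the paper's, but the proposal breaks down exactly at the step you flag as needing care, and the fix is not found. Your candidate path, the unique tree-path between the two endpoints of the monochromatic edge, does not start at $x$, so ``give color $\alpha$ to $x$ and update along $P^{\alpha}$'' is not even defined for it; prepending the branch from $x$ to the meeting point does not help, because the result is a $Y$-shaped tree, not a path. The paper's key idea, which is missing from your proposal, is to traverse one branch forwards and the other \emph{backwards}: writing the two branches as $u_1$-\ldots-$u_k$-$v_1$-\ldots-$v_r$ and $u_1$-\ldots-$u_k$-$w_1$-\ldots-$w_s$ (sharing only $u_1,\ldots,u_k$, with $v_r$ and $w_s$ adjacent and receiving the same color $c$ in the updating), one takes $P^{\alpha}=u_1$-\ldots-$u_k$-$v_1$-\ldots-$v_r$-$w_s$-$w_{s-1}$-\ldots-$w_1$. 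Updating along this path colors the first branch as before, then forces $w_s$ to its \emph{other} color $c'(w_s)$, which (using $c'(w_i)=c(w_{i-1})$, with $w_0=u_k$) forces each $w_i$ to $c'(w_i)$ going back down the second branch, until $w_1$ receives $c'(w_1)=c(u_k)$, producing the monochromatic edge $u_kw_1$ inside the path. This single path starting at $x$ is what delivers (a) and (b); without it your argument has no valid $P^{\alpha}$.

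For (c) your proposal is also incomplete. It is neither true nor needed that the witness tree spans $G$ (exhaustive updating from $x$ with a single color may leave vertices uncolored), and covering $V$ is not obtained from any spanning or maximality property of the trees. The paper's argument is short: once $P^1$ and $P^2$ are built as above, giving $x$ either color of $L(x)$ and updating along the corresponding path forces all its colors and ends in a monochromatic edge, so $(G|(V(P^1)\cup V(P^2)),L)$ is already a list-obstruction; minimality of $(G,L)$ then forces $V=V(P^1)\cup V(P^2)$. Your deletion-of-$w$ sketch points toward minimality but never identifies this induced obstruction, which is the one-line argument that closes (c).
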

\begin{proof}
We give color $\alpha$ to $x$ and update exhaustively from $x$.
According to Claim~\ref{clm:improper}, after some round of updating an edge appears whose end vertices receive the same color.
We then stop the updating procedure.
During the whole updating procedure we record an auxiliary digraph $D=(W,A)$ as follows.
Initially, $W=\{x\}$ and $A= \emptyset$.
Whenever we update a vertex $u$ from a vertex $v$, we add the vertex $u$ to $W$ and the edge $(v,u)$ to $A$.
This gives a directed tree whose root is $x$.

We can find directed paths $R$ and $S$ in $T$ both starting in $x$ and ending in vertices $y$ and $z$, say, such that $y$ and $z$ are adjacent in $G$ and they receive the same color during the updating procedure.
We may assume that $R=u_1$-\ldots-$u_k$-$v_1$-\ldots-$v_r$ and $S=u_1$-\ldots-$u_k$-$w_1$-\ldots-$w_s$, where $R$ and $S$ share only the vertices $u_1,\ldots,u_k$.
For each vertex $v \in V(R) \cup V(S)$, let $c(v)$ be the color received by $v$ in the updating procedure.
Moreover, let $c'(v)$ be the unique color in $L(v) \setminus \{c(v)\}$. Observe that, setting $w_0=v_0=u_k$, we have that
$c'(w_i)=c(w_{i-1})$ for every $i \in \{1, \ldots, s\}$,
and $c'(v_i)=c(v_{i-1})$ for every $i \in \{1, \ldots, k\}$.

Consider the following, different updating with respect to $x$.
We again give color $\alpha$ to $x$, and then update along $R$.
Now we update $w_s$ from $v_r$, thus giving it color $c'(w_s)$.
This, in turn, means we can update $w_{s-1}$ from $w_s$, giving it color $c'(w_{s-1})$, and so on.
Finally, when we update $w_1$ and it receives color $c'(w_1)$, an edge appears whose end vertices are colored in the same color.
Indeed, $u_kw_1$ is such an edge since $c(u_k)=c'(w_1)$.
Summing up, the path 
$$ \mbox{$P^\alpha=u_1$-\ldots-$u_k$-$v_1$-\ldots-$v_r$-$w_s$-$w_{s-1}$-\ldots-$w_1$} $$
starts in $x$ and, when we give $x$ the color $\alpha$ and update along $P^{\alpha}$, we obtain an improper partial coloring.
As $\alpha\in \{1,2\}$ was arbitrary, the assertions (a) and (b) follow.

To see (c), just note that the graph $G|(V(P_1) \cup V(P_2))$ is an obstruction: giving either color of $L(x)$ to $x$ and updating exhaustively yields a monochromatic edge.
By the minimality of $G$, $G=G|(V(P_1) \cup V(P_2))$ and so (c) holds.
\end{proof}

\begin{claim}\label{clm:V1=1}
Assume that $|V_1|=1$, say $V_1=\{x\}$ with $L(x)=\{\alpha\}$.
Then the following holds:
	\begin{enumerate}[(a)]
		\item there is a Hamiltonian path $P=v_1$-\ldots-$v_k$ of $G$ with $x=v_1$;
		\item updating from $x=v_1$ along $P$ assigns a color $\gamma_i$ to $v_i$, $i=1,\ldots,k$; and
		\item there is an edge of the form $v_iv_j$ with $\gamma_i=\gamma_j$.
	\end{enumerate}
\end{claim}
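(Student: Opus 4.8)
The plan is to reuse the argument from the proof of Claim~\ref{clm:V1=0}, the new feature being that the distinguished vertex $x$ now has only one available colour, so a single path (rather than two) suffices and it turns out to be Hamiltonian. First I would give $x$ the colour $\alpha$ and update exhaustively in $G$; note that $V=V_2\cup\{x\}$ since $V_1=\{x\}$, so this is updating in all of $G$. Exactly as in the proof of Claim~\ref{clm:V1=0}, I record an auxiliary rooted tree $T$ with root $x$, adding an arc $(v,u)$ whenever $u$ is updated from $v$. Since $V_1=\{x\}$ is already coloured, Claim~\ref{clm:improper} (applied to this $x$ and $\alpha$) yields that the resulting partial colouring $c$ has a monochromatic edge $yz$, and both $y,z$ belong to $T$ (these are the only coloured vertices). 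Following the two root-paths of $T$ to $y$ and to $z$ and peeling off their common prefix $u_1\text{-}\ldots\text{-}u_k$ (with $u_1=x$) produces paths $R=u_1\text{-}\ldots\text{-}u_k\text{-}v_1\text{-}\ldots\text{-}v_r$ and $S=u_1\text{-}\ldots\text{-}u_k\text{-}w_1\text{-}\ldots\text{-}w_s$ meeting only in $u_1,\ldots,u_k$ (degenerate configurations, such as $y$ being an ancestor of $z$, are dealt with as in Claim~\ref{clm:V1=0}).

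Next, following Claim~\ref{clm:V1=0} verbatim, I would splice $R$ with the reverse of the $w$-part of $S$ to obtain
\[
P \;=\; u_1\text{-}\ldots\text{-}u_k\text{-}v_1\text{-}\ldots\text{-}v_r\text{-}w_s\text{-}w_{s-1}\text{-}\ldots\text{-}w_1 ,
\]
which is a path of $G$ because consecutive tree arcs and the edge $yz$ are edges of $G$ and the three blocks of vertices are pairwise disjoint. Re-running the updating — colour $x=u_1$ with $\alpha$ and update along $P$ — reproduces $c$ on the $u$'s and $v$'s; updating $w_s$ from $v_r$ deletes $c(v_r)=c(w_s)$ and hence colours $w_s$ with its other colour $c'(w_s)$; and since $c'(w_i)=c(w_{i-1})$ for every $i$ (because originally $w_i$ was updated from $w_{i-1}$), the updating now runs back along the $w$'s, colouring $w_i$ with $c'(w_i)$, until $w_1$ receives $c'(w_1)=c(u_k)$, making the edge $u_k w_1$ monochromatic. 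Relabelling $P$ as $v_1\text{-}\ldots\text{-}v_k$ with $v_1=x$, every vertex of $P$ has been coloured and there is a monochromatic edge, which gives (b) and (c).

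For (a) I would argue that $(G|V(P),L)$ is itself a list-obstruction, whence $V(P)=V(G)$ by the minimality of $(G,L)$. This is exactly where the hypothesis $|V_1|=1$ enters: in any $L$-colouring $\phi$ of $(G|V(P),L)$ we must have $\phi(x)=\alpha$ since $L(x)=\{\alpha\}$; then, using that every $v_i$ with $i\ge 2$ has $|L(v_i)|=2$, an induction along the path edges $v_{i-1}v_i$ forces $\phi(v_i)=c(v_i)$ for all $i$, and the monochromatic edge then shows $\phi$ cannot be proper. (Contrast Claim~\ref{clm:V1=0}, where $x$ has two colours, so neither path alone yields an obstruction and one only gets $V=V(P^1)\cup V(P^2)$.) The step I expect to require the most care is checking that the spliced walk $P$ is genuinely a path and that the re-run of the updating does exactly what is claimed in all the small or degenerate cases — $k=1$, $r=0$, $s=1$, or the monochromatic edge incident with $x$ — but none of these introduces anything genuinely new beyond the corresponding checks in the proof of Claim~\ref{clm:V1=0}.
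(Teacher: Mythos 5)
Your proposal follows the paper's own route: it reuses the splicing construction from Claim~\ref{clm:V1=0} (via Claim~\ref{clm:improper} and the auxiliary tree) to obtain a single path starting at $x$ along which updating yields an improper coloring, and then invokes minimality of $(G,L)$ to conclude the path is Hamiltonian. The only difference is that you spell out the forced-coloring induction showing $(G|V(P),L)$ is itself an obstruction, a step the paper leaves implicit, so this is correct and essentially the same proof.
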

\begin{proof}
We assign color $\alpha$ to $x$ and update exhaustively from $x$.
Let $c$ be the obtained partial coloring.
According to Claim~\ref{clm:improper}, there is an edge $uv$ of $G$ with $c(u)=c(v)$.
Since $(G,L)$ is a minimal obstruction, every vertex of $G$ received a color in the updating process: otherwise, we could simply remove such a vertex and still have an obstruction.

Repeating the argument from the proof of Claim~\ref{clm:V1=0}, we obtain a path $P$ that starts in $x$ and, when we give $x$ color $\alpha$ and update along $P$, we obtain an improper partial coloring.
This proves (b) and (c).
Due to the minimality of $(G,L)$, $P$ is a Hamiltonian path, which proves (a).
\end{proof}

\begin{claim}\label{clm:V1=2}
Assume $|V_1|=2$, say $V_1=\{x,y\}$ with $L(x)=\{\alpha\}$ and $L(y)=\{\beta\}$.
Then the following holds:
\begin{enumerate}[(a)]
	\item there is a Hamiltonian path $P=v_1$-\ldots-$v_k$ of $G$ with $x=v_1$ and $y=v_k$; and
	\item updating from $v_1$ along $P$ assigns the color $\beta$ to $v_{k-1}$.
\end{enumerate}
\end{claim}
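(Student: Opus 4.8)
The plan is to mimic the structure of the proofs of Claims~\ref{clm:V1=0} and~\ref{clm:V1=1}, but now tracking two singleton-list vertices simultaneously. First I would assign color $\alpha$ to $x$ and color $\beta$ to $y$, and update exhaustively from $\{x,y\}$ inside the graph $(G|(V_2\cup\{x,y\}),L) = (G,L)$ itself (since $V_1=\{x,y\}$, we have $V=V_2\cup\{x,y\}$). By Claim~\ref{clm:improper} applied with the starting vertex $x$ (and noting that $y$, having a singleton list, is colored with its unique color either way), this exhaustive update must produce a monochromatic edge $uv$ with $c(u)=c(v)$, and we stop there. As in Claim~\ref{clm:V1=0}, I would record the auxiliary directed forest $D$: it now has \emph{two} roots, $x$ and $y$, since updates can propagate outward from either singleton. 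Every non-root vertex that got colored has a unique in-arc, so $D$ is a forest with (at most) two components, rooted at $x$ and at $y$ respectively.

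Next I would locate the monochromatic edge $uv$ inside $D$ and read off the updating history. There are two cases. If $u$ and $v$ lie in the same tree of $D$ — say the tree rooted at $x$ — then exactly the argument of Claim~\ref{clm:V1=0} produces a path $Q$ starting at $x$, along which updating from $x$ (in color $\alpha$) forces a monochromatic edge, and then by minimality $G = G|V(Q) = G|V(P^\alpha)$ would be an obstruction \emph{not using the singleton list of $y$}; but then $y$ is dominated (or simply: $(G\setminus y, L)$ is still not colorable since the coloring failure didn't involve $y$), contradicting minimality of $(G,L)$. So this case cannot occur, and $u$ lies in the tree rooted at $x$ while $v$ lies in the tree rooted at $y$ (or vice versa). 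Let $R$ be the directed path in $D$ from $x$ to $u$ and $S$ the directed path from $y$ to $v$. Now I would perform the same "reverse the update along one branch" trick: give $x$ color $\alpha$, update along $R$ to color $u$; this forces color $c(u)$ on $u$; since $uv$ is monochromatic, $c(v)=c(u)$, so $v$ is forbidden color $c(v)$ and hence forced to its other color $c'(v)$; this propagates \emph{backwards} along $S$ from $v$ toward $y$, forcing each $w_i$ to take color $c'(w_i)$, and finally forcing $c'(w_1)$ on the neighbor $w_1$ of $y$, which equals the forbidden color $c(y)=\beta$ — a conflict at the edge $yw_1$. Concatenating, the path
$$P \;=\; x\text{-}(R)\text{-}u\text{-}v\text{-}(\overline{S})\text{-}w_1\text{-}y$$
is a path from $x=v_1$ to $y=v_k$ along which updating from $x$ in color $\alpha$ forces, at the last step, color $\beta$ to be placed on the penultimate vertex $v_{k-1}=w_1$, contradicting the singleton $\{\beta\}$ only via the edge to $y$. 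This gives (b), and since $G|V(P)$ is then already an obstruction, minimality of $(G,L)$ forces $V(P)=V(G)$, i.e. $P$ is Hamiltonian, giving (a).

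The main obstacle I anticipate is handling the "wrong" case cleanly — namely ruling out that the monochromatic edge sits entirely within one of the two trees, or within neither (e.g. between two leaves of the same tree, which is really the same issue). The key point is that if the coloring failure can be reached by propagating from $x$ alone, then the singleton vertex $y$ is not used in the obstruction and can be deleted, contradicting minimality; symmetrically for propagating from $y$ alone. One must also be slightly careful that "reversing" the update along $S$ is legitimate: since $|L(w_i)|=2$ for every internal vertex $w_i$ of $S$ (they are in $V_2$), forbidding one color on $w_i$ leaves exactly one, so the backward propagation is forced at every step, exactly as in Claims~\ref{clm:V1=0} and~\ref{clm:V1=1}. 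Finally, one should note the degenerate possibility that $u=x$ or $v=y$ (the monochromatic edge is incident to a root), which only shortens the relevant path and causes no trouble.
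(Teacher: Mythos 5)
Your argument is correct, but it is organized differently from the paper's proof. The paper does not propagate from both $x$ and $y$: it colors $x$ with $\alpha$ and updates exhaustively only inside $G\setminus y$; minimality of $(G,L)$ guarantees that this partial coloring is proper (otherwise the forced updates would exhibit a non-colorable induced subgraph avoiding $y$), so the monochromatic edge supplied by Claim~\ref{clm:improper} must be incident to $y$, i.e.\ some neighbor $u$ of $y$ is forced to color $\beta$. The desired path is then simply the update-tree path from $x$ to $u$ followed by the edge $uy$ --- no case analysis and no reversal of a branch is needed, which is what keeps this case shorter than Claims~\ref{clm:V1=0} and~\ref{clm:V1=1}. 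Your two-rooted version also works: the case of a conflict inside one tree is correctly eliminated by minimality (the forced path avoids the other singleton vertex), and in the cross case the backward propagation along $S$ is legitimate for the reason you give, producing a path on which $v_{k-1}$ is forced to $\beta$; minimality then yields Hamiltonicity, and your degenerate cases ($u=x$ or $v=y$) are harmless. The one point to tighten is the appeal to Claim~\ref{clm:improper}: as stated it concerns updating from the single vertex $x$ inside $G|(V_2\cup\{x\})$, with $y$ passive, whereas you update from $\{x,y\}$ simultaneously, and the two processes may color vertices differently, so the claim cannot be quoted verbatim. You need its (easy) two-root analogue: if the simultaneous exhaustive update were conflict-free, the forced partial coloring on the colored set could be glued to an $L$-coloring of the uncolored set (which exists by minimality, that set being a proper subset of $V(G)$), since exhaustiveness guarantees no colored vertex carries a color lying in the list of an uncolored neighbor. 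With that adjustment your proof is complete; the trade-off is that your route re-uses the branching/reversal machinery of Claim~\ref{clm:V1=0}, while the paper's choice of keeping $y$ out of the propagation makes the conflict land on $y$ automatically and buys a noticeably simpler argument.
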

\begin{proof}
We color $x$ with color $\alpha$ and update exhaustively from $x$, but only considering vertices and edges of the graph $G\setminus y$.
Let $c$ be the obtained partial coloring.
By minimality, $c$ is proper.
According to Claim~\ref{clm:improper}, there is a neighbor $u$ of $y$ in $G$ with $c(u)=\beta$.

Like in the proof of Claim~\ref{clm:V1=0} and Claim~\ref{clm:V1=1}, we see that there is a path $P$ from $x$ to $y$ whose last edge is $uy$ such that giving color $\alpha$ to $x$ and then updating along $P$ implies that $u$ is colored with color $\beta$, which implies (b).
Due to the minimality of $(G,L)$, $P$ is a Hamiltonian path , and thus (a) holds.
\end{proof}

We can now prove our main lemma.

\begin{proof}[Proof of Lemma~\ref{lem:propagationpath}]
Recall from Claim~\ref{clm:V1small} that $|V_1|\le 2$.

\subparagraph{Case $|V_1|=0$.}
For this case Claim~\ref{clm:V1=0} applies and we obtain $x$, $P^1$ and $P^2$ as in the statement of the claim.
We may assume that, among all possible choices of $x$, $P^1$ and $P^2$, the value $\max\{|V(P^1)|,|V(P^2)|\}$ is minimum and, subject to this, $\min\{|V(P^1)|,|V(P^2)|\}$ is minimum.

Let us say that $P^1=v_1$-$v_2$-$\ldots $-$v_s$, where $v_1=x$.
Consider $v_1$ to be colored in color 1, and update along $P^1$, but only up to $v_{s-1}$.
Due to the choice of $P^1$ and $P^2$ being of minimum length, the coloring so far is proper.
Now when we update from $v_{s-1}$ to $v_s$, two adjacent vertices receive the same color.
Let the partial coloring obtained so far be denoted $c$.
Let $X$ be the set of neighbors $w$ of $v_s$ in $V(P^1)$ with $c(w)=c(v_s)$, and let $r$ be minimum such that $v_r \in X$.

We claim that $s-r\le \lambda$.
To see this, let $c'(v_j)$ be the unique color in $L(v_j)\setminus \{c(v_j)\}$, for all $j=1,\ldots,s$.
We claim that the following assertions hold.
\begin{enumerate}[(a)]
	\item $c(v_j)=c'(v_{j+1})$ for all $j=r,\ldots,s-1$.
	\item For every edge $v_iv_j$ with $r \le i,j\le s-1$ it holds that $c(v_i) \neq c(v_j)$.
	\item For every edge $v_iv_j$ with $r \le i,j\le s$ and $j-i \ge 2$ it holds that $c(v_i) \neq c'(v_j)$.
	\item For every edge $v_iv_j$ with $r+2 \le i,j\le s$ it holds that $c'(v_i) \neq c'(v_j)$.
\end{enumerate}
Assertion (a) follows from the fact that $P$ obeys the assertions of Claim~\ref{clm:V1=0}.
For (b), note that the choice of $P_1$ to be of minimum length implies that until we updated $v_s$, the partial coloring is proper.

To see (c), suppose there is an edge $v_iv_j$ with $r \le i,j\le s$ and $j-i \ge 2$ such that $c(v_i) = c'(v_j)$.
Then the path $P^1$ can be shortened to the path $v_1$-\ldots-$v_i$-$v_j$-\ldots-$v_s$, which is a contradiction.

Now we turn to (d), and consider the following coloring.
We color $P^1$ as before up to $v_r$.
Now we update from $v_r$ to $v_s$, giving color $c'(v_s)$ to $v_s$.
Then we color $v_{s-1}$ with color $c'(v_{s-1})$, then $v_{s-2}$ with color $c'(v_{s-2})$, and so on, until we reach $v_{r+1}$.
But $c'(v_{r+1})=c(v_r)$ due to (a), which means that the path $Q^1=v_1$-$v_2$-$\ldots $-$v_r$-$v_s$-$v_{s-1}$-$v_{s-2}$-$\ldots $-$v_{r+1}$ is a choice equivalent to $P^1$.
In particular, due to the choice of $P^1$ and $P^2$, the constructed coloring of $Q^1$ is proper if we leave out $v_{r+1}$.
Hence, there is no edge $v_iv_j$ with $r+2 \le i,j\le s$ such that $c'(v_i) = c'(v_j)$.
This yields (d).

From (a)-(d) it follows that every edge $v_iv_j$ with $r+2 \le i < j \le s$ and $i \le j-2$ is such that 
\begin{equation}\label{eqn:conditions2}
\mbox{$S(v_i)=\alpha\beta$ and $S(v_j)=\beta\gamma$,} 
\end{equation}
where $\{1,2,3\} = \{\alpha,\beta,\gamma\}$.
Consequently, the path $v_r$-\ldots-$v_{s-1}$ is a propagation path.
By assumption, $|\{v_r,\ldots,v_{s-1}\}| \le \lambda$ and so $s-r \le \lambda$.

A symmetric consideration holds for $P^2$.
Let us now assume that $|V(P^1)| \ge |V(P^2)|$.
It remains to show that $r$ is bounded by some constant.
To this end, recall that $\lambda \ge 20$.

Suppose that there is an edge $v_iv_j$ with $3 \le i \le j\le r$ such that $c'(v_i) = c'(v_j)$.
We then put $x'=v_j$, $Q^1=v_j$-\ldots-$v_s$, and $Q^2=v_j$-\ldots-$v_i$.
But this is a contradiction to the choice of $x$, $P^1$, and $P^2$, as $\max\{|V(Q^1)|,|V(Q^2)|\}<\max\{|V(P^1)|,|V(P^2)|\}$.
In addition to the assertion we just proved, which corresponds to assertion (d) above, the assumptions (a)-(c) from above also hold here, where we replace $r$ by 1 and $s$ by $r$.
Hence, using the same argumentation as above, we see that $r\le \lambda+1$.
Summing up, we have $|V| \le |V(P^1) \cup V(P^2)| \le 2 |V(P^1)| \le 4\lambda+2$, as desired.

\subparagraph{Case $|V_1|=1$.}
Now Claim~\ref{clm:V1=1} applies and we obtain the promised path, say $P=v_1$-\ldots-$v_s$, with $|L(v_1)|=1$.
Let us say $L(v_1)=\{1\}$.
Consider $v_1$ to be colored in color 1, and update along $P$, but only up to $v_{s-1}$.
Due to the choice of $P$, the coloring so far is proper.
Now when we update from $v_{s-1}$ to $v_s$, two adjacent vertices receive the same color.
Let the partial coloring obtained so far be denoted $c$.
Let $X$ be the set of neighbors $w$ of $v_s$ on $P$ with $c(w)=c(v_s)$, and let $r$ be minimum such that $v_r \in X$.
Moreover, let $c'(v_j)$ be the unique color in $L(v_j)\setminus \{c(v_j)\}$, for all $j=2,\ldots,s$.
Just like in the case $|V_1|=0$, we obtain the assertions (a)-(d) from above and this implies $s-r\le \lambda$.

It remains to show that $r\le \lambda+1$.
To see this, suppose that there is an edge $v_iv_j$ with $2 \le i \le j\le r$ such that $c'(v_i) = c'(v_j)$.
We then put $P^1=v_j$-\ldots-$v_s$ and $P^2=v_j$-\ldots-$v_i$.
Now, if we give color $c(v_j)$ to $v_j$ and update along $P^1$ we obtain an improper coloring.
Moreover, if we give color $c'(v_j)$ to $v_j$ and update along $P^2$ we also obtain an improper coloring.
This means that the pair $(G|(V(P^1)\cup V(P^2)),L)$ is not colorable, in contradiction to the minimality of $(G,L)$.

The assertion we just proved corresponds to assertion (d) above, and the assumptions (a)-(c) also hold here, where we replace $r$ by 1 and $s$ by $r$.
Hence, we know $r\le \lambda+1$ and obtain $|V| = |V(P)| \le 2 \lambda+1$.

\subparagraph{Case $|V_1|=2$.}
Claim~\ref{clm:V1=2} applies and we obtain the promised path, say $P=v_1$-\ldots-$v_s$, with $|L(v_1)|=|L(v_s)|=1$.
Let us say $L(v_1)=\{\alpha\}$ and $L(v_s)=\{\beta\}$.
Consider $v_1$ to be colored in color $\alpha$, and update along $P$, but only up to $v_{s-1}$.
Due to the choice of $P$, the partial coloring so far is proper.
Let the partial coloring obtained so far be denoted $c$, and put $c(v_s)=\beta$.
We now have $c(v_{s-1})=c(v_s)$, and this is the unique pair of adjacent vertices of $G$ that receive the same color.

For each $j=2,\ldots,s-1$, we denote by $c'(v_j)$ the unique color in $L(v_j)\setminus \{c(v_j)\}$.
We will show  that $s \leq \lambda +1$.
Just like in the cases above the following assertions apply.
\begin{enumerate}[(a)]
	\item $c(v_j)=c'(v_{j+1})$ for all $j=1,\ldots,s-2$.
	\item For every edge $v_iv_j$ with $1 \le i,j\le s-1$ it holds that $c(v_i) \neq c(v_j)$.
	\item For every edge $v_iv_j$ with $1 \le i,j\le s-1$ and $j-i \ge 2$ it holds that $c(v_i) \neq c'(v_j)$.
	\item For every edge $v_iv_j$ with $3 \le i,j\le s-1$ it holds that $c'(v_i) \neq c'(v_j)$.
\end{enumerate}
Let $r'=1$ and $s'=s-1$.
As above we see that $s'-r'\le \lambda-1$.
Hence, $s \le \lambda + 1$.
From the fact that $P$ is a Hamiltonian path in $G$ we obtain the desired bound $|V| = |V(P)| \le \lambda+1$.
This completes the proof.
\end{proof}

\section{$P_6$-free minimal list-obstructions}\label{sec:sufficiency-P6}

The aim of this section is to prove that there are only finitely many $P_6$-free minimal list-obstructions.
In Section~\ref{sec:size2} we prove the following lemma which says that there are only finitely many $P_6$-free minimal list-obstructions with lists of size at most two.

\begin{lemma}\label{lem:size2}
Let $(G,L)$ be a $P_6$-free minimal list-obstruction for which $|L(v)|\le 2$ and $L(v) \subseteq \{1,2,3\}$ holds for all $v \in V(G)$.
Then $|V(G)|\le 100$.
\end{lemma}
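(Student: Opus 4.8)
The plan is to combine Lemma~\ref{lem:propagationpath} with a direct structural analysis of $P_6$-free propagation paths. By Lemma~\ref{lem:propagationpath}, it suffices to show that every $P_6$-free propagation path $P=v_1$-$\dots$-$v_k$ arising from a minimal list-obstruction $(G,L)$ with lists of size at most two has at most some absolute constant $\lambda$ vertices; then $|V(G)|\le 4\lambda+4$, and choosing the bookkeeping carefully gives $|V(G)|\le 100$. So the whole work is in bounding the length of a propagation path.

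The key observation I would use is that $P$, being $P_6$-free, cannot contain six vertices inducing a path; since $P$ itself is a (not necessarily induced) path on its vertex set, for every window $v_i$-$v_{i+1}$-$\dots$-$v_{i+5}$ of six consecutive vertices there must be a chord. I would analyze where such chords can be, using the shape constraint~\eqref{eqn:conditions0}: any chord $v_iv_j$ with $i\le j-2$ and $3\le i<j$ forces $S(v_i)=\alpha\beta$ and $S(v_j)=\beta\gamma$ with $\{1,2,3\}=\{\alpha,\beta,\gamma\}$. Since there are only $3\cdot2=6$ possible shapes, and since along the path the colors propagate deterministically, I expect that the pattern of shapes $S(v_3),S(v_4),\dots$ is highly structured — essentially following the update rule $c'(v_{i+1})=c(v_i)$, so consecutive shapes are of the form $\beta\gamma$ then $\gamma\delta$ then $\delta\beta$, cycling through a period-three pattern determined by whether the path "turns" at each step. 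The $P_6$-freeness then means chords must appear frequently, but the shape restriction on chords severely limits which pairs $(v_i,v_j)$ can be adjacent, and I would argue this forces the color-turning pattern to stabilize, so that after a bounded prefix the path lives in a structure (roughly, a blow-up of a short pattern) that cannot be long without either creating an induced $P_6$ or contradicting minimality of the obstruction.

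Concretely, I would: (i) set up the deterministic update and record the sequence of shapes along $P$, noting that a chord $v_iv_j$ exists only between vertices of shapes $\alpha\beta$ and $\beta\gamma$; (ii) partition the index set according to shape; since chords only go between two of the (at most three relevant) shape classes in a fixed direction, the "no induced $P_6$" condition — every six consecutive vertices have a chord — restricts how many consecutive vertices can lie outside those two classes, and more strongly restricts long stretches with the same shape; (iii) deduce that the multiset of shapes appearing among $v_3,\dots,v_{k}$ is confined to at most a couple of classes and that each class contributes only boundedly many vertices, because too many vertices of the same shape adjacent to a common vertex would create a dominated vertex or a non-minimal sub-obstruction; (iv) handle the short prefix $v_1,v_2$ and the final few vertices separately. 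Summing the bounds from the classes gives $\lambda$ an explicit constant, and then $4\lambda+4\le 100$ after optimizing constants (this is exactly the kind of "far from sharp" bound the introduction warns about, so I would not worry about tightness, only about getting below $100$).

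The main obstacle I anticipate is step (ii)–(iii): translating "$P_6$-free" into a usable local-density statement about the propagation path and then showing that a shape class cannot be represented too many times. The danger is that chords could be arranged so that long paths remain $P_6$-free (e.g., via many short chords), so I will need to use minimality of $(G,L)$ in an essential way — a long monochromatic-shape segment all seeing some fixed set of update-vertices should produce two comparable vertices under the domination relation defined in Section~\ref{sec:prelims}, contradicting minimality. Getting the quantitative bookkeeping right so that the final constant is at most $100$ rather than merely finite is the second, more tedious, difficulty; as the authors remark, they fall back on a computer-aided verification for exactly this lemma, which suggests the human case analysis is intricate enough that I would structure it as a finite check over the at most six shape patterns and the possible chord configurations in each length-$6$ window.
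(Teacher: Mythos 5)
Your reduction is exactly the paper's: invoke Lemma~\ref{lem:propagationpath}, so that everything hinges on showing every $P_6$-free propagation path has at most $\lambda$ vertices with $4\lambda+4\le 100$, i.e.\ $\lambda\le 24$. But that bound is precisely the part you do not prove. Steps (ii)--(iv) are a plan with acknowledged obstacles, not an argument: the shape sequence along a propagation path is \emph{not} forced into a stabilizing period-three pattern (at every step there are two choices of ``turn''), the claim that many vertices of a common shape seeing a common vertex yields a dominated pair does not follow from the definition of domination (you would need $N(v)\subseteq N(u)$ \emph{and} $L(u)\subseteq L(v)$, neither of which is given by equal shapes plus one common neighbor), and nothing in the outline controls the constant well enough to land at $\lambda\le 24$ rather than merely ``finite''. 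Indeed the authors' own computer-free argument for this step is entirely different in spirit --- extract a large complete bipartite induced subgraph from $G|V(P)$ (using $K_4$-freeness and the result cited as \cite{GSS}), apply Ramsey's theorem to make shapes and local configurations uniform, and then exhibit an induced $P_6$ --- and they state explicitly that it yields a significantly worse bound than $100$, which is evidence that the kind of hand bookkeeping you propose will not reach the stated constant.

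What the paper actually does for this lemma is an exhaustive computer search: it generates all $P_6$-free propagation paths with lists of size two satisfying condition~\eqref{eqn:conditions0}, pruning any branch that creates an induced $P_6$ or violates the shape condition, and finds that no such path on $25$ vertices exists (Table~1); hence $\lambda\le 24$ and $|V(G)|\le 4\cdot 24+4=100$. So your proposal shares the paper's framework but has a genuine gap at its core: the finite case analysis over chord configurations and shape patterns that you defer to ``a finite check'' is exactly the content of the lemma, and as written neither your structural claims nor your quantitative bookkeeping establish it. To repair the proposal you would either have to carry out that case analysis completely (the authors describe their own hand version as tedious and weaker), or accept a computer verification as the paper does.
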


Our proof of this lemma is computer-aided.
We also have a computer-free proof, but it is tedious and complicated, and 
gives a significantly worse bound on the size of the obstructions, so we will only sketch the idea of the computer-free proof.

In Section~\ref{sec:generalcase} we solve the general case, where each list may have up to three entries, making extensive use of Lemma~\ref{lem:size2}.
We prove the following lemma.

\begin{lemma}\label{lem:3downto2}
There exists an integer $C$ such that the following holds. Let $G$ be a $P_6$-free graph, and let $L$ be a list system such that $L(v) \subseteq \{1,2,3\}$
for every $v \in V(G)$.
Suppose that $(G,L)$ is a minimal list obstruction. Then $|V(G)| \leq C$. Consequently,
there are only finitely many $P_6$-free minimal list-obstructions.
\end{lemma}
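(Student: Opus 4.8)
The plan is to reduce the general case to Lemma~\ref{lem:size2} by showing that a $P_6$-free minimal list-obstruction with some lists of size three cannot be too much larger than one with all lists of size at most two. The starting point is a structure theorem for $P_6$-free graphs: every connected $P_6$-free graph has a dominating subgraph that is either a clique or an induced $C_6$ (this is the statement I would invoke; it is the $t=6$ case of the structure theorem alluded to in the introduction). So first I would pass to a connected component of $G$ that is itself a minimal list-obstruction (if $(G,L)$ is not colorable then some component isn't, and by minimality $G$ is connected anyway once we delete non-critical vertices), and fix a dominating clique or dominating $C_6$, call it $D$.

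The key idea is to exploit domination to kill lists of size three. If $D$ is a clique, then since $L$ has order $3$ and $D$ induces a clique, $|V(D)| \le 3$; there are only boundedly many proper colorings of $(G|D, L)$ (at most $3! = 6$), and for each such partial coloring $c$ of $D$ we update the lists of all vertices of $G \setminus D$ with respect to $D$. Because $D$ is dominating, every vertex of $G \setminus D$ has a neighbor in $D$, hence loses at least one color, so the updated list system $L_c$ has order $3$ but every vertex outside $D$ has $|L_c(v)| \le 2$. If $D$ is an induced $C_6$, a similar but slightly more delicate argument works: $C_6$ is $3$-list-colorable-obstruction-free in a controlled way, there are boundedly many proper $L$-colorings of $(C_6,L)$, and again for each one every vertex outside $D$ loses a color upon updating and ends up with a list of size at most two. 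In either case the number of relevant partial colorings of $D$ is bounded by an absolute constant, say at most $6$.

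Now fix one such partial coloring $c$ of $D$. The pair $(G \setminus D, L_c)$ need not be a minimal list-obstruction, but since $(G,L)$ is not colorable, $(G\setminus D, L_c)$ is not colorable either. Pass to a minimal list-obstruction $(G', L')$ induced by $(G\setminus D, L_c)$ by deleting non-critical vertices (using the machinery defined in the Preliminaries). This $(G',L')$ is $P_6$-free and has all lists of size at most two, so Lemma~\ref{lem:size2} gives $|V(G')| \le 100$. The remaining, and main, obstacle is to bootstrap this into a bound on $|V(G)|$ itself: a priori $G \setminus D$ could have many vertices that become non-critical after updating. Here one uses minimality of the original $(G,L)$: every vertex $v \in V(G)$ is critical for $(G,L)$, meaning $(G\setminus v, L)$ is colorable. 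I would argue that each vertex of $G\setminus D$ that survives to lie in $V(G')$ carries a bounded ``reason'' for its criticality, and that the total number of vertices needed to witness non-colorability of $(G,L)$ — namely $V(D)$ together with the union, over the boundedly many choices of $c$, of the vertex sets $V(G'_c)$ of the induced minimal obstructions — is bounded by $|V(D)| + 6 \cdot 100$, an absolute constant $C$. More carefully: $G$ restricted to $V(D) \cup \bigcup_c V(G'_c)$ is already not colorable (for any proper coloring would restrict to some $c$ on $D$ and then color $G'_c$), so by minimality of $(G,L)$ this set is all of $V(G)$, giving $|V(G)| \le |V(D)| + 6\cdot 100 \le C$.

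The hard part will be the precise handling of the $C_6$-dominating case and the verification that updating with respect to a partially coloured dominating set really does reduce all outside lists to size at most two in every branch (including the edge cases where updating produces an empty list or a monochromatic edge among already-coloured vertices, which the ``convention'' at the end of Section~\ref{sec:prelims} is designed to absorb); and the counting argument that assembles the boundedly many induced minimal sub-obstructions into a single bounded obstruction forcing $|V(G)| \le C$. Once $C$ exists, finiteness of the family of $P_6$-free minimal list-obstructions is immediate, since there are only finitely many pairs $(G,L)$ with $|V(G)| \le C$ and $L$ of order $3$.
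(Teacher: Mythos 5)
There is a genuine gap, and it sits exactly where you flag the argument as routine: the structure theorem you invoke is not available in the form you need. The $t=6$ case of Theorem~\ref{structure} (Camby--Schaudt) gives a connected dominating set inducing either a $P_4$ or a $P_4$-free graph; the related dominating-subgraph theorem for $P_6$-free graphs gives a dominating \emph{complete bipartite} subgraph or a dominating induced $C_6$, not a dominating clique. A clique-or-$C_6$ statement is simply false (e.g.\ the tree obtained from a claw by subdividing every edge once is $P_6$-free, connected, and has neither a dominating clique nor any induced $C_6$). The consequence is fatal for your counting: the dominating set $D$ that these theorems actually provide can be an arbitrarily large $P_4$-free (or complete bipartite) graph, so the number of proper $L$-colorings of $(G|D,L)$ is not bounded by an absolute constant, and your step ``at most $6$ precolorings of $D$, hence $|V(G)|\le |V(D)|+6\cdot 100$'' collapses twice over: neither the number of branches nor $|V(D)|$ itself is bounded. (When the dominating set \emph{is} bounded, your assembly argument is fine --- it is exactly Lemma~\ref{precolor} combined with Lemma~\ref{update4} and Lemma~\ref{P_4} --- but that is the easy case.)

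Handling the unbounded dominating set is precisely what the paper's proof of Lemma~\ref{lem:3downto2} spends its length on. When the Camby--Schaudt set is $P_4$-free it is a join $A\ast B$ (Seinsche), and the paper precolors only a carefully chosen set of at most three vertices $S_0$ hitting it so that no vertex of $G$ is complete to $S_0$; it then grows the precolored region in stages ($S_1,S_2,S_3$), analyzes the attachment sets $X_{ij}$ and the leftover vertices with full lists via Kempe-chain arguments (Claim~\ref{clm:Kempe}), and bounds the pieces that were never precolored through the reduction claims (Claims~\ref{stable}, \ref{bipartite1}, \ref{reducecomponents}), with a separate, structurally heavier treatment of graphs containing a $C_5$ (Claims~\ref{notmixed}--\ref{c5}). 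One also needs the quantitative bookkeeping of Lemma~\ref{update4} (the factor $36$ for updating three times), which your sketch omits. So the reduction to Lemma~\ref{lem:size2} is the right target, but the route you propose does not get there; you would need either a bounded semi-dominating set (which $P_6$-free graphs need not have) or an argument of the paper's kind that precolors only a bounded fragment of a possibly huge dominating cograph and then controls everything else structurally.
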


The main technique used  in the proof of Lemma~\ref{lem:3downto2} is to guess the coloring on a small set  $S$ of vertices of the minimal list-obstruction at hand, $(G,L)$ say.
After several transformations, we arrive at a list-obstruction $(G,L')$ where each list has size at most two, and so we may apply Lemma~\ref{lem:size2} to show that there is a minimal list-obstruction $(H,L')$ with a bounded number of vertices induced by $(G,L')$.
We can prove that $G$ is essentially the union of these graphs $H$ (one for each coloring of $S$), and so the number of vertices of $G$ is bounded by a function of the number of guesses we took in the beginning.
Since we precolor only a (carefully chosen) small part of the graph, we can derive that the number of vertices of $G$ is bounded by a constant.

To find the right vertex set to guess colors for, we use a structure theorem for $P_t$-free graphs~\cite{CS14} that implies the existence of a well-structured connected dominating subgraph of a minimal list-obstruction.

\subsection{Proof of Lemma~\ref{lem:size2}}\label{sec:size2}

Let $(G,L)$ be a $P_6$-free minimal list-obstruction such that every list contains at most two colors.
Suppose that $P=v_1$-\ldots-$v_k$ is a propagation path in $(G,L)$.
We show that if $G$ is $P_6$-free, then $k\leq 24$.
In view of Lemma~\ref{lem:propagationpath}, this proves that $G$ has at most 100 vertices.

Our proof is computer-aided, but conceptually very simple. 
The program generates the paths $v_1$, $v_1$-$v_2$, $v_1$-$v_2$-$v_3$, and so on, 
lists for each $v_i$, as in the definition of a propagation path, and
edges among the vertices in the path.  Whenever a $P_6$ or an edge
violating condition~\eqref{eqn:conditions0} of the definition of a propagation path is found, the 
respective branch of the search tree is closed. Since the program does not 
find such a path on 25 vertices (cf. Table 1), our claim is proved.

\begin{table}[ht!]
\centering
{\footnotesize
\begin{tabu}{ l c c c c c c c c c }
\tabucline[0.9pt]{-}\\[-7pt]
Vertices 			&	1 & 2 &  3 & 4  & 5  & 6   & 7    & 8  &       \\[1pt]

Propagation paths	& 1 & 2 &  6 & 22 & 86 & 350 & 1 220 & 2 656  & \\[1pt]
\hline\\[-7pt]
Vertices 			& 9     & 10    & 11     &	12   & 13   &  14   & 15   & 16 &  \\[1pt]

Propagation paths	& 4 208 & 5 360 & 5 864  & 5 604 & 5 686 &  5 004 & 4 120 & 3 400 &  \\[1pt]
\hline\\[-7pt]
Vertices 			& 17    & 18    & 19    & 20     & 21    & 22     &	23   & 24   &  25    \\[1pt]

Propagation paths	& 2 454 & 1 688 & 1 064 & 516    & 202   & 72     & 18   & 2 &  0  \\[1pt]
\tabucline[0.9pt]{-}
\end{tabu}
}
\caption{Counts of all $P_6$-free propagation paths with lists of size 2 meeting condition~\eqref{eqn:conditions0} generated by Algorithm~\ref{algo:prefix}.}

\label{table:counts_prefixpaths-P6free}
\end{table}

The pseudocode of the algorithm is shown in Algorithm~\ref{algo:prefix} and~\ref{algo:construct}. 
Our implementation of this algorithm can be downloaded from~\cite{listcriticalpfree-site}. 
Table~\ref{table:counts_prefixpaths-P6free} lists the number of configurations generated by our program.

\begin{algorithm}[h]
\caption{Generate propagation paths and lists}
\label{algo:prefix}
  \begin{algorithmic}[1]
	\STATE $H\gets (\{v_2\},\emptyset)$
	\STATE $c(v_1)\gets 1$
	\STATE $L(v_1)\gets \{1\}$
	\STATE Construct($H,c,L$)\\ ~~~~~// We may assume $c(v_1)=1$ and $L(v_1)=\{1\}$.
  \end{algorithmic}
\end{algorithm}

\begin{algorithm}[ht!]
\caption{Construct(Graph $H$, coloring $c$, list system $L$)}
\label{algo:construct}
  \begin{algorithmic}[1]
		\STATE $j \gets |V(H)|$
		\STATE $V(H)\gets V(H) \cup \{v_{j+1}\}$
		\STATE $E(H)\gets E(H) \cup \{v_{j}v_{j+1}\}$
		\\ ~~~~~// This extends the path by the next vertex $v_{j+1}$.
		\FORALL{$\alpha \in \{1,2,3\}\setminus \{c(v_j)\}$ and all $I \subseteq \{1,2, \ldots, j-1\}$}
			\STATE $H' \gets H$
			\STATE $E(H') \gets E(H') \cup \{v_iv_{j+1}: i \in I\}$
			\\ ~~~~~// This adds edges from $v_{j+1}$ to earlier vertices in all possible ways.
			\STATE $c(v_{j+1})\gets \alpha$
			\STATE $L(v_{j+1})\gets \{\alpha,c(v_j)\}$
			\IF{$(H',c,L)$ is $P_6$-free and satisfies condition~\eqref{eqn:conditions0}}
				\STATE Construct($H',c,L$)
			\\ ~~~~~// If the propagation path is not pruned, we extend it further.
			\ENDIF
		\ENDFOR
  \end{algorithmic}
\end{algorithm}

Next we sketch the idea of the computer-free proof. Let $(G,L)$ be a minimal 
list obstruction with all lists of size at most two, and suppose for a 
contradiction that  there is a (very) long propagation path $P$ in $G$.
We may assume that $G$ does not contain a clique 
with four vertices. It follows from the main result of \cite{GSS} that $G|V(P)$
contains a large induced subgraph $H$, which is a complete bipartite graph;
let $(A,B)$ be a bipartition of $H$. Using Ramsey's Theorem~\cite{Ramsey}  we may assume that
all vertices of $A$ have the same shape, and all vertices of $B$ have the same 
shape (by ``coloring'' the edges of $H$ by the shapes of their ends). We can 
now choose a large subset $A'$ of $A$ all of whose members are pairwise far 
apart  in $P$,  and  such that $A'$ is far in $P$ from some subset  $B'$ of 
$B$. We analyze the structure of short subpaths of $P$ containing each 
$a \in A$,  and the edges between such subpaths, and to $B'$. We can again use 
Ramsey's Theorem to  assume that the structure is the same for every member of 
$A'$ and every member of $B'$. Finally, we accumulate enough structural 
knowledge to find a $P_6$ in $G$, thus reaching a contradiction. 
If instead of $P_6$ we wanted to use the same method to produce $P_5$, the 
argument becomes much shorter, and it was carried out in \cite{Zhang}.
 
\subsection{Reducing obstructions}
In this section we prove three lemmas which help us reduce the size of the obstructions.
These lemmas will be used in the proofs of Sections~\ref{sec:sufficiency-P6}.3 and~\ref{sec:sufficiency-2P3}.2. 

Let $(G,L)$ be a list-obstruction and let $R$ be an induced subgraph of $G$. Let $\mathcal{L}$ be a set of subsystems of $L$ satisfying the following assertions.
\begin{enumerate}
\item For every $L' \in \mathcal{L}$ there exists an induced subgraph $R(L')$ of $R$ such that $|L'(v)|=1$ for every $v \in V(R(L'))$.	
\item For each $L'\in \mathcal{L}$, $L'(v)=L(v)$ for $v\in V(G)\setminus R(L')$ and $L'(v)\subseteq L(v)$ for $v\in R(L')$.
\item For every $L$-coloring $c$ of $R$ there exists a list system $L'\in \mathcal{L}$ such that $c(v) \in L'(v)$ for every $v \in R(L')$.
\end{enumerate}
We call $\mathcal L$ a \emph{refinement of $L$ with respect to $R$}.
Observe that $\{L\}$ with $R(L)$ being the empty graph is a refinement of
$L$ with respect to $G$, though this is not a useful refinement.
For each list system $L'\in \mathcal{L}$ it is clear that $(G,L')$ is again a list-obstruction, though not necessarily a minimal one, even if $(G,L)$ is minimal.

\begin{lemma}\label{precolor}
Assume that $(G,L)$ is a minimal list-obstruction. Let $R$ be an induced 
subgraph of $G$, and let $\mathcal{L}= \{L_1,L_2,\ldots,L_m\}$ be a refinement of $L$ with respect to $R$. 
For every $L_i \in \mathcal{L}$, let $(G_{L_i},L_i)$ be a minimal obstruction
induced by $(G,L_i)$.

Then  $V(G)=R \cup \bigcup_{L_i \in \mathcal{L}}V(G_{L_i})$.
Moreover, if each $G_{L_i}$ can be chosen such that 
$|V(G_{L_i} \setminus R| \leq k$, then $G$ has at most $|V(R)|+km$ vertices.
\end{lemma}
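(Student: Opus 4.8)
The plan is to prove Lemma~\ref{precolor} directly from the definition of a refinement, using the minimality of $(G,L)$ in two places: once to establish that $G$ equals the union $R \cup \bigcup_i V(G_{L_i})$, and once (implicitly, via the bound on each $|V(G_{L_i}\setminus R)|$) to get the vertex count.

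\textbf{Step 1: the union covers $V(G)$.} Set $W = R \cup \bigcup_{L_i \in \mathcal L} V(G_{L_i})$, and suppose for contradiction that $W \neq V(G)$. Since $(G,L)$ is a minimal list-obstruction, $(G\setminus (V(G)\setminus W), L) = (G|W, L)$ is colorable; let $c$ be an $L$-coloring of $G|W$. The aim is to contradict the fact that $(G,L)$ itself is not colorable. In particular $c$ restricts to an $L$-coloring of $R$, so by property~(3) of a refinement there is some $L_i \in \mathcal L$ with $c(v) \in L_i(v)$ for every $v \in R(L_i)$. Now consider the coloring $c$ restricted to $V(G_{L_i}) \subseteq W$: I claim it is an $L_i$-coloring of $G_{L_i}$. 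Indeed, $L_i$ and $L$ agree off $R(L_i)$ by property~(2), and on $R(L_i)$ we have $c(v) \in L_i(v)$ by choice of $L_i$; for $v \in V(G_{L_i}) \cap R(L_i)$, $L_i(v)$ is a singleton, so $c(v)$ is forced to be that single color. This means $(G_{L_i}, L_i)$ is colorable, contradicting that it is a (minimal) list-obstruction induced by $(G,L_i)$. Hence $W = V(G)$, which is the first assertion.

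\textbf{Step 2: the vertex count.} Suppose now each $G_{L_i}$ is chosen with $|V(G_{L_i}\setminus R)| \le k$. From Step~1, $V(G) = R \cup \bigcup_{i=1}^m V(G_{L_i})$, so
\[
|V(G)| \;\le\; |V(R)| \;+\; \Bigl|\bigcup_{i=1}^m \bigl(V(G_{L_i}) \setminus V(R)\bigr)\Bigr| \;\le\; |V(R)| \;+\; \sum_{i=1}^m \bigl|V(G_{L_i}) \setminus V(R)\bigr| \;\le\; |V(R)| + km,
\]
as claimed. (Here I am using $V(R)$ and $R$ interchangeably for the vertex set of the induced subgraph $R$, consistent with the paper's conventions; note $V(G_{L_i}\setminus R)$ means $V(G_{L_i}) \setminus V(R)$.)

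The conceptual content is entirely in Step~1, and within it the one point that needs care is checking that the restriction of $c$ to $V(G_{L_i})$ is genuinely an $L_i$-coloring — i.e.\ that no vertex of $V(G_{L_i})\cap R(L_i)$ is colored outside its (singleton) $L_i$-list. This is exactly what property~(3) buys us: it guarantees the existence of an $L_i$ whose forced colors on $R(L_i)$ are \emph{compatible} with the coloring $c$ we extracted from $G|W$. Everything else is bookkeeping with the definitions of "refinement" and of "minimal list-obstruction induced by". I do not anticipate a serious obstacle; the only mild subtlety is being careful that $G_{L_i}$ is an induced subgraph of $G$ (so that $c$ restricted to it is defined and respects adjacencies), which follows because deleting non-critical vertices from $G$ yields an induced subgraph, and $G_{L_i}$ is obtained from $G$ in exactly that way.
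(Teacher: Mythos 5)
Your proof is correct and follows essentially the same route as the paper: use minimality of $(G,L)$ to obtain an $L$-coloring of a proper induced subgraph containing $R$ and every $G_{L_i}$, invoke property~(3) of the refinement to find a compatible $L_i$, and contradict the fact that $(G_{L_i},L_i)$ is an obstruction, with the count then being the same bookkeeping. The only (immaterial) difference is that the paper colors $G\setminus v$ for a single uncovered vertex $v$ rather than $G|W$ directly.
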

\begin{proof}
	Let $(G_i,L_i|_{G_i})$ be a minimal list-obstruction induced by $(G,L_i)$ such that $|V(G_i)\setminus V(R)| \leq k$, $i=1,\ldots,m$. 
	Suppose for a contradiction that there exists a vertex $v$ in 
$V(G)\setminus R$ such that $v$ is contained in none of the 
$G_i$, $i=1,\ldots,m$. 
	By the minimality of $(G,L)$, $G\setminus \{v\}$ is
	$L$-colorable. Let $c$ be an 
	$L$-coloring
	of $G \setminus \{v\}$.
	We may assume that $c(r) \in L_1(r)$ for every $r \in V(R(L_1))\cap V(G_1)$. 
	Then $c$ is a coloring of $(G_1,L_1)$, which is a contradiction. 
This proves the first assertion of Lemma~\ref{precolor}.
	Consequently,   $$|V(G)| \le |\bigcup_{i=1}^m V(G_i)| \le |V(R)|+|\bigcup_{i=1}^m V(G_i\setminus R)|$$ and the second assertion follows.

\end{proof}

Next we prove a lemma which allows us to update three times with respect to a set of vertices with lists of size $1$. 

\begin{lemma}\label{update4}
	Let $(G,L)$ be a list-obstruction, and let $X \subseteq V(G)$ be a vertex subset such that $|L(x)|=1$ for every $x \in X$. 
	Let $L'$ be the list obtained by updating with respect to $X$ three times. 
	Let $(G',L')$ be a minimal list-obstruction induced by $(G,L')$. 
	Then there exists a minimal list-obstruction induced by $(G,L)$, say $(G'',L)$, with $|V(G'')| \leq 36|V(G')|$.
\end{lemma}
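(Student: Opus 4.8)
The plan is to show that "updating three times with respect to a set $X$ of precolored vertices" can be simulated by adding only a bounded number of extra vertices to $G'$, so that a minimal obstruction for $(G,L)$ is found inside $G'$ together with these extra vertices. The key point is that each element $v$ that acquires a list of size $\le 1$ during rounds $1$, $2$ or $3$ of the updating does so because of a short "reason": a vertex in $X$ (round~$1$), or a vertex that got colored in round~$1$ and hence has a neighbor in $X$ (round~$2$), or a vertex colored in round~$2$, whose reason in turn traces back through at most two more vertices to $X$ (round~$3$). Thus for every vertex $v$ with $|L'(v)| < |L(v)|$ there is a "witness set" $W(v) \subseteq V(G)$ with $|W(v)| \le $ a small constant, consisting of a short path/tree of vertices leading from $v$ back into $X$, such that updating $L$ exhaustively inside $G|(W(v))$ already forces $L(v)$ down to $L'(v)$.

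Concretely, first I would make precise the structure of three rounds of updating. In round~$1$, a vertex $v$ gets $|L(v)|$ reduced only via its neighbors in $X$; so the witness is $v$ together with at most two neighbors in $X$ (at most two, since each kills one color from a list of size $\le 3$; actually we only ever care that $|L'(v)|\le 1$, so at most two elements of $X$ suffice). In round~$2$, $v$'s list is cut by vertices colored in round~$1$, each of which has its own round-$1$ witness of size $\le 3$; so $W(v)$ has size at most $1 + 2\cdot 3 = 7$, but one can be more careful: we need at most two round-$1$-colored neighbors to pin $v$, each carrying $\le 2$ vertices of $X$, giving $|W(v)| \le 1 + 2 + 4 = 7$. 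Iterating once more for round~$3$ gives $|W(v)| \le 1 + 2(1 + 2 + 4)$ or so — in any case a fixed constant; the paper's bound $36$ suggests they bound the number of \emph{added} vertices per vertex of $G'$ by $35$, i.e. each vertex $v\in V(G')$ with a shrunken list contributes a witness of $\le 35$ further vertices. I would not optimize the constant; I would just verify some explicit constant $\le 35$ works.

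Next I would define $G'' := G|\bigl(V(G') \cup \bigcup_{v \in V(G')} W(v)\bigr)$, where the union is over those $v\in V(G')$ with $L'(v)\subsetneq L(v)$ (for the others take $W(v)=\{v\}$), and then pass to a minimal obstruction induced by $(G'',L)$ — this only removes vertices, so the size bound is preserved. The count is $|V(G'')| \le |V(G')| + 35\,|V(G')| = 36\,|V(G')|$. The substantive thing to check is that $(G'',L)$ really is an obstruction, i.e. not $L$-colorable. Suppose $c$ is an $L$-coloring of $(G'', L)$. Since $X \subseteq V(G'')$ (each $x\in X$ with $|L(x)|=1$ appears as its own witness, or inside some witness — I'd make sure $X \cap V(G') $ or $X$ is included, adding $X$ explicitly to $V(G'')$ if needed; note $|X|$ need not be bounded, so I'd instead argue that only the $\le 35|V(G')|$ relevant elements of $X$ matter and are already in the witnesses), the colors $c(x)$ agree with $L(x)$ on $X$, and then by the witness property, running the three updating rounds inside $G''$ forces $c(v)\in L'(v)$ for every $v\in V(G')$. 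Hence $c$ restricted to $V(G')$ is an $L'$-coloring of $(G',L')$, contradicting that $(G',L')$ is a list-obstruction.

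The main obstacle I anticipate is bookkeeping the witnesses so that (i) the constant stays $\le 35$ and (ii) the "updating inside $G''$ forces $c(v)\in L'(v)$" implication is actually valid — the subtlety is that in the original graph the updating uses \emph{all} of $G$, whereas in $G''$ we only kept the witness vertices, so I must make sure each reduction step's reason is entirely contained in $G''$. This is exactly what choosing $W(v)$ to be a full "derivation tree" of the reduction (of depth $\le 3$, branching $\le 2$) guarantees, and handling the degenerate cases introduced by the convention that empties all lists once a monochromatic edge among precolored vertices appears (so that $(G'',L)$ inherits the "not colorable" conclusion). I would also remark that if $(G,L')$ already has a vertex with empty list the statement is trivial, disposing of that boundary case first.
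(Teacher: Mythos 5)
Your plan is correct and is essentially the paper's own argument: the paper likewise builds bounded "reason" sets for each shrunken list, layered over the three rounds (sets $R$, $S$, $T$ of witnesses in $X\cup X_1\cup X_2$ with at most $3$, $2$, $2$ elements per vertex), takes $P=V(G')\cup R\cup S\cup T$ with $|P|\le(1+3+8+24)|V(G')|=36|V(G')|$, and shows $(G|P,L)$ is not colorable by tracing a hypothetical coloring back through the witnesses exactly as you describe. Your per-vertex derivation-tree bookkeeping versus the paper's global layered count is only a cosmetic difference, and your flagged boundary cases (empty lists, only finitely many relevant vertices of $X$) are handled the same way.
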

\begin{proof}
	Let $Y=X_1$ and $Z=X_2$, as in the definition of updating $i$ times.
	We choose sets $R$, $S$, and $T$ as follows.
	\begin{itemize}
		\item For every $v \in V(G') \setminus (X \cup Y \cup Z)$, define $R(v)$ to be a minimum subset of $(X \cup Y \cup Z)\cap N(v)$ such that $\bigcup_{s \in R(v)}L'(s)=L(v) \setminus L'(v)$, and let $R=\bigcup_{v \in V(G') \setminus (X \cup Y \cup Z)}R(v)$.
		\item For every $v \in (V(G')\cup R) \cap Z$, define $S(v)$ to be a minimum subset of $(X\cup Y)\cap N(v)$ such that $\bigcup_{s \in S(v)}L'(s)=L(v) \setminus L'(v)$, and let $ S=\bigcup_{v \in (V(G')\cup R) \cap Z}S(v) $. 
		\item For every $v \in (V(G')\cup R\cup S) \cap  Y$, define $T(v)$ to be a minimum subset of $X\cap N(v)$ such that $\bigcup_{s \in T(v)}L'(s)=L(v) \setminus L'(v)$, and let $ T= \bigcup_{v \in (V(G')\cup R\cup S) \cap  Y}T(v) $.
	\end{itemize}
	Clearly, $|R(v)| \leq 3$ for every $v \in V(G') \setminus (X \cup Y \cup Z)$, $|S(v)|\le 2$ for every $v \in (V(G')\cup R) \cap Z$, and $|T(v)|\le 2$ for every $v \in (V(G')\cup R\cup S) \cap  Y$.
	Let $P=R\cup S\cup T\cup V(G')$, and observe that $|P|\leq (1+3+8+24) |V(G')|=36|V(G')|$.
	It remains to prove that $(G|P,L)$ is not colorable.
	Suppose there exists a coloring $c$ of $(G|P,L)$.
	Then $c$ is not a coloring of $(G',L')$, and since $V(G') \subseteq P$, it follows that there exists $w \in V(G')$ such that $c(w) \not \in L'(w)$.
	Therefore $c(w) \in L(w) \setminus L'(w)$.
	
	We discuss the case when $v \in V(G') \setminus (X \cup Y \cup Z)$, as the cases of $v \in (V(G')\cup R) \cap Z$ and $v \in (V(G')\cup R\cup S) \cap  Y$ are similar.
	We can choose $m\in R(w)$ such that $L'(m)=\{c(w)\}$ and one of the following holds.
	\begin{itemize}
		\item $m \in X$, and thus $L(m)=L'(m)=\{c(w)\}$.
		\item $m \in Y$, and thus for any $ i\in L(m)\setminus L'(m)$ there exists $n_i\in T(m)$ such that $L(n_i)=\{i\}$.
		\item $m \in Z$, and thus for any $ i\in L(m)\setminus L'(m)$ there exists $n_i\in S(m)$ such that either $L(n_i)=\{i\}$ or, for any $ j\in L(n_i)\setminus \{i\}$, there exists $l_j\in T(n_i)$ with $L(l_j)=\{j\}$.
	\end{itemize}
	In all cases it follows that $c(m)=c(w)$, in contradiction to the fact that $m$ and $w$ are adjacent.
	This completes the proof.
\end{proof}

Let $A$ be a subset of $V(G)$ and $L$ be a list system; let $c$ be an $L$-coloring of $G|A$, and let $L_c$ be the list system obtained by setting $L_c(v)=\{c(v)\} $ for every $v\in A$ and updating with respect to $A$ three times; we say that $L_c$ is \emph{obtained from $L$ by precoloring $A$ (with $c$)  and updating three times}. If for every $c$, $|L_c(v)|\leq 2$ for every $v\in V(G)$, we call $A$ a \emph{semi-dominating set} of $(G,L)$. If $L(v)=\{1,2,3\}$ for every $ v\in G $ and $A$ is a semi-dominating set of $(G,L)$, we say that $A$ is a \emph{semi-dominating set} of $G$.  Note that a dominating set is always a semi-dominating set.
Last we prove a lemma for the case when $G$ has a bounded size semi-dominating set.

\begin{lemma}\label{P_4}
	Let $(G,L)$ be a minimal list-obstruction, and assume that $G$ has a semi-dominating set $A$ with $|A| \leq t$. Assume also that if $(G',L')$ is a minimal 
	obstruction where $G'$ is an induced subgraph of 
	$G$, and $L'$ is a subsystem of $L$ with $|L'(v)| \leq 2$ for every $v$, then
	$|V(G')| \leq m$.  Then $|V(G)| \leq 36\cdot 3^t \cdot m +t$.
\end{lemma}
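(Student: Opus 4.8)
The plan is to obtain Lemma~\ref{P_4} as a straightforward composition of Lemmas~\ref{precolor} and~\ref{update4} together with the definition of a semi-dominating set. Write $R=G|A$ and let $\mathcal{C}$ be the set of all $L$-colorings of $R$; since $|V(R)|=|A|\le t$ and each list is a subset of $\{1,2,3\}$, we have $|\mathcal{C}|\le 3^t$. For each $c\in\mathcal{C}$ define the list system $L_c^0$ by $L_c^0(v)=\{c(v)\}$ for $v\in A$ and $L_c^0(v)=L(v)$ otherwise, and set $R(L_c^0)=A$. The first step is to check that $\mathcal{L}:=\{L_c^0:c\in\mathcal{C}\}$ is a refinement of $L$ with respect to $R$: conditions 1 and 2 are immediate from the definition of $L_c^0$, and condition 3 holds because any $L$-coloring $c$ of $R$ satisfies $c(v)\in\{c(v)\}=L_c^0(v)$ for every $v\in A=R(L_c^0)$. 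Note that every $(G,L_c^0)$ is again a list-obstruction, since $L_c^0$ is a subsystem of $L$ and $(G,L)$ is not colorable.

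Next, fix $c\in\mathcal{C}$ and let $L_c$ be the list system obtained from $L_c^0$ by updating with respect to $A$ three times; this is precisely the list system obtained from $L$ by precoloring $A$ with $c$ and updating three times, so the assumption that $A$ is a semi-dominating set gives $|L_c(v)|\le 2$ for all $v\in V(G)$, and moreover $L_c$ is a subsystem of $L$. Let $(G_c,L_c)$ be a minimal list-obstruction induced by $(G,L_c)$. Then $G_c$ is an induced subgraph of $G$, and $L_c$ restricted to $V(G_c)$ is a subsystem of $L$ with all lists of size at most two, so the hypothesis of the lemma yields $|V(G_c)|\le m$ (if the updating convention degenerates $(G_c,L_c)$ to a one-vertex graph with empty list, the bound holds trivially). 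Now apply Lemma~\ref{update4} to the list-obstruction $(G,L_c^0)$ with $X=A$: since $L_c$ arises from $L_c^0$ by updating three times with respect to $A$ and $(G_c,L_c)$ is a minimal list-obstruction induced by $(G,L_c)$, we obtain a minimal list-obstruction $(G_c',L_c^0)$ induced by $(G,L_c^0)$ with $|V(G_c')|\le 36|V(G_c)|\le 36m$; in particular $|V(G_c')\setminus R|\le 36m$.

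Finally, apply Lemma~\ref{precolor} to the refinement $\mathcal{L}$ of $L$ with respect to $R$, using for each $L_c^0\in\mathcal{L}$ the minimal obstruction $(G_c',L_c^0)$ produced above, which satisfies $|V(G_c')\setminus R|\le 36m$. Since $|\mathcal{L}|\le 3^t$, this gives $|V(G)|\le |V(R)|+36m\cdot 3^t\le t+36\cdot 3^t\cdot m$, which is the claimed bound. I do not expect a genuine obstacle here; the only care needed is bookkeeping — verifying the three defining properties of a refinement, keeping the two roles of the symbol $m$ apart (the ``$m$'' of Lemma~\ref{precolor} is the number $|\mathcal{L}|$ of refinement systems, whereas the ``$m$'' of the present lemma bounds minimal obstructions with lists of size at most two), and noting that the factor $36$ of Lemma~\ref{update4} multiplies $|V(G_c)|$ rather than $|V(G_c)\setminus R|$, which is harmless since $|V(G_c)|\le m$ outright. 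The semi-dominating hypothesis is invoked exactly once, to guarantee $|L_c(v)|\le 2$ so that the ``size at most two'' hypothesis of the lemma applies to $(G_c,L_c)$.
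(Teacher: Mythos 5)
Your proof is correct and follows exactly the paper's argument: the paper's own proof of this lemma is the same composition — enumerate the at most $3^t$ precolorings of $A$, use the semi-dominating hypothesis to get lists of size at most two after updating three times, and then combine Lemma~\ref{update4} with Lemma~\ref{precolor} to obtain $|V(G)| \leq 36\cdot 3^t\cdot m + t$. You have merely spelled out the bookkeeping (verifying the refinement conditions and how the factor $36$ enters) that the paper leaves implicit.
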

\begin{proof}
	Consider all possible $L$-colorings $c_1, \ldots, c_s$ of $A$; then 
	$s \leq 3^t$. For each $i$, let $L_i$ be the list system obtained by updating 
	with respect to $A$ three times. Then  $|L_i(v)| \leq 2$ for every $v \in V(G)$ and
	for every $i \in \{1, \ldots, s\}$.  Now Lemma~\ref{precolor} together with Lemma~\ref{update4} imply that 
	$|V(G)| \leq 36\cdot 3^t \cdot m +t$. This completes the proof.
\end{proof}

\subsection{Proof of Lemma~\ref{lem:3downto2}}\label{sec:generalcase}


We start with several claims that deal with vertices that have a special structure in their neighborhood.

\begin{claim}\label{mixed}
Let $G$ be a graph, and let $X \subseteq V(G)$ be connected. If $v \in V(G) \setminus X$ is mixed on $X$, then there exist adjacent $x_1,x_2 \in X$ such that
$v$ is adjacent to $x_1$ and non-adjacent to $x_2$.
\end{claim}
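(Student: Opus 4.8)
\textbf{Proof proposal for Claim~\ref{mixed}.}

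The plan is to argue by contradiction, exploiting the connectedness of $X$ via a walk between a neighbor and a non-neighbor of $v$. Since $v$ is mixed on $X$, there is a vertex $a \in X$ adjacent to $v$ and a vertex $b \in X$ non-adjacent to $v$. Because $X$ is connected, $G|X$ contains a path $a = x_1$-$x_2$-$\cdots$-$x_\ell = b$ with all $x_i \in X$. Now suppose, for contradiction, that the conclusion fails; that is, there is no edge of $G|X$ with one end adjacent to $v$ and the other end non-adjacent to $v$.

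Under this assumption, consider walking along the path from $x_1$ to $x_\ell$. The vertex $x_1 = a$ is adjacent to $v$, and the vertex $x_\ell = b$ is non-adjacent to $v$. Hence there must be some index $i$ with $1 \le i < \ell$ such that $x_i$ is adjacent to $v$ but $x_{i+1}$ is non-adjacent to $v$: otherwise, by induction along the path, every $x_i$ would have the same adjacency status to $v$ as $x_1$, contradicting that $x_\ell$ differs. But $x_i x_{i+1}$ is an edge of $G|X$ with $x_i$ adjacent to $v$ and $x_{i+1}$ non-adjacent to $v$, so setting $x_1 := x_i$ and $x_2 := x_{i+1}$ (relabeling) gives exactly the pair required by the statement, contradicting our assumption. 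Therefore the conclusion holds.

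I do not expect any genuine obstacle here; this is a routine ``discrete intermediate value'' argument along a path in a connected subgraph, and the only care needed is to phrase the extraction of the critical edge $x_i x_{i+1}$ cleanly (either by taking the largest index $i$ with $x_i$ adjacent to $v$, or by a straightforward induction on $\ell$). The statement is essentially a lemma packaging a fact that will be invoked repeatedly in the structural analysis, so the proof should be kept short and self-contained.
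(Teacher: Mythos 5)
Your proof is correct and is essentially the paper's argument: the paper simply notes that since $N(v)\cap X$ and $X\setminus N(v)$ are both non-empty and $X$ is connected, some edge of $G|X$ crosses between them, which is exactly the fact your path-walking argument makes explicit. No gap; your version just spells out the routine step the paper leaves implicit.
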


\begin{proof}
Since $v$ is mixed on $X$, both the sets $N(v) \cap X$ and $X \setminus N(v)$
are non-empty, and since $X$ is connected, there exist $x_1 \in N(v) \cap X$
and $x_2 \in X \setminus N(v)$ such that $x_1$ is adjacent to $x_2$. This
proves Claim~\ref{mixed}.
\end{proof}

\begin{claim}\label{biplemma}
Let $G$ be a $P_6$-free graph and let $v \in V(G)$. 
Suppose that $G|N(v)$ is a connected bipartite graph with bipartition $(A,B)$. 
Let $G'$ be obtained from $G\setminus (A \cup B)$ 
by adding two new vertices $a,b$ with
$N_{G'}(a)=\{b\}  \cup \bigcup_{u \in A} (N_G(u) \cap V(G'))$
and $N_{G'}(b)=\{a\} \cup \bigcup_{u \in B} (N_G(u) \cap V(G'))$. 
Then $G'$ is $P_6$-free.
\end{claim}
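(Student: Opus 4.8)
Let $G$ be a $P_6$-free graph and $v\in V(G)$ with $G|N(v)$ connected bipartite with bipartition $(A,B)$; form $G'$ as in the statement. Then $G'$ is $P_6$-free.

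The plan is to argue by contradiction. Suppose $G'$ contains an induced $P_6$, say $Q = q_1$-$q_2$-$q_3$-$q_4$-$q_5$-$q_6$; I will produce an induced $P_6$ in $G$ by \emph{lifting} $Q$, that is, by replacing whichever of the new vertices $a,b$ lie on $Q$ by suitable vertices of $A,B$, possibly routing through $v$. First I would record a short dictionary. For a vertex $w \in V(G)\setminus(A\cup B)$: $w$ is adjacent to $a$ in $G'$ iff $w$ has a $G$-neighbour in $A$, and symmetrically for $b,B$. Also $N_G(v) = A\cup B$, so every $G$-vertex outside $A\cup B$ is non-adjacent to $v$ in $G$; on the other hand $N_{G'}(v) = \{a,b\}$ and $a\sim b$. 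Consequently $v$ cannot be an interior vertex of the induced path $Q$ (its two neighbours on $Q$ would have to be the adjacent pair $a,b$), and if $v\in V(Q)$ then $v$ is an endpoint of $Q$, its unique $Q$-neighbour is one of $a,b$, and the other of $a,b$ does not lie on $Q$. Finally, since $Q$ is induced, any $G$-vertex $q_j$ with a $G$-neighbour in $A$ must be a $Q$-neighbour of $a$ when $a\in V(Q)$ (and symmetrically for $B,b$); this is what makes the non-adjacency checks below routine.

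Next I would split on $V(Q)\cap\{a,b\}$. If this set is empty, $Q$ is an induced $P_6$ of the induced subgraph $G\setminus(A\cup B)$ of $G$, a contradiction. If it has one element, say $a=q_i$ and $b\notin V(Q)$, I would replace $a$ by a vertex of $A$: if $a$ is an endpoint of $Q$, any $A$-neighbour of its single $Q$-neighbour works, since no other $q_j$ has a $G$-neighbour in $A$; if $a$ is interior, with $Q$-neighbours $q_{i-1},q_{i+1}$, then either these have a common neighbour $u\in A$, or they have $A$-neighbours $u',u''$ with $u'\not\sim q_{i+1}$ and $u''\not\sim q_{i-1}$, and then I splice in $v$ (using $u',u''\in A = N_G(v)$ and $u'\not\sim u''$) and truncate the resulting walk to six vertices. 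The sub-case $v\in V(Q)$ is immediate: there $v=q_1$, $a=q_2$, and since $v$ is complete to $A$ it suffices to replace $a$ by any $A$-neighbour of $q_3$. If $V(Q)\cap\{a,b\}=\{a,b\}$, then $a,b$ are consecutive on $Q$ and, by the first paragraph, $v\notin V(Q)$; writing $\{a,b\}=\{q_i,q_{i+1}\}$ with $i\in\{1,2,3\}$ after possibly reversing $Q$, I would replace $a$ by some $u\in A$ and $b$ by some $w\in B$ chosen adjacent to the relevant outside $Q$-neighbours, and if $u,w$ cannot be chosen adjacent I again splice $v$ between them (recall $v$ is complete to $A\cup B$) and truncate to six vertices. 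In every case the six vertices obtained form a path whose edges are immediate from the choices and whose non-edges are forced by the dictionary (outside $Q$-vertices miss $A$ resp.\ $B$, and every $G$-vertex of $Q$ misses $v$); this induced $P_6$ in $G$ is the desired contradiction.

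The only delicate point is the bookkeeping in the "interior" instances of the one-new-vertex case and in the two-new-vertices case: one must verify that after splicing in $v$ the path really has exactly six vertices and is induced, and here the observation that \emph{only} the $Q$-neighbours of $a$ (resp.\ $b$) can have $G$-neighbours in $A$ (resp.\ $B$) does all the work. The degenerate situations $|N(v)|\le 1$, where one of $A,B$ is empty and the corresponding new vertex is merely a pendant, can be handled separately by an even simpler version of the same lifting, noting that $v$ itself already plays the role of that pendant in $G$.
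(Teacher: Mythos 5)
Your proposal is correct and follows essentially the same route as the paper: both arguments lift a hypothetical induced $P_6$ of $G'$ back to $G$ by substituting vertices of $A$, $B$, or $v$ for $a,b$, using the key observation that only the $Q$-neighbours of $a$ (resp.\ $b$) can have $G$-neighbours in $A$ (resp.\ $B$). The paper merely packages your explicit case analysis (single-vertex replacement versus splicing in $v$ and truncating) as one step, replacing $a,b$ by an induced connecting path through $A\cup B\cup\{v\}$ chosen to meet only one of $A,B$ when possible.
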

\begin{proof}
Suppose $Q$ is a $P_6$ in $G$. Then $V(Q) \cap \{a,b\} \neq \emptyset$.
Observe that if both $a$ and $b$ are in  $V(Q)$, then $v \not \in V(Q)$.
If only one vertex of $Q$, say $q$,  has a neighbor in $\{a,b\}$, say $a$,
then we get a $P_6$ in $G$ by replacing $a$ with a neighbor of $q$ in $A$, 
and, if $b \in V(Q)$, replacing $b$ with $v$. Thus we may assume that
two vertices $q,q'$ of $Q$ have a neighbor in $\{a,b\}$. If $q$ and $q'$ have a
common neighbor $u \in A \cup B$, then $G|((V(Q) \setminus \{a,b\} \cup \{u\}))$
is a $P_6$ in $G$, a contradiction. So no such $u$ exists, and in particular
$v \not \in V(Q)$. 
Let $Q'$ be an induced path from $q$ to 
$q'$ with $V(Q') \setminus \{q,q'\} \subseteq A \cup B \cup \{v\}$,
meeting only one of the sets $A,B$ if possible. Then 
$G|((V(Q) \setminus \{a,b\}) \cup V(Q'))$
is a $P_6$ in $G$, a contradiction. This proves Claim~\ref{biplemma}.
\end{proof}

In the remainder of this section $G$ is a $P_6$-free graph.

\begin{claim}
\label{stable}
Let $(G,L)$ be a minimal list-obstruction.
Let $A$ be a stable  set in $G$. Let $U$ be the set of vertices of 
$V(G) \setminus A$ that are not mixed on $A$, and let
$k=|V(G) \setminus (A \cup U)|$. Then $|A| \leq 7 \times 2^k$.
\end{claim}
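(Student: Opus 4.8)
Let $(G,L)$ be a minimal list-obstruction, $A$ a stable set in $G$, $U$ the set of vertices of $V(G)\setminus A$ not mixed on $A$, and $k=|V(G)\setminus(A\cup U)|$. Then $|A|\le 7\times 2^k$.

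The plan is to control $A$ by classifying its vertices according to how they interact with the small set $W:=V(G)\setminus(A\cup U)$ and with the colouring structure, and then showing that two vertices of $A$ sitting in the same class would violate minimality — specifically, one would dominate the other or they could be identified without losing the obstruction. First I would record the key observation about $U$: by Claim~\ref{mixed}, a vertex $u\in U$ that is adjacent to some vertex of the stable set $A$ but is ``not mixed'' must in fact be complete to $A$ (the contrapositive of mixedness on a stable — hence connected-enough — set; more carefully, $A$ stable means a $u$ adjacent to some $a\in A$ and nonadjacent to some $a'\in A$ is mixed on $A$, so $u\in U$ forces $u$ complete to $A$ or anticomplete to $A$). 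Thus every $u\in U$ is either complete or anticomplete to $A$. So the neighbourhood in $G$ of a vertex $a\in A$ is determined by: the set $U_{\mathrm{cpl}}$ of $U$-vertices complete to $A$ (common to all $a\in A$), together with $N(a)\cap W$. Since $|W|=k$, there are at most $2^k$ possibilities for $N(a)\cap W$, and hence at most $2^k$ distinct neighbourhoods among vertices of $A$.

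Next I would bring in the lists. For each $a\in A$ we also have $L(a)\subseteq\{1,2,3\}$, so at most $7$ possible nonempty lists. The natural pairing $(N(a)\cap W,\,L(a))$ partitions $A$ into at most $7\times 2^k$ classes; I want to show each class has at most one element. Suppose $a,a'\in A$ lie in the same class: $N(a)\cap W=N(a')\cap W$ and $L(a)=L(a')$. Since both are complete to $U_{\mathrm{cpl}}$ and anticomplete to the rest of $U$, and $A$ is stable (so $a,a'$ are nonadjacent and neither is adjacent to the other), we get $N(a)\setminus\{a'\}=N(a')\setminus\{a\}=N(a)=N(a')$ exactly. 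Hence $a$ dominates $a'$ in the sense defined in Section~\ref{sec:prelims}: $L(a)\subseteq L(a')$ (equality) and $N(a')\subseteq N(a)$. But the excerpt notes that if one vertex dominates another then $(G,L)$ is not a minimal list-obstruction — contradiction. Therefore each class is a singleton and $|A|\le 7\times 2^k$, as claimed.

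The main thing to get right — and the only place where a careless argument would fail — is the step asserting that a $U$-vertex adjacent to some but not all of $A$ would be mixed, so that membership in $U$ really does force complete/anticomplete to $A$; here one must use that $A$ is stable only to the extent of knowing ``mixed'' is defined vertex-against-set and does not require $A$ connected (Claim~\ref{mixed} is not even needed — mixedness on $A$ is by definition exactly ``neither complete nor anticomplete''). After that the counting is routine: at most $2^k$ traces on $W$, at most $7$ lists, and identical-class vertices of a stable set have literally equal closed-off neighbourhoods, triggering domination. I would also double-check the edge case where a class has $a,a'$ with $N(a)\cap W=\emptyset$: then $a$ and $a'$ have all their neighbours in $U_{\mathrm{cpl}}$, still equal, so domination still applies; and the case $A=\emptyset$ or $|A|=1$ is trivial. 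No quantitative slack is wasted: the bound $7\times 2^k$ is exactly (number of lists)$\times$(number of $W$-traces).
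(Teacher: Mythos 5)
Your proof is correct and follows essentially the same route as the paper: partition $A$ into at most $7\times 2^k$ classes by the trace on $V(G)\setminus(A\cup U)$ and the list, note that stability of $A$ plus non-mixedness of $U$ forces equal neighborhoods within a class, and conclude by minimality that each class is a singleton. The paper phrases the final step by directly extending an $L$-coloring of $G\setminus x$ (giving $x$ the color of $y$) rather than citing the domination observation, but this is the same argument.
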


\begin{proof}
Partition $A$ by the adjacency in 
$V(G) \setminus (A \cup U)$ and by lists; more precisely let $A=A_1\cup A_2\cup\ldots \cup A_{7 \times 2^k}$ such that for every $i$ and for every $x,y\in A_i$, 
$N(x) \setminus (A \cup U) =N(y) \setminus (A \cup U)$ and $L(x)=L(y)$.
Since $A$ is stable and no vertex of $U$ is mixed on $A$, it follows that
for every $x,y \in A_i$, $N(x)=N(y)$. 
We claim that  $A_i \leq 1$ for every $i$.
Suppose for a contradiction that there exist $x,y \in A_i$ with $x \neq y$. 
By the minimality of $(G,L)$, $G\setminus x$ is $L$-colorable. 
Since  $N(x)= N(y)$ and $L(x)=L(y)$, we deduce that $G$ is $L$-colorable by giving $x$ the same 
color as  $y$, a contradiction. This proves Claim~\ref{stable}.
\end{proof}

\begin{claim}\label{bipartite1}
Let $(G,L)$ be a minimal list-obstruction.
Let $H$ be an induced subgraph of $G$ such that $H$ is connected and bipartite. 
Let $(A,B)$ be the bipartition of $H$, and let $u \in V(G)$ be complete to $A \cup B$.  Let $U$ be the set of all vertices in $V(H) \setminus (A \cup B)$ 
that are not mixed on $A$. Let $K= V(G) \setminus (A \cup B \cup U)$ and 
$k=|K|$. Then $|A|\leq  7\cdot 2^{7\cdot 2^k+k}$.
\end{claim}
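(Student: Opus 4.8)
The plan is to reduce this claim to Claim~\ref{stable} by replacing the bipartite graph $H$ with a single stable set, using Claim~\ref{biplemma} to preserve $P_6$-freeness. First I would apply Claim~\ref{biplemma} to the vertex $u$: since $u$ is complete to $A \cup B$ and $G|N(u)$ contains the connected bipartite graph $H$ with bipartition $(A,B)$ — but wait, $N(u)$ might be strictly larger than $A \cup B$, so I would instead work directly. The cleaner route: let $W = \bigcup_{v \in V(H) \setminus (A \cup B)} N(v)$ after suitable bookkeeping; actually the intended argument is to iterate Claim~\ref{stable}. Let me describe it that way.

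First, apply Claim~\ref{stable} to the stable set $A$ inside $G$. The set $U$ in the statement of Claim~\ref{bipartite1} is precisely the set of vertices of $V(H) \setminus (A \cup B)$ that are not mixed on $A$; however Claim~\ref{stable} requires $U$ to be \emph{all} of $V(G) \setminus A$ not mixed on $A$. So I would first enlarge: let $U'$ be the set of all vertices of $V(G) \setminus A$ not mixed on $A$, and let $k' = |V(G) \setminus (A \cup U')|$. Every vertex of $V(G) \setminus (A \cup U')$ is mixed on $A$, hence (by Claim~\ref{mixed}) has a neighbor and a non-neighbor in $A$; since $A$ is stable and $A \subseteq N(u)$, such a vertex cannot be in... — here I need $k' \le k$, i.e. $V(G) \setminus (A \cup U') \subseteq K = V(G) \setminus (A \cup B \cup U)$. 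A vertex $w$ mixed on $A$ is not in $U$; is it in $B$? If $w \in B$ then $w$ is anticomplete to $A$ (since $H$ is bipartite with parts $A,B$), contradicting that $w$ is mixed. Hence $w \notin B$, and since $w$ is mixed on $A$ it is not in $U$ either, so $w \in K$. Therefore $V(G) \setminus (A \cup U') \subseteq K$, giving $k' \le k$, and Claim~\ref{stable} yields $|A| \le 7 \cdot 2^{k'} \le 7 \cdot 2^k$.

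This already proves a bound stronger than claimed, so I suspect the intended statement and proof are set up for a later, more delicate application where $K$ is genuinely the obstacle; in any case the displayed bound $|A| \le 7 \cdot 2^{7 \cdot 2^k + k}$ follows a fortiori from $|A| \le 7 \cdot 2^k$. To match the stated bound verbatim one may simply invoke Claim~\ref{stable} with $U$ as given (noting that the vertices of $V(H) \setminus (A \cup B)$ mixed on $A$ number at most $k$, the vertices of $B$ are anticomplete to $A$ and can be partitioned into at most $7 \cdot 2^k$ classes by the argument below, and then re-partition $A$). Concretely: partition $A$ into classes $A_1, \dots, A_N$ so that within each class all vertices have the same list and the same neighborhood in $V(G) \setminus (A \cup U)$; the number of classes is at most $7 \cdot 2^k$ directly, but to obtain the iterated exponential one first partitions $B$ into at most $7 \cdot 2^k$ classes by the same reasoning applied to $B$ (using that $u$ is complete to $B$ and $B$ is stable), then partitions $A$ by list, by neighborhood in $K$, and by neighborhood in the $\le 7 \cdot 2^k$ classes of $B$ — this gives at most $3 \cdot 2^k \cdot 2^{7 \cdot 2^k}$ classes, absorbed into $7 \cdot 2^{7 \cdot 2^k + k}$. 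The point is that two vertices $x, y$ of $A$ in the same class satisfy $N(x) = N(y)$ (their neighborhoods outside $A \cup U$ agree, no vertex of $U$ is mixed on $A$, and $A$ is stable), so by minimality of $(G,L)$ and the domination observation they cannot coexist, forcing each class to have size at most one.

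The main obstacle is purely bookkeeping: correctly accounting for the vertices of $B$ and of $U$ so that the partition of $A$ into "twin classes" has the claimed size, and verifying in each case that membership in a common class forces $N(x) = N(y)$ and $L(x) = L(y)$, after which minimality of $(G,L)$ closes the argument exactly as in Claim~\ref{stable}. No new ideas beyond Claims~\ref{mixed}, \ref{biplemma}, and \ref{stable} are needed.
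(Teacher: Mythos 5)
Your proposal has a genuine gap, and it stems from one false assertion: that the vertices of $B$ are anticomplete to $A$. Since $(A,B)$ is the bipartition of the \emph{connected} bipartite graph $H$, all edges of $H$ go between $A$ and $B$, so every vertex of $B$ has a neighbor in $A$, and in general vertices of $B$ are mixed on $A$. This breaks your first reduction: the set of vertices of $V(G)\setminus A$ that are mixed on $A$ is not contained in $K$, because it typically contains most of $B$, and $|B|$ is not bounded in terms of $k$. Hence the claimed ``stronger bound'' $|A|\le 7\cdot 2^{k}$ does not follow from Claim~\ref{stable}; indeed no such bound can be expected, since the entire point of Claim~\ref{bipartite1} is to control $|A|$ when $B$ is large and its adjacencies to $A$ are uncontrolled.

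The same error undermines your fallback argument. The partition you describe (by lists, by neighborhoods in $K$, and by which classes $B_j$ of $B$ a vertex of $A$ meets) is exactly the partition used in the paper, but two vertices $x,y$ of $A$ in a common class do \emph{not} satisfy $N(x)=N(y)$: they agree on $K$, and each vertex of $U$ is complete or anticomplete to $A$, but inside $B$ they are only required to have neighbors in the same classes $B_j$, not to be adjacent to the same vertices. So the twin/domination argument of Claim~\ref{stable} does not apply, and refining by exact neighborhoods in $B$ would cost $2^{|B|}$ classes. This is precisely where the real work of the paper lies: for $x,y$ in one class, it considers the component $C$ of $H\setminus x$ containing $y$, splits $N(x)\cap B$ into the private neighbors $N_1$ (whose only neighbor in $H$ is $x$) and the rest $N_2$, uses $P_6$-freeness to prove $N_2\subseteq C$, takes by minimality a coloring $c$ of $G\setminus(N_1\cup\{x\})$, uses the vertex $u$ complete to $A\cup B$ to force $C\cap A$ and $C\cap B$ to be monochromatic, and then recolors $x$ with $c(y)$ and each $n_1\in N_1$ with the color of a class-level twin $n_1'\in B$ adjacent to $y$ (same list, same neighbors in $K$), producing a coloring of $(G,L)$ and a contradiction. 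None of this recoloring machinery appears in your proposal, so the key step---that each class of the partition contains at most one vertex---remains unproven.
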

\begin{proof}
We partition $A$ according to the adjacency in $K$ and the lists of the vertices. 
More precisely, let $A=A_1\cup A_2\cup\ldots \cup A_{7\cdot 2^k}$ such that for any $i$ and for any $x,y\in A_i$, $N(x)\cap K=N(y)\cap K$ and $ L(x)=L(y)$.
Analogously, let $B=B_1\cup B_2\cup\ldots \cup B_{7\cdot 2^k}$. 

Next, for $i = 1,\ldots,7\cdot 2^k$, we partition $A_i=A_i^1\cup\ldots \cup A_i^{ 7\cdot 2^k}$ according to the sets $B_1, B_{7\cdot 2^k}$ in which they have neighbors. 
More precisely, for any $t$ and any $x,y\in A_i^t$, $N(x)\cap B_j\neq \emptyset$ if and only if $N(y)\cap B_j\neq \emptyset$, $j = 1,\ldots,7\cdot 2^k$. 
We partition the sets in $B$ analogously. 
We claim that $A_i^j \leq 1$.
Suppose that there exist $x,y\in A_i^j$ with $x\neq y$. Let $N=N(x) \cap B$.   
Let $C$ be the component of $H \setminus x$ with $y \in C$. 
Let $N_1$ be the  set of vertices of $N$ whose unique neighbor in $H$ is $x$, and let $N_2=N \setminus N_1$.   

Suppose first that there is a vertex $s \in N_2 \setminus C$. 
Since  $y$ is not dominated by $x$, there exists $t\in (B\cap N(y))\setminus N(x)$. 
Since $s \not \in C$, it follows that $s$ and $t$ have no common neighbor in $H$, and so, since $G$ is $P_6$-free, there is a $5$-vertex path $Q$ in $H$ with ends
$s$ and $t$; let the vertices of $Q$ be $q_1$-$\ldots$-$q_5$, where $s=q_1$ and $t=q_5$.
Since $s \not \in C$, it follows that $q_2=x$.
Since $y$-$t$-$q_4$-$q_3$-$x$-$s$ is not a $P_6$ in $G$, it follows that $y$ is adjacent to $q_3$, and so we may assume that $q_4=y$.
Let $p \in A \setminus \{x\}$ be a neighbor of $s$. 
Since $p$-$s$-$x$-$q_3$-$y$-$t$ is not a $P_6$, it follows that $p$ has a neighbor in $\{t,q_3\}$, contrary to the fact that $s \not \in C$. 
This proves that $N_2 \subseteq C$.

Observe that  $C \cap N_1 = \emptyset$. 
By the minimality of $(G,L)$, there is a coloring $c$ of $(G\setminus (N_1 \cup \{x\}), L)$. 
Since $u$ is complete to $V(C)$, it follows that $C\cap A$ and $C \cap B$ are both monochromatic. 
We now describe a coloring of $G$. 
Color $x$ with $c(y)$.
Let $n_1 \in N_1$ be arbitrary.
Since $x,y \in A_i^j$, there exists $n_1'$ in $B$ such that $n_1'$ is adjacent to $y$, $L(n_1)=L(n_1')$, and $n_1,n_1'$ have the same neighbors in $K$. 
Now color $n_1$ with $c(n_1')$.
Repeating this for every vertex of $N_1$ produces a coloring of $(G,L)$ a contradiction. 
This proves~Claim~\ref{bipartite1}.
\end{proof}

\begin{claim}
\label{reducecomponents}
There is a function $q: \mathbb{N} \rightarrow \mathbb{N}$ such that the following holds.
Let $(G,L)$ be a minimal list-obstruction. Let $D_1, \ldots, D_t$ be connected subsets of $V(G)$ with the following properties.
\begin{itemize}
\item $|D_i|>1$ for every $i$,
\item $D_1, \ldots, D_t$ are pairwise disjoint and anticomplete to each other,
\item for each $i$ there is a set $U_i \subseteq V(G)$ such  that $U_i$ is complete to $D_i$,
\item $D_i$ is anticomplete to $V(G) \setminus (U_i \cup D_i)$,
\item for every $i \in \{1, \ldots, t\}$ there is $c_i \in \{1,2,3\}$
such that $c(u) \neq c_i$ for every coloring $c$ of $(G,L)$ and for every $u \in U_i$, and
\item $V(G) \neq D_1 \cup U_1$.
\end{itemize}
Then there is an induced subgraph $F$ of $G$  such that  
$V(G) \setminus \bigcup_{i=1}^t D_i \subseteq V(F) \subseteq V(G)$ and a list 
system $L'$ such 
that
\begin{itemize}  
\item $|L'(v)| \leq 2$ for every $v \in V(F) \cap  \bigcup_{i=1}^t D_i $,
\item $L'(v)=L(v)$ for every $v \in V(F) \setminus  \bigcup_{i=1}^t D_i $, and
\item $(F,L')$ is a minimal list-obstruction, and $|V(G)| \leq q(|V(F)|)$. 
\end{itemize}
\end{claim}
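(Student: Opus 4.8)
The plan is a kernelization: working on one $D_i$ at a time, I first reduce the lists on $D_i$ to size at most two in a \emph{forced} way, obtaining a list system $L'$ with $(G,L')$ still a list-obstruction; then I bound $|D_i|$ by a function of $|U_i|$ (hence of $|V(F)|$); and finally I delete the redundant parts of the $D_i$, clean up to a minimal list-obstruction $F$ containing all of $V(G)\setminus\bigcup_iD_i$, and add up the bounds to define $q$. Throughout I record the elementary structure first: since $U_i$ is complete to $D_i\neq\emptyset$ and $D_i$ is anticomplete to $V(G)\setminus(U_i\cup D_i)$, we have $U_i=N(D_i)$, and $U_i\neq\emptyset$ (else $D_i$ is a union of components of $G$, and $|D_i|>1$ together with minimality of $(G,L)$ forces $G=G|D_i$, contradicting $V(G)\neq D_1\cup U_1$ after relabelling, or else $D_i$ would be deletable, again a contradiction). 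Because $U_i$ is complete to the connected set $D_i$ with $|D_i|\ge 2$, in every proper $L$-colouring of any induced subgraph $H$ of $G$ with $D_i\cup U_i\subseteq V(H)$ the colours used on $U_i$ are disjoint from the (at least two) colours used on $D_i$; hence $U_i$ is monochromatic in such a colouring, and by the hypothesis on $c_i$ its colour lies in $\{1,2,3\}\setminus\{c_i\}=:\{a_i,b_i\}$.

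Next I carry out the list reduction for a fixed $i$. I distinguish which of the two assignments ``all of $U_i$ gets $a_i$'' and ``all of $U_i$ gets $b_i$'' can be extended to a proper $L$-colouring of $G|(D_i\cup U_i)$. If neither extends, then $(G|(D_i\cup U_i),L)$ is already a list-obstruction, so minimality of $(G,L)$ forces $G=G|(D_i\cup U_i)$, which after relabelling contradicts $V(G)\neq D_1\cup U_1$; so this degenerate case does not occur. If exactly one, say the $a_i$-assignment, extends, then in every proper $L$-colouring of any $H\supseteq D_i\cup U_i$ the set $U_i$ is monochromatic of colour $a_i$, so $D_i$ avoids $a_i$; I set $L'(v):=L(v)\setminus\{a_i\}$ for $v\in D_i$, which is lossless in the sense that any $L$-colouring of any $H$ with $D_i\subseteq V(H)$ is automatically $L'$-admissible on $D_i$. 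If both extend, then the constraint $D_i$ imposes on its surroundings is only ``$U_i$ is monochromatic with colour in $\{a_i,b_i\}$'', and (using that $(G\setminus D_i,L)$ cannot be a list-obstruction, as it would force $(G\setminus v,L)$ to be one for $v\in D_i$) I can exhibit a single vertex $w\in D_i$ with $\{a_i,b_i\}\subseteq L(w)$ that already realises this constraint once its list is cut to $L'(w):=\{a_i,b_i\}$, and reduce the remaining vertices of $D_i$ by deleting $c_i$ (so $G|D_i$ now carries a $2$-list system using only $\{a_i,b_i\}$; connectivity of $D_i$ then forces $G|D_i$ to be bipartite, or $(G,L')$ collapses locally, which only helps). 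In all cases every $v\in D_i$ has $|L'(v)|\le 2$, and since $L'\le L$ the pair $(G,L')$ is again a list-obstruction.

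The main work — and the step I expect to be the hardest — is to bound $|D_i|$ by a function of $|U_i|$. Here I use that $G|D_i$ is $P_6$-free and, under $L'$, carries lists of size at most two, so Lemma~\ref{lem:size2} caps the size of any minimal list-obstruction induced inside it; combined with the minimality of $(G,L)$ (no vertex of $D_i$ is dominated by another, and $D_i\setminus v$ is colourable with $U_i$ pinned to the forced colour, for each $v$) and the earlier structural claims, I argue that $G|D_i$ is, up to deleting boundedly many vertices, either bipartite with a vertex of $U_i$ complete to it — whence Claim~\ref{bipartite1} (and Claim~\ref{stable} for the stable parts) bounds each side of the bipartition by a function of $|U_i|$ — or a minimal $P_6$-free $2$-list obstruction of at most $100$ vertices. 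This yields a set $D_i^*\subseteq D_i$ with $|D_i^*|$ and $|D_i|$ both bounded by some $g(|U_i|)$, such that $(G\setminus(D_i\setminus D_i^*),L')$ is still a list-obstruction. I also bound the number $t$ of components: each $U_i$ is a non-empty subset of $V(G)\setminus\bigcup_jD_j\subseteq V(F)$, and an argument of the same flavour (minimality, plus the fact that $G|(D_i\cup U_i)$ is the join of $G|D_i$ with $G|U_i$, which via $P_6$-freeness severely restricts how many pairwise-anticomplete $D_j$ can hang off a common $U$, and the $c_i$-hypothesis which prevents several such $D_j$ from jointly killing all colours of $U$) bounds $t$ by some $h(|V(F)|)$.

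Finally I assemble. Delete $D_i\setminus D_i^*$ from $G$ for every $i$ (the $D_i$ are pairwise anticomplete, so the reductions do not interfere), obtaining an induced subgraph $G_0$ with $V(G)\setminus\bigcup_iD_i\subseteq V(G_0)\subseteq V(G)$ and $(G_0,L')$ a list-obstruction; then delete any remaining non-critical vertices. By the ``losslessness'' above, any $L$-colouring of $G_0\setminus x$ (which exists for $x\notin\bigcup_iD_i$ because $(G\setminus x,L)$ is colourable by minimality) restricts to an $L'$-colouring, so every vertex outside $\bigcup_iD_i$ is critical and survives; hence the resulting minimal list-obstruction $(F,L')$ still contains all of $V(G)\setminus\bigcup_iD_i$ and satisfies the two list conditions. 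Since $|V(F)|\ge|V(G)\setminus\bigcup_iD_i|$, $t\le h(|V(F)|)$, and $|D_i|\le g(|U_i|)\le g(|V(F)|)$, we get
\[
|V(G)|=|V(G)\setminus\textstyle\bigcup_iD_i|+\sum_{i=1}^t|D_i|\le |V(F)|+h(|V(F)|)\cdot g(|V(F)|)=:q(|V(F)|),
\]
which proves the claim.
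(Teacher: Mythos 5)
The decisive gap is in your final assembly, which is where the whole content of Claim~\ref{reducecomponents} lies: you must guarantee that the minimal obstruction $(F,L')$ still contains all of $V(G)\setminus\bigcup_i D_i$, and your ``losslessness'' argument does not deliver this. In the case where both monochromatic assignments on $U_i$ extend, you remove $c_i$ from the lists on $D_i$; but (reading the $c_i$-hypothesis, as you do, as a statement about colourings of induced subgraphs) every proper $L$-colouring of $G|(D_i\cup U_i)$ makes $U_i$ monochromatic with a colour in $\{a_i,b_i\}$ and therefore uses the other two colours on $D_i$, one of which is $c_i$, since $D_i$ contains an edge. So after your reduction $G|(D_i\cup U_i)$ is not $L'$-colourable at all; the minimal obstruction $F$ you extract may then live entirely inside $D_i\cup U_i$ and the required containment fails --- a ``local collapse'' does not ``only help'' here, it is exactly what the conclusion forbids. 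The reduction is thus not an equivalent reformulation of the constraint imposed by $D_i$, and moreover the single vertex $w\in D_i$ with $\{a_i,b_i\}\subseteq L(w)$ that you invoke need not exist (take $D_i$ to be an edge with lists $\{b_i,c_i\}$ and $\{a_i,c_i\}$: both assignments extend). Even in the ``exactly one extends'' case the restriction step breaks when the deleted vertex $x$ lies in $U_i$: an $L$-colouring of $G\setminus x$ may colour $U_i\setminus\{x\}$ with $b_i$ (removing $x$ can unlock it), so $D_i$ then uses $a_i$, the very colour you removed, and the colouring does not restrict to an $L'$-colouring.

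This is precisely where the paper's proof does something your plan is missing. Since $G|(D_i\cup U_i)$ is a proper induced subgraph (this is where $V(G)\neq D_1\cup U_1$ is used), each $G|D_i$ is bipartite, with parts $D_i^1,D_i^2$ that are monochromatic in every relevant colouring; the paper replaces each part by a single representative whose list is $\bigcap_{d\in D_i^j}L(d)$, and when that intersection is full it deletes the representative and removes $c_i$ from the other representative's list (this is where the $c_i$-hypothesis enters). The containment $V(G)\setminus\bigcup_i D_i\subseteq V(F)$ is then proved not by restricting a colouring of $G\setminus v$ but by converting it: each surviving representative is recoloured with the unique colour of its monochromatic part, which is admissible precisely because its list is the intersection, and the deleted full-list parts are coloured $c_i$. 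Your in-place list reduction admits no analogue of this conversion, and that is the missing idea. Finally, your quantitative steps are only asserted: bipartiteness of $G|D_i$ is never actually established; Lemma~\ref{lem:size2} is not applicable to $(G|D_i,L')$, which need not be an obstruction; the correct bound is that $|D_i|$ is at most an absolute constant by Claim~\ref{bipartite1}, because some $u\in U_i$ is complete to $D_i$ and no vertex outside $D_i$ is mixed on a part of it; and the bound $t\le|V(F)|$ used by the paper comes from each $D_i$ having to contribute a vertex to $F$ (again via the conversion argument), not from your sketched ``$P_6$-freeness restricts how many $D_j$ can hang off a common $U$''.
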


\begin{proof}
Write $D=\bigcup_{i=1}^t D_i$.
Since $V(G) \neq U_1 \cup D_1$, it follows from the minimality of $(G,L)$ 
that $(G|(D_i \cup U_i),L)$ is colorable for every $i$, and so 
each $G|D_i$ is bipartite. Let $D_i^1,D_i^2$ be the bipartition of $G|D_i$.
Then for every $i$ there is  a coloring of $(G|D_i,L) $ in which each of the 
sets $D_i^1,D_i^2$ is monochromatic, and in particular
$\bigcap_{d \in D_i^j}L(d) \neq \emptyset$ for every $i \leq t$ and 
$j \in \{1,2\}$.

For every $i \leq t$ and $j \in \{1,2\}$, let $d_i^j \in D_i^j$ such that
$d_i^1$ is adjacent to $d_i^2$. Set $L''(d_i^j)=\bigcap_{d \in D_i^j}L(d)$.
Let $F''$ be the graph obtained from $G$ by deleting
$D \setminus (\bigcup_{i=1}^m\{d_i^1,d_i^2\})$.
Set $L''(v)=L(v)$ for every $v \in V(F'') \setminus D$.
 
We may assume that there exists $s \in \{0,1, \ldots, t\}$ such that
$|L''(d_i^1)|=3$ for every  $i \leq s$, and that
for $i \in \{s+1, \ldots, t\}$ and $j \in \{1,2\}$, $|L''(d_i^j)| \leq 2$.

Let $F'$ be obtained from $F''$ by deleting $\{d_1^1, \ldots, d_s^1\}$.
For $i \in \{1, \ldots, s\}$, let $L'(d_i^2)=L''(d_i^1) \setminus  \{c_i\}$. 
Let $L'(v)=L''(v)$ for every other 
vertex of $F'$. Then $V(G) \setminus D \subseteq V(F')$, 
$L'(v)=L(v)$ for every $v \in V(F') \setminus D$, and $|L'(v)| \leq 2$
for $v \in V(F') \cap D$.

We claim  that $(F',L')$ is not colorable. Suppose $c'$ is a coloring of
$(F',L')$.  We construct a coloring of $(G,L)$. Set
$c(v)=c'(v)$ for every $v \in V(G) \setminus D$. 
For $i \in \{1, \ldots, t\}$ and for  every $d \in D_i^2$,
set $c(d)=c'(d_i^2)$. Then $c(d) \neq c_i$ if $i \leq s$. For 
$i \in \{1, \ldots, s\}$, set $c(d)=c_i$ for every $d \in D_i^1$, 
and for $i \in \{s+1, \ldots, t\}$, set $c(d)=c'(d_i^1)$ for every 
$d \in D_i^1$. Now $c$ is a coloring of $(G,L)$, a contradiction.

Let $(F,L')$ be a minimal obstruction induced by $(F',L')$.
We claim that $V(G) \setminus D \subseteq V(F)$. Suppose not, 
let $v \in V(G) \setminus (D \cup V(F))$.
It follows from the minimality of $G$ that
$(G\setminus v, L)$ is colorable. Let $c$ be such a coloring. 
Set $c'(v)=c(v)$ for every $v \in V(F) \setminus D$. 
Let $i \in \{1, \ldots, t\}$. If $U_i \neq \{v\}$, then
each of these sets $U_i,D_i^1,D_i^2$ is monochromatic in $c$.
If $U_i=\{v\}$, then $D_i$ is anticomplete to $V(G) \setminus (D_i \cup \{v\})$, and  we may assume that each of $D_i^1,D_i^2$ is monochromatic in $c$.
Now set $c'(d_i^2)$ to be the
unique color that appears in $D_i^2$, and for $i \in \{s+1, \ldots t\}$,
set $c'(d_i^1)$ to be the unique color that appears in $D_i^1$. Then
$c'$ is a coloring of $(F \setminus v, L)$, a contradiction.
Now Claim~\ref{reducecomponents} follows from at most $t \leq |V(F)|$ 
applications of Lemma~\ref{bipartite1}.
\end{proof}

\begin{claim}\label{clm:Kempe}
Let $x \in V(G)$ such that $L(x)=\{1,2,3\}$ and let $U,W \subseteq V(G)$ be disjoint non-empty sets such that $N(x)= U \cup W$ and $U$ is complete to $W$. 
Then (possibly exchanging the roles of $U$ and $W$) either
\begin{enumerate}
\item there is a path $P$ with $|V(P)|=4$, such that 
the ends of $P$ are in $U$, no internal vertex of $P$ is in $U$, and 
$V(P) \cap W=\emptyset$, or
\item there exist distinct $i,j \in \{1,2,3\}$ 
and vertices $u_i,u_j \in U$ and $w_i, w_j$ such that for every
$k \in \{i,j\}$ 
\begin{itemize}
\item $k \in L(u_k)$, 
\item $|L(w_k) \cap \{i,j\}|=1$, 
\item there is a path $P_k$ from $u_k$ to $w_k$, 
\item if $|V(P_k)|$ is even, then $k \not \in L(w_k)$,
\item if $|V(P_k)|$ is odd, then $k \in L(w_k)$ 
\end{itemize}
and $V(P_i)$ is anticomplete to $V(P_j)$.
\end{enumerate}
\end{claim}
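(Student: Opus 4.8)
The plan is to use the minimality of $(G,L)$ to produce a coloring of $G\setminus x$ and then argue by a list-Kempe-chain swap. Since $(G,L)$ is a minimal list-obstruction, $(G\setminus x,L)$ is colorable; fix an $L$-coloring $c$ of $G\setminus x$. Because $(G,L)$ is not colorable, $c$ does not extend to $x$, so $\{c(v):v\in N(x)\}=\{1,2,3\}$. As $U$ is complete to $W$, the sets $c(U)$ and $c(W)$ are disjoint and nonempty with union $\{1,2,3\}$, so exactly one of them is a singleton; exchanging $U$ and $W$ if necessary (this is the exchange referred to in the statement) we may assume $c(U)=\{r\}$ and $c(W)=\{i,j\}$ with $\{i,j,r\}=\{1,2,3\}$. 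In particular $W$ has a vertex of color $i$ and one of color $j$, and $r\in L(u)$ for every $u\in U$.

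The core idea is that if we could recolor $G\setminus x$ so that $W$ becomes monochromatic (keeping $U$ monochromatic as well), then $N(x)$ would use at most two colors and $c$ would extend to $x$, contradicting that $(G,L)$ is not colorable. I would attempt this with a list-Kempe swap on the color pair $\{i,j\}$: let $F$ be the subgraph of $G\setminus x$ induced by the vertices $v$ with $c(v)\in\{i,j\}$ and $\{i,j\}\subseteq L(v)$, and observe that swapping $i\leftrightarrow j$ on a connected component of $F$ preserves properness and all lists, the only possible new conflicts being with vertices outside $F$ that are colored $i$ or $j$, i.e. vertices \emph{pinned} within $\{i,j\}$ to a single color by their list. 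Picking $w_i,w_j\in W$ with $c(w_i)=i$, $c(w_j)=j$ and trying to push $W\cap V(F)$ to one color, this recoloring must be obstructed, and the obstruction is a \emph{barrier}: either the component of $F$ that must be swapped carries on its boundary a vertex pinned to the wrong color, or $W$ contains both a vertex pinned to $i$ and a vertex pinned to $j$ that cannot be recolored simultaneously.

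I would then convert each kind of barrier into the stated structure using that $G$ is $P_6$-free. In the first case the obstruction produces a connection through $U$ (whose vertices all have color $r$) between pieces of the chain that cannot be merged; taking a shortest such connection and using $P_6$-freeness, together with the vertex $x$ which is adjacent to all of $U$, to bound its length, one is forced to a path $u$-$a$-$b$-$u'$ with $u,u'\in U$ and $a,b\notin U\cup W$, which is the first alternative (the path may come out inside $U$ or inside $W$, both of which are permitted by the statement). In the second case the two pinned vertices, together with shortest paths $P_k$ ($k\in\{i,j\}$) from a vertex $u_k\in U$ to the pinned vertex $w_k$, give the vertices and paths of the second alternative; the parity conditions relating $|V(P_k)|$ and $L(w_k)$ are precisely the statement that, propagating the colors $i,j$ along $P_k$ starting with color $k$ at $u_k$, the forced color reaching $w_k$ equals the unique element of $L(w_k)\cap\{i,j\}$, which is exactly what makes $w_k$ block the swap. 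Finally, $P_6$-freeness is used to force $V(P_i)$ anticomplete to $V(P_j)$: an edge joining the two, together with $x$ (adjacent to both $u_i$ and $u_j$) and a vertex of $W$ (to which $u_i,u_j$ are complete), would close up into an induced $P_6$.

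The main obstacle I expect is the bookkeeping around the list-Kempe chains: one must choose the component to be swapped, and then a minimal barrier, carefully enough that the barrier reduces to exactly one of the two listed configurations and nothing else, and one must invoke $P_6$-freeness at precisely the right places to bound the relevant path lengths and to obtain the anticompleteness of $P_i$ and $P_j$. A secondary complication is that a vertex whose list also contains $r$ may be recolored to $r$, but only if not all of its neighbours in $U$ keep the color $r$, so one has to track which recolorings of $U$ remain available while performing the swap; this adds technical overhead but no new idea beyond the Kempe analysis itself.
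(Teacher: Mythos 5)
Your core idea is the same as the paper's (use minimality to color $G\setminus x$, observe that $N(x)$ must see all three colors, so one of $U,W$ is bichromatic in $\{i,j\}$ and the other monochromatic, then analyse $\{i,j\}$-Kempe chains), but the case analysis that actually turns a failed swap into the two stated outcomes is not set up correctly, and that is where the claim lives. The correct dichotomy is: either some component of the subgraph induced by the $\{i,j\}$-colored vertices contains vertices of both colors of the \emph{bichromatic} side of $N(x)$ --- a shortest connecting path is even by parity, has at least $4$ vertices because that side is stable (no $K_4$), and fewer than $6$ because it is induced, giving outcome~1 --- or no component does, in which case, for \emph{each} $k\in\{i,j\}$, the attempt to swap all components meeting the color-$k$ vertices of that side must fail (otherwise the recoloring extends to $x$), producing a ``deficient'' vertex inside such a component; the path $P_k$ is taken inside that component, and the two paths are anticomplete simply because they lie in the disjoint, anticomplete unions of components attached to the two color classes. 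Your two ``barriers'' do not match this: the connectivity case is missing (your description of it as ``a connection through $U$'' is incoherent, since in your orientation $U$ is monochromatic in $r$ and meets no $\{i,j\}$-chain), the boundary-pinned-vertex barrier is the deficiency obstruction and feeds outcome~2, not outcome~1, and outcome~2 needs failure in \emph{both} swap directions with the blocking vertices allowed to sit anywhere in the chains, not only in $W$.

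The step that would actually fail as written is the anticompleteness of $V(P_i)$ and $V(P_j)$. You propose to force it by $P_6$-freeness (``an edge joining the two \ldots would close up into an induced $P_6$''), but nothing of the sort is forced: if, say, each $P_k$ has only two vertices and their far ends are adjacent, the resulting configuration contains no induced $P_6$, so there is no contradiction to extract; what an edge between the two paths really tells you is that the two relevant color classes meet a common Kempe component, i.e.\ you are in the other case of the dichotomy and must output the $P_4$ of outcome~1 instead. In the paper the anticompleteness is obtained structurally, from the component separation, and without first splitting off the connectivity case your argument cannot recover it. A secondary slip: you fix the exchange so that $U$ is the monochromatic side and then place the endpoints $u_k$ in $U$ and start the paths there; those vertices have color $r\notin\{i,j\}$, do not lie in the chains, and the parity/propagation derivation of the conditions on $L(w_k)$ does not apply to them --- the endpoints must be taken on the bichromatic side, which is the set playing the statement's role of $U$ after the allowed exchange.
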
 
\begin{proof}
Since we may assume that $G \neq K_4$, it follows that $U$ and $W$ are both stable sets. 
By the minimality of $G$, there is an $L$-coloring of $G \setminus x$, say $c$.
Since $G$ does not have an $L$-coloring, we may assume that there exist $u_1,u_2 \in U$ such that $c(u_i)=i$. 
Then $c(w)=3$ for every $w \in W$, and $c(u) \in \{1,2\}$ for every $u \in U$. 
For $i=1,2$ let $U_i=\{u \in U : c(u)=i\}$. 
Let $V_{12}=\{v \in V(G) : c(v) \in \{1,2\}\}$, and let $G_{12}=G|V_{12}$. 
Suppose first that some component of $G_{12}$ meets both $U_1$ and $U_2$.
Let $P$ be a shortest path from $U_1$ to $U_2$ in $G_{12}$. 
Then $|V(P)|=4$ since $G$ is $P_6$-free.
Moreover, $V(P) \cap W = \emptyset$, and no interior vertex of $P$ is in $U$,
and \ref{clm:Kempe}.1 holds.

So we may assume that no component of $G_{12}$ meets both $U_1$ and $U_2$.
For $i=1,2$ let  $V_i$ be the union of the components of $G_{12}$ that meet $U_i$.
Then $V_1$ is anticomplete to $V_2$.
If we can exchange the colors $1$ and $2$ on every component that meets
$U_1$, then doing so produces a coloring of $G \setminus x$ where every vertex 
of $U$ is colored $2$ and every vertex of $W$ is colored $3$; 
this coloring can then be extended to $G$, which is a contradiction. 
So there is a component $D$ that meets $U_1$ and where such an exchange is 
not possible. Let $(D_1,D_2)$ be a bipartition of $D$.
We may assume that $U_1 \cap D_2 = \emptyset$. Let us say that $d \in D$
is {\em deficient} if either $d \in D_1$ and $2 \not \in L(d)$, or
$d \in D_2$ and $1 \not \in L(d)$. Then there is a deficient vertex in $D$.
Let $P_1$ be a shortest path in $D$ from some vertex $u_1 \in U_1$ to a 
deficient vertex $w_1$ of $D$. Then $u_1,w_1,P_1$ satisfy the conditions of
\ref{clm:Kempe}.2.   It follows from symmetry that there exist 
$u_2, w_2 \in V_2$ and a path $P_2$ from $u_2$ to $w_2$ satisfying the
condition of \ref{clm:Kempe}.2. Since $V_1$ is anticomplete to $V_2$,
it follows that $V(P_1)$ is anticomplete to $V(P_2)$ and \ref{clm:Kempe}.2 
holds.  \end{proof}

We also make use of the following result.		

\begin{theorem}[Camby and Schaudt~\cite{CS14}]\label{structure}
For all $ t \geq 3 $, any connected $ P_t $-free graph $ H $ contains a connected dominating set whose induced subgraph is either $P_{t-2}$-free, or isomorphic to $P_{t-2}$.
\end{theorem}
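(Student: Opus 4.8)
The plan is to take $D$ to be a \emph{minimum} connected dominating set of $H$ (one exists since $H$ is connected) and to prove that $H|D$ is $P_{t-2}$-free or isomorphic to $P_{t-2}$. If $H|D$ is $P_{t-2}$-free we are done, so assume $Q=q_1$-$q_2$-\ldots-$q_{t-2}$ is an induced path of $H|D$. It suffices to show $D=V(Q)$: then $Q$ is an induced Hamiltonian path of $H|D$, so $H|D$ has no chords (because $Q$ is induced) and no vertex outside $V(Q)$ (because $V(Q)=D$), i.e.\ $H|D\cong P_{t-2}$.

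To get $D=V(Q)$, observe that $V(Q)$ is connected with $|V(Q)|=t-2\le|D|$; hence if $V(Q)$ is a dominating set of $H$ then it is a connected dominating set of size at most $|D|$, so minimality of $|D|$ forces $|D|=t-2$, and since $V(Q)\subseteq D$ we get $D=V(Q)$. It remains to rule out that $V(Q)$ fails to dominate $H$. Suppose it does, fix a vertex $v\notin V(Q)$ with no neighbour in $V(Q)$, and let $C$ be the component containing $v$ of the subgraph of $H$ induced by the vertices lying outside $V(Q)$ and having no neighbour in $V(Q)$. Since $H$ is connected there is a shortest path from $C$ to $V(Q)$, say $v'$-$w_1$-\ldots-$w_p$-$q_j$ with $v'\in C$, all $w_i\notin V(Q)$ and $q_j$ the only vertex of $V(Q)$ on it; note $p\ge1$ and each $w_i$ has a neighbour in $V(Q)$. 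Let $q_a$ and $q_b$ be, respectively, the vertex of $V(Q)$ of smallest and largest index having a neighbour in $\{w_1,\ldots,w_p\}$. The idea is that, since $q_a,q_b$ are extremal and $Q$ is induced, one can splice the segment $q_1$-\ldots-$q_a$, a carefully chosen monotone sub-walk through the $w_i$'s, and the segment $q_b$-\ldots-$q_{t-2}$ into an induced path, then attach to it vertices of $C$ (and, if necessary, of $D$ along a path joining $v$ to $V(Q)$ inside $H|D$) until it has at least $t$ vertices, contradicting $P_t$-freeness.

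The step carrying all the weight is this last one, turning the splice into a genuinely induced path on $t$ vertices. The obstacles are that a bridge vertex $w_i$ may have several, possibly interior, neighbours on $Q$; that both arms of the splice run through $w_1$, so the object is a priori a ``Y'' rather than a path; and that chords can appear where the arms meet $q_a$ and $q_b$. Dealing with this requires: (i) choosing $v$, the bridge, and the indices $a,b$ as extremally as possible (taking, for each of $q_a,q_b$, a suitably extreme neighbour along the bridge); (ii) a case split according to whether $[a,b]$ is short and near an end of $Q$, so that one arm already contains almost all of $Q$, or spread out, in which case --- since $Q$ is already long --- the component $C$ together with \emph{all} of its attachments into $V(Q)$ is forced to contain a long induced path; and (iii) if necessary, choosing $D$ to have, subject to being a minimum connected dominating set, as few edges as possible, which removes shortcut edges from $H|D$. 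The base cases $t\le4$ are immediate: a connected $P_3$-free graph is complete, so a single vertex is a connected dominating set inducing $P_1\cong P_{t-2}$; and a connected $P_4$-free graph has a dominating vertex or a dominating edge, giving a connected dominating set inducing $P_1$ (which is $P_2$-free) or $P_2\cong P_{t-2}$. An alternative to starting from a minimum connected dominating set is to seed $D$ with a longest induced path of $H$ and enlarge it in a controlled way; the induced-path bookkeeping remains the crux.
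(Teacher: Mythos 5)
The paper does not prove this statement at all; it is quoted from Camby and Schaudt~\cite{CS14}, so your attempt has to stand on its own. As written it does not: everything is concentrated in the claim that an induced $P_{t-2}$, say $Q$, contained in a minimum connected dominating set $D$ must dominate $H$, and that claim is exactly what you do not prove. Worse, the route you indicate for it --- obtain an undominated vertex $v$, a bridge into $Q$ with extreme attachments $q_a,q_b$, and splice these into an induced path ``contradicting $P_t$-freeness'' --- cannot work from $P_t$-freeness alone. If the bridge vertex $w$ attaches to the interior of $Q$, the candidate induced paths are $q_1$-$\cdots$-$q_a$-$w$-$v$ (with $a+2$ vertices), $v$-$w$-$q_b$-$\cdots$-$q_{t-2}$ (with $t+1-b$ vertices), and the splice $q_1$-$\cdots$-$q_a$-$w$-$q_b$-$\cdots$-$q_{t-2}$, which has $t+a-b\le t-2$ vertices when $b\ge a+2$ and has the chord $q_aq_{a+1}$ when $b=a+1$; the vertex $v$ can only be appended at an end of a path, and $w$ sits in the middle of the splice. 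A concrete witness that no $P_t$ need appear: for $t=6$ take $Q=q_1$-$q_2$-$q_3$-$q_4$, a vertex $m$ adjacent exactly to $q_2,q_3$, and $v$ adjacent only to $m$; this graph is even $P_5$-free, $Q$ is an induced $P_4$, and $v$ has no neighbour in $V(Q)$. So any correct proof must bring the minimality (or some other extremal choice) of $D$ into the splice argument itself, not only at the very end to conclude $|D|=t-2$; your items (i)--(iii) acknowledge this but do not carry it out, and it is not even argued that minimum cardinality is the right extremal property. This missing step is the theorem, not a routine verification.

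Two smaller points. First, in your setup the assertion that every internal vertex $w_i$ of the shortest path from $C$ to $V(Q)$ has a neighbour in $V(Q)$ is unjustified for $i\ge 2$ (such a $w_i$ could be undominated but lie in a different component of undominated vertices); this is repairable by choosing an undominated vertex at minimum distance from $V(Q)$, which in fact forces that distance to be $2$ and simplifies your bridge to a single vertex. Second, the base cases $t\le 4$ and the final deduction ``$Q$ dominating $\Rightarrow$ $D=V(Q)\Rightarrow H|D\cong P_{t-2}$'' are fine; it is only the central claim that is open.
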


Our strategy from now on is as follows. 
Let $(G,L)$ be a minimal list-obstruction, where $G$ is $P_6$-free.
At every step, we find a subgraph $R$ of $G$, and consider all  possible 
partial precolorings of $R$.
For each precoloring, we update three times with respect to $R$, and possibly
modify the lists further, to produce
a minimal list-obstruction $(G',L')$ where $G'$ is an induced subgraph of $G$, 
and $|L(v)| \leq 2 $ for every $v \in V(G')$, and such that
$|V(G)|$ is bounded from above by a function of $|V(G')|$ (the function does not depend on $G$, it works for all $P_6$-free graphs $G$).
Since by Lemma~\ref{lem:size2} $V(G')$ has bounded size,
it follows from Lemma~\ref{precolor} and Lemma~\ref{update4}  that 
$V(G) \setminus R$ has bounded size. 
Now we use the minimality of $G$ and the internal structure of $R$ to show 
that $R$ also has a bounded number of vertices, and so $|V(G)|$ is bounded.
Next we present the details of the proof.

\begin{claim}\label{c5free}
There exists an integer $C$ such that the following holds. 
Let  $(G,L)$ is a minimal list-obstruction, where $G$ is $C_5$-free.
Then  $|V(G)| \leq C$.
\end{claim}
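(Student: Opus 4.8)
The plan is to exhibit a \emph{semi-dominating set} of $(G,L)$ of bounded size and then invoke Lemma~\ref{P_4}. Since $G$ is $P_6$-free, so is every induced subgraph of $G$, so by Lemma~\ref{lem:size2} every minimal sub-obstruction with lists of size at most two has at most $100$ vertices; hence Lemma~\ref{P_4} turns a semi-dominating set of size at most $t$ into the bound $|V(G)| \le 36\cdot 3^{t}\cdot 100 + t$.

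First I would dispose of the degenerate cases. If $(G,L)$ is disconnected, minimality forces $G$ to be connected. If $G$ contains $K_4$, then four mutually adjacent vertices are already uncolorable with lists of order $3$, so minimality gives $G=K_4$; thus we may assume $\omega(G)\le 3$. If every list has size at most two, Lemma~\ref{lem:size2} applies directly, so we may assume some vertex has list $\{1,2,3\}$. Now apply Theorem~\ref{structure} with $t=6$: the connected $P_6$-free graph $G$ has a connected dominating set $D$ with $G|D$ either isomorphic to $P_4$ or $P_4$-free. If $G|D\cong P_4$, then $D$ is a dominating set (hence semi-dominating) with $|D|=4$, and Lemma~\ref{P_4} finishes the proof. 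Otherwise $G|D$ is a connected cograph; being connected on at least two vertices (the one-vertex case being trivial), it is a join $D_1*D_2$ of two nonempty cographs, and since $\omega(D_1*D_2)=\omega(D_1)+\omega(D_2)\le 3$ we may assume $D_1$ is a stable set and $D_2$ is $K_3$-free. As every connected $K_3$-free cograph is complete bipartite, $D_2$ is a disjoint union of isolated vertices and complete bipartite graphs, all complete to $D_1$.

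It remains to replace $D$ by a bounded-size (semi-)dominating set; this is where $C_5$-freeness is used, through Claim~\ref{clm:Kempe}. For a vertex $x$ with $L(x)=\{1,2,3\}$ whose neighbourhood decomposes as $N(x)=U\cup W$ with $U$ complete to $W$ (so $U,W$ are stable, as $\omega(G)\le 3$), alternative \ref{clm:Kempe}.1 is impossible: the $P_4$ it produces has both ends in the stable set $U$ and both internal vertices outside $N(x)$, so together with $x$ it would be an induced $C_5$. Hence alternative \ref{clm:Kempe}.2 always holds, and by $P_6$-freeness the two anticomplete Kempe paths it yields are induced and hence short. In our configuration every $z\in D_2$ has $N_{G|D}(z)=D_1\cup Y$ with $D_1$ complete to $Y$ (here $Y$ is the opposite side of the complete bipartite piece containing $z$), so this structure is available, and I would combine it with Claim~\ref{mixed} and Claim~\ref{biplemma} to bound the number of vertices of $V(G)\setminus D$ that are mixed on $D_1$ or on a fixed piece of $D_2$. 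Once that count is bounded, Claim~\ref{stable} bounds $|D_1|$, Claim~\ref{bipartite1} (with the complete vertex ranging over $D_1$) bounds each complete bipartite piece of $D_2$, and Claim~\ref{reducecomponents} collapses the pairwise anticomplete, twin-like pieces of $D_2$ to boundedly many of them; what remains of $D$ is then a dominating set of bounded size, and Lemma~\ref{P_4} completes the argument.

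The main obstacle is exactly this middle step. A priori the dominating cograph $G|D$ is unbounded, so the whole weight of the proof lies in using $C_5$-freeness (via the Kempe-chain dichotomy of Claim~\ref{clm:Kempe}) to limit how vertices outside $D$ attach to its large stable part $D_1$ and its many complete bipartite pieces, and then in carefully tracking the constants produced by the successive applications of Claims~\ref{stable}, \ref{bipartite1}, \ref{reducecomponents} and of the updating lemmas (Lemma~\ref{update4}, Lemma~\ref{precolor}), so that the final bound is a single absolute constant $C$ that works uniformly for all $C_5$-free (and $P_6$-free) minimal list-obstructions.
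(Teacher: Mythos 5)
Your opening moves match the paper (Theorem~\ref{structure}, the $P_4$ case via Lemma~\ref{P_4}, Seinsche-type structure of the $P_4$-free dominating set as a join with a stable side), but the heart of the argument is missing, and you say so yourself: everything hinges on ``the middle step'' of bounding how outside vertices attach to the unbounded dominating cograph, and no actual argument is given for it. Worse, the one concrete tool you propose for that step cannot be applied as stated: Claim~\ref{clm:Kempe} requires a vertex $x$ with $L(x)=\{1,2,3\}$ whose \emph{entire} neighborhood in $G$ is $U\cup W$ with $U$ complete to $W$, whereas for $z\in D_2$ you only control $N_{G|D}(z)$; since $D$ is dominating, $z$ typically has many neighbors outside $D$ with no such join structure, and there is no reason $L(z)=\{1,2,3\}$. (Your observation that alternative \ref{clm:Kempe}.1 produces an induced $C_5$ through $x$ is correct, but it is moot without the hypotheses.) The subsequent applications are also circular: Claim~\ref{stable} bounds $|D_1|$ only in terms of the number of vertices mixed on $D_1$, Claim~\ref{bipartite1} bounds a side of a bipartite piece only in terms of the vertices mixed on it, and Claim~\ref{reducecomponents} needs each piece to be anticomplete to everything outside its attachment set together with a forbidden color for that attachment set in every coloring -- none of these preconditions is established for the pieces of $D_2$, which a priori meet $V(G)\setminus D$ arbitrarily. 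So the proposal is a plan with a genuine gap, not a proof, and the intended route (a bounded semi-dominating set fed into Lemma~\ref{P_4}) is not how the bound is actually obtained.

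For contrast, the paper does not try to bound the dominating set as a semi-dominating set. It picks only two or three vertices $S_0$ of the join (an edge $a b$ across the join plus, if it exists, a common neighbor $c$), chosen so that no vertex of $G$ is complete to $S_0$ (using $K_4$-freeness), precolors them and updates via Lemmas~\ref{precolor} and~\ref{update4}; it then grows the connected precolored set $S_1$, adds at most six further extremally chosen vertices $U_{ij},V_{ij}$, and analyzes the resulting sets $X_3,Y_3$. The $P_6$-freeness yields that no vertex of $X_3$ is mixed on an edge of $Y_3$; $C_5$-freeness is used exactly once, to show the classes $X_{12},X_{13},X_{23}$ are pairwise complete; then Claim~\ref{reducecomponents} handles the nontrivial components of $Y_3$, a small refinement of list systems (branching on a monochromatic $X_{ij}$) reduces all lists to size at most two so that Lemma~\ref{lem:size2} applies, and only at the very end does Claim~\ref{stable} bound the stable sets $X_{ij}$. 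If you want to salvage your approach you would need to supply, from scratch, the attachment analysis that replaces this entire chain.
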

\begin{proof}
We may assume that $|V(G)|>8$, and therefore there is no $K_4$ in $G$.
Let $H$ be as in Theorem~\ref{structure}. Then $H$ is either $P_4$ or $P_4$-free.  If $|V(H)| \leq 4$, the result follows from Lemma~\ref{P_4}, so we may assume that $H$ is $P_4$-free. Now by a result of \cite{Scheische} $V(H)=A \cup B$,
where $A$ is complete to $B$, and both $A$ and $B$ are non-empty.
Choose $a \in A, b \in B$. Define the set $S_0$ as follows. If there is a vertex $c$ complete to $\{a,b\}$, let $S_0=\{a,b,c\}$; if no such $c$ exists, let
$S_0=\{a,b\}$. Then no vertex of $V(G)$ is complete to $S_0$. Let $X_0$ be the set of vertices of $G$ with a neighbor in $S_0$, and let $Y_0=V(G) \setminus (S_0 \cup X_0)$. By Theorem~\ref{structure} every vertex of $Y_0$ has a neighbor in $X_0$. By Lemma~\ref{precolor} and Lemma~\ref{update4} we may assume that the vertices of $S_0$ are precolored; let $L_1$ be the list system obtained from $L$ by updating three times with respect to $S_0$. Then $|L_1(x)| \leq 2$
for every $v \in X_0$. Let $S_1'=\{v \in V(G) : |L(v_1)|=1\}$, and
let $S_1$ be the connected component of $S_1'$ such that $S_0 \subseteq S_1$.
Let  $X_1$ be the set of vertices of $V(G) \setminus S_1$ with a neighbor in $S_1$,  and
let $Y_1=V(G) \setminus (S_1 \cup X_1)$. Then $X_1 \cap S_1' = \emptyset$, and
every vertex of $Y_1$ has a 
neighbor in $X_1$. Since $S_0 \subseteq S_1$, no vertex of $G$ is complete to 
$S_1$. For $i,j \in \{1,2,3\}$ let $X_{ij}^1=\{x \in X_1: L_1(x_1)=\{i,j\}\}$.

We now construct the sets $S_2,X_2,Y_2$.
For every $i,j \in \{1,2,3\}$ let
$U_{i,j}$ be defined as follows. If there is a vertex $u \in X^1_{ij}$  
such that there exist $y,z,w \in Y_1$ where $\{y,z,w\}$ is a clique and
$u$ has exactly one neighbor in $\{y,z,w\}$, choose such a vertex $u$
with $N(u) \cap Y_1$ maximal, and let $U_{i,j}=\{u\}$. If no such vertex $u$
exists, let $U_{i,j}=\emptyset$. Let ${X^1}'_{i,j}$ be the set of vertices of
$X^1_{i,j}$ that are anticomplete to $U_{i,j}$.
Let $Y_1'$ be the set of vertices in $Y_1$
that are anticomplete to $U_{1,2} \cup U_{1,3} \cup U_{2,3}$.

Next we define $V_{i,j}$ for every $i,j \in \{1,2,3\}$. If there is a vertex 
$v \in {X^1}'_{ij}$  
such that there exist adjacent $y,z\in Y_1'$ 
where  $v$ has exactly one neighbor in $\{y,z\}$, choose such a vertex $v$
with $N(v) \cap Y_1'$ maximal, and let $V_{i,j}=\{v\}$. If no such vertex $v$
exists, let $V_{i,j}=\emptyset$. 

Now let $S_2 = S_1 \cup \bigcup_{i,j \in \{1,2,3\}}(U_{ij} \cup V_{ij})$.
Precolor the vertices of $S_2$.
Observe that $S_2$ is connected. Let $L_3$ be the list system obtained from 
$L_1$ by updating three times. By~Lemma~\ref{precolor} and Lemma~\ref{update4}
we may assume that $(G,L_3)$ is a minimal list-obstruction.
Let $S_3'=\{v \in V(G) : |L(v_1)|=1\}$, and let $S_3$ be the connected component
of $S_3'$ such that $S_2 \subseteq S_3$.
Let $X_3$ be the set of vertices of $V(G) \setminus S_3$ with a neighbor in $S_3$,  and
let $Y_3=V(G) \setminus (S_3 \cup X_3)$. Then every vertex of $Y_3$ has a 
neighbor in $X_3$. Since $S_1 \subseteq S_3$, no vertex of $G$ is complete to 
$S_3$, and for every $x \in X_3$, $|L(x)|=2$.
For $i,j \in \{1,2,3\}$ let $X_{ij}^3=\{x \in X_1: L_1(x_1)=\{i,j\}\}$.
Then $X^3_{ij}$ is anticomplete to $U_{ij} \cup V_{ij}$.

\begin{equation}
\label{notmixedY}
\begin{minipage}[c]{0.8 \textwidth} \em
No vertex of $X_3$ is mixed on an edge of $Y_3$.
\end{minipage}
\end{equation}

Suppose that there exist $x \in X_3$ and $y,z \in Y_3$ such that
$y$ is adjacent to $z$, and $x$ is adjacent to $y$ and not to $z$.
We may assume that $x \in X^3_{12}$. Then $x \in X^1_{12} \cup Y_1$,
and $y,z \in Y_1$.
Suppose first that $x \in Y_1$. Then there is $s_3 \in S_3 \setminus S_1$ 
such that $x$ is adjacent to $s_3$.  Since $y,z \in Y_3$, it follows that
$s_3$ is anticomplete to $\{y,z\}$. Since $s_3 \in S_3 \setminus S_1$,
there is a path $P$ from $s_3$ to some vertex $s_3' \in S_3$, such 
$s_3'$ has a neighbor in $S_1$, and $V(P) \setminus \{s_3'\}$ is anticomplete to
$S_1$. Then $s_3'$ is not complete to $S_1$, and since $S_1$ is connected,
it follows from Claim~\ref{mixed} that there exist $s_1,s_1' \in S_1$ such that
$s_3'$-$s_1$-$s_1'$ is a path. But now $z$-$y$-$x$-$s_3$-$P$-$s_3'$-$s_1$-$s_1'$ is a 
$P_6$, a contradiction. This proves that $x \not \in Y_1$, and therefore
$x \in X^1_{12}$.

Since $y,z \in Y_1 \cap Y_3$, it follows that $V_{12} \neq \emptyset$. Let
$v$ be the unique element of $V_{12}$.  Then $v$  is non-adjacent to $x,y,z$.  
Since $x$ is adjacent to $y$, and $v$ is non-adjacent to $y$, it follows
from the choice of $v$ that there exists $y' \in Y_1$ such that
$y'$ is adjacent to $v$ and not to $x$. Since $v$ has a neighbor
in $S_1$, and $v$ is not complete to $S_1$, and since $S_1$ is connected,
Claim~\ref{mixed} implies that there exist $s,s' \in S_1$ such that 
$v$-$s$-$s'$ is  a path. 
Since neither of $s'$-$s$-$v$-$y'$-$y$-$z$ and $s'$-$s$-$v$-$y'$-$z$-$y$ is a $P_6$,
it follows that $y'$ is either complete or anticomplete to $\{y,z\}$.
Suppose first that $y'$ is anticomplete to $\{y,z\}$.
Let $P$ be a path from $v$ to $x$ with interior in $S_1$.
Then $|V(P)| \geq 3$. Now $y'$-$v$-$P$-$x$-$y$-$z$ is a $P_6$, a contradiction.
This proves that $y'$ is complete to $\{y,z\}$.

Now $\{y',y,z\}$ is a clique in $Y_1$, and $v$ has exactly one neighbor in it.
This implies that $U_{12} \neq \emptyset$. Let $u$ be the unique element
of $U_{12}$. Then $u$ is anticomplete to 
$\{v,y',y,z\}$. It follows from the maximality of $u$ that there exists
$y'' \in Y_1$ such that $y''$ is adjacent to $u$ and non-adjacent to $v$.
Since $u$ has a neighbor
in $S_1$, and $u$ is not complete to $S_1$, and since $S_1$ is connected,
Claim~\ref{mixed} implies that there exist $t,t' \in S_1$ such that 
$u-t-t'$ is  a path. 
Suppose $y''$ has a neighbor in $\{y',y,z\}$.
Since $G \neq K_4$, there exist $q,q' \in \{y',y,z\}$ such that
$y''$ is adjacent to $q$ and not to $q'$.
But now $t'$-$t$-$u$-$y''$-$q$-$q'$ is a $P_6$, a contradiction.
This proves that  $y''$  anticomplete to $\{y',y,z\}$.
Let $P$ be a path from $u$ to $v$ with interior in $S_1$.
Then $|V(P)| \geq 3$. Now $y''$-$v$-$P$-$u$-$y'$-$y$ is a $P_6$, a contradiction.
This proves \eqref{notmixedY}.

\bigskip
For $i,j \in \{1,2,3\}$ let $X_{ij}$ be the set of vertices in $X^3_{ij}$ with a neighbor in $Y_3$.

\begin{equation}
\label{completeattachments}
\begin{minipage}[c]{0.8 \textwidth} \em
The sets $X_{12},X_{13},X_{23}$ are pairwise complete to each other.
\end{minipage}
\end{equation}

Suppose $x_1 \in X_{12}$ is non-adjacent to $x_2 \in X_{13}$. Since $S_3$ is connected and both
$x_1,x_2$ have neighbors in $S_3$, there is a path $P$ from $x_1$ to $x_2$
with $V(P) \setminus \{x_1,x_2\} \subseteq S_3$.
Since $L_3(x_1)=\{1,2\}$ and $L_3(x_2)=\{1,3\}$, it follows that no vertex
of $S_3$ is adjacent to both $x_1$ and $x_2$, and so $|V(P)| \geq 4$.
Let $y_i \in Y_3$ be adjacent to $x_i$.
If $|V(P)|>4$ or $y_1 \neq y_2$, then $y_1$-$x_1$-$P$-$x_2$-$y_2$ contains a path
with at least six vertices, a contradiction. So
$|V(P)|=4$ and $y_1=y_2$. But now $y_1$-$x_1$-$P$-$x_2$-$y_1$ is a $C_5$ in $G$,
again a contradiction. This proves \eqref{completeattachments}.

\bigskip

Let $D_1, \ldots, D_t$ be the components of $Y_3$ that have size at least two.
Moreover, let $Y'=\bigcup_{i=1}^t D_i$.

\begin{equation}
\label{stableY}
\begin{minipage}[c]{0.8 \textwidth} \em
There is an induced subgraph $F$ of $G$ with 
$V(G) \setminus Y' \subseteq V(F)$ and a list system $L'$ such that
\begin{itemize}  
\item $|L'(v)| \leq 2$ for every $v \in V(F) \cap Y'$,
\item $L'(v)=L(v)$ for every $v \in V(F) \setminus Y'$, and
\item $(F,L')$ is a minimal list-obstruction, and $|V(G)|$ depends only on 
$|V(F)|$. 
\end{itemize}
\end{minipage}
\end{equation}
For $in \in \{1, \ldots, t\}$ let $U_i$ be the set of vertices of $X_3$ with a 
neighbor in $D_i$.
It follows from Claim~\ref{mixed} and \eqref{notmixedY} that
$U_i$ is complete to $D_i$.
Since each $D_i$ contains an edge, \eqref{completeattachments} implies
that  each $U_i$ is a subset of one of $X^3_{1,3}, X^3_{1,3},X^3_{2,3}$. 
Therefore there exists $c_i \in \{1,2,3\}$ such that for every $u \in U_i$,
$c_i \not \in L(u)$. 
Now \eqref{stableY} follows from Claim~\ref{reducecomponents}.
This proves \eqref{stableY}.

\bigskip

Let $(F,L')$ be as in \eqref{stableY}. 
Since our goal is to prove that $(G,L_3)$ induces a minimal obstruction of
bounded size, it is enough to show that $|V(F)|$ has bounded size  (where the 
bound is independent of $G$). 
Therefore we may assume that $G=F$ and $L_3=L'$, and in particular that 
$|L_3(v)| \leq 2$ for every $v \in Y'$. 

Let $Y=Y_3\setminus Y'$. Then the set $Y$ is stable, 
$N(y) \subseteq X_{12} \cup X_{13} \cup X_{23}$ for every $y \in Y$, and  for
$v \in V(G)$, if $|L_3(v)|=3$, then $v \in Y$. Moreover, if
$y \in Y$ has $|L_3(y)|=3$ and $N(y) \subseteq X_{ij}$ for some $i,j \in \{1,2,3\}$, then $(G,L_3)$ is colorable if and only if $(G\setminus y,L_3)$ is colorable,
contrary to the fact that $(G,L_3)$ is a minimal list obstruction. 
Thus for every $y \in Y$
with $|L_3(y)|=3$, $N(y)$ meets at least two of $X_{12},X_{13},X_{23}$.
By \eqref{completeattachments} it follows that the sets $X_{12},X_{13},X_{23}$
are pairwise complete to each other, and therefore no $v \in Y$ has neighbors
in all three of $X_{12},X_{13},X_{23}$.

Next we define a refinement $\mathcal{L}$ of $L_3$. 
\begin{itemize}
\item If exactly one of $X_{12},X_{13},X_{23}$ is non-empty, then $\mathcal{L}=\{L_3\}$.

\item If at least two of  $X_{12},X_{13},X_{23}$ are non-empty and some $X_{ij}$ 
contains two adjacent vertices $a,b$,
let $L'$ be the list obtained by precoloring $\{a,b\}$ and  updating three
times, and let $\mathcal{L}=\{L'\}$.

\item  Now assume that at least two of $X_{ij}$ are non-empty, and each
of $X_{ij}$ is a stable set. Observe that in this case, in every coloring of 
$G$ at least one of $X_{ij}$ is monochromatic. For all $i,j$ such that $X_{ij} \neq \emptyset$ and for all  $k \in \{i,j\}$  add to $\mathcal L$  the list system $L_{ij}^i$, where $L_{ij}^i(x)=\{i\}$ for all  $x \in X_{ij}$ and $L_{ij}^i(v)=L_3(v)$ for all $v \in V(G) \setminus X_{ij}$, and we updated three times with respect to $X_{ij}$.
\end{itemize}

Now $\mathcal L$ is a refinement of $L$ and satisfies the hypotheses of 
Lemma~\ref{precolor}. We claim that 
for every $L' \in \mathcal{L}$ there exist $i,j \in \{1,2,3\}$ such that
after the first step of updating  
 $|L'(x)|=1$
for all 
$x \in (X_{12} \cup X_{13} \cup X_{23}) \setminus X_{ij}$,

In view of \eqref{completeattachments},  this is clear if some  $X_{ij}$ is 
not stable or if only one of the sets $X_{12},X_{13},X_{23}$ is non-empty. So we 
may assume that all $X_{ij}$
are stable, and at least two are non-empty. Let $L'=L_{ij}^i$.
Then $L'(x)=\{i\}$ for all $x \in X_{ij}$. 
Let  $k \in \{1,2,3\} \setminus \{i,j\}$, then by \eqref{completeattachments}
after the first step of
updating $L'(x)=\{k\}$ for every $x \in X_{ik}$. Thus after the first step of updating only
one of the sets $X_{ij}$ may contain vertices with lists of size two.

Since every $y \in Y$ with $|L_3(y)|=3$ has neighbors in at least two of
$X_{12},X_{13},X_{23}$, it follows that after the second step of updating 
all vertices of $Y$ have lists of size at most two, and so 
for all $L' \in \mathcal{L}$ we have that $|L'(v)| \leq 2$ for all $v \in V(G)$.
By Lemma~\ref{lem:size2}, each of $(G,L')$ induces a minimal obstruction
with at most 100 vertices. Applying the Lemma~\ref{precolor} and 
Lemma~\ref{update4}, 
we deduce that $|V(G) \setminus (X_{12} \cup X_{13} \cup X_{23})|$
depends only on the sizes of the minimal obstructions
induced by $(G,L')$, and therefore does not depend on $G$. Now, since each of 
$X_{ij}$ is a stable set, 
Claim~\ref{stable} implies that $|V(G)|$ is bounded, and Claim~\ref{c5free} 
follows.
\end{proof}

In the remainder of this proof we deal with minimal list-obstructions 
$(G,L)$ containing a $C_5$, by taking advantage of the structure that it 
imposes. Let $C$ be a $C_5$ in $G$,  
say $C=c_1$-$c_2$-$c_3$-$c_4$-$c_5$-$c_1$. 
Let $X(C)$ be the set of vertices of $V(G) \setminus V(C)$ that have a neighbor in $C$,
let $Y(C)$ be the set of vertices of $V(G) \setminus (V(C) \cup X(C))$ that have a neighbor in $X$, 
and let $Z(C)=V(G) \setminus (V(C) \cup X(C) \cup Y(C))$.

\begin{claim}\label{notmixed}
Assume that $|V(G)| \ge 7$. 
Then the following assertions hold.
\begin{enumerate}
\item For every
$x \in X(C)$ there exist indices $i,j \in \{1,\ldots,5\}$ such that $x$-$c_i$-$c_j$ is an induced path. 
\item No vertex of $Y(C)$ is mixed on an edge of $G|Z(C)$.
\item If $v \in X(C)$ is mixed on an edge of $G|(Y(C) \cup Z(C))$, then the set of neighbors of $v$ in $C$ is not contained in a $3$-vertex path of $C$.
\item If $v \in X(C)$ has a neighbor in $Y(C)$, then the set of neighbors of $v$ in $C$ is not contained in a $2$-vertex path of $C$.
\item If $z \in Z(C)$ and $u,t \in N(z) \cap Y(C)$ are non-adjacent, then no vertex
of $X(C)$ is mixed on $\{u,t\}$.
\item Let $D$ be a component of $Z(C)$ with $|D|=1$, and let $N$ be the set of vertices of $Y(C)$ with a neighbor in $D$. Then either $N$ is anticonnected,
or $N=U \cup W$ where $U$ and $W$ are stable sets, and  $U$ is complete to $W$.
\item Let $D$  be a component of $Z(C)$ with $|D|>1$, and let $N$ be the set of vertices of $Y(C)$ with a neighbor in $D$.   Then $D$ is bipartite, and
$N$ is a stable set complete to $D$.
\end{enumerate}
\end{claim}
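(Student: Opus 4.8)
The plan is to first record three easy consequences of the hypotheses $|V(G)|\ge 7$ and minimality, and then to derive all seven assertions by extracting induced copies of $P_6$ (or, twice, by recognizing a small list-obstruction). First I would note that $G$ is connected: if $G$ were a disjoint union of non-empty graphs $G_1,G_2$, then non-colorability of $(G,L)$ forces, say, $(G_1,L)$ to be non-colorable, and deleting any vertex of $G_2$ then contradicts minimality. Next, $G$ is $K_4$-free: for \emph{any} list system with lists contained in $\{1,2,3\}$, a $K_4$ is not colorable (four pairwise adjacent vertices cannot receive four distinct colors from a three-color palette), so a minimal list-obstruction containing an induced $K_4$ would equal it, contradicting $|V(G)|\ge7$. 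Finally, no vertex $u\in V(G)$ is complete to the vertex set of an induced $C_5$: the resulting $W_5$, with lists in $\{1,2,3\}$, is not colorable (every proper coloring of a $C_5$ uses all three colors, leaving none for $u$), and again minimality would force $G=W_5$, contradicting $|V(G)|\ge7$.

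For part (1), by the third observation $x\in X(C)$ is not complete to the connected set $V(C)$, hence is mixed on it, so Claim~\ref{mixed} yields adjacent $c_i,c_j\in V(C)$ with $x$ adjacent to $c_i$ and not to $c_j$; then $x$-$c_i$-$c_j$ is the desired induced $P_3$. The remaining parts all follow one template: to an induced $P_3$ of the form $x$-$c_i$-$c_j$ furnished by part (1) (with $x\in X(C)$) one prepends a short path reaching into $Y(C)\cup Z(C)$, and all required non-adjacencies are immediate from the layering, since vertices of $Z(C)$ have no neighbor in $X(C)\cup V(C)$ and vertices of $Y(C)$ have no neighbor in $V(C)$. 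Thus for part (2), if $y\in Y(C)$ is mixed on an edge $z_1z_2$ of $G|Z(C)$ with $y$ adjacent to $z_1$, and $x\in X(C)$ is a neighbor of $y$, then $z_2$-$z_1$-$y$-$x$-$c_i$-$c_j$ is an induced $P_6$; for part (5), with $z\in Z(C)$, non-adjacent $u,t\in N(z)\cap Y(C)$, and $v\in X(C)$ adjacent to $u$ but not $t$, the path $t$-$z$-$u$-$v$-$c_i$-$c_j$ is an induced $P_6$. Parts (3) and (4) use the same idea but walk inside $C$: if $v\in X(C)$ is mixed on an edge $ab$ of $G|(Y(C)\cup Z(C))$ with $v$ adjacent to $a$, and $N(v)\cap V(C)$ lay inside a $3$-vertex subpath of $C$, then from a $C$-neighbor of $v$ one can take two steps along $C$ away from that subpath to reach vertices non-adjacent to $v$, producing an induced $P_6$ of the form $b$-$a$-$v$-$c_\ell$-$c_m$-$c_n$; likewise, if $v\in X(C)$ has a neighbor $y\in Y(C)$ and $N(v)\cap V(C)$ lay inside an edge of $C$, there is a chordless $4$-vertex subpath of $C$ beginning at a $C$-neighbor of $v$ and missing the rest of $N(v)\cap V(C)$, giving an induced $P_6$ of the form $y$-$v$-$c_\ell$-$c_m$-$c_n$-$c_p$.

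For part (7), let $D$ be a component of $Z(C)$ with $|D|>1$ and $N$ the set of vertices of $Y(C)$ with a neighbor in $D$; since $G$ is connected and $D\ne V(G)$ we have $N\ne\emptyset$. If some $u\in N$ were not complete to the connected set $D$ it would be mixed on $D$, hence by Claim~\ref{mixed} mixed on an edge of $G|Z(C)$, contradicting part (2); so $N$ is complete to $D$. If $N$ contained an edge, that edge together with an edge of $D$ (one exists as $D$ is connected with $|D|>1$) would form a $K_4$; so $N$ is stable. And $D$ is bipartite, being triangle-free (a triangle in $D$ with any vertex of $N$ is a $K_4$), free of induced $C_5$ (such a $C_5$ with a vertex of $N$ complete to it is excluded by the third preliminary observation), and free of induced cycles of length $\ge7$ (these contain an induced $P_6$), so it has no induced odd cycle. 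For part (6), if $D=\{z\}$ then $N=N(z)$, and $N$ is triangle-free since a triangle in $N(z)$ with $z$ is a $K_4$; if $G|N$ is not anticonnected, its anticomponents are pairwise complete to one another, and triangle-freeness forces exactly two of them, each stable, so $N=U\cup W$ with $U,W$ stable and $U$ complete to $W$.

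I expect no serious obstacle here: each assertion is proved by assuming its conclusion fails and producing an induced $P_6$ (or a forbidden $K_4$ or $W_5$). The only genuine idea is the first-paragraph observation that $K_4$ and $W_5$ are list-obstructions for \emph{every} list assignment of order three, which lets us dispose of them outright; the only point needing a little care is the book-keeping in parts (3) and (4), namely which three or four vertices of the $5$-cycle to use, which works exactly because confining $N(v)\cap V(C)$ to a short subpath of $C_5$ leaves a long enough chordless escape path inside $C$.
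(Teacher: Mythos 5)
Your proof is correct and follows essentially the same route as the paper: part (1) from Claim~\ref{mixed} plus the fact that no vertex is complete to $V(C)$, parts (2)--(5) by exhibiting explicit induced $P_6$'s running through the layers $Z(C)$, $Y(C)$, $X(C)$, $C$, and parts (6)--(7) from $K_4$-freeness together with completeness of $N$ to $D$. If anything, you are more explicit than the paper in justifying the $K_4$-freeness and the $W_5$ exclusion from minimality and $|V(G)|\ge 7$, and in deriving the bipartiteness of $D$ in part (7) (triangle-free, no induced $C_5$ with a dominating vertex, no long induced cycles), which the paper attributes tersely to the absence of $K_4$.
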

\begin{proof} 
Since $|V(G)| \ge 7$, no vertex is complete to $V(C)$, as that would lead to a list-obstruction on $6$ vertices.
Thus, the first assertion follows from the fact that $G$ is connected and
Claim~\ref{mixed}.

Next we prove the second assertion.
Suppose that $u\in Y(C)$ is mixed on the edge $st$ 
with $s,t \in Z(C) $, namely $u$ is adjacent to $s$ and not to $t$.   
Let $b\in N(u)\cap X$ and $i,j \in \{1,\ldots,5\}$ be such that $b$-$c_i$-$c_j$ is an induced path (as in Claim~\ref{notmixed}.1). 
Then $t$-$s$-$u$-$b$-$c_i$-$c_j$ is a $P_6$, a contradiction.

To see the third assertion, suppose that $x \in X$ is adjacent to $t \in Y$ and non-adjacent to $s \in Y \cup Z$, where $t$ is adjacent to $s$, and suppose that $N(x) \cap V(C) \subseteq \{c_1,c_2,c_3\}$. 
We may assume that $x$ is adjacent to $c_3$. 
Then $c_5$-$c_4$-$c_3$-$x$-$t$-$s$ is a $P_6$ in $G$, a contradiction.

To prove the fourth statement, we may assume that $x \in X$ is adjacent to $c_1$ and to $y \in Y$, and non-adjacent to $c_2$, $c_3$ and $c_4$.
Now $y$-$x$-$c_1$-$c_2$-$c_3$-$c_4$ is a $P_6$ in $G$, a contradiction.

To prove the fifth statement, suppose that $w \in X(C)$ is adjacent to $u$ and 
non-adjacent to $t$.
By Claim~\ref{notmixed}.1 there exists $i,j$ such that $w$-$c_i$-$c_j$ is an induced path.
Then $t$-$z$-$u$-$w$-$c_i$-$c_{i+1}$ is a $P_6$, a contradiction.

Next let  $D=\{v\}$ be a component of $Z$, then $N(v) \subseteq Y$,
and Claim~\ref{notmixed}.6 follows immediately from the fact that there is no $K_4$ in $G$.

	 
Finally let $D$ be a component of $Z(C)$ with $|D|>1$.
By Claim~\ref{notmixed}.2 $N$ is complete to $D$. Since there is no $K_4$
in $G$, it follows that $D$ is bipartite and $N$  is a  stable set. 
This proves Claim~\ref{notmixed}.7.
\end{proof}

By Lemma~\ref{precolor} and Lemma~\ref{update4} we may assume that in $(G,L)$ 
the vertices of $C$ are precolored, and that we have updated three times with respect to $V(C)$. We may assume that $|V(G)|>8$.

\begin{claim}\label{noZ}
There is an induced subgraph $F$ of $G$ with 
$V(G) \setminus Z(C) \subseteq V(F)$ and a list system $L'$ such that
\begin{itemize}  
\item $|L'(v)| \leq 2$ for every $v \in V(F) \cap Z(C)$,
\item $L'(v)=L(v)$ for every $v \in V(F) \setminus Z(C)$, and
\item $(F,L')$ is a minimal list-obstruction, and $|V(G)|$ depends only on
the size of $|V(F)|$.
\end{itemize}
\end{claim}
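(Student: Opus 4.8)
The plan is to reduce the components of $Z(C)$ to pieces carrying lists of size at most two, handling them according to their size and controlling the blow-up through Lemma~\ref{precolor}, Lemma~\ref{update4} and Lemma~\ref{bipartite1}. Throughout, recall that $V(C)$ is precoloured, that we have updated three times with respect to $V(C)$, that $(G,L)$ is still a minimal list-obstruction (so no list is empty), and that $|V(G)|>8$. A preliminary observation drives everything: $G$ is connected, so each vertex of $Z(C)$ lies at distance exactly three from $V(C)$, and a shortest path from $z\in Z(C)$ to $V(C)$ has the form $z$-$n$-$x$-$c$ with $n\in Y(C)$, $x\in X(C)$, $c\in V(C)$. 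Using $P_6$-freeness together with the five-cycle through $c$ --- the same kind of extremal case-checking as in Claim~\ref{notmixed} --- one shows that $N(x)\cap V(C)$ has at least three vertices; since $C$ has no monochromatic triple and no list is empty, these vertices carry exactly two of the precoloured colours, so $|L(x)|=1$. Thus $x$ was coloured already in the first round of updating, $n$ lost the colour of $x$ in the second round, and \emph{every vertex of $Y(C)$ with a neighbour in $Z(C)$ has a list of size at most two}.

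Now let $D_1,\dots,D_t$ be the components of $Z(C)$ with more than one vertex, and for each $i$ let $N_i$ be the set of vertices with a neighbour in $D_i$. By definition of $Z(C)$ we have $N_i\subseteq Y(C)$, and by Claim~\ref{notmixed}.7 each $D_i$ is bipartite with $N_i$ a stable set complete to $D_i$; moreover $D_i$ is anticomplete to $V(G)\setminus(D_i\cup N_i)$, the $D_i$ are pairwise anticomplete, and $V(G)\ne D_1\cup N_1$ because $V(C)$ is disjoint from it. Since $D_i$ is connected and bipartite with an edge, $N_i$ is non-empty and complete to $D_i$, and there is no $K_4$, in every proper colouring of any induced subgraph containing $D_i\cup N_i$ the set $D_i$ uses exactly two colours and $N_i$ is monochromatic with its colour lying in $\bigcap_{n\in N_i}L(n)$; this intersection is non-empty (take such a colouring, which exists by minimality of $(G,L)$) and of size at most two by the preliminary observation, so any $c_i\in\{1,2,3\}\setminus\bigcap_{n\in N_i}L(n)$ is a colour never used on $N_i$. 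Hence the hypotheses of Claim~\ref{reducecomponents} hold with $U_i=N_i$, and one application gives an induced subgraph $F_1$ of $G$ with $V(G)\setminus\bigcup_iD_i\subseteq V(F_1)$ and a list system $L_1$ agreeing with $L$ off $\bigcup_iD_i$, with $|L_1(v)|\le2$ for every $v\in V(F_1)\cap\bigcup_iD_i$, such that $(F_1,L_1)$ is a minimal list-obstruction and $|V(G)|$ is bounded by a function of $|V(F_1)|$.

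It remains to deal with the components of $Z(C)\cap V(F_1)$ of size one that still carry a list of size three, i.e.\ singletons $\{v\}$ with $v\in Z(C)$ and $|L_1(v)|=3$. For such a $v$, $N(v)\subseteq Y(C)$, every vertex of $N(v)$ has a list of size at most two (preliminary observation), $N(v)$ is triangle-free (there is no $K_4$), and $|N(v)|\ge3$ since otherwise $v$ would be removable, contradicting minimality. By Claim~\ref{notmixed}.6, $N(v)$ is anticonnected or equals $U\cup W$ with $U,W$ stable and $U$ complete to $W$, both non-empty. In the latter case I would apply Claim~\ref{clm:Kempe} with $x=v$: its alternative~\ref{clm:Kempe}.1 gives a four-vertex path with both ends in (say) $U\subseteq Y(C)$ and interior disjoint from $N(v)$, which, joined to $v$ and to a path running from one $Y(C)$-endpoint through $X(C)$ into $C$, produces a $P_6$ after the same sort of case-checking as above --- a contradiction; alternative~\ref{clm:Kempe}.2 gives two anticomplete ``pushing'' paths out of $U$ with the stated colour behaviour, from which, using $P_6$-freeness, one either recolours around $v$ to colour all of $G$ or shows $v$ removable --- again a contradiction. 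The anticonnected case is settled by a shorter variant of the same argument. Therefore $(F,L'):=(F_1,L_1)$ satisfies all the assertions, with $V(G)\setminus Z(C)\subseteq V(G)\setminus\bigcup_iD_i\subseteq V(F)$.

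The main obstacle is the extremal $P_6$-free bookkeeping hidden inside ``the same kind of case-checking'': proving that the $X(C)$-vertex on a shortest path into $Z(C)$ has at least three neighbours on $C$ --- this is exactly what forces the size-two lists that make Claim~\ref{reducecomponents} applicable and that we exploit for the singletons --- and carrying out the Claim~\ref{clm:Kempe}-based elimination of list-size-three singletons. Both arguments are routine in spirit but require careful analysis of how short paths attach to the fixed $C_5$.
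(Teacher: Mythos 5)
Your overall architecture matches the paper's: handle the components of $Z(C)$ of size at least two through Claim~\ref{reducecomponents} (using Claim~\ref{notmixed}.7), then rule out singleton components of $Z(C)$ that still carry a full list, using Claim~\ref{notmixed}.6 and Claim~\ref{clm:Kempe}. But two steps, as written, have genuine gaps. First, your verification of the colour hypothesis of Claim~\ref{reducecomponents} is too weak. You choose $c_i\in\{1,2,3\}\setminus\bigcap_{n\in N_i}L(n)$ and argue that $c_i$ is never used on $N_i$ in colourings of subgraphs containing all of $D_i\cup N_i$. What the reduction actually requires (and what its proof uses when a colouring of the reduced pair is extended back to $G$ by giving one side of $D_i$ the colour $c_i$) is that $c_i\notin L(u)$ for \emph{every} $u\in U_i$: in the reduced graph $D_i$ is replaced by one or two representatives, so nothing forces $U_i=N_i$ to be monochromatic or to avoid $c_i$ there. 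With lists such as $\{1,2\}$ and $\{2,3\}$ on $N_i$, your $c_i$ lies in one of the lists and the extension can clash. The paper closes exactly this point with Claim~\ref{notmixed}.5: all vertices of $U_i$ have the same neighbours in $X(C)$, those neighbours get lists of size one after the first round of updating (Claim~\ref{notmixed}.3), and hence after the second round the \emph{same} colour is missing from every list in $U_i$. That common missing colour is the $c_i$ you need; your intersection argument does not supply it.

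Second, the elimination of a singleton $\{v\}\subseteq Z(C)$ with $|L(v)|=3$ is only asserted, and the one concrete route you propose does not work: joining the four-vertex path from Claim~\ref{clm:Kempe}.1 to $v$ yields a $C_5$ through $v$ (both ends of that path lie in $U\subseteq N(v)$), not a $P_6$. The paper instead extracts from either outcome of Claim~\ref{clm:Kempe} a vertex $t\notin U$ that is mixed on $U$ (in the second outcome this itself needs a list argument exploiting that three rounds of updating were performed), shows via Claim~\ref{notmixed}.5 that $t\in Y(C)\cup Z(C)$, proves two further statements (an $X(C)$-vertex adjacent to a vertex of $Y\setminus(U\cup W)$ having a neighbour in $U$ must be complete to $U$; consequently $Y\setminus(U\cup W)$ is anticomplete to $U\cup W$), forcing $t\in Z(C)$, and only then produces the final $P_6$ of the form $t$-$b$-$a$-$x$-$c_i$-$c_{i+1}$. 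This case analysis is the bulk of the paper's proof of the claim and is absent from your sketch; the anticonnected case, handled in the paper by a domination argument through Claim~\ref{notmixed}.5, is likewise only asserted. Your preliminary observation (every vertex of $Y(C)$ with a neighbour in $Z(C)$ has a list of size at most two) is correct and follows from Claim~\ref{notmixed}.3, but it cannot substitute for these missing steps.
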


\begin{proof}
We write $X=X(C)$, $Y=Y(C)$ and $Z=Z(C)$. 
Let $D_1, \ldots, D_t$ be components of $Z$ with $|D_i| \geq 2$.
Write $D= \bigcup_{i=1}^t D_i$.
For every $i$ let $U_i$ be the set of vertices of $Y$ with a neighbor in $D_i$.
By Claim~\ref{notmixed}.7 for every $i$, $U_i$ is  a stable set complete to 
$D_i$. 
By Claim~\ref{notmixed}.3 every $x \in X$ with a neighbor in $U_i$ has 
neighbors of two different colors in $V(C)$, and so every such $x$ has
list of size one after the first step of updating. Now  
by Claim~\ref{notmixed}.5  and since we have updated three times,  it 
follows that for every $i$  there exists $c_i \in \{1,2,3\}$ such that for 
every $u \in U_i$, $c_i \not \in L(u)$.  
By Claim~\ref{reducecomponents} there exist 
an induced subgraph $F$ of $G$ with 
$V(G) \setminus Z(C) \subseteq V(F)$ and a list system $L'$  such that
\begin{itemize}  
\item $|L'(v)| \leq 2$ for every $v \in V(F) \cap D$,
\item $L'(v)=L(v)$ for every $v \in V(F) \setminus Z(C)$, and
\item $(F,L')$ is a minimal list-obstruction, and $|V(G)|$ depends only on
the size of $|V(F)|$.
\end{itemize}

It remains to show that that $L'(v) \leq 2$ for every 
$v \in V(F) \cap Z$.
Suppose there is $v \in V(F) \cap Z$ with $|L(v)|=3$. 
Let $D$ be the component of $Z$ containing $v$. Then  $D=\{v\}$.
If $N(v)$ is anticonnected, then by Claim~\ref{notmixed}.5 every $x \in X$ with a
neighbor in $N(v)$ dominates $v$, contrary to the fact that $(F,L')$ is a minimal list-obstruction. So 
by Claim~\ref{notmixed}.6 $N(v)=U \cup W$, both $U$ and $W$ are stable sets, and 
$U$ is complete to $W$.

We now apply Claim~\ref{clm:Kempe}. We may assume that if Claim~\ref{clm:Kempe}.1
holds then $p_1,p_4 \in U$, and if Claim~\ref{clm:Kempe}.2 holds,
then $u_1,u_2 \in U$. We show that in both cases some vertex
$t \in V(G) \setminus U$   is  mixed on  $U$. If Claim~\ref{clm:Kempe}.1
holds, we can take $t=p_1$, so we may assume that Claim~\ref{clm:Kempe}.2
holds.  We may assume that $u_1=w_1$ and $u_2=w_2$, for otherwise
some vertex of $V(P_1) \cup V(P_2)$  is mixed on $U$.
By Claim~\ref{notmixed}.3 and Claim~\ref{notmixed}.5, and since we 
have updated,  
it  follows that there exists $i \in \{1,2,3\}$ such that for every $u \in U$,
$i \not \in L(u)$.
Since $|L(v)|=3$, and we have updated three times, it follows that
after the second step of updating all $u \in U$ have exactly the same list, 
and this list has size two.
Since $u_1=w_1$ and $u_2=w_2$, it follows that the lists of $u_1$ and $u_2$
changed and became different in the third step of updating, and so some vertex 
$V(G) \setminus U$ is mixed on $U$, as required. This proves the claim.
Let $t$ be  a vertex of $V(G) \setminus U$ that is mixed on $U$.
By Claim~\ref{notmixed}.5, it  follows that $t \in Y \cup Z$.

First we show that if $y \in Y \setminus (U \cup W)$ has a neighbor
$u \in U$, and $x \in X$ is adjacent to $y$, then $x$ is complete to 
$U$. Suppose not, let $i$ be such that
$x-c_i-c_{i+1}$ is a path (such $i$ exists by Claim~\ref{notmixed}.1).
By Claim~\ref{notmixed}.5, $x$ is anticomplete to $U$.
Then $v$-$u$-$y$-$x$-$c_i$-$c_{i+1}$ is a $P_6$, a contradiction. This proves the claim.

Now we claim that $Y \setminus (U \cup W)$ is anticomplete to $U \cup W$.
Suppose $y \in Y$ has a neighbor $u \in U$, and let $x \in X$ be adjacent
to $y$. Then $x$ is complete to $U$. Since $x$ does not
dominate $v$, it follows that $x$ has a non-neighbor $w \in W$, and 
again by the previous claim, $y$ is anticomplete to $W$. 
Let $x_1 \in X$ be adjacent to $w$. By Claim~\ref{notmixed}.5
$x_1$ is complete to $W$. Since $x_1$ does not dominate $v$, it
follows that $x_1$ has a non-neighbor in $U$, and so by Claim~\ref{notmixed}.5
$x_1$ is anticomplete to $U$. By the previous claim,
$x_1$ is non-adjacent to $y$. Let $i$ be such that
$x_1$-$c_i$-$c_{i+1}$ is a path (such $i$ exists by Claim~\ref{notmixed}.1).
Now $c_{i+1}$-$c_i$-$x_1$-$w$-$u$-$y$ is a $P_6$, a contradiction. This proves the claim,
and in particular we deduce that $t \in Z$.

Since $t$ is mixed on $U$, there exists an edge 
$a,b$ with one end in $U$ and the other in $W$, such that
$t$ is adjacent to $b$ and not to $a$. Let $x \in X$ be adjacent to 
$a$. Since $x$ does not dominate $v$, we deduce that $x$ is not complete to 
$U \cup W$, and so by Claim~\ref{notmixed}.5 $x$ is non-adjacent to $b$.
Let $i$ be such that $x$-$c_i$-$c_{i+1}$ is a path (as in \ref{notmixed}.1).
Now $t$-$b$-$a$-$x$-$c_i$-$c_{i+1}$ is a $P_6$, a contradiction.
This proves Claim~\ref{noZ}.
\end{proof}

Let $(F,L')$ be as in Claim~\ref{noZ}. 
Since our goal is to prove that $|V(G)|$ is bounded, it is enough to prove that
$|V(F)|$ is bounded, and so  we may assume that $G=F$, $L=L'$, and in 
particular $|L(v)| \leq 2$ for every $v \in Z(C)$. 

\begin{claim}\label{compsW}
Assume that in the precoloring of $C$ $c_2$ and $c_5$ receive the same color, 
say $j$.
Let $A=\{a \in X(C) : N(a) \cap V(C)=\{c_2,c_5\}\}$ and $W=\{y \in Y : N(y) \cap X \subseteq A \}$. 
Let $D$ be a component of $W$ such that there exists a vertex with list of size $3$ in $D$, and let $N$ be the set of vertices of $A$ with a neighbor in $D$.
Then $D$ is complete to $N$, and either 
\begin{itemize}
	\item $D$ is anticomplete to $V(G) \setminus (D \cup N)$, or
	\item there exists vertices $d\in D$ and $v\in N(d)$ such that precoloring $d$, $v$ with distinct colors and updating with respect to the set $\{d,v\}$ three times reduces the list size of all vertices in $W$ to at most two.
\end{itemize}
\end{claim}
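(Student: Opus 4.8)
I would argue in three parts, the first two being short structural reductions and the third containing the real work.

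\emph{Part 1: $D$ is complete to $N$.} Every $a\in A$ satisfies $N(a)\cap V(C)=\{c_2,c_5\}$, and $\{c_2,c_5\}$ is contained in the $3$-vertex path $c_5$-$c_1$-$c_2$ of $C$. Hence, by Claim~\ref{notmixed}.3, no vertex of $A$ is mixed on an edge of $G|(Y(C)\cup Z(C))$. Since $D\subseteq W\subseteq Y(C)$ is connected and, by definition of $N$, every $n\in N$ has a neighbour in $D$, Claim~\ref{mixed} shows that $n$ cannot be mixed on $D$, so $n$ is complete to $D$. Thus $D$ is complete to $N$, and the first assertion holds.

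\emph{Part 2: structure when $D$ has an external neighbour.} If $D$ is anticomplete to $V(G)\setminus(D\cup N)$ we are in the first alternative, so assume not and fix $d_0\in D$ with a neighbour $w\notin D\cup N$. Since $d_0\in W$, every neighbour of $d_0$ in $X(C)$ lies in $A$, and being a neighbour of $D$ it lies in $N$; and $d_0\in Y(C)$ has no neighbour in $C$; hence $w\in(Y(C)\setminus D)\cup Z(C)$. Pick $n\in N$. In the path $w$-$d_0$-$n$-$c_2$-$c_3$-$c_4$ every non-consecutive pair is non-adjacent except possibly $\{w,n\}$, so $P_6$-freeness forces $w\sim n$; as $n$ was arbitrary, $w$ is complete to $N$. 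If $w\in Z(C)$ then $w$ has no neighbour in $X(C)\supseteq N\neq\emptyset$, a contradiction; so $w\in Y(C)\setminus D$. Moreover $w\notin W$, for otherwise $w$ would lie in the component $D$ of $G|W$; hence $w$ has a neighbour $x\in X(C)\setminus A$, and by Claim~\ref{notmixed}.4 (as $x$ has the neighbour $w\in Y(C)$), $N(x)\cap V(C)$ is not contained in a $2$-vertex path of $C$. Finally, $d_0$-$w$ is an edge with both ends complete to $N$, so, since $G$ has no $K_4$, $N$ is a stable set.

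\emph{Part 3: the precolouring and the main obstacle.} Take $d=d_0$ and $v=w$, so $v\in N(d)$. Precolour $d$ with a colour $\alpha$ and $v$ with a colour $\beta\neq\alpha$, and update with respect to $\{d,v\}$ (one round already does what we need, and three rounds only shrink lists further). After one round each $n\in N$ has lost both $\alpha$ (from $d_0$, as $N$ is complete to $D$) and $\beta$ (from $w$, as $w$ is complete to $N$), so $|L(n)|\le1$ for every $n\in N$. After a second round every vertex adjacent to $N$ has list of size at most two; in particular this covers all of $D$, and also every vertex of any component $D'$ of $G|W$ containing a vertex adjacent to some $n\in N$, because such an $n$ is complete to the connected set $D'$ by Claim~\ref{notmixed}.3. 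So it remains to show that no component of $G|W$ that is anticomplete to $N$ contains a vertex of list size three.

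This last statement is the main obstacle. Suppose $y'\in W$ has list of size three; then $y'\notin D$, $y'$ is anticomplete to $N$, and (again by Claim~\ref{notmixed}.3) the whole component $D'$ of $G|W$ containing $y'$ is anticomplete to $N$. Let $n'$ be a neighbour of $y'$ in $A$; then $n'\in A\setminus N$, $n'$ is complete to $D'$, and $N(n')\cap V(C)=\{c_2,c_5\}$. The plan is to reach a contradiction by exhibiting an induced $P_6$ in $G$ using the ingredients assembled above: the vertex $x\in X(C)\setminus A$ with $w\sim x$ (whose neighbourhood in $C$ is not contained in a $2$-vertex path), the vertex $w$ (complete to $N$, anticomplete to $C$), the components $D,D'$ and their private $A$-vertices $n,n'$, and two suitably chosen consecutive vertices of $C$. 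The delicate point, and the reason a short case analysis on $N(x)\cap V(C)$ and on whether $w$ has a neighbour in $D'$ seems unavoidable, is that $n$ and $n'$ are each adjacent to the two \emph{non-consecutive} vertices $c_2$ and $c_5$, so the only subpaths of $C$ that can be appended to $n$ or to $n'$ without creating a chord are $c_2$-$c_3$-$c_4$ and $c_5$-$c_4$-$c_3$; one must route the $P_6$ through $x$ and $w$ so as to avoid these chords as well as the edges forced by $w$ being complete to $N$. Carrying this out gives the contradiction, which finishes Part 3 and the claim.
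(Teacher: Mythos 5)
Your Parts 1 and 2 are correct and run parallel to the paper: the completeness of $N$ to $D$ via Claim~\ref{notmixed}.3 and Claim~\ref{mixed} is exactly the paper's first step, and your direct $P_6$-argument ($w$-$d_0$-$n$-$c_2$-$c_3$-$c_4$) that the external neighbour $w$ is complete to $N$ is a harmless variant of the paper's observation (via Claim~\ref{notmixed}.3) that $N(v)\cap A=N$. The bookkeeping in the first half of Part 3 (after precolouring $d_0,w$ with distinct colours, every $n\in N$ has $|L(n)|\le 1$, hence $D$ and every component of $W$ with a neighbour in $N$ is reduced) is also fine.

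However, Part 3 contains a genuine gap: the statement you yourself identify as ``the main obstacle'' — that no component of $G|W$ anticomplete to $N$ can retain a vertex with a list of size three — is only announced as a plan (``Carrying this out gives the contradiction''), and the plan as described would not go through by $P_6$-hunting alone. The paper's proof of exactly this step is a case analysis on $N(x)\cap V(C)$ that essentially uses the lists and the minimality of the obstruction, not just $P_6$-freeness. First, you only record that $N(x)\cap V(C)$ is not contained in a \emph{two}-vertex path (Claim~\ref{notmixed}.4); what is actually needed is that it is not contained in a \emph{three}-vertex path (Claim~\ref{notmixed}.3, since $x$ is mixed on the edge $wd_0$), which forces $|L(x)|=1$ after updating with respect to $C$ and splits the argument into two cases. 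If $N(x)\cap\{c_2,c_5\}\neq\emptyset$, then $j\notin L(x)$, and for a putative vertex $t\in W$ with list of size three and its $A$-neighbour $s$, one shows $s$ is anticomplete to $\{v,d\}$, that $L(s)=\{1,2,3\}\setminus\{j\}$, and hence (a \emph{list} argument, not a structural one) that $s$ is non-adjacent to $x$, which yields the $P_6$ $t$-$s$-$c_2$-$x$-$v$-$d$. If instead $N(x)\cap V(C)=\{c_1,c_3,c_4\}$, then $L(x)=\{j\}$, so $j\notin L(v)$; if $D=\{d\}$ the contradiction comes from the minimality of $(G,L)$ (a colouring of $G\setminus d$ extends by giving $d$ colour $j$), which no $P_6$ can replace, and otherwise one needs $d'\in N(d)\cap D$ with $d'\not\sim v$ (ruled out by a $K_4$ through a vertex of $N$) to obtain the $P_6$ $c_5$-$c_1$-$x$-$v$-$d$-$d'$. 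None of these ingredients — the dichotomy $j\notin L(x)$ versus $L(x)=\{j\}$, the list-based non-edge, the singleton-$D$ minimality argument, and the $K_4$ step — appear in your outline, so the proof of the second bullet is missing rather than merely compressed.
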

\begin{proof}
By Claim~\ref{notmixed}.3 no vertex of $X(C)$ is mixed on $D$, and so $N$ is complete  to $D$. Also by Claim~\ref{notmixed}.3 $W$ is anticomplete to $Z(C)$.

Let $d \in D$, and let $v \in N(d) \setminus (N \cup D)$. Since $D \subseteq W$, it follows that $v \in Y(C)$. By Claim~\ref{notmixed}.3, $N(v) \cap A=N$.  Let $x \in N(v) \cap (X(C) \setminus A)$. Then 
$N(x)\cap V(C)$ are not contained in a $3$-vertex 
path of $C$, and therefore  $|L(x)|=1$. 

First suppose that   $N(x)\cap \{c_2,c_5\}\neq \emptyset$. Then 
$j \not \in L(x)$.  We precolor $\{v,d\}$ and update with respect to the set 
$\{v,d\}$ three times. We may assume that the precoloring of $G|(V(C) \cup \{v,d\})$ is proper.
Since $\{v,d\}$ is complete to $N$ and not both $v,d$
are precolored $j$, it follows that $|L(n)|=1$ for every $n \in N$, and 
$|L(u)| \leq 2$ for every $u \in W$ such that $u$ has a neighbor $N$.
Suppose there is  $t \in W$ with 
$|L(t)|=3$. Then $t$ is anticomplete to $\{v,d\}$.  
Since $t \in W$, there exists $s \in A$ adjacent to $t$, and so 
$s$ is not complete to $\{v,d\}$.  Since $s \in A$, it 
follows from Claim~\ref{notmixed}.3 that $s$ is not mixed on the edge $vd$, and so $s$ is 
anticomplete to $\{v,d\}$. Since $|L(t)|=3$, it  follows that 
$L(s)=\{1,2,3\} \setminus \{j\}$, and so $s$ is non-adjacent
to $x$ (since we have updated three times with respect to $V(C)$).
Assume by symmetry that $c_2$ is adjacent to $x$, then $t$-$s$-$c_2$-$x$-$v$-$d$ is a
$P_6$, a contradiction. 

Therefore we may assume that  $N(x) \cap \{c_2,c_5\}= \emptyset$, and so
$N(x)=\{c_1,c_3,c_4\}$. It follows that $L(x)=\{j\}$, and consequently 
$L(v)\subseteq \{1,2,3\}\setminus \{j\}$. 
If $D=\{d\}$, then $|L(d)|=3$;
but $L(u)\subseteq \{1,2,3\}\setminus \{j\}$ for all $u\in N(d)$, which contradicts the fact that $(G,L)$ is a minimal list obstruction.
Therefore we may assume there exists $d'\in N(d)\cap D$. 
Since $G$ is not a $K_4$, $d'$ is not adjacent to $v$. 
But now $c_5$-$c_1$-$x$-$v$-$d$-$d'$ is a $P_6$, a contradiction.
\end{proof}

\begin{claim}\label{C5clone}
Assume that there is a vertex $c_1' \in V(G)$ adjacent to $c_1,c_2,c_5$ and non-adjacent to $c_3,c_4$.   Then $|V(G)|$ is bounded from above 
(and the bound does not depend on $G$).

\end{claim}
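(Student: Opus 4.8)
The plan is to reduce $(G,L)$ to the setting of Claim~\ref{compsW}, show that after the reductions already made the only lists of size three sit inside the set $W$ of that claim, and then invoke Claim~\ref{compsW}, Claim~\ref{reducecomponents}, Lemma~\ref{precolor}, Lemma~\ref{update4} and Lemma~\ref{lem:size2} to bound $|V(G)|$. First I would dispose of an easy subcase. Since $c_1'$ is adjacent to $c_1,c_2,c_5$ and $c_1$ is itself adjacent to both $c_2$ and $c_5$, the precolors $c(c_2)$ and $c(c_5)$ both differ from $c(c_1)$, so $\{c(c_1),c(c_2),c(c_5)\}=\{1,2,3\}$ unless $c(c_2)=c(c_5)$. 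In the former case updating with respect to $V(C)$ gives $c_1'$ the empty list, so $G|(V(C)\cup\{c_1'\})$ is a non-colorable pair on six vertices, and by minimality of $(G,L)$ this forces $|V(G)|\le 6$. So I may assume $c(c_2)=c(c_5)=:j$ --- the hypothesis of Claim~\ref{compsW} --- and then updating colors $c_1'$ with the unique color $k\notin\{c(c_1),j\}$; I precolor $c_1'$ with $k$ (this is forced, so it costs nothing). The color classes of $C$ are now $\{c_2,c_5\}$ (color $j$), a singleton, and a non-edge pair among $\{c_1,c_3,c_4\}$; by the symmetry exchanging $c_3$ with $c_4$ and $c_2$ with $c_5$ (which fixes $c_1$, $c_1'$ and $j$ and preserves all hypotheses) I may assume that pair is $\{c_1,c_4\}$.

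Next I would pin down list sizes. By Claim~\ref{noZ}, which has already been applied, every vertex of $Z(C)$ has list of size at most two. Any $x\in X(C)$ whose $C$-neighborhood contains two vertices of distinct precolors becomes colored during updating; every other $x\in X(C)$ has $N(x)\cap V(C)$ inside one color class, so at least one color is deleted from $L(x)$ and $|L(x)|\le 2$. Hence every vertex of $X(C)$ has list of size at most two. The crucial point is then that \emph{every $y\in Y(C)$ with $|L(y)|=3$ lies in $W$}, where $A=\{a\in X(C):N(a)\cap V(C)=\{c_2,c_5\}\}$ and $W=\{y\in Y(C):N(y)\cap X(C)\subseteq A\}$ as in Claim~\ref{compsW}. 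Indeed, if $|L(y)|=3$ then (the updating having propagated three rounds) $y$ is non-adjacent to $c_1'$ and to every $X(C)$-vertex that has become colored; by Claim~\ref{notmixed}.4 each $X(C)$-neighbor $x$ of $y$ has $N(x)\cap V(C)$ not contained in an edge of $C$, and since $C_5$ has independence number two, any such set of size at least three, and any non-edge pair of $C$ other than $\{c_1,c_4\}$ and $\{c_2,c_5\}$, contains two distinct precolors, so $x$ would be colored --- impossible. The pair $\{c_1,c_4\}$ is also excluded: if $N(x)\cap V(C)=\{c_1,c_4\}$ and $x$ is non-adjacent to $c_1'$, then $y$-$x$-$c_4$-$c_5$-$c_1'$-$c_2$ is an induced $P_6$ (the required non-edges all hold: $y$ is anticomplete to $V(C)$ and non-adjacent to $c_1'$, $x$ is non-adjacent to $c_2$ and to $c_5$, $c_1'$ is non-adjacent to $c_4$, and $c_2$ is non-adjacent to each of $c_4$ and $c_5$), a contradiction; and if $x$ is adjacent to $c_1'$ then $x$ misses both the color of $\{c_1,c_4\}$ and $k$, so $x$ is colored, again impossible. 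Thus $N(y)\cap X(C)\subseteq A$, i.e. $y\in W$.

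Finally I would reduce $W$ and conclude. Apply Claim~\ref{compsW} to each component $D$ of $W$ containing a vertex with list of size three. If some such $D$ satisfies the second alternative, precolor the promised edge: this is a refinement of $L$ as in Lemma~\ref{precolor} with at most six branches, in each of which all of $W$ --- and hence, by the previous paragraph, all of $G$ --- acquires lists of size at most two, and Lemma~\ref{precolor} together with Lemma~\ref{update4} bound $|V(G)|$ in terms of the sizes of the resulting minimal obstructions. Otherwise each such $D$ is anticomplete to $V(G)\setminus(D\cup N_D)$ and complete to $N_D\subseteq A$; every vertex of $A$ has $j$ absent from its list, each such $D$ has at least two vertices (a single such vertex, all of whose neighbors avoid $j$, could be colored $j$, contradicting minimality), and $V(G)\neq D\cup N_D$ since $|V(G)|>8$ and $V(C)\cup\{c_1'\}$ is disjoint from $D\cup N_D$; so Claim~\ref{reducecomponents} applies with $U_i=N_{D_i}$ and $c_i=j$ and yields a minimal obstruction $(F,L')$, with $F$ an induced subgraph of $G$, $L'$ a subsystem of $L$ equal to $L$ outside $\bigcup D_i$ and of size at most two on $V(F)\cap\bigcup D_i$, and $|V(G)|$ bounded in terms of $|V(F)|$. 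Either way we reach a minimal list-obstruction $(G^{*},L^{*})$, an induced subgraph of $G$ with a subsystem of $L$, in which every list has size at most two --- the precolored set $V(C)\cup\{c_1'\}$ has lists of size one, $X(C)$ and $Z(C)$ were bounded by two above, and $Y(C)$ now is too --- and in which $|V(G)|$ is bounded by a function of $|V(G^{*})|$. By Lemma~\ref{lem:size2}, $|V(G^{*})|\le 100$, so $|V(G)|$ is bounded by an absolute constant, proving Claim~\ref{C5clone}.

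I expect the main obstacle to be the dichotomy ``list of size three $\Rightarrow$ inside $W$'' of the second-to-last paragraph, which rests entirely on producing the single $P_6$ $y$-$x$-$c_4$-$c_5$-$c_1'$-$c_2$, together with the bookkeeping needed to ensure that each reduction step --- precoloring $V(C)$ and $c_1'$, then Claim~\ref{compsW} or Claim~\ref{reducecomponents}, then Lemma~\ref{update4} --- contributes only a multiplicative constant to the final bound. One also has to check carefully that three rounds of updating with respect to $V(C)$ really do suffice to propagate colors through $c_1'$ and through the colored vertices of $X(C)$ down to $Y(C)$.
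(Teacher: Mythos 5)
Your proof is correct and follows essentially the same route as the paper: precolor $V(C)\cup\{c_1'\}$, show that the only vertices retaining lists of size three are $Y(C)$-vertices all of whose $X(C)$-neighbors attach to $C$ exactly at $\{c_2,c_5\}$, handle singleton components directly and the remaining components of $W$ via Claim~\ref{compsW} and Claim~\ref{reducecomponents}, and finish with Lemma~\ref{precolor}, Lemma~\ref{update4} and Lemma~\ref{lem:size2}. The only real difference is presentational: the paper routes the argument through the second five-cycle $C'=c_1'$-$c_2$-$c_3$-$c_4$-$c_5$-$c_1'$ (defining $A$ and $W$ with respect to both cycles and applying Claim~\ref{noZ} to $C'$), whereas you stay with $C$ alone and supply explicitly the ``easy case analysis'' the paper omits --- the induced $P_6$ $y$-$x$-$c_4$-$c_5$-$c_1'$-$c_2$ excluding attachments at $\{c_1,c_4\}$ --- and you also treat the second alternative of Claim~\ref{compsW} explicitly, which the paper's write-up glosses over.
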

\begin{proof}
By Lemma~\ref{precolor} and Lemma~\ref{update4}, we can precolor the vertices 
of $V(C) \cup \{c_1'\}$  and update with  respect to $V(C) \cup \{c_1'\}$ 
three times.
By symmetry, we may assume that $L(c_1)=\{1\}$, $L(c_1')=L(c_3)=\{2\}$, $L(c_2)=L(c_5)=\{3\}$ and $L(c_4)=\{1\}$.
Let $C'=c_1'$-$c_2$-$c_3$-$c_4$-$c_5$-$c_1'$.
We write $X=X(C)$, $X'=X(C')$, and define the sets $Y$, $Y'$, $Z$, and $Z'$ in a similar manner. We abuse notation an denote the list system thus obtained by $L$.  Recall that $(G,L)$ is a minimal list-obstruction.

Let $A$ be the set of all vertices $a \in X \cup X'$ for which $N(a) \cap \{c_1,c_1',c_2,c_3,c_4,c_5\} = \{c_2,c_5\}$. 
Let $W$ be the set of vertices  $y \in Y \cap Y'$ such that $N(y) \cap (X \cup X') \subseteq A$. 
Since we have updated $|L(x)| \leq 2$ for every $x \in X \cup X'$.
By Claim~\ref{noZ} applied to $C'$,
we may assume that $|L(z)| \leq 2$ for every 
$z \in Z \cup Z'$.
Thus if $|L(v)|=3$ then $v \in Y \cap Y'$, and an easy case analysis shows that $v \in W$.
By Lemma~\ref{lem:size2} we may assume that $W \neq \emptyset$.
Let $D_1, \ldots ,D_t$ be the components of $W$ that contain vertices with 
lists of size three.
Suppose first that $|D_i|=\{d\}$ for some $i$.
Then, letting $c$ be a coloring of $G \setminus d$, we observe that no vertex of $N(d)$ is colored $3$, and so we can get a coloring of $G$ by setting $c(d)=3$, a contradiction. This proves that $|D_i| \geq 2$ for every $i$.

Let $i \in \{1, \ldots, t\}$. Let $U_i$ be the set of vertices of $A$ with a neighbor in $D_i$. By  Claim~\ref{compsW} , $D_i$ is complete to $U_i$
and anticomplete to $V(G) \setminus (D_i \cup U_i)$. 
Since $U_i \subseteq A$,
it follows that $3 \not \in L(u)$ for every $u \in U_i$. Let $(F,L')$ be as in 
Claim~\ref{reducecomponents}. Since $|L'(v)| \leq 2$ for every $v \in V(F)$,
Lemma~\ref{lem:size2} implies that $|V(F)| \leq 100$. Since
$|V(G)|$ depends only on $|V(F)|$, Claim~\ref{C5clone} follows.
This completes the proof of Claim~\ref{C5clone}.
\end{proof}

We can now prove the following claim, which is the last step of our argument.
We may assume that $C$ is precolored in such a way that the precoloring is 
proper, and the set $\{c_2,c_4\}$ is monochromatic and the set $\{c_3,c_4\}$ 
is monochromatic.
\begin{claim}\label{c5}
$|V(G)|$ is bounded from above (and the bound does not depend on $G$).
\end{claim}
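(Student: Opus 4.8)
The plan is to perform one further bounded round of precolouring, after which every list has size at most two, and then to invoke Lemma~\ref{lem:size2}. We use the situation already established: the vertices of $C$ carry a fixed proper precolouring in which $c_1$ is the uniquely coloured vertex and $\{c_2,c_4\},\{c_3,c_5\}$ are the two monochromatic (distance-two) pairs; we have updated three times with respect to $V(C)$; by Claim~\ref{noZ} we may assume $|L(z)|\le 2$ for every $z\in Z(C)$; and $|V(G)|>8$, so $G$ has no $K_4$. Since every vertex of $X(C)$ has a neighbour in $C$ and $V(C)$ is coloured, $|L(x)|\le 2$ for every $x\in X(C)$; hence every vertex $v$ with $|L(v)|=3$ lies in $Y(C)$.

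The first step is to confine these size-three lists. By Claim~\ref{C5clone} and the rotational symmetry of $C$ we may assume that no vertex of $G$ is complete to three consecutive vertices of $C$ and anticomplete to the other two, since otherwise $|V(G)|$ is bounded. With these ``clones'' excluded, I would show that if $v\in Y(C)$ has $|L(v)|=3$ then every neighbour $x$ of $v$ in $X(C)$ has $N(x)\cap V(C)\in\{\{c_2,c_4\},\{c_3,c_5\}\}$: by Claim~\ref{notmixed}.4 the set $N(x)\cap V(C)$ is not contained in a $2$-vertex path of $C$, so it is a distance-two pair or has at least three vertices, and any such attachment other than a monochromatic distance-two pair meets two colour classes of $C$, forcing $|L(x)|=1$ already after the first updating round; that singleton colour then reduces $L(v)$ to size at most two within the three updating rounds, a contradiction. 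Together with a short $P_6$-argument (using Claim~\ref{notmixed}) ruling out a size-three vertex whose $X(C)$-neighbours attach at both monochromatic pairs, this places every size-three vertex in $W:=W_{24}\cup W_{35}$, where $W_{ij}$ denotes the set of vertices of $Y(C)$ all of whose $X(C)$-neighbours attach to $C$ exactly at $\{c_i,c_j\}$.

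Next I would apply Claim~\ref{compsW}, in the two forms obtained from the rotations of $C$ that move $c_3$, respectively $c_4$, to the role played by $c_1$, to each component $D$ of $W_{24}$, respectively $W_{35}$, that contains a size-three vertex. Writing $N$ for the set of neighbours of $D$ in $X(C)$, Claim~\ref{compsW} gives that $D$ is complete to $N$ and that either (i) $D$ is anticomplete to $V(G)\setminus(D\cup N)$, or (ii) there is an edge $dv$ with $d\in D$ such that precolouring $\{d,v\}$ with distinct colours and updating three more times reduces every list in the corresponding $W_{ij}$ to size at most two. In case (ii) I perform that precolouring (this is invoked at most twice, hence a bounded amount of extra precolouring). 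In case (i) first note that $|D|\ge 2$: if $D=\{d\}$ then $N(d)=N$ consists of vertices attaching to one monochromatic pair, which all avoid a common colour $\alpha$, so any colouring of $G\setminus d$ extends to $G$ by setting $c(d)=\alpha$, contradicting that $(G,L)$ is an obstruction; hence these components $D$ --- each complete to $N\subseteq X(C)$, anticomplete to the rest of $G$, and with every vertex of $N$ forbidding a common colour --- satisfy the hypotheses of Claim~\ref{reducecomponents}, which I apply to all of them at once. In both cases I obtain an induced subgraph $F$ of $G$ and a subsystem $L'$ of $L$ such that $(F,L')$ is a minimal list-obstruction, $|L'(v)|\le 2$ for every $v\in V(F)$, and, by Lemmas~\ref{precolor} and~\ref{update4} together with Claim~\ref{reducecomponents}, $|V(G)|$ is bounded above by a function of $|V(F)|$ not depending on $G$. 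Then Lemma~\ref{lem:size2} gives $|V(F)|\le 100$, so $|V(G)|$ is bounded by an absolute constant; with Claim~\ref{c5free} this completes the proof of Lemma~\ref{lem:3downto2}.

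I expect the main obstacle to be the first step: determining precisely which vertices can still carry a size-three list after three updating rounds, and in particular ruling out the ``mixed'' vertices of $Y(C)$ that see both monochromatic pairs. This requires a careful case analysis of how an $X(C)$-vertex can attach to the precoloured induced $C_5$ and of how the updating propagates through $Y(C)$, and it is where the full strength of $P_6$-freeness, via Claim~\ref{notmixed}, is needed. Once all size-three lists are confined to the well-structured set $W$, the remainder is a routine combination of Claim~\ref{compsW}, Claim~\ref{reducecomponents}, and the reduction lemmas.
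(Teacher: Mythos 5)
Your reduction breaks down at the first step, and unfortunately that step is where the real content of the claim lies. You assert that a short $P_6$-argument rules out a size-three vertex whose $X(C)$-neighbours attach at both monochromatic pairs, so that every vertex with a full list ends up in $W_{24}\cup W_{35}$. The truth is the opposite: after the three rounds of updating, a vertex $y\in Y(C)$ retains a list of size three precisely when all its $X(C)$-neighbours lie in $A'=\{x: N(x)\cap V(C)=\{c_2,c_4\}\}$ or $B'=\{x: N(x)\cap V(C)=\{c_3,c_5\}\}$ (those neighbours only have lists $\{1,3\}$ and $\{1,2\}$, which cannot shrink $L(y)$), and minimality then \emph{forces} such a $y$ to have neighbours on both sides: if, say, $N(y)\cap B=\emptyset$, then (after the bounded extra precolouring that handles neighbours of $y$ outside $A\cup B\cup Y'$, the paper's step \eqref{NsubAB}) a colouring of $G\setminus y$ extends to $y$ with colour $2$, contradicting criticality. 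So the vertices you try to exclude are exactly the ones that survive, they do not lie in any single $W_{ij}$, and the rotated Claim~\ref{compsW} together with Claim~\ref{reducecomponents} never touches them. No short $P_6$-argument can dispose of them: configurations where $Y'$-vertices are complete to nonempty pieces of both $A$ and $B$ (the ``anticomponents'' $C_i$ of the paper) are perfectly $P_6$-free and can a priori be arbitrarily large.

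Handling this set is the bulk of the paper's proof of Claim~\ref{c5}, and nothing in your proposal substitutes for it: one needs the statements \eqref{notmixedanti}--\eqref{size1} controlling how $Y'$ attaches to the anticomponents, the auxiliary graph $F$ on the anticomponents with $Y_1$-vertices as edges, the argument \eqref{monosets} that $A(F_i)$ and $B(F_i)$ are monochromatic in every colouring, the contraction of each $F_i$ to a single pair $(a_i,b_i)$ whose $P_6$-freeness rests on Claim~\ref{biplemma}, the refinement $\{L_1,L_2,L_3\}$ exploiting that one of $A''$, $B''$ or one of $A^*,B^*$ is monochromatic, and finally the counting arguments bounding $s$, each $|F_i|$ and $|Y_1|$, followed by Claim~\ref{bipartite1}. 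Your outline correctly reproduces the ambient setup (precoloured $C$, Claim~\ref{noZ}, exclusion of clones via Claim~\ref{C5clone}, and the eventual appeal to Lemmas~\ref{lem:size2}, \ref{precolor}, \ref{update4}), but with the central case mis-stated the argument as proposed does not prove the claim.
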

\begin{proof} 
We may assume that $L(c_1)=1$, $L(c_2)=L(c_4)=2$  and  $L(c_3)=L(c_5)=3$. 
Write $X=X(C), Y=Y(C)$ and $Z=Z(C)$.
Let $A'=\{v\in X:N(v)\cap C=\{c_2,c_4\}\}$ and $B'=\{v\in X:N(v)\cap C=\{c_3,c_5\}\}$.  

It follows from Claim~\ref{notmixed}.4 that after the first step of updating 
every $v\in X\setminus(A'\cup B')$ with  a neighbor in $Y$
has list of size one.
Let $ Y' $ be the set of vertices that have lists of size 3 after the third
step of updating.
Since $L(z) \leq 2$ for every $z \in Z$, it follows that  $Y'\subseteq Y$,
and $N(y) \cap X \subseteq A \cup B$ for every $y \in Y'$.

Let $ A,B $ be the subsets of $ A',B' $ respectively consisting of all vertices with a neighbor in $ Y' $.  Then after the second step of updating, the list 
of every vertex in $A$ is $ \{1,3\} $ and the list of every vertex in $B$ is 
$\{1,2\}$. If one of $A',B'$ is not a stable set, Claim~\ref{C5clone} completes 
the proof.  So, we may assume that each of $A',B'$ is a stable set.

Let $ H $ be the graph obtained from $ G|(A\cup B) $ by making each of $ A,B $ a clique.
Let $ C_1,\ldots,C_t $ be the anticomponents of $ H $ such that both $ A_i=C_i \cap A $ and $ B_i=C_i \cap B $ are nonempty. 
Let $A''=A \setminus \bigcup_{i=1}^tC_i$ and let $B''=B_i \setminus \bigcup_{i=1}^t C_i$.

\begin{equation}\label{notmixedanti}
\begin{minipage}[c]{0.8\textwidth}\em
Let $v \in Y'$. Then $N(v) \cap A$ is complete to $B' \setminus N(v)$, and
$N(v) \cap B$ is complete to $A' \setminus N(v)$. In particular, 
$A$ is complete to $B' \setminus B$, $B$ is complete to $A' \setminus A$,
and $v$ is not mixed on $C_i$ for any $i$.

\end{minipage}
\end{equation}
Suppose this is false.
By symmetry, we may assume there exists $w\in A$ non-adjacent to $k \in B'$ 
such that $v$ is adjacent to $w$ but not to $k$. 
Then $v$-$w$-$c_2$-$c_1$-$c_5$-$k$  is a $P_6$ in $G$, a contradiction.
This proves \eqref{notmixedanti}. 
\begin{equation}\label{NsubAB}
\begin{minipage}[c]{0.8\textwidth}\em
Suppose $v \in Y'$ is adjacent to $y \in V(G) \setminus (A \cup B \cup Y')$.
Then precoloring $y$ and $v$ and updating three times reduces the list size of 
all  vertices in $Y'$ to at most two.
\end{minipage}
\end{equation}
Since $v \in Y'$, it follows that  $N(v)\cap X\subseteq  A\cup B$, and 
therefore $y \not \in X$. It follows from Claim~\ref{notmixed}.3 that 
$N(v) \cap X$ is complete to $N(v) \setminus X$.

By Claim~\ref{compsW} $v$ has both a neighbor in $A$ and a neighbor in $B$.
We precolor $v$ and $y$ and update three times; denote the new list system by $L''$.   If $v$ and $y$ have the same color, or  one of $v,y$ is colored $1$, 
then $L''(u)=\emptyset$ for some vertex $u \in N(v) \cap (A \cup B)$, and 
\eqref{NsubAB} holds.
Thus
we may assume that one of $v,y$ is precolored $2$, and the other one $3$.
We claim that, after updating, $|L(x)| = 1$ for every $x \in X$.  Recall that even before we precolored
$v$ and $y$ we had that $|L(x)|=1$ for every $x \in X \setminus (A' \cup B')$.
Since $v$ and $y$ are colored $2,3$, and $\{v,y\}$ is complete
to $N(v) \cap X$, it follows that $|L(x)|=1$ for every $x \in N(v) \cap X$.
Since both $N(v)\cap A$ and $N(v)\cap B$ are nonempty, $L(x)=\{1\}$ for every $x \in N(v) \cap X$.
By \eqref{notmixedanti}, $N(v) \cap A$ is complete
to $B' \setminus N(v)$, and $N(v) \cap B$ is complete to $A' \setminus N(v)$. 
Since we have updated, $L(a)=\{3\}$ for every
$a \in A' \setminus N(v)$ and $L(b)=\{2\}$ for every $b \in B' \setminus N(v)$.
Consequently $|L(w)| \leq 2$ for every $w \in Y$. This proves~\ref{NsubAB}.

\bigskip
In view of \eqref{NsubAB}, Lemma~\ref{precolor} and Lemma~\ref{update4},  we may assume that
$|L(v)| \leq 2$ for every $v \in Y(C)$ for which
$N(v) \setminus (A \cup B) \neq \emptyset$.

\begin{equation}\label{adjCi}
\begin{minipage}[c]{0.8\textwidth}\em
Let $T=\{y \in Y : N(y) \subseteq A'' \cup B''\}$. 
There is collection $\mathcal{L}$ of list systems such that for every
$L' \in \mathcal{L}$
\begin{itemize}  
\item  $|L'(v)| \leq 2$ for every $v \in T$, and
\item $L'(v)=L(v)$ for every $v \in V(G) \setminus T$, 
\end{itemize}
For every $L' \in \mathcal{L}$, let $(G_{L'},L')$ be a minimal list
obstruction induced by $(G,L')$.
Then $|V(G)|$  depends only on $|\bigcup_{L' \in \mathcal{L}}V(G_{L'})|$.

\end{minipage}
\end{equation}

Let $y \in T \cap Y'$. 
First we show that $y$ has a neighbor in $A$ and a neighbor in $B$.
Suppose $N(y) \cap B=\emptyset$. 
Then, by the remark following  \eqref{NsubAB}, $N(y) \subseteq A$. 
But now a coloring of $G \setminus y$ can be extended to a coloring of $G$ by assigning color $2$ to $y$, contrary to the fact $(G,L)$ is a minimal obstruction.
This proves that $y$ has a neighbor in $A$ and a neighbor in $B$.
In particular both $A''$ and $B''$  are non-empty.  

Observe that in every coloring of $G$
either $A''$ or $B''$ is monochromatic (since they are complete to each other).
Let $\mathcal{L}$  be the following collection of list systems.
For each $i \in \bigcap_{a \in A''}L(a)$ we add to $\mathcal L$  the list system $L'$, where $L'(a)=\{i\}$ for all $a \in A''$ and $L'(v)=L(v)$ for all $v \in V(G) \setminus A''$; and we update three times with respect to $A''$.
Moreover, for each $j \in \bigcap_{b \in B''}L(b)$ we add  to $\mathcal L$ the list system $L'$, where $L'(b)=\{j\}$ for all $b \in B''$ and $L'(v)=L(v)$ for all $v \in V(G) \setminus B''$, and we update three times with respect to $B''$.

Now $\mathcal L$ is a refinement of $L$ and satisfies the hypotheses of Lemma~\ref{precolor} with $R=G|(A'' \cup B'')$.
Let $L' \in \mathcal{L}$. Since either
$|L(a)|=1$ for every $a \in A''$, or $|L(b)|=1$ for every $b \in B''$, and
since we have updated three times, we have that
$|L'(y)| \leq 2$ for every $y \in T$.
Let $(G_{L'},L')$ be a minimal list-obstruction induced by $(G,L')$.

By~Lemma~\ref{precolor} and Lemma~\ref{update4},
$$V(G)=A \cup \bigcup_{L' \in \mathcal{L}} V(G_{L'}).$$
Since $A$ is a stable set, Claim~\ref{stable} implies that $|A|$ only depends on
$|\bigcup_{L' \in \mathcal{L}} V(G_{L'})|$, and \eqref{adjCi} follows.
This proves \eqref{adjCi}.

\bigskip

Let $\mathcal{L}$ be as in \eqref{adjCi}.
Since our goal is to prove that $G$ has bounded size, it is enough to show that $(G,L')$ induces a minimal
obstruction of bounded size for every $L' \in \mathcal{L}$.
Therefore we may assume that for every $y \in Y'$ there exists an index $i$ 
such that $y$ is complete to $C_i$.

\begin{equation}\label{hpair}
\begin{minipage}[c]{0.8\textwidth}\em
Let $y_1 \in Y'$ and let $C_1 \subseteq N(y_1)$.
Then we may assume that no vertex of $V(G) \setminus C_1$ is mixed on $A_1$ (and similarly on $B_1$).
\end{minipage}
\end{equation}
Suppose $x \in V(G) \setminus C_1$ is mixed on $A_1$. 
Since $x$ is mixed on $C_1$, and $C_1$ is an anticomponent of $H$, there exist $a_1\in A_1$
and $b_1\in B_1$ such that $a_1b_1$ is a non-edge, and $x$ is mixed on this non-edge.
Let $a_1' \in A_1$ be such that $x$ is mixed on $\{a_1,a_1'\}$. 
By Lemma~\ref{precolor} and Lemma~\ref{update4} we can precolor $T=\{x,a_1,a_1',b_1,y_1\}$,
and update three times with respect to $T$. 
Let $Y''$ be the set of vertices with lists of size $3$ after updating. We 
claim that  $Y''=\emptyset$.
Suppose not and let $v\in Y''$. 
By the remark following \eqref{adjCi} there exists an index $i$ such that $v$ is complete to $C_i$. 
Then $i \neq 1$.
Since $v \in Y''$, $\{a_1,a_1'\}$ is complete to $B_i$ and $b_1$ is complete 
to $A_i$,
and we have updated three times  with respect to $T$, it 
follows that $L(a_1)=L(a_1')=\{3\}$ and  $L(b_1)=\{2\}$.
Since $x$ has a neighbor in $\{a_1,a_1'\}$ we may assume that
$L(x) \neq \{3\}$. 

First consider the case that $x$ is adjacent to $a_1$ and not to $b_1$. Then
$x$ is non-adjacent to $a_1'$.
Choose $a_i\in A_i$.
Since $x$-$a_1$-$y_1$-$b_1$-$a_i$-$v$ is not a $P_6$ in $G$, it follows that $x$ is adjacent to $a_i$. Since $v \in Y''$, it follows that $L(x)=\{2\}$,
and therefore $x$ is anticomplete to $B_i$.  
Choose $b_i$ such that $a_ib_i$ is a non-edge, then $x$-$a_i$-$v$-$b_i$-$a'_1$-$y_1$ is a $P_6$ in $G$, a contradiction. 
Therefore $x$ is adjacent to $b_1$ and not to $a_1$. 
Since $x$-$b_1$-$y_1$-$a_1$-$b$-$v$ is not a $P_6$ in $G$ for any $b \in B_i$,
it follows that  $x$ is complete to $B_i$,
which is a contradiction since $L(x) \neq \{3\}$ and $v \in Y''$.
This proves \eqref{hpair}.
\begin{equation}\label{size1}
\begin{minipage}[c]{0.8\textwidth}\em
Let $v \in Y'$ and let $C_i \in N(v)$. Then we may assume $|A_i|=|B_i|=1$.
\end{minipage}
\end{equation}
Suppose this is false.
We may assume that $i=1$. 
By \eqref{hpair}, no vertex of $ G\setminus C_1 $ is mixed on $ A_1 $ and no vertex of $ G\setminus C_1 $ is mixed on $ B_1 $. 
Choose $ a_1 \in A_1 $ and $ b_1 \in B_1 $ such that $a_1b_1$ is an edge if possible.
Then $ (G\setminus (A_1\cup B_1)) \cup \{a_1,b_1\} $ is not $L$-colorable, since otherwise we can color $A_1$ in the color of $a_1$ and $ B_1 $ in the color of $b_1$. 
Since $(G,L)$ is a minimal list-obstruction,
\eqref{size1} follows.
\bigskip

Let $Y_1=\{y \in Y' : N(y) \subseteq (A \setminus A'') \cup (B \setminus B'')\}$,
and let $Y_2=Y' \setminus Y_1$. By~\eqref{size1} and since $(G,L)$
is a minimal list-obstruction, every $y\in Y_1$ is complete to more than one of $C_1, \ldots, C_t$.
We may assume that each of $C_1, \ldots, C_s$
 is complete to some vertex of $Y_1$, and $C_{s+1} \cup \ldots \cup C_t$
is anticomplete to $Y_1$.
Let $F$ be the graph with vertex set $V(F)=\{1, \ldots, s\}$ where
$i$ is adjacent to $j$ if and only if there is a vertex $y \in Y_1$ complete to $C_i \cup C_j$.
We will refer to the vertices of $F$ as $1, \ldots, s$ and $C_1, \ldots, C_s$
interchangeably.

Let $F_1, \ldots, F_k$ be the components of $F$, let 
$A(F_i)=\bigcup_{C_j \in F_i} A_j$, and let $B(F_i)=\bigcup_{C_j \in F_i}B_j$. 
Moreover, let
$Y(F_i)=\{y \in Y_1 : N(y) \subseteq A(F_i) \cup B(F_i)\}$.
\begin{equation}\label{monosets}
\begin{minipage}[c]{0.8\textwidth}\em
Let $i \in \{1, \ldots, k\}$ and let $T \subseteq V(G)$ be such that 
$A(F_i) \cup B(F_i) \cup Y_1 \subseteq T$. Then 
	for every $L$-coloring of $G|T$, both of the sets
$A(F_i)$ and $B(F_i)$ are monochromatic, and the color of $A(F_i)$ is different
from the color of $B(F_i)$. 
\end{minipage}
\end{equation}
Let $c$ be a coloring of $G|T$.
Let $y \in Y(F_i)$.
We may assume that $y$ is complete to $C_1$, and $C_1 \in F_i$.
Let $\alpha =c(A_1)$ and $\beta =c(B_1)$, where $c(A_i)$ and $c(B_i)$ denote the color given to the unique vertices in the sets $A_i$ and $B_i$ respectively.
Since $y$ is complete to at least two of $C_1, \ldots, C_s$, the sets $N(y) \cap A$ and $N(y) \cap B$ are monochromatic, and $\alpha \neq \beta$.
Pick any $t \in F_i$, and let $P$ be a shortest path in $F$ from $C_1$ to $t$.
Let $s$ be the neighbor of $t$ in $P$. 
We may assume that $s=C_2$ and $t=C_3$.
We proceed by induction and assume that $c(A_2)=\alpha$, and $c(B_2)=\beta$.
Since $s$ is adjacent to $t$ in $F$,  there is $y' \in Y_1$ such that $y'$ is complete to $C_2 \cup C_3$.
Then $c(y') \in \{1,2,3\} \setminus \{\alpha,\beta\}$. 
Moreover, $A_2$ is complete to $B_3$, and $A_3$ is complete to $B_2$, and so $c(A_3) \notin \{ c(y'), \beta\}$ and $c(B_3) \notin \{c(y'), \alpha\}$.
It follows that $c(A_3)=\alpha$ and $c(B_3)=\beta$, as required.
This proves \eqref{monosets}.
\bigskip

We now construct a new graph $G'$ where we replace each $F_i$ by a representative in $A$ and a representative in $B$, as follows.
Let $G'$ be the graph obtained from $G \setminus (C_1 \cup \ldots \cup C_s \cup Y_1)$ by adding $2s$ new vertices $a_1,\ldots,a_s,b_1, \ldots, b_s$, where
$$N_{G'}(a_i)=\{b_i\} \cup \bigcup_{a \in A(F_i)}(N_G(a) \cap V(G'))$$ 
and $$N_{G'}(b_i)=\{a_i\} \cup \bigcup_{b \in B(F_i) } (N_G(b) \cap V(G')),$$
for all $i \in \{1,\ldots,s\}$. 
Note that, in $G'$, the set $\{a_1,\ldots,a_s\}$ is complete to the set $\{b_1,\ldots, b_s\}$.
Let $L(a_i)=\{1,3\}$ and $L(b_i)=\{1,2\}$ for every $i$.
By repeated applications of Claim~\ref{biplemma}, we deduce that $G'$ is $P_6$-free.

Let $A^*=(A \setminus (A'' \cup A_1 \ldots \cup A_s)) \cup \{a_1, \ldots, a_s\}$ and $B^*=(B \setminus (B'' \cup B_1 \ldots \cup B_s)) \cup \{b_1, \ldots, b_s\}$.
Note that $A^*$ is complete to $B''$, and $B^*$ is complete to 
$A''$.

Let $R=G|(A^* \cup B^* \cup A'' \cup B'' )$.  

We may assume that $|A^*|\geq 2$, and define the list systems $L_1$, $L_2$, and $L_3$ as follows.
$$L_1(v)= \begin{cases} \{3\} &\text{ if }v \in A'' \\
\{2\} &\text{ if }v \in B''\\
L(v) &\text{ if }v \not\in A'' \cup B'' \end{cases}$$

$$L_2(v)= \begin{cases} \{3\} &\text{ if }v \in A^* \\
L(v) &\text{ if }v \not\in A^* \end{cases}$$

$$L_3(v)= \begin{cases} \{2\} &\text{ if }v \in B^* \\
L(v) &\text{ if }v \not\in B^* \end{cases}$$

Let $\mathcal{L}=\{L_1,L_2,L_3\}$.
It is clear that, for every $L$-coloring $c$ of $G'$, there exists a list system $L' \in \mathcal{L}$ such that $c$ is also an $L'$-coloring of $G'$.
Recall that by the remark following \eqref{adjCi} every vertex of $Y_2$ has a  neighbor in  $A^*$, a neighbor in $B^*$, and a neighbor in $A'' \cup B''$. Therefore, for every $L' \in \mathcal{L}$, every vertex in $Y_2$ is adjacent to some vertex $v$ with  $|L'(v)|=1$.
Now by Lemma~\ref{lem:size2}, Lemma~\ref{precolor}, and Lemma~\ref{update4}, 
for every $L' \in \mathcal{L}$, 
$G'$ contains an induced subgraph $G''$ such that $(G'',L)$ is not 
colorable, and $|V(G'') \setminus V(R)|\leq 3\cdot 36\cdot 100$.
We may assume that for every index $i$, $a_i\in G''$ or $b_i\in G''$, for otherwise we can just delete $F_i $ from $G$ contradicting the minimality of $(G,L)$. 

We claim that the subgraph induced by $G$ on the vertex set $$S=(V(G) \cap V(G'')) \cup Y_1 \cup \bigcup_{i=1}^s (A(F_i) \cup B(F_i))$$
 is not $L$-colorable. 
Suppose this is false and let $c$ be such a coloring.  
By \eqref{monosets}, for every $i \in \{1, \ldots, k\}$ the sets $A(F_i)$ and $B(F_i)$ are both monochromatic, and $c$ can be converted to a coloring of $G''$ by giving $a_i$ the unique color that appears in $A(F_i)$ and $b_i$ the unique color that appears in $B(F_i)$, a contradiction.  
Thus $V(G)=S$, and it is sufficient to show that $|Y_1 \cup \bigcup_{i=1}^s (A(F_i) \cup B(F_i))|$ has bounded size.
 To see this, let $T=S\setminus (A \cup B \cup Y_1)$, then 
$|T|< |V(G'') \setminus R| \leq   3\cdot 36\cdot 100$.

First we bound $s$.
Partition the set of pairs $\{(a_1,b_1), \ldots, (a_s,b_s)\}$ according to the 
adjacency of each $(a_i,b_i)$ in $T$; let $H_1, \ldots, H_l$ be the blocks of this partition.  Then $l \leq 2^{2|T|}$.

We claim that $|H_i|=1$ for every $i$. 
Suppose for a contradiction  that $(a_i,b_i), (a_j,b_j) \in H_1$. 
Let $c$ be an $L$-coloring of $G'' \setminus \{a_i, b_i\}$. 
Note that, since $N(a_i)=N(a_j)$ and $N(b_i)=N(b_j)$, setting $c(a_i)=c(a_j)$ and $c(b_i)=c(b_j)$  gives an $L$-coloring of $G''$, a contradiction. 
This proves that $s\leq 2^{2|T|}$.

Next we bound $|F_i|$ for each $i$. Let $i \in \{1, \ldots, s\}$.
Partition the set $\{C_j: j \in F_i\}$
according to the adjacency of $C_j$ in $T$.
Let $C_1^i, \ldots, C_{q_i}^i$ be the blocks of the partition.
Then $q_i\leq 2^{|T|}$.
Let $C_l \in C_1^i$.
For each $j \in \{2, \ldots, q_i\}$ let $Q^i_j$ be a shortest path from $C_l$ to $C_j^i$ in $F$. 
In $G$, $Q^i_j$ yields a path ${Q^i_j}'= a_1'$-$y_1'$-$a_2'$-$y_2'$-$\ldots$-$y_m'$-$a_m'$ where $a_1' \in C_l$, $a_m' \in A \cap C_j^i$, $a_2',\ldots,a_{m-1}' \in \bigcup_{l \in \{1, \ldots, q\} \setminus \{1,j\}}A \cap C_l^i$ and $y_1', \ldots, y_m' \in Y_1$. 
Let $Y(Q^i_j)=\{y_1', \ldots, y_m'\}$.
Since ${Q^i_j}'$ does not contain a $P_6$, it follows that $|Y(Q_j)| \leq 2$.
Let $Y_1^i=\bigcup_{j=2}^{q_i} Y(Q^i_j)$, and note that 
$|Y_1^i| \leq 2q_i-2 \leq 2(2^{|T|}-1)$. 
Moreover, let $\hat{Y}=\bigcup_{i=1}^s Y_1^i$, and note that 
$|\hat{Y}| \leq  2(2^{|T|}-1)s$.

Next we claim that $\hat{Y}=Y_1$. 
To see this, suppose that there exists a vertex $y \in Y_1 \setminus \hat{Y}$.
Note that $y$ is critical, and let $c$ be a coloring of $G \setminus y$.
We may assume that $N(y) \subseteq \bigcup_{i \in F_1} C_i$.
We will construct a coloring of $G''$ and obtain a contradiction.
By \eqref{monosets}, for every $i \in \{2, \ldots, s\}$ both of the sets $A(F_i)$ and $B(F_i)$ are monochromatic and so we can color $a_i$ and $b_i$ with the corresponding colors.

Let $F'$ be the graph with vertex set $F_1$ and such that $i$ is adjacent to $j$
if and only if there is a vertex $y' \in \hat{Y}$ (and therefore $y' \in Y_1^1$)
complete to $C_i \cup C_j$. Recall the partition $C_1^1, \ldots, C_{q_1}^1$.
By the definition of $Y_1^1$, there exists $C_1' \in C_1^1$ such that for every $i \in \{2, \ldots, q_1\}$ there is a path in $F'$ from $C_1'$ to a member $C_i'$ of $C_i^1$. Write $\{a_1'\}=C_1' \cap A$ and $\{b_1'\}=C_1' \cap B$, and let $\alpha=c(a_1')$ and $\beta=c(b_i')$.
Following the outline of the proof of \eqref{monosets} we deduce that $\alpha \neq \beta$, and that for each $i \in \{1, \ldots, q\}$ some vertex of $\bigcup_{C \in C_i^1} C \cap A$ is colored with color $\alpha$, and some vertex of $\bigcup_{C \in C_i^1} C \cap B$ is colored with color $\beta$.
Observe that for every index $i$ only vertices of $Y_1 \cup \bigcup_{C \in C_i^1} (C \cap B)$ are mixed on $\bigcup_{C \in C_i^1} (C \cap A$), and only vertices of $Y_1 \cup \bigcup_{C \in C_i^1} (C \cap A)$ are mixed on $\bigcup_{C \in C_i^1} (C \cap B)$.
Thus we can color $a_1$ with color $\alpha$ and $b_1$ with color $\beta$, obtaining a coloring of $G''$, a contradiction. 
This proves that $|Y_1| \leq  2(2^{|T|}-1)s$.
Now applying Claim~\ref{bipartite1} $|Y_1|$ times implies that there is a function $q$ that does not depend on $G$, such that 
$|\bigcup_{i=1}^s (A(F_i) \cup B(F_i))| \leq q(|T|)$.
Consequently,  $|V(G)| \leq |T| + |Y_1| + q(|T|) \leq |T| + 2(2^{|T|}-1)s+q(T)$.
This completes the proof. 
\end{proof}

Now Lemma~\ref{lem:3downto2} follows from Claim~\ref{c5}.

\section{$2P_3$-free 4-vertex critical graphs}\label{sec:sufficiency-2P3}

The aim of this section is to show that there are only finitely many $2P_3$-free 4-vertex critical graphs. The proof follows the same  outline as the proof of the  previous section.
Lemma~\ref{lem:2P3-list-size-2} deals with  $2P_3$-free minimal list-obstructions where every list has size at most two. In view of Lemma~\ref{lem:nec-list} 
the exact analogue of Lemma~\ref{lem:size2}
does not hold in this case, 
however if we add the additional assumption that the minimal list-obstruction is contained in a $2P_3$-free 4-vertex-critical graph that  was obtained by updating with respect to a set of precolored vertices, then we can show that the size of the obstruction  is bounded.

\begin{lemma}\label{lem:2P3-list-size-2}
There is a an integer $C>0$ such that the following holds.
Let $(G,L)$ be a list-obstruction.
Assume that $G$ is $2P_3$-free and the following holds.
\begin{enumerate}[(a)]
	\item Every list contains at most two entries.
	\item Every vertex $v$ of $G$ with $|L(v)|=2$ has a neighbor $u$ with $|L(u)|=1$ such that for all $w \in V(G)$ with $|L(w)|=2$, $uw \in E(G)$ implies $L(w)=L(v)$.
\end{enumerate}
Then $(G,L)$ contains a minimal list-obstruction with at most $C$ vertices.
\end{lemma}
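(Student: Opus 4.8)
The plan is to follow the proof of Lemma~\ref{lem:size2}: reduce to bounding the number of vertices on a propagation path, and then use hypothesis (b) --- the abstract form of ``obtained from a $4$-vertex-critical graph by precolouring and updating'' --- to carry out that bound. Write $V_1=\{v\in V(G):|L(v)|=1\}$ and $V_2=\{v\in V(G):|L(v)|=2\}$. If some vertex has an empty list we are done with $C=1$, so assume not. By (b) every vertex of $V_2$ has a neighbour in $V_1$, so $V_1$ is a dominating set of $G$; and if $V_2=\emptyset$ then all lists are singletons and non-colourability produces a monochromatic edge, i.e.\ a minimal list-obstruction on two vertices, so assume $V_1,V_2\neq\emptyset$. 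Since $2P_3$ is an induced subgraph of $P_7$, the graph $G$ is $P_7$-free. Now ``being a list-obstruction'' and (a) pass to all induced subgraphs, and (b) passes to any induced subgraph obtained by deleting only vertices of $V_2$ (each remaining $V_2$-vertex keeps its witness, whose set of $V_2$-neighbours only shrinks); so, replacing $(G,L)$ by a smallest induced sub-pair still enjoying all three properties, we may assume every vertex of $V_2$ is critical and $G$ is connected. The remaining non-critical vertices then lie in $V_1$; deleting them yields a minimal list-obstruction $(G^\ast,L)$ with lists of size at most two, and it suffices to bound $|V(G^\ast)|$. Every propagation path of $G^\ast$ is also one of $G$ (the two notions agree, as $G^\ast$ is an induced subgraph of $G$), so by Lemma~\ref{lem:propagationpath} it is enough to bound by some universal $\lambda\ge20$ the number of vertices on a propagation path of $G$.

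Fix such a propagation path $P=v_1$-$\cdots$-$v_k$, starting with colour $\alpha$, and let $c_i$ be the colour assigned to $v_i$ by updating along $P$, with $c_1:=\alpha$. Then $L(v_i)=\{c_{i-1},c_i\}$ and $S(v_i)=c_ic_{i-1}$ for $i\ge2$, the word $c_2c_3\cdots c_k$ has no two equal consecutive letters, and by~\eqref{eqn:conditions0} every chord $v_iv_j$ of $P$ with $3\le i\le j-2$ satisfies $c_j=c_{i-1}$ and $c_i\notin\{c_{i-1},c_{j-1}\}$, so that $c_i$ is forced to be the third colour. The goal is to bound $k$ by a universal constant.

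This is where (b) enters, and it is the heart of the matter. For each $i\in\{2,\ldots,k\}$ fix a witness $u_i\in V_1$ adjacent to $v_i$ all of whose $V_2$-neighbours have list exactly $\{c_{i-1},c_i\}$. Suppose $k$ is enormous. From the chord restriction one deduces that for many indices $i$ the triple $v_{i-1}v_iv_{i+1}$ is an induced $P_3$. From (b) one deduces that a witness $u_i$ cannot be adjacent to a path vertex $v_m$ with $|m-i|$ large: such an edge makes $v_m$ a $V_2$-neighbour of $u_i$, forcing $\{c_{m-1},c_m\}=\{c_{i-1},c_i\}$, and combining this with the chord restriction and the structure of the colour-word one quickly produces a forbidden induced subgraph ($2P_3$, or $P_7$). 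A Ramsey-type pigeonhole on the ``local type'' of an index --- its shape, the colour in $L(u_i)$, and the adjacencies of $v_i$ and $u_i$ to $v_{i-2},\ldots,v_{i+2}$ --- then produces two indices $i<j$, far apart on $P$, of the same type, with $v_{i-1}v_iv_{i+1}$ and $v_{j-1}v_jv_{j+1}$ both induced $P_3$'s; by the two deductions above the six vertices are distinct and there is no edge between the two triples, so $G$ contains an induced $2P_3$ --- a contradiction. Hence $k\le\lambda_0$ for a universal $\lambda_0$, and (enlarging $\lambda_0$ to at least $20$) Lemma~\ref{lem:propagationpath} gives $|V(G^\ast)|\le4\lambda_0+4$; so $(G,L)$ contains the minimal list-obstruction $(G^\ast,L)$, which has at most $C:=4\lambda_0+4$ vertices.

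The main obstacle is clearly the third paragraph: organising the case analysis so that \emph{every} way a long propagation path could avoid an induced $2P_3$ --- long ``chordal'' runs, and long-range edges out of the witnesses --- is excluded. This is in the spirit of the computer-free sketch for the $P_6$ case in Section~\ref{sec:size2}, but genuinely harder: $P_7$-free propagation paths can be arbitrarily long (as the necessity part of the paper shows), so the bound must be extracted from hypothesis (b) rather than from $H$-freeness alone, and one cannot reduce to a large induced complete bipartite subgraph of $G|V(P)$, since complete bipartite graphs are themselves $2P_3$-free.
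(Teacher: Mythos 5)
Your framing is right — reduce via Lemma~\ref{lem:propagationpath} to bounding propagation paths, and recognize that hypothesis (b) must supply the bound because $2P_3$-free propagation paths are not short by themselves — but the core argument in your third paragraph has a genuine gap. The final step claims that two far-apart indices $i<j$ of the same ``local type'', with $v_{i-1}$-$v_i$-$v_{i+1}$ and $v_{j-1}$-$v_j$-$v_{j+1}$ both induced, yield two anticomplete $P_3$'s. That does not follow from condition~\eqref{eqn:conditions0} together with your two deductions. Condition~\eqref{eqn:conditions0} \emph{permits} a chord from any vertex of shape $\alpha\beta$ to any later vertex of shape $\beta\gamma$, and these chords connect same-type triples no matter how far apart they are; your pigeonhole on shapes and on adjacencies to $v_{i-2},\ldots,v_{i+2}$ sees nothing about such long-range path chords. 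The graphs $H_r$ of Section~\ref{sec:list-necessity} make this concrete: they are arbitrarily long $2P_3$-free propagation paths (lists of size at most two) in which \emph{every} pair of far-apart induced triples is joined by a chord from a shape-$21$ vertex to a later shape-$13$ vertex — that chord pattern is exactly how they avoid a $2P_3$. Your only use of (b) is to restrict adjacencies of the witnesses $u_i$ to the path (and even the claim that a far edge $u_iv_m$ ``quickly produces a forbidden induced subgraph'' is asserted, not proved); this says nothing about chords among path vertices, which is where the whole difficulty lies. So as sketched, the argument cannot rule out an $H_r$-like chorded path, and no contradiction is reached.

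For comparison, the paper's proof attacks precisely these path chords. It first uses $2P_3$-freeness to extract a subpath of linear length whose shapes are periodic ($32,13,21$ repeating): two well-separated ``color repeats'' $c(v_i)=c(v_{i+2})$, $c(v_j)=c(v_{j+2})$ of the same kind would give two anticomplete induced $P_3$'s by \eqref{eqn:conditions0}, so repeats are rare and can be excised. Then, since $G$ is $P_7$-free, connected induced subgraphs have bounded diameter, so some vertex $v_i$ of the subpath has at least $20$ neighbours on it (all of shape $21$ if $S(v_i)=13$). Only now does (b) enter: three shape-$32$ vertices $v_{j_1},v_{j_2},v_{j_3}$, placed carefully relative to these neighbours, receive witnesses $x_{j_u}$ with list $\{1\}$ that are non-adjacent to all subpath vertices of lists $\{1,2\}$ and $\{1,3\}$; the pendant induced paths $v_{j_u+1}$-$v_{j_u}$-$x_{j_u}$ then force, via $2P_3$-freeness and \eqref{eqn:conditions0}, specific chords such as $v_{j_1+1}v_{j_3}$ and $v_{j_2+1}v_{j_3}$, and the resulting triple $v_{j_1+1}$-$v_{j_3}$-$v_{j_2+1}$ together with $v_{i_7}$-$v_i$-$v_{i_8}$ is a $2P_3$. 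Something of this kind — a high-degree path vertex obtained from $P_7$-freeness, played against pendant singleton-list witnesses obtained from (b) — is what your sketch is missing.
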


Like in the case of $P_6$-free list-obstructions, we can use the precoloring technique to prove that the lemma above implies our main lemma.

\begin{lemma}\label{lem:2P3-reduction}
There is an integer $C>0$ such that every $2P_3$-free 4-vertex-critical graph
has at most $C$ vertices. Consequently,
there are only finitely many $2P_3$-free 4-vertex-critical graphs.
\end{lemma}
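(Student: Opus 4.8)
The plan is to follow, step for step, the blueprint of the proof of Lemma~\ref{lem:3downto2}, with $P_6$-freeness replaced everywhere by $P_7$-freeness and with Lemma~\ref{lem:2P3-list-size-2} playing the role of Lemma~\ref{lem:size2}. First I would dispose of the trivial cases: if $|V(G)|\le 8$ there is nothing to prove, so assume $G$ is a large $2P_3$-free $4$-vertex-critical graph. Then $G$ is connected (being $4$-chromatic) and $K_4$-free (an induced $K_4$ would be a proper induced subgraph that is not $3$-colorable), and, crucially, $G$ is $P_7$-free, because $P_7$ contains an induced $2P_3$ (the triples $\{p_1,p_2,p_3\}$ and $\{p_5,p_6,p_7\}$ induce $P_3$'s anticomplete to one another). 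Setting $L(v)=\{1,2,3\}$ for every $v$ makes $(G,L)$ a minimal list-obstruction, so the goal is to bound $|V(G)|$ by an absolute constant.

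Next I would extract a small skeleton to precolor. Applying Theorem~\ref{structure} with $t=7$, $G$ has a connected dominating set $D_1$ with $G|D_1$ either isomorphic to $P_5$ or $P_5$-free. If $G|D_1\cong P_5$, then $D_1$ is a dominating set with $|D_1|=5$: precoloring $D_1$ (at most $3^5$ proper precolorings) and updating three times yields, for each precoloring $c$, a list-obstruction $(G,L_c)$ with all lists of size at most two. Moreover $(G,L_c)$ satisfies hypothesis (b) of Lemma~\ref{lem:2P3-list-size-2}: if $|L_c(v)|=2$ then exactly one colour $c_v$ was deleted from $L_0(v)=\{1,2,3\}$, at some round $j\le 3$, caused by a vertex $u\in X_{j-1}$ whose list at that stage is $\{c_v\}$; round $j$ then deletes $c_v$ from every size-$2$ neighbour of $u$, so each such neighbour gets the list $\{1,2,3\}\setminus\{c_v\}=L_c(v)$, and $u$ is the witness. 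Hence the proof of Lemma~\ref{P_4} carries over verbatim (it only ever applies Lemma~\ref{lem:2P3-list-size-2} to list systems obtained by precoloring and updating three times, which all satisfy (b)), and Lemma~\ref{precolor} together with Lemma~\ref{update4} bounds $|V(G)|$. If instead $G|D_1$ is $P_5$-free, a second application of Theorem~\ref{structure} (now with $t=5$, to the connected graph $G|D_1$) gives a connected dominating set $D_2$ of $G|D_1$ with $G|D_2$ isomorphic to $P_3$ or $P_3$-free; since a connected $P_3$-free graph is a clique and $G$ is $K_4$-free, in all cases $|D_2|\le 3$, and since $D_2$ dominates $D_1$ and $D_1$ dominates $G$, every vertex of $G$ lies within distance two of $D_2$.

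The main work is then this ``$2$-dominating'' case, and here I would mimic Claims~\ref{mixed}--\ref{c5} of Section~\ref{sec:sufficiency-P6}.3. Precolor $D_2$ and update three times; writing $X=N(D_2)$, $Y=N(X)\setminus(D_2\cup X)$, $Z=V(G)\setminus(D_2\cup X\cup Y)$, the fact that $D_2$ $2$-dominates $G$ forces $Z=\emptyset$, every vertex of $Y$ has a neighbour in $X$, and $|L(x)|\le 2$ for all $x\in X$; so the only vertices that can still carry a size-$3$ list lie in $Y$ and are attached solely to $X$. Now one runs the reduction machinery of the $P_6$-free proof: Claim~\ref{reducecomponents}, Claim~\ref{bipartite1} and Claim~\ref{biplemma} to absorb the non-trivial components of the outer layer, the Kempe-chain Claim~\ref{clm:Kempe} for vertices whose neighbourhood splits into two complete-to-each-other stable sets, a further bounded round of precoloring of carefully chosen small configurations near $X$ to kill the residual size-$3$ lists, and finally Claim~\ref{stable} to bound the surviving stable sets of pairwise-dominating ``clones''. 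Every list system produced along the way is obtained from $(G,L)$ by precoloring a bounded set and updating three times, so hypothesis (b) of Lemma~\ref{lem:2P3-list-size-2} keeps holding by the argument above; once all lists have size at most two, Lemma~\ref{lem:2P3-list-size-2} bounds the induced minimal list-obstructions, and Lemma~\ref{precolor} and Lemma~\ref{update4} propagate the bound back through the updates to give $|V(G)|\le C$. The final sentence of the lemma is then immediate.

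The step I expect to be the genuine obstacle is the third one: replaying the long chain of $P_6$-free structural claims with $2P_3$ in place of $P_6$. Because $2P_3$ is disconnected, forbidding it is a strictly weaker \emph{local} constraint than forbidding $P_6$: wherever the $P_6$-free argument reaches a contradiction by exhibiting an induced $P_6$, the $2P_3$-free argument must instead exhibit two induced $P_3$'s that are anticomplete to each other, which forces one to carry along a second induced $P_3$ living far from the configuration under analysis (for instance in a far component of $Y$, or inside $D_2\cup X$). Making these ``far-apart $P_3$'' arguments go through uniformly — while simultaneously keeping the bookkeeping that preserves hypothesis (b) of Lemma~\ref{lem:2P3-list-size-2} and that invokes Lemma~\ref{lem:2P3-list-size-2} rather than the (here false) unconditional Lemma~\ref{lem:size2} — is where the real effort lies.
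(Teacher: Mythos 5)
Your setup is sound and coincides with the paper's: $2P_3$-freeness gives $P_7$-freeness, so Theorem~\ref{structure} yields a small dominating structure; your observation that a list system obtained from full lists by precoloring a set and updating three times automatically satisfies hypothesis (b) of Lemma~\ref{lem:2P3-list-size-2} is correct and is exactly what justifies the final appeal to that lemma (both in your plan and in the paper); and the dominating-$P_5$ case is indeed finished by the precoloring machinery of Lemma~\ref{precolor}, Lemma~\ref{update4} and Lemma~\ref{P_4}. The gap is in the third step, which is the actual content of the lemma and which you leave as a plan. The claims you propose to transplant are largely unavailable: Claim~\ref{biplemma} and Claim~\ref{clm:Kempe} are proved using $P_6$-freeness, Claim~\ref{reducecomponents} rests on Claim~\ref{bipartite1}, whose proof also invokes $P_6$-freeness, and the chain Claims~\ref{notmixed}--\ref{c5} is organized entirely around an induced $C_5$; a $2P_3$-free graph need not be $P_6$-free ($P_6$ itself is $2P_3$-free), so none of this applies verbatim. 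You concede that every ``exhibit a $P_6$'' contradiction must be replaced by ``exhibit two anticomplete $P_3$'s'' and that making this work uniformly is where the real effort lies --- but that effort is precisely the proof, so as written the argument is missing its core. (A secondary issue: hypothesis (b) is not obviously preserved through a step like Claim~\ref{reducecomponents}, which manufactures lists on representative vertices rather than obtaining them by updating from colored neighbors.)

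The paper does something genuinely different here, and it is the device you are missing. Its Claim~\ref{T} does not merely produce a set of size at most $3$ within distance two of everything: it produces either a semi-dominating set of size at most $5$, or a set $S_0$ with $|S_0|\le 5$, $S_0\cup N(S_0)\cup N(N(S_0))=V(G)$, that \emph{contains an induced $P_3$} (when the inner dominating set is a clique, it is augmented by a vertex mixed on it, or by two nonadjacent vertices complete to it). Once the region of already-determined vertices contains a $P_3$, every vertex still carrying a full list lies in a set anticomplete to that $P_3$, so by $2P_3$-freeness that set is $P_3$-free and, by $K_4$-freeness, a disjoint union of cliques of size at most $3$ (Claim~\ref{clm:P3(2.5)}). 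From there the paper does not replay Section~\ref{sec:sufficiency-P6}.3 at all; it runs a short bespoke sequence of bounded precoloring rounds $L_1,\dots,L_6,L^*$, each precoloring a constant-size set chosen by extremal conditions (a vertex minimizing $|N(x_k)\cap B_{ij}|$, a vertex maximizing the number of $P_3$'s it starts into $X_2$, the anchors of small $i$-wide components, one representative of each of the seven types $Y_0,\dots,Y_6$), and shows by direct $2P_3$-arguments (Claims~\ref{clm:P3(0)}--\ref{clm:P3(4)}) that after these rounds no size-three lists survive, at which point Lemma~\ref{lem:2P3-list-size-2} applies and Claim~\ref{color}, Lemma~\ref{precolor} and Lemma~\ref{update4} propagate the bound back. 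Without either this $P_3$-in-$S_0$ trick or a worked-out substitute for the $P_6$-free structural analysis, your proposal does not yet prove the lemma.
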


\subsection{Proof of Lemma~\ref{lem:2P3-list-size-2}}

Let $G'$ be an induced subgraph of $G$ such that $(G',L)$ is a minimal list-obstruction.
By Lemma~\ref{lem:propagationpath}, it suffices to prove that the length of any propagation path of $(G',L)$ is bounded by a constant.
To see this, let $P=v_1$-$v_2$-$\ldots $-$v_n$ be a propagation path of $(G',L)$ starting with color $\alpha$, say.
Consider $v_1$ to be colored with $\alpha$, and update along $P$ until every vertex of $P$ is colored.
Let this coloring of $P$ be denoted by $c$.
Recall condition~\eqref{eqn:conditions0} from the definition of propagation path: every edge $v_iv_j$ with $3 \le i < j \le n$ and $i \le j-2$ is such that 
\begin{equation*}
\mbox{$S(v_i)=\alpha\beta$ and $S(v_j)=\beta\gamma$,} 
\end{equation*}
where $\{1,2,3\} = \{\alpha,\beta,\gamma\}$.

First we prove that there is a constant $\delta$ such that there is a subpath $Q=v_{m}$-$v_{m+1}$-\ldots-$v_{m'}$ of $P$ of length at least $\lfloor \delta n \rfloor$ with the following property.
After permuting colors if necessary, it holds for all $i \in \{m,\ldots,m'\}$ that
\begin{equation*} 
S(v_i)=
\begin{cases}
32, & \mbox{ if } i \equiv 0 \mod 3\\
13, & \mbox{ if } i \equiv 1 \mod 3\\
21, & \mbox{ if } i \equiv 2 \mod 3
\end{cases}.
\end{equation*}
To see this, suppose there are two indices $i,j \in \{3,\ldots,n-3\}$ such that $i+2 \le j$ and $c(v_i)=c(v_{i+2})=c(v_j)=c(v_{j+2})$.
Moreover, suppose that $c(v_i)=c(v_{i+2})=c(v_j)=c(v_{j+2})=\alpha$ and $c(v_{i+1})=c(v_{j+1})=\beta$ for some $\alpha,\beta$ with $\{\alpha,\beta,\gamma\}=\{1,2,3\}$.
Thus, $L(v_{i+1})=L(v_{i+2})=L(v_{j+1})=L(v_{j+2})=\{\alpha,\beta\}$, $\alpha \in L(v_{j+3})$, and $\alpha \neq c(v_{j+3})$.
But now $v_{i}$-$v_{i+1}$-$v_{i+2}$ and $v_{j+1}$-$v_{j+2}$-$v_{j+3}$ are both induced $P_3$'s, according to~\eqref{eqn:conditions0}, and there cannot be any edge between them.
This is a contradiction to the assumption that $G$ is $2P_3$-free.
The same conclusion holds if $c(v_{i+1})=c(v_{j+1})=\gamma$.
Hence, there cannot be three indices $i,j,k \in \{3,\ldots,n-3\}$ such that $i+2 \le j$, $j+2 \le k$, and $$c(v_i)=c(v_{i+2})=c(v_j)=c(v_{j+2})=c(v_k)=c(v_{k+2})=\alpha.$$

Consider the following procedure.
Pick the smallest index $i \in \{3,\ldots,n-3\}$ such that $c(v_i)=c(v_{i+2})=1$, if possible, and remove the vertices $v_{i}$, $v_{i+1}$, and $v_{i+2}$ from $P$.
Let $P'$ be the longer of the two paths $v_1$-$v_2$-$\ldots $-$v_{i-1}$ and $v_{i+3}$-$v_2$-$\ldots $-$v_{n}$.
Repeat the deletion process and let $P'' = v_r$-$v_{r+1}$-\ldots-$v_{r'}$ be the path obtained.
As shown above, we now know that there is no index $j \in \{r+2,\ldots,r'-3\}$ with $c(v_j)=c(v_{j+2})=1$.

Repeating this process for colors 2 and 3 shows that there is some $\delta >0$ such that there is a path $Q=v_{m}$-$v_{m+1}$-\ldots-$v_{m'}$ of length $\lfloor \delta n \rfloor$ where $c(v_i)\neq c(v_{i+2})$ for all $i \in \{m-1,\ldots,m'-2\}$.
Thus, after swapping colors if necessary we have the desired property defined above.

From now on we assume that $G$ has sufficiently many vertices and hence $m'-m$ is sufficiently large.
Since $G$ is $2P_3$-free and hence $P_7$-free, the diameter of every connected induced subgraph of $G$ is bounded by a constant.
In particular, the diameter of the graph $G|(\{v_{m},\ldots,v_{m'}\})$ is bounded, and so we may assume that there is a vertex $v_i$ with $m \le i \le m'$ with at least 20 neighbors in the path $Q$.
We may assume that $c(v_i)=1$ and, thus, $S(v_i)=13$.

We discuss the case when $|N(v_i) \cap \{v_{m},\ldots,v_{i-1}\}| \ge 10$.
The case of $|N(v_i) \cap \{v_{i+1},\ldots,v_{m'}\}| \ge 10$ can be dealt with in complete analogy.

We pick distinct vertices $v_{i_1},\ldots,v_{i_{10}} \in N(v_i) \cap \{v_{m},\ldots,v_{i-1}\}$ where $i_1<i_2<\ldots<i_{10}$.
Note that~\eqref{eqn:conditions0} implies that $S(v_{i_j})=21$ for all $j \in \{1,\ldots,10\}$.

We can pick three indices $j_1,j_2,j_3$ with $r'< j_1 < j_2 < j_3 < m'$ such that 
\begin{itemize}
	\item $S(v_{j_1})=S(v_{j_2})=S(v_{j_3})=32$, and
	\item $i_2 + 5 = j_1$, $j_1 + 6 = j_2$, $j_2 + 4 \le i_7$, $i_8 + 5 = j_3$, and $j_3 + 4 = i$.
\end{itemize}
Recall that assumption (b) of the lemma we are proving implies the following.
Since $L(v_{j_{u}})=\{2,3\}$, $v_{j_{u}}$ has a neighbor $x_{j_u}$ with $L(x_{j_u})=\{1\}$, $u=1,2,3$, such that $x_{j_u}$ is not adjacent to any vertex $v_j$ with $m \le j \le m'$ and $j \equiv 1 \mod 3$ or $j \equiv 2 \mod 3$.

Suppose that $x_{j_u}=x_{j_{u'}}$ for some $v_{j_{u'}}$ with $u' \in \{1,2,3\}\setminus \{u\}$.
Now the path $v_{j_u}$-$x_{j_u}$-$v_{j_{u'}}$ is an induced $P_3$, and so is the path $v_{i_1}$-$v_i$-$v_{i_2}$, both according to condition~\eqref{eqn:conditions0}.
Moreover, there is no edge between those two paths, due to~\eqref{eqn:conditions0}, which is a contradiction.
Hence, the three vertices $x_{j_u}$, $x_{j_u}$, and $x_{j_u}$ are mutually distinct and, due to the minimality of $(G,L)$, mutually non-adjacent.

Consider the induced $P_3$'s $v_{j_1+1}$-$v_{j_1}$-$x_{j_1}$ and $v_{j_3+1}$-$v_{j_3}$-$x_{j_3}$.
Since $G$ is $2P_3$-free, there must be an edge between these two paths.
According to~\eqref{eqn:conditions0}, it must be the edge $v_{j_1+1}v_{j_3}$.
For similar reasons, the edge $v_{j_2+1}v_{j_3}$ must be present.
Now the path $v_{j_1+1}$-$v_{j_3}$-$v_{j_2+1}$ is an induced $P_3$, and so is the path $v_{i_7}$-$v_i$-$v_{i_8}$.
Moreover, there is no edge between those two paths, due to~\eqref{eqn:conditions0}, which is a contradiction.
This completes the proof.

\subsection{Proof of Lemma~\ref{lem:2P3-reduction}}

We start with two statements that allow us to precolor sets of vertices with certain properties. In this subsection $G$ is always a $2P_3$-free graph, and all lists are subsets of $\{1,2,3\}$.

\begin{claim}\label{color}
Assume that $(G,L)$ is a list-obstruction. Let $X\subseteq V(G)$ be such that there exists a coloring $c$ of $G|X$ with the following property: for each $x\in X$ there exists a set $N_x\subseteq V(G)$ with $|N_x|\leq k$ such that $x$ is colored $c(x)$ in every coloring of $(G|(\{x\}\cup N_x),L)$. 
Let $L'$ be a list system such that 
\begin{equation*} 
L'(v)=
\begin{cases}
L(v), & \mbox{ if } v\in V(G) \setminus X\\
\{c(x)\}, & \mbox{ if } v\in X
\end{cases}.
\end{equation*}
Then the following holds.
	\begin{enumerate}[(a)]
		\item $(G,L')$ is a list-obstruction.
		\item If $K\subseteq V(G)$ is such that $(G|K,L')$ is a minimal list-obstruction induced by $(G,L')$, then $(G,L)$ contains a minimal list-obstruction of size at most $(k+1)|K|$.
	\end{enumerate}
\end{claim}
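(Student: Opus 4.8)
\emph{Part (a).} This is immediate. Since $L'(v)\subseteq L(v)$ for every $v\in V(G)$, every coloring of $(G,L')$ is also a coloring of $(G,L)$. As $(G,L)$ is a list-obstruction it has no coloring, hence neither does $(G,L')$, so $(G,L')$ is a list-obstruction.

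\emph{Part (b).} The plan is to enlarge $K$ by the ``witness sets'' $N_x$ and show that the resulting induced subgraph, equipped with the \emph{original} list system $L$, is already a (not necessarily minimal) list-obstruction; a minimal one induced by it then witnesses the desired bound. Concretely, put
\[
K'=K\cup\bigcup_{x\in K\cap X}N_x .
\]
Since $|N_x|\le k$ for each $x$, we get $|K'|\le|K|+k\,|K\cap X|\le(k+1)|K|$, so it suffices to prove that $(G|K',L)$ is not colorable.

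To see this, suppose for contradiction that $c'$ were a coloring of $(G|K',L)$, and consider its restriction $c'|_K$. For $v\in K\setminus X$ we have $c'(v)\in L(v)=L'(v)$. For $x\in K\cap X$, note that $\{x\}\cup N_x\subseteq K'$, so the restriction of $c'$ to $\{x\}\cup N_x$ is a coloring of $(G|(\{x\}\cup N_x),L)$; by the defining property of $N_x$, in every such coloring $x$ receives color $c(x)$, hence $c'(x)=c(x)$, which is the unique element of $L'(x)$. Thus $c'|_K$ is a coloring of $(G|K,L')$, contradicting the fact that $(G|K,L')$ is a list-obstruction. Therefore $(G|K',L)$ is not colorable. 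Repeatedly deleting non-critical vertices (as in the definition of an induced minimal list-obstruction) yields a set $S\subseteq K'$ with $(G|S,L)$ a minimal list-obstruction induced by $(G,L)$ and $|S|\le|K'|\le(k+1)|K|$, as required.

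The argument is essentially bookkeeping; the only point that needs care is to ensure that for every $x\in K\cap X$ the induced subgraph $G|(\{x\}\cup N_x)$ genuinely sits inside $G|K'$ — which is precisely why $N_x$ is thrown into $K'$ for each such $x$ — so that the defining property of $N_x$ may legitimately be applied to the restriction of $c'$. Observe also that this property is used only contrapositively, so we never need the small subgraphs $G|(\{x\}\cup N_x)$ to be colorable.
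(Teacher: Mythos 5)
Your proof is correct and follows essentially the same route as the paper: both define the enlarged set $K\cup\bigcup_{x\in K\cap X}N_x$ of size at most $(k+1)|K|$, show any $L$-coloring of it would restrict to an $L'$-coloring of $G|K$ (since each $\{x\}\cup N_x$ lies inside the enlarged set, forcing $c'(x)=c(x)$), and then pass to an induced minimal list-obstruction. Your write-up is in fact slightly more careful than the paper's in stating exactly which vertices $x$ the forcing property is applied to.
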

\begin{proof}
	Since $L'(v)\subseteq L(v)$ for all $v \in V(G)$, $(G,L')$ is also a list-obstruction. This proves (a). 
	
Let $A=G|(K\cup \bigcup\limits_{x\in K\cap X}N_x)$, then $|V(A)|\leq (k+1)|K|$.  
Suppose that there exists a coloring, $c'$ of  $(A,L)$.
Note that for every $x\in V(A)$, $N_x\subseteq A$. Hence by the definition of $X$, $c'(x)=c(x)$ for every $x\in V(A)$. This implies that $c'$ is also a coloring of $(A,L')$, which gives a coloring of $(G|K,L')$, a contradiction. Therefore $(A,L)$ is a list-obstruction induced by $(G,L)$. Since $|V(A)|\leq (k+1)|K|$, (b) holds. This completes the proof. 
\end{proof}

\begin{claim}\label{update3}	Let $(G,L)$ be a list-obstruction, and let $X \subseteq V(G)$ be a vertex subset such that $|L(x)|=1$ for every $x \in X$. Let $Y=N(X)$, and let $Y'\subseteq Y$ be such that for every $v\in Y'$, $|L(v)|=3$. For every $v\in Y'$, pick $x_v\in N(v)\cap X$. 
	Let $L'$ be the list defined as follows.
	\begin{equation*} 
	L'(v)=
	\begin{cases}
	L(v), & \mbox{ if } v\in V(G) \setminus Y'\\
	L(v)\setminus L(x_v), & \mbox{ if } v\in Y'
	\end{cases}.
	\end{equation*}
	Let $(G',L')$ be a minimal list-obstruction induced by $(G,L')$. 
	Then there exists a minimal list-obstruction induced by $(G,L)$, say $(G'',L)$, with $|V(G'')| \leq 2|V(G')|$.

\end{claim}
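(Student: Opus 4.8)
The plan is to follow the template of the proof of Lemma~\ref{update4}, but the bookkeeping is much lighter here: we update only once, and only at vertices whose list has size $3$, so for each such vertex it suffices to retain a single ``witness'' neighbour in $X$, which explains the factor $2$. Concretely, I would set $R=\{x_v : v\in V(G')\cap Y'\}$ and $P=V(G')\cup R$. Since $v\mapsto x_v$ maps $V(G')\cap Y'$ into $X$, we have $|R|\le |V(G')\cap Y'|\le |V(G')|$, and hence $|P|\le 2|V(G')|$.

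The core step is to show that $(G|P,L)$ is not colorable. Suppose $c$ is an $L$-coloring of $G|P$; I would show that the restriction of $c$ to $V(G')$ is an $L'$-coloring of $G'$, contradicting that $(G',L')$ is a list-obstruction. Properness is immediate because $G'$ is an induced subgraph of $G|P$. For the list conditions: if $w\in V(G')\setminus Y'$ then $L'(w)=L(w)$ and $c(w)\in L(w)$, so there is nothing to prove; if $w\in V(G')\cap Y'$ then $x_w\in R\subseteq P$, so $c(x_w)$ is defined, and since $|L(x_w)|=1$ it is the unique element of $L(x_w)$. As $x_w\in N(w)$, the vertices $w$ and $x_w$ are adjacent, so $c(w)\ne c(x_w)$, and therefore $c(w)\in L(w)\setminus L(x_w)=L'(w)$, as needed.

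Finally I would pass from ``$(G|P,L)$ is not colorable'' to the existence of the desired $(G'',L)$. The point to state carefully — this is the only slightly delicate bookkeeping — is that since $(G|P,L)$ is not colorable and $P\subseteq V(G)$, every $v\in V(G)\setminus P$ is non-critical in $G$, because $G\setminus v$ still contains the non-colorable graph $G|P$ as an induced subgraph; deleting the vertices of $V(G)\setminus P$ one at a time keeps $G|P$ induced at every stage, so this is a legitimate sequence of deletions of non-critical vertices and exhibits $(G|P,L)$ as a list-obstruction induced by $(G,L)$. Continuing to delete non-critical vertices inside $P$ produces a minimal list-obstruction $(G'',L)$ induced by $(G,L)$ with $V(G'')\subseteq P$, and hence $|V(G'')|\le |P|\le 2|V(G')|$. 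I do not anticipate any real obstacle beyond this last point.
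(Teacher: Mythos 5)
Your proposal is correct and follows essentially the same route as the paper: the same sets $R=\{x_v : v\in V(G')\cap Y'\}$ and $P=V(G')\cup R$, the same argument that an $L$-coloring of $G|P$ would restrict to an $L'$-coloring of $G'$ (via adjacency of $w$ and $x_w$), and the same count $|P|\le 2|V(G')|$. The paper leaves the final passage from non-colorability of $(G|P,L)$ to a minimal list-obstruction induced by $(G,L)$ implicit, whereas you spell it out; this is a fair addition, not a difference in approach.
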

\begin{proof}
	Let $R=\{x_v:v\in V(G')\cap Y'\}$ and let $P=R\cup V(G')$. It follows that $|V(P)|\leq 2|V(G')|$.
	It remains to prove that $(G|P,L)$ is not colorable.
	Suppose there exists a coloring $c$ of $(G|P,L)$.
	Note that $c$ is not a coloring of $(G',L')$ and $G'$ is an induced subgraph of $G|P$. Hence there exists $w \in V(G')$ such that $c(w) \not \in L'(w)$. By the construction of $L'$,
	it follows that $w\in Y'$ and that $c(w) \in L(w) \setminus L'(w)=\{c(x_w)\}$, which is a contradiction.
	This completes the proof.
\end{proof}
Let $G$ be a $2P_3$-free 4-vertex-critical graph such that $|V(G)|\geq 5$, then the following claim holds.
\begin{claim}\label{T}
	At least one of the following holds
	\begin{enumerate}
		\item There exists $S_0\subseteq V(G)$ such that $|S_0|\leq 5$, $G|S_0$ contains a copy of $P_3$ and $S_0\cup  N(S_0) \cup N(N(S_0)) = V(G)$, or
		\item $G$ has a semi-dominating set of size at most $5$.
	\end{enumerate}

\end{claim}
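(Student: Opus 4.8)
The plan is to apply the Camby--Schaudt structure theorem (Theorem~\ref{structure}) twice and then settle everything by a short case analysis, so I will only indicate the cases and not write out every verification.

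\textbf{Preliminaries.} First I would record that $G$ is connected (it is $4$-vertex-critical), that $G$ is $P_7$-free (since $2P_3$ is an induced subgraph of $P_7$), and that $G$ is $K_4$-free (a $K_4$ would be $G$ itself, contradicting $|V(G)|\ge 5$, or else a non-$3$-colorable proper induced subgraph, contradicting $4$-vertex-criticality). I would also use the remark, made just after the definition of semi-dominating set, that every dominating set is semi-dominating; thus to get alternative~2 it is enough to find a dominating set of size at most~$5$.

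\textbf{Two applications of the structure theorem.} Apply Theorem~\ref{structure} with $t=7$ to get a connected dominating set $D$ of $G$ with $G|D\cong P_5$ or $G|D$ $P_5$-free. In the first sub-case $S_0:=D$ works for alternative~1: five vertices, it contains a $P_3$, and it dominates $G$. Otherwise $G|D$ is connected and $P_5$-free, so apply Theorem~\ref{structure} again (with $t=5$) to $G|D$, obtaining a connected dominating set $D'\subseteq D$ of $G|D$ with $G|D'\cong P_3$ or $G|D'$ $P_3$-free. Because $D'$ dominates $G|D$ we have $D\subseteq N_G[D']$, and since $D$ dominates $G$ every vertex of $G$ is within distance two of $D'$, i.e.\ $D'\cup N(D')\cup N(N(D'))=V(G)$; so \emph{any} set containing $D'$ satisfies the domination requirement of alternative~1. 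Hence if $G|D'\cong P_3$ we take $S_0:=D'$ and are done. The one genuine case left is: $G|D'$ is connected and $P_3$-free, hence a clique, hence (as $G$ is $K_4$-free) $|D'|\le 3$.

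\textbf{The remaining case, and the main obstacle.} Here I would set $A_1:=N(D')\setminus D'$, note $N[D']=D'\cup A_1$ is connected (every vertex of $A_1$ is adjacent to the connected set $D'$), and split on $|D'|$. If $|D'|=3$ and $A_1=\emptyset$ then $D\subseteq N_G[D']=D'$, so $D=D'$ is a dominating set of size $3$ and alternative~2 holds. If $|D'|=3$ and $A_1\ne\emptyset$, pick $v\in A_1$: since $\{v\}\cup D'$ is not a $K_4$, there are $d_i,d_j\in D'$ with $v$ adjacent to $d_i$ but not $d_j$, so $v$-$d_i$-$d_j$ is an induced $P_3$, and $S_0:=D'\cup\{v\}$ (four vertices, containing $D'$) works for alternative~1. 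If $|D'|\le 2$: when $G|N[D']$ is $P_3$-free it is a clique (being connected) of size $\le 3$ ($K_4$-free), so $D\subseteq N_G[D']$ is a dominating set of size $\le 3$ and alternative~2 holds; when $G|N[D']$ contains an induced $P_3$ on $\{p,q,r\}$, the set $S_0:=D'\cup\{p,q,r\}$ has at most $|D'|+3\le 5$ vertices, contains a $P_3$, and contains $D'$, so alternative~1 holds. The only subtle point is the size bound $5$ in this last case: the obvious candidate $D'\cup\{p,q,r\}$ can have six vertices when $|D'|=3$, which is why one argues separately for $|D'|=3$, using that a triangle plus one further vertex adjacent to it but not complete to it already contains an induced $P_3$ on two triangle vertices; for $|D'|\le 2$ the naive bound is automatically $\le 5$. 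Everything else is routine bookkeeping between the nested dominating sets $D'\subseteq D\subseteq V(G)$.
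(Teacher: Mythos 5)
Your proof is correct and follows essentially the same route as the paper: two applications of Theorem~\ref{structure} (using that $G$ is connected, $P_7$-free and $K_4$-free), reducing to the case where the inner connected dominating set induces a clique of size at most $3$, and then a short case analysis producing either a small $S_0$ containing a $P_3$ or a dominating (hence semi-dominating) set of size at most $5$. The only difference is bookkeeping in the endgame: the paper first reduces to $|V(D_f)|\ge 6$ via the semi-dominating option and then splits on whether some vertex outside the clique $T$ is mixed on $T$ (adding one mixed vertex, or two nonadjacent vertices of $D_f\setminus T$ complete to $T$), whereas you split on $|D'|$ and on whether $G|N[D']$ is $P_3$-free; both analyses are sound.
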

\begin{proof}
	Since $G$ is $2P_3$-free and thus also $P_7$-free, Theorem~\ref{structure} states that $G$ has a dominating induced $P_5$ or a dominating $P_5$-free connected induced subgraph, denoted by $D_f$. Recall that a dominating set is always a semi-dominating set; so we may assume that the latter case holds and $|V(D_f)|\geq 6$. By applying Theorem~\ref{structure} to $D_f$ again, we deduce that $D_f$ has a dominating induced subgraph $T$, which is isomorphic to $P_3$ or a connected $P_3$-free graph. 
	
	If $T$ is isomorphic to $P_3$, then we are done by setting $S_0=V(T)$.
	Hence we may assume $T$ is a connected $P_3$-free graph. Therefore $T$ is a complete graph, and so  $V(T)\leq 3$.  If there exists a vertex $s'\in V(G\setminus T)$ mixed on $T$, we are done by setting $S_0=V(T)\cup \{s'\}$. Hence we may assume that for every $v\in V(D_f\setminus T)$, $v$ is complete to $T$. Since $|V(G)|\geq 5$, it follows that $D_f$ is $K_4$-free. Therefore there exist $v,w\in V(D_f\setminus T)$ such that $v$ is non-adjacent to $w$ and we are done by setting $S_0=V(T)\cup \{v,w\}$.
\end{proof}

If $G$ has a semi-dominating set of size at most $5$, we are done by Lemma~\ref{P_4} and Lemma~\ref{lem:2P3-list-size-2}. Hence we may assume there exists $S_0$ defined as in Claim~\ref{T}.


	
For a list system $L'$ of $G$, we say that $(X_1, X_2, B, S)$ is the \emph{partition with respect to $L'$} by setting: 
\begin{enumerate}[(a)]
	\item $S=\{v\in V(G):|L'(v)|=1\}$.
	\item $ B=N(S) $; assume that $|L'(v)|=2$ for every $v\in B$.
	\item Let $ X=V(G) \setminus (S\cup B) $. We say that $C$ is a \emph{good component} of $X$ if there exist $x \in C$ and $\{i,j\}\subseteq \{1,2,3\}$ so that $x$ has two adjacent neighbors $a,b \in B_{ij} $, where $ B_{ij}=\{b\in B $ such that $ L'(b)=\{i,j\}\} $. 
	Let $ X_1 $ be the union of all good components of $X$ and let $ X_2=X\setminus X_1 $. 

\end{enumerate}

Let $(X_1, X_2, B, S)$ be the partition with respect to $L'$.  
Define $X=X_1\cup X_2$. For every $1\leq i\leq j\leq 3$, define $ B_{ij}=\{b\in B $ such that $ L'(b)=\{i,j\}\} $ and $ X_{ij}=\{x\in X_2$ such that $ |N(x)\cap B_{ij}|\geq 2\} $.  For $ \{i,j,k\}=\{1,2,3\} $, let us say that a component $ C $ of $ X_2 $ is \emph{i-wide} if there exist $ a_j $ in $ B_{ik} $ and $ a_k $ in $ B_{ij} $ such that $ C $ is complete to $ \{a_j,a_k\} $. 
We call $ a_j $ and $ a_k $ \emph{i-anchors} of $ C $. 
Note that a component can be $ i $-wide for several values of $ i $. Let $L''$ be a subsystem of $L'$ and let $(X'_1, X'_2, B', S')$ be the partition with respect to $L''$. Then $S\subseteq S'$, $B'\setminus B \subseteq X_1\cup X_2$ and $ X'_2\subseteq X_2$.

 Next we define a sequence of new lists $L_0,\ldots,L_5$. Let $\{i,j,k\}=\{1,2,3\}$. Let $S_0$ be as in Claim~\ref{T}, and let $L_0=L$.



 \begin{enumerate}
 	\item  Let $L_1$ be the list system obtained by precoloring $S_0$ and updating three times. Let $(X^1_1, X^1_2, B^1, S^1)$ be the partition with respect to $L_1$. 
 	
	\item 	For each $k\in\{1,2,3\}$, choose $ x_k\in X^1_{ij} $ such that
	$ |N(x_k)  \cap  B^1_{ij}| $ is minimum. Let $ a_k,b_k\in N(x_k)  \cap  B^1_{ij} $. Let $L_2$ be the list system obtained from $L_1$ by precoloring $ \bigcup\limits_{i=1}^3 \{a_i,b_i,x_i\} $
	and updating  the lists of vertices three times. Let $(X^2_1, X^2_2, B^2, S^2)$ be the partition with respect to $L_2$ .

	\item For each $k\in\{1,2,3\}$, let $ \hat{B}_k \subseteq B^2_{ij} $ with $ |\hat{B}_k|\leq 1 $ be defined as follows. If there does not exist a vertex $v \in B^2_{ij}$ that starts a path  $ v$-$u$-$w $ where $ u, w \in X^2_2 $, then $ \hat{B}_k = \emptyset $. Otherwise choose $ b_k\in B^2_{ij}$ maximizing the number of pairs $ (u, w) $ where $b_k$-$u$-$w$ is a path and let $ \hat{B}_k = \{b_k\} $.
	 Let $L_3$ be the list system from $L_2$ obtained by precoloring $ \hat{B}_1\cup \hat{B}_2 \cup \hat{B}_3  $ and updating three times. Let $(X^3_1, X^3_2, B^3, S^3)$ be the partition with respect to $L_3$. 
	\item Apply step $2$ to $(X^3_1, X^3_2, B^3, S^3)$ with list system $L_3$; let $L_4$ be the list system obtained and let $(X^4_1, X^4_2, B^4, S^4)$ be the partition with respect to $L_4$. 
	\item For every component $C_t$ of $X^4_2$ with size $2$, if $C_t$ is $ i $-wide with $i$-anchors $a^t,b^t$, set $L_5(a^t)=L_5(b^t)=\{i\}$; then let $L_5$ be the list system after updating with respect to $\bigcup_t\{a^t,b^t\}$ three times. Let $(X^5_1, X^5_2, B^5, S^5)$ be a partition with respect to $L_5$. 
	\end{enumerate}

By Lemma~\ref{precolor} and Lemma~\ref{update4}, it is enough to prove that $ (G,L_4) $ induces a bounded size list-obstruction. To do that, we 
prove the same for $ (G,L_5) $, and then  use Claim~\ref{color} and Lemma~\ref{update4}, as we explain in the remainder of this section.

We start with  a few technical statements. 

\begin{claim}\label{clm:P3(2.5)} Let $1\leq m\leq l\leq 5$. Then the following holds.
	\begin{enumerate}
		\item For every vertex in $x\in X^m$, $|L_m(x)|=3$, and every component of $X^m$ is a clique with size at most $3$.
		\item If no vertex of $B_{ij}^m$ is mixed on an edge in $G|X^m_2$, then no vertex of $B_{ij}^l$ is mixed on an edge in $G|X_2^l$.
		\item If no vertex of $X^m_2$ has two neighbors in $B^m_{ij}$ and no vertex of $B^m$ is mixed on an edge in $G|X^m_2$, then no vertex of $X^l_2$ has two neighbors in $B_{ij}^l$.
	\end{enumerate}
	
\end{claim}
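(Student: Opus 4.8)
All three parts of Claim~\ref{clm:P3(2.5)} concern how the structural features of the partition behave when we pass from a list system $L_m$ to a further-updated subsystem $L_l$ with $m\le l$. The unifying principle, already recorded in the text immediately before the definition of $L_0,\ldots,L_5$, is the monotonicity statement: if $L''$ is a subsystem of $L'$ and $(X_1',X_2',B',S')$, $(X_1,X_2,B,S)$ are the corresponding partitions, then $S\subseteq S'$, $B'\setminus B\subseteq X_1\cup X_2$, and $X_2'\subseteq X_2$. Since $L_m\supseteq L_{m+1}\supseteq\cdots\supseteq L_l$ (each step only precolors vertices and updates, which can only shrink lists), we may apply this repeatedly. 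The key consequence we will use again and again is: \emph{an edge of $G|X_2^l$ is also an edge of $G|X_2^m$} (because $X_2^l\subseteq X_2^m$ and induced subgraphs keep edges), and every vertex of $X^l$ still has a size-$3$ list in $L_l$, since $X^l\subseteq X^m$ and the partition definition forces $|L_l(v)|=3$ for $v\in X^l$.

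For part~(1): a vertex $x\in X^m=X_1^m\cup X_2^m$ lies in $V(G)\setminus(S^m\cup B^m)$; by definition of the partition, $S^m$ collects exactly the vertices with $|L_m(v)|=1$ and $B^m=N(S^m)$ consists of vertices with $|L_m|=2$. So $x\notin S^m\cup B^m$ forces $|L_m(x)|=3$ (it cannot be $1$ or $2$ — size $1$ would put it in $S^m$, size $2$ would require it not to be adjacent to any $S^m$-vertex, which is exactly the situation we must rule out; here I will need to check that the updating steps are \emph{exhaustive}, so that no vertex outside $B^m$ has a neighbor in $S^m$ — this follows because $B^m$ is \emph{defined} as $N(S^m)$, so any vertex with a neighbor in $S^m$ is in $B^m$ by fiat, and the partition's clause (b) asserts $|L_m(v)|=2$ for all such $v$). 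That every component of $X^m$ is a clique of size at most $3$ is the part that genuinely uses $2P_3$-freeness together with $G$ being $4$-vertex-critical (so $G\ne K_4$, hence $K_4$-free is the wrong statement — rather, a component of $X^m$ that contains an induced $P_3$ would, together with a disjoint $P_3$ somewhere in $G$, contradict $2P_3$-freeness; and since $G$ has at least $5$ vertices and is $4$-vertex-critical, one can always find a disjoint $P_3$, e.g. inside $S_0$). A $P_3$-free graph is a disjoint union of cliques, and a clique on $\ge 4$ vertices contains $K_4$; combined with $G\ne K_4$ and criticality one bounds the clique size by $3$.

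For parts~(2) and~(3): these are immediate from the monotonicity and the two observations above, applied inductively — it suffices to prove the step $l=m+1$ and iterate. For~(2), suppose $b\in B_{ij}^{m+1}$ is mixed on an edge $uw$ of $G|X_2^{m+1}$. Then $u,w\in X_2^{m+1}\subseteq X_2^m$ and $uw\in E(G)$, so $uw$ is an edge of $G|X_2^m$. Also $b\in B^{m+1}$; either $b\in B^m$, in which case $L_m(b)\supseteq L_{m+1}(b)=\{i,j\}$ and since updating never enlarges lists and $b$ already had a size-$\le 2$ list, in fact $L_m(b)=\{i,j\}$, so $b\in B_{ij}^m$ and is mixed on the same edge — contradicting the hypothesis for $m$; or $b\in B^{m+1}\setminus B^m\subseteq X_1^m\cup X_2^m$, and here I will argue that $b$ mixed on an edge of $X_2^m$ still contradicts the hypothesis, since being mixed is a property of $G$ alone and $X_2^m$ has the no-mixing property as assumed. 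Part~(3) is analogous: a vertex $x\in X_2^{l}\subseteq X_2^m$ with two neighbors in $B_{ij}^l$ would, tracing each such neighbor back to $B^m$ (or to $X_1^m\cup X_2^m$) via monotonicity, yield either a vertex of $X_2^m$ with two neighbors in $B_{ij}^m$, or a mixing on an edge of $G|X_2^m$ — both excluded by hypothesis.

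\textbf{Main obstacle.} The delicate point is handling the vertices in $B^{l}\setminus B^{m}$, i.e.\ vertices that were in $X_1^m\cup X_2^m$ at stage $m$ but dropped to list-size $2$ and entered $B$ at stage $l$. For such a vertex one cannot simply say ``it was already in $B_{ij}^m$''; instead one must use that the \emph{edge} it is mixed on (in part~(2)) or the \emph{two neighbors} it has (in part~(3)) live in $X_2^l\subseteq X_2^m$, so the configuration is already visible at stage $m$ and contradicts the hypothesis there — but one has to be careful that "mixed on an edge of $G|X_2^m$" and "two neighbors in $B_{ij}^m$" are the right things to contradict, and in the second case that the two neighbors, being in $B_{ij}^l$, were in $B^m$ with the list $\{i,j\}$ (not some larger list) — which again follows because lists only shrink and they already have size $2$ at stage $l$. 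I expect the bookkeeping around $X_1$ versus $X_2$ (a component can switch from good to the rest as lists shrink, but not the other way) to be the part requiring the most care in writing up cleanly.
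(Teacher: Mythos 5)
Your overall plan coincides with the paper's: use monotonicity of the partition under list shrinking (lists only shrink, so $S^m\subseteq S^l$, $X_2^l\subseteq X_2^m$, and a vertex of $B^l_{ij}$ that already lay in $B^m$ must lie in $B^m_{ij}$), prove part (1) from the $P_3$ inside $S_0$ plus $K_4$-freeness, and prove parts (2) and (3) by pushing the offending configuration back to stage $m$. But there is a genuine gap precisely at the point you flag as the main obstacle, namely the vertices of $B^l\setminus B^m$. In part (2), when $b\in B^l_{ij}\setminus B^m$ you assert that ``$b$ mixed on an edge of $X_2^m$ still contradicts the hypothesis, since \ldots\ $X_2^m$ has the no-mixing property as assumed.'' It does not: the hypothesis only forbids vertices of $B^m_{ij}$ from being mixed on edges of $G|X_2^m$, and in this case $b$ is not in $B^m$ (nor in $S^m$, since its list still has size two), so $b\in X^m$ and the hypothesis says nothing about it. The missing ingredient is part (1) itself: $b$-$u$-$w$ is then an induced $P_3$ inside $G|X^m$, contradicting that every component of $X^m$ is a clique. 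The same defect recurs in part (3): if \emph{both} neighbors $u,v$ of $x$ fall outside $B^m$, neither hypothesis applies and you again need part (1) (the path $u$-$x$-$v$ would be a $P_3$ in $X^m$, using that $u,v$ are non-adjacent); and if exactly one, say $v$, falls outside $B^m$, you must first observe that $v\in X_2^m$ (because $X_1^m$ is a union of components of $X^m$ and $x\in X_2^m$ is adjacent to $v$) and that $u$ is non-adjacent to $v$ (because $x\in X_2^l$ lies in no good component, so its two neighbors in $B^l_{ij}$ cannot be adjacent) before concluding that $u\in B^m$ is mixed on the edge $xv$ of $G|X_2^m$. None of these steps appear in your sketch, and without them the ``both excluded by hypothesis'' conclusion is simply false.

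A smaller imprecision in part (1): a $P_3$ in $X^m$ that is merely \emph{disjoint} from another $P_3$ does not give an induced $2P_3$; the two paths must be anticomplete. Your choice of the copy inside $S_0$ is the right one, but the reason it works should be stated: $S_0\subseteq S^m$ (the $S_0$-vertices are precolored from stage $1$ on), and $X^m$ avoids $B^m=N(S^m)$, so $X^m$ is anticomplete to $S^m$ and in particular to the $P_3$ in $S_0$. With that fix, and with part (1) invoked in the two cases above, your argument becomes the paper's proof.
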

\begin{proof}
	By construction, for every vertex in $x\in X^m$, $|L_m(x)|=3$. Observe that $S_0\subseteq S^m$. Recall that $G$ is $2P_3$-free and that $S_0$ contains a $ P_3 $. Hence $X^m$ does not contain a $P_3$, and so every component of $X^m$ is a clique. Since $|V(G)|\geq 5$, it follows that every component of $X^m$ has size at most $ 3 $. This proves the first statement.
	
	Let $b\in B_{ij}^l$ be mixed on the edge $uv$ such that $u,v\in X_2^l$. Recall that $ X_2^l\subseteq X_2^m$; thus $\{u,v\}\subseteq X_2^m$. By assumption $b\not \in B_{ij}^m$ and hence $b\in X^m$. But now $b$-$u$-$v$ is a $P_3$ in $X^m$, a contradiction. This proves the second statement.
	
	To prove the last statement, suppose that there exists $y\in X_2^l$ with two neighbors $u,v\in B_{23}^l$. Since $y\in X_2^l$, it follows that $u,v$ are non-adjacent. Note that $y\in X_2^m$, hence by assumption and symmetry, we may assume that $v\notin B^m$. Therefore $v\in X^m$. Since $X_1^m$ is the union of components of $X^m$, and $y\in X_2^m$ is adjacent to $v$, it follows that $v\in X^m\setminus X_1^m$, and consequently $v\in X_2^m$. If $u\notin B^m$, then $u$-$y$-$v$ is a $P_3$ in $G|X^m$, contrary to the first statement.  Hence $u\in B^m$  and then $u$ is mixed on the edge $vy$ of $ G|X^m_2$, a contradiction. This completes the proof.
\end{proof}

\begin{claim}\label{clm:P3(0)}

	$X^1_{12}\cup X^1_{23}\cup X^1_{13}\subseteq B^2\cup S^2. 
	$
\end{claim}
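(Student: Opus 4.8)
The plan is to prove the statement by contradiction. Fix $\{i,j,k\}=\{1,2,3\}$ and a vertex $y\in X^1_{ij}$, and suppose $y\notin B^2\cup S^2$. I would first record the easy consequences. Since $y\in X^2$, Claim~\ref{clm:P3(2.5)}.1 gives $|L_2(y)|=3$, so $y$ has no neighbour whose $L_2$-list has size one; equivalently, $y$ is anticomplete to $S^2$. The vertices $a_k,b_k,x_k$ are precolored during the construction of $L_2$, so they all lie in $S^2$ (we may assume, as usual, that the precoloring is proper). Hence $y$ is anticomplete to $\{a_k,b_k,x_k\}$, and in particular $y\neq x_k$.

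Next I would analyse $N^\ast:=N(y)\cap B^1_{ij}$, which has size at least $2$ because $y\in X^1_{ij}$. Since $y$ lies in a component of $X^1$ that is not good, the set $N^\ast$ is stable. Every $v\in N^\ast$ has $L_1(v)=\{i,j\}$ and, being a neighbour of $y$, satisfies $v\notin S^2$, so $L_2(v)=\{i,j\}$; in particular $v\notin\{a_k,b_k\}$, and $v$ is non-adjacent to both $a_k$ and $b_k$ — otherwise updating $v$ from one of these precolored vertices (whose colour lies in $\{i,j\}$) would reduce $|L_2(v)|$ below $2$. The same argument applied to $x_k$ shows $a_k\not\sim b_k$, so $a_k$-$x_k$-$b_k$ is an induced $P_3$.

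The crux is then a short $2P_3$-freeness argument combined with the minimality built into the choice of $x_k$. For any two distinct $a,b\in N^\ast$ the path $a$-$y$-$b$ is an induced $P_3$, vertex-disjoint from $a_k$-$x_k$-$b_k$. As $G$ is $2P_3$-free there is an edge between the two paths; since $y$ is anticomplete to $\{a_k,x_k,b_k\}$ and $a,b$ are non-adjacent to $a_k,b_k$, this edge must be $ax_k$ or $bx_k$. Therefore at most one vertex of $N^\ast$ is non-adjacent to $x_k$, i.e. $|N(x_k)\cap N^\ast|\ge|N^\ast|-1$. Since $N(x_k)\cap N^\ast$ and $\{a_k,b_k\}$ are disjoint subsets of $N(x_k)\cap B^1_{ij}$, we get $|N(x_k)\cap B^1_{ij}|\ge|N^\ast|+1>|N^\ast|=|N(y)\cap B^1_{ij}|$, contradicting the fact that $x_k$ was chosen in $X^1_{ij}$ so as to minimise $|N(x_k)\cap B^1_{ij}|$. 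The argument is symmetric in the three pairs $\{1,2\},\{1,3\},\{2,3\}$, which proves the claim.

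The one point that needs care is the bookkeeping around the precoloring and the three rounds of updating: one must justify that $a_k,b_k,x_k\in S^2$ and that a single round of updating already forces the non-adjacency of the vertices of $N^\ast$ to the precolored vertices $a_k,b_k$. Everything else — the stability of $N^\ast$, the two induced $P_3$'s, and the final counting — is routine, and I do not expect it to be the bottleneck; the whole proof hinges on recognising that the minimality in the choice of $x_k$ is precisely the leverage needed.
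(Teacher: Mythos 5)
Your proof is correct, and it is built from the same ingredients as the paper's proof of Claim~\ref{clm:P3(0)} (the precolored triple $a_k,x_k,b_k$, the minimality of $|N(x_k)\cap B^1_{ij}|$, $2P_3$-freeness applied to two vertex-disjoint induced $P_3$'s centered at $x_k$ and at the bad vertex, and the fact that a vertex outside $B^2\cup S^2$ is anticomplete to $S^2$), but you deploy them in the reverse order, so the two arguments are near mirror images. The paper uses minimality first, to produce two neighbours $a',b'\in(N(x')\cap B^1_{ij})\setminus N(x_k)$ of the bad vertex $x'$; $2P_3$-freeness then forces an edge between $\{a_k,b_k\}$ and $\{a',b'\}$, and two rounds of updating delete a colour from $L_2(x')$, contradicting $|L_2(x')|=3$. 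You use the updating first, showing that all of $N(y)\cap B^1_{ij}$ avoids and is anticomplete to $\{a_k,b_k\}$; $2P_3$-freeness then forces $x_k$ to be adjacent to all but at most one vertex of that set, and the count $|N(x_k)\cap B^1_{ij}|\ge|N(y)\cap B^1_{ij}|+1$ contradicts the minimality in the choice of $x_k$. Both routes are sound; yours lands the final contradiction on a purely combinatorial fact (minimality) rather than on the list of the bad vertex. One small clarification: the correct reason that $a_k\not\sim b_k$ (and likewise that $N(y)\cap B^1_{ij}$ is stable) is that $x_k$, like $y$, lies in $X^1_{ij}\subseteq X^1_2$ and hence in a non-good component of $X^1$; the updating argument cannot give this, since two adjacent vertices of $B^1_{ij}$ precolored with the distinct colours $i$ and $j$ are perfectly consistent with updating. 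Your phrase ``the same argument applied to $x_k$'' is fine provided it refers to that non-goodness argument rather than to the update step it immediately follows.
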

\begin{proof}
	Suppose that there exists $ x'  \in  X^1_{ij}\setminus (B^2\cup S^2) $ for some $1\leq i\leq j\leq 3$; then $ |L_2(x')|=3 $. Let $ x_k\in X^1_{ij} $ and $ a_k,b_k\in N(x_k)  \cap  B^1_{ij} $ be the vertices chosen to be precolored in the step creating $L_2$. Then $ x' $ is non-adjacent to $ \{x_k,a_k,b_k\} $. The minimality of $ |N(x_k) \cap B^1_{ij}| $
	implies that there exist $ a',b' \in (N(x')\cup B^1_{ij})\setminus N(x_k) $.  
Since $G$ is $2P_3$-free, there exists an edge between $ \{a_k,b_k,x_k\} $ and $ \{a',b',x'\} $. Specifically, there exists an edge between $\{a_k,b_k\}$ and $\{a',b'\}$. We may assume that $L_2(a_k)=\{i\}$ and $a_k$ is adjacent to at least one of $a',b'$. Recall that $L_1$ is obtained by precoloring $ \bigcup\limits_{i=1}^3 \{a_i,b_i,x_i\} $
and updating three times. It follows that $j\notin L_2(x')$, a contradiction. 
\end{proof}

 \begin{claim}\label{clm:P3(2)}
 	No vertex of $B^3$ is mixed on an edge of $ X_2 $.
 \end{claim}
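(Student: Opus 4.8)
No vertex of $B^3$ is mixed on an edge of $X_2$.

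Here is how I plan to prove Claim~\ref{clm:P3(2)}. Recall that $(X_2^3)\subseteq X_2$ and more generally $X_2^l \subseteq X_2^m$ for $m\le l$, so that any edge of $X_2$ on which some $b\in B^3$ is mixed is also an edge of some earlier layer. The plan is to use Claim~\ref{clm:P3(2.5)}(2): it suffices to check that, for each $\{i,j\}\subseteq\{1,2,3\}$, no vertex of $B^3_{ij}$ is mixed on an edge of $G|X_2^3$ — and then conclude that no vertex of $B^3$ is mixed on an edge of $X_2$. So the real content is: for every non-edge structure arising from the step-3 construction of $L_3$, mixed behaviour is impossible.

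First I would suppose for contradiction that some $b\in B^3_{ij}$ is mixed on an edge $uv$ with $u,v\in X_2$; say $b$ is adjacent to $u$ and non-adjacent to $v$. Since $b$ has a list of size two in $L_3$ and $X_2^3\subseteq X_2^2$, I would analyze where $b$ sits relative to layer 2: either $b\in B^2$ already, or $b\in X^2$ (it cannot be in $S^2$ since $|L_3(b)|=2$ and $S^2\subseteq S^3$ forces $|L_3|=1$ there). If $b\in X^2$, then $b$-$u$-$v$ would be a $P_3$ inside $G|X^2$, contradicting Claim~\ref{clm:P3(2.5)}(1) (every component of $X^m$ is a clique). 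So $b\in B^2_{ij}$. Now recall the step creating $L_3$: for the index $k$ with $\{i,j,k\}=\{1,2,3\}$, if there is a vertex of $B^2_{ij}$ starting a path $v'$-$u'$-$w'$ with $u',w'\in X_2^2$, we precolored the one, $b_k$, maximizing the number of such pairs; otherwise $\hat B_k=\emptyset$. Since our $b$ starts the path $b$-$u$-$v$ with $u,v\in X_2^2$ (the edge $uv$ lies in $X_2\subseteq X_2^2$, and $u,v\notin B^2$ because — being still in $X_2^3$ — they have lists of size $3$ in $L_3$, hence also in $L_2$), the set $\hat B_k$ is nonempty; let $b_k$ be its element.

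If $b = b_k$, then $b$ was precolored in the step creating $L_3$, so $|L_3(b)|=1$, contradicting $b\in B^3_{ij}$. Hence $b\ne b_k$, so $b$ is anticomplete to the precolored set and in particular $b$ is non-adjacent to $b_k$. By the maximality in the choice of $b_k$, the number of pairs $(u',w')$ with $b_k$-$u'$-$w'$ a path and $u',w'\in X_2^2$ is at least the number of such pairs for $b$, which is positive; pick one such path $b_k$-$u_k$-$w_k$. Now $b$-$u$-$v$ and $b_k$-$u_k$-$w_k$ are two induced $P_3$'s — each is induced because, in each, the middle vertex lies in $X_2^2$ and the two endpoints are non-adjacent (for $b$-$u$-$v$: $b$ non-adjacent to $v$ by the mixing assumption; for $b_k$-$u_k$-$w_k$: it is a path by construction). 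Since $G$ is $2P_3$-free, there must be an edge between $\{b,u,v\}$ and $\{b_k,u_k,w_k\}$; $b$ is non-adjacent to $b_k$ and, since $b,b_k\in B$ and $u_k,w_k,u,v\in X$, the possibilities are edges between $\{u,v\}$ and $\{u_k,w_k\}$, or $b$ adjacent to $u_k$ or $w_k$, or $b_k$ adjacent to $u$ or $v$. I would then derive a contradiction from each surviving case by exhibiting a longer induced path or a $2P_3$. The main obstacle I expect is this final case distinction: ruling out all the cross-edges requires carefully using (i) that $b$ and $b_k$ have the \emph{same} list $\{i,j\}$ (so an edge from $b$ to a vertex updated by $b_k$, or vice versa, cannot have reduced its list in the wrong way) together with the fact that we have updated exhaustively, and (ii) the $2P_3$-freeness applied to several auxiliary $P_3$-pairs obtained by swapping endpoints along the two paths; most likely one gets, in each case, either a $P_7$ (hence a $2P_3$) or two disjoint anticomplete $P_3$'s directly. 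The symmetric case where $b$ is adjacent to $v$ and not to $u$ is handled identically.
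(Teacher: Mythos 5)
Your proposal reproduces the paper's opening moves — reduce to $b\in B^2_{ij}$ via Claim~\ref{clm:P3(2.5)}.1, note that $\hat B_k\neq\emptyset$, and that its element $b_k$ is distinct from and non-adjacent to $b$ — but the case analysis you defer at the end is precisely where the paper's two essential ingredients live, and your plan for closing it (``exhibit a longer induced path or a $2P_3$'') provably cannot work. First, you take an \emph{arbitrary} path $b_k$-$u_k$-$w_k$, whereas the paper exploits the maximality in the choice of $b_k$ (together with the fact that $b_k$ is anticomplete to $\{u,v\}$, which you never record, though it is immediate because $u,v$ still have lists of size $3$ in $L_3$ after updating with respect to the precolored set) to select a pair $(u_k,w_k)$ on which $b$ is \emph{not mixed}. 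Without that, the subcase where $b$ is adjacent to exactly one of $u_k,w_k$ only yields an induced $P_6$ such as $v$-$u$-$b$-$w_k$-$u_k$-$b_k$, and a $P_6$ contains no induced $2P_3$, so $2P_3$-freeness gives no contradiction. Second, and more seriously, in the subcase where $b$ is complete to $\{u_k,w_k\}$ the six vertices $b,u,v,b_k,u_k,w_k$ contain neither an induced $2P_3$ nor a $P_7$, so no path argument can finish; the paper's contradiction there is structural, not path-based: $u_k$ then has two neighbors $b,b_k\in B^2_{ij}$, so by Claim~\ref{clm:P3(0)} (the effect of the step-2 precoloring) one of $b,b_k$ must lie in $X^1$, whence $b$-$u$-$v$ or $b_k$-$u_k$-$w_k$ is a $P_3$ inside $G|X^1$, contradicting Claim~\ref{clm:P3(2.5)}.1. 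You invoke neither Claim~\ref{clm:P3(0)} nor any substitute, and your suggested tool (i) (``same lists plus exhaustive updating'') is both vague and inapplicable, since the construction updates three times rather than exhaustively.

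Two smaller points: edges between $\{u,v\}$ and $\{u_k,w_k\}$, which you also leave open, are excluded in the paper because components of $X^2$ are cliques of size at most $3$ and $G$ has no $K_4$ (Claim~\ref{clm:P3(2.5)}.1) — easy, but it should be said, since the later steps need the two triples to be anticomplete except possibly at $b$. Also, your opening reduction via Claim~\ref{clm:P3(2.5)}.2 runs in the wrong direction (it passes information from earlier layers to later ones, and $X^3_2\subseteq X_2$), so it does not upgrade a statement about edges of $X^3_2$ to one about all of $X_2$; this matches an imprecision in the paper's own statement, whose proof likewise only treats edges inside $X^3_2$, but it is worth being aware of which version is actually proved and used later.
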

 \begin{proof}
 
 	Suppose that there exists a path $ b'$-$x_1'$-$x_2' $ such that $ b' \in B^3_{ij} $ and $ x_1',x_2'\in X^3_2 $. Note that $ x_1',x_2'\in X^2_2 $ since $L_3$ is a subsystem of $L_2$. By Claim~\ref{clm:P3(2.5)}.1, $X^2$ is $P_3$-free. Hence $b'\in B^2_{ij}$. By Claim~\ref{clm:P3(2.5)}.3, there exists $ b \in B^2_{ij} $ such that $ b-x-y $ is a path where $ x, y \in X^2_2 $. Then in step $3$, $ \hat{B}_k \neq \emptyset $ and let $ b\in \hat{B}_k $. 
 	By the construction of $L_3$ and since $ x_1',x_2'\in X^3_2 $, $ b $ is anticomplete to $ \{b',x_1',x_2'\} $.
 	By the construction of $\hat{B}_k $, there exist $ x_1, x_2\in X^2_2 $ such that $ b$-$x_1$-$x_2 $ is a path and $ b' $ is not mixed on $x_1x_2$.
 	If $ \{x_1,x_2\} $ is not anticomplete to $ \{x_1',x_2'\} $, then by Claim~\ref{clm:P3(2.5)}.1 $ G|\{x_1,x_2,x_1',x_2'\}$  is a $K_4$, a contradiction to the fact that $|V(G)|\geq 5$. Hence $ \{x_1,x_2\} $ is anticomplete to $ \{x_1',x_2'\} $. Since $G$ is $2P_3$-free, there exists an edge
 	between $ b' $ and $ \{x_1,x_2\} $. Consequently, $ b' $ is complete to $ \{x_1,x_2\} $.
 	Now $ x_1 $ has two neighbors in $ B^2_{ij} $, namely $ b $ and $ b' $. By Claim~\ref{clm:P3(0)}, $x_1\notin B^1_{ij}$. It follows that either $b\in X^1$ or $b'\in X^1$. If $b\in X^1$, then $b-x-y$ is a $P_3$ in $X^1$, contrary to Claim~\ref{clm:P3(2.5)}.1. Hence $b'\in X^1$. It follows that $b'$-$x'_1$-$x'_2$ is a $P_3$ in $X^1$, again contrary to Claim~\ref{clm:P3(2.5)}.1. This completes the proof.
 \end{proof}
 
We are now ready to prove that it suffices to show that $(G,L_5)$ induces a 
mininal list-obstruction of bounded size.  Let $C_t$ be an $i$-wide component 
of $X_1^4$ with $C_t=\{x_t,y_t\}$, and let $a_t,b_t$ be the $i$-anchors of 
$C_t$ that were  chosen in step 5. By the definition of $i$-anchors, 
$L_4(a_t)\cap L_4(b_t)=\{i\}$ and $\{a_t,b_t\}$ is 
complete to $C_t$; therefore  $c(a_t)=c(b_t)=i$ for every coloring $c$ of 
$ (G|\{x_t,y_t,a_t,b_t\}, L_4)$. Hence we can apply Claim~\ref{color} to 
$L_4$. By Claim~\ref{color} and Lemma~\ref{update4}, it is enough to show that 
$ (G,L_5) $ induces a bounded size list-obstruction.

\begin{claim}\label{clm:P3(3)}
$ X^5_2 $ is stable.
\end{claim}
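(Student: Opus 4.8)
The plan is to argue by contradiction. Suppose $X^5_2$ is not stable. By Claim~\ref{clm:P3(2.5)}.1 every component of $X^5$ is a clique on at most three vertices, and $X^5_2$ is by construction a union of components of $X^5$; hence there is a component $C$ of $X^5$ with $2\le |C|\le 3$ that is \emph{not} good for $L_5$, meaning that for every $v\in C$ and every $\{i,j\}\subseteq\{1,2,3\}$ the set $N(v)\cap B^5_{ij}$ is stable. I would first record two easy facts. (i) Every vertex of $C$ has $|L_5(\cdot)|=3$, and by definition of the partition $X^5=V(G)\setminus(S^5\cup N(S^5))$ the set $C$ is anticomplete to $S^5$; since $V(G)=S^5\cup B^5\cup X^5$ this gives $N(v)\setminus C\subseteq B^5$ for every $v\in C$. (ii) As $G$ is connected and $|V(G)|\ge 5>|C|$, each $v\in C$ actually has a neighbour in $B^5$, and since $C$ is not good, $N(v)\cap B^5$ is the disjoint union of one stable set in each of $B^5_{12},B^5_{13},B^5_{23}$. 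Finally, since lists only shrink along the construction we have $S^4\subseteq S^5$, hence $X^5\subseteq X^4$, and again by Claim~\ref{clm:P3(2.5)}.1 the clique $C$ lies inside a unique component $C'$ of $X^4$ with $C=C'\cap X^5$.

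Next I would dispose of the case $C'\in X^4_1$, i.e.\ $C'$ is good for $L_4$, witnessed by some $z\in C'$ and two adjacent vertices $a,b\in B^4_{ij}$. If $z\notin X^5$, then $z$ is complete to $C$ (both sit in the clique $C'$) while $|L_5(z)|\le 2$; the value $|L_5(z)|=1$ is impossible, since then $z\in S^5$ and $C\subseteq N(S^5)=B^5$, contradicting $C\subseteq X^5$. So $z\in B^5$, and a short argument using Claim~\ref{clm:P3(2)} and the fact that we have updated three times shows that the good-ness of $C'$ can be re-witnessed by a vertex of $C$; hence we may take $z\in C$. Then, because $|L_5(z)|=3$, neither of the neighbours $a,b$ of $z$ can have a singleton list, so each keeps size $2$ and $L_5(a)=L_4(a)=\{i,j\}=L_4(b)=L_5(b)$, i.e.\ $a,b\in B^5_{ij}$. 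Thus $C$ is good for $L_5$, a contradiction.

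It remains to handle $C'\in X^4_2$. If $|C'|=2$ then $C=C'$; and if $C'$ were $i$-wide for some $i$, then in step $5$ the $i$-anchors $a^t,b^t$ of $C_t=C'$ are assigned the list $\{i\}$, and since $C'$ is complete to $\{a^t,b^t\}$, updating deletes $i$ from the lists of the vertices of $C'$, so $C=C'\cap X^5=\emptyset$, a contradiction. Hence we are left with the configuration where either $|C'|=3$, or $|C'|=2$ and $C'$ is $i$-wide for no $i$; moreover $C'$ is not good for $L_4$. Here I would use that step $4$ is exactly the operation of step $2$ applied at level $3$, so the analogue of Claim~\ref{clm:P3(0)} yields $X^3_{12}\cup X^3_{13}\cup X^3_{23}\subseteq B^4\cup S^4$; consequently every vertex of $X^4_2$, in particular every vertex of $C'$, has at most one neighbour in each of $B^3_{12},B^3_{13},B^3_{23}$. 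Combining this scarcity of ``old'' external neighbours with Claim~\ref{clm:P3(2)} (no vertex of $B^3$ is mixed on an edge of $X_2$), with $K_4$-freeness (so when $|C'|=3$ no external vertex is complete to $C'$), and with the fact that $S_0\subseteq S^5$ contains an induced $P_3$ anticomplete to $X^5\supseteq C$, I would argue: starting from an edge of $C$ one builds an induced $P_3$ that is disjoint from and anticomplete to a suitable second induced $P_3$ (either the one inside $S_0$, or one assembled from external neighbours of $C$), yielding an induced $2P_3$ in $G$ and contradicting $2P_3$-freeness — unless the external neighbourhood of $C$ in $B^5$ already contains an edge lying inside a single $B^5_{ij}$ and attached to one vertex of $C$, which makes $C$ good for $L_5$, again a contradiction.

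The main obstacle is precisely this last step. The external neighbours of $C$ that descend from $B^3$ are tightly controlled (at most one per colour class), but $C$ may acquire external neighbours that only entered $B^5$ during steps $4$ and $5$; pinning down their lists and adjacencies — again via the ``updated three times'' hypothesis together with Claim~\ref{clm:P3(2)}, to rule out that such a vertex is mixed on an edge of $C$ or completes a $K_4$ with part of $C$ — and then squeezing the whole external structure into the $2P_3$/good dichotomy is where essentially all the work lies. I would treat the two surviving subcases ($|C'|=3$ and $|C'|=2$ non-$i$-wide) in parallel, the triangle case being slightly more delicate because of the additional $K_4$-avoidance constraints it forces on the neighbours of $C$.
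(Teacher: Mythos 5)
There is a genuine gap: your case analysis bottoms out in exactly the subcases that constitute the heart of the claim --- $|C'|=3$, and $|C'|=2$ but $i$-wide for no $i$ --- and for these you only describe a plan (assembling an induced $2P_3$ from an edge of $C$ and external neighbours, versus forcing goodness) while conceding that ``essentially all the work lies'' there. The obstacle you name, namely external neighbours of $C$ that only entered $B$ during steps 4 and 5, is real for your approach, because the level-3 analogue of Claim~\ref{clm:P3(0)} only controls neighbours in the classes $B^3_{ij}$, not in $B^4_{ij}$ or $B^5_{ij}$; but it is already resolved in the paper by Claim~\ref{clm:P3(2.5)}.3 (propagation of ``no vertex of $X_2$ has two neighbours in the same $B_{ij}$'' to later levels, in the presence of the non-mixing property) combined with Claim~\ref{clm:P3(0)} and Claim~\ref{clm:P3(2)}. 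More importantly, the subcases you cannot close simply do not occur, and seeing this requires an ingredient you never use: $G$ is 4-vertex-critical, so every vertex has degree at least $3$. For a size-2 component $C$ of $X^4_2$, each of its two vertices therefore has at least two neighbours in $B^4$, hence (by the one-neighbour-per-class bound) neighbours in at least two distinct classes $B^4_{ij}$; by Claim~\ref{clm:P3(2)} and Claim~\ref{clm:P3(2.5)}.2 every $B^4$-neighbour of $C$ is complete to $C$, and since any two classes share a colour $i$, the component $C$ is automatically $i$-wide --- so step 5 already deleted it from $X^5$, a contradiction with no $2P_3$-construction needed. The size-3 case dies even faster: a triangle component of $X^5_2$ has a $B^5$-neighbour (degree at least $3$), which by non-mixing is complete to it, producing a $K_4$, impossible since $|V(G)|\geq 5$.

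Two further remarks. Your first case ($C'$ good for $L_4$) is vacuous: the paper's observation that $X'_2\subseteq X_2$ for subsystems gives $C\subseteq X^5_2\subseteq X^4_2$, so $C'$ is non-good for $L_4$; in any event the ``re-witnessing'' argument you invoke there is only asserted, not supplied. And your disposal of the ``$|C'|=2$ and $i$-wide'' subcase does coincide with the paper's final step, but without the minimum-degree argument and the propagated one-per-class bound you have no way to reduce to that subcase, which is why the proposal as written does not prove the claim.
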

\begin{proof}
Since $|V(G)|\geq 5$ and since no vertex of $B^5$ is mixed on an edge of $G|X^5_2$, by Claim~\ref{clm:P3(2.5)}.1 every component of $ X^5_2 $ has size at most $2$. We may assume some component $C$ of $X^5_2$ has size exactly $2$, for otherwise the claim holds. Then $C$ is a component of $ X^4_2 $. By Claim~\ref{clm:P3(2.5)} and Claim~\ref{clm:P3(0)}, no vertex of $X_2^4$ has two neighbors in $B^4_{ij}$. Since every vertex in $G$ has degree at least $3$, every vertex of $C$ has a neighbor in at least two of $ B^4_{12},B^4_{23},B^4_{13}$. It follows that 
$C$ is $ i $-wide for some $ i $ and therefore $C\subseteq S^5\cup B^5$, a contradiction.
\end{proof}

By Claim~\ref{clm:P3(2.5)} and Claim~\ref{clm:P3(2)}, no vertex of $X_2^5$ has two neighbors in $B^5_{ij}$. Since every vertex in $G$ has degree at least $3$, it  follows that every vertex  of $X_2^5$ has exactly one neighbor in each of $ B^5_{ij} $.
Let $ Y_0 $, $ Y_1 $, \ldots, $ Y_6 $ be a partition of $ X^5_2 $ as follows. Let $x\in X_2^5$ and $ a_k= N(x)\cap B^5_{ij} $ for $\{i,j,k\}=\{1,2,3\}$. If $\{a_1,a_2,a_3\}$ is a stable set, then $ x\in Y_0 $; if $E(G|\{a_1,a_2,a_3\})=\{a_ia_j\}$, then $x\in Y_k$; and if $E(G|\{a_1,a_2,a_3\})=\{a_ia_j, a_ia_k\}$, then $x\in Y_{i+3}$. Note that $ G|\{a_1,a_2,a_3\} $ cannot be a clique since $V(G)\geq 5$.
For each non-empty $ Y_s $, pick $ x_s \in Y_s $, and let $ a_{sk}\in N(x_s)\cap B^5_{ij} $ for $\{i,j,k\}=\{1,2,3\}$.
Let $L_6$ be the list system obtained by precoloring $ \bigcup\limits_{i=0}^{6}\{x_i,a_{i1},a_{i2},a_{i3}\} $ with $c$ and updating three times. 

\begin{claim}\label{clm:P3(4)}
For every $x\in X^5_2$,	$|L_6(x)|\leq 2$
\end{claim}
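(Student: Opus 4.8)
The plan is to argue by contradiction: suppose some $x \in X^5_2$ has $|L_6(x)| \ge 3$. Since the updating process only shrinks lists and $|L_5(x)| = 3$ by Claim~\ref{clm:P3(2.5)}.1, this forces $L_6(x) = L_5(x) = \{1,2,3\}$, i.e.\ $x$ loses no colour during the updating that produces $L_6$. Write $P = \bigcup_{i=0}^{6}\{x_i, a_{i1}, a_{i2}, a_{i3}\}$ for the precoloured set and $c$ for the precolouring; I may assume $c$ is a proper colouring of $G|P$, since otherwise $L_6$ is identically empty by the updating convention and the claim is vacuous. Recall that $x$ has exactly one neighbour $a_k$ in each $B^5_{ij}$ (with $\{i,j,k\} = \{1,2,3\}$ and $L_5(a_k) = \{1,2,3\}\setminus\{k\}$), and that $X^5_2$ is stable by Claim~\ref{clm:P3(3)}. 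The first step is to record two consequences of ``$x$ loses no colour'': $x$ has no neighbour in $P$ (such a neighbour would delete a colour from $x$ in round one); and for each $k$, since $a_k$ is adjacent to $x$ and $|L_5(a_k)| = 2$, the vertex $a_k$ has no neighbour $v \in P$ with $c(v) \in L_5(a_k)$, for otherwise $a_k$ acquires a one-element list after round one and $x$ loses $c(a_k)$ after round two. In particular $a_1, a_2, a_3 \notin P$ and $x \notin P$, so $x \ne x_s$, where $x_s$ is the representative of the block $Y_s$ containing $x$.

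The crux is to find a monochromatic pair among $a_{s1}, a_{s2}, a_{s3}$. These three vertices lie in $P$ and are all adjacent to $x_s$, so properness of $c$ on $G|P$ forces $c(x_s)$ to avoid $c(a_{s1}), c(a_{s2}), c(a_{s3})$, and hence two of them coincide, say $c(a_{sm}) = c(a_{sn}) = l$ with $m \ne n$. Since $c(a_{sm}) \in L_5(a_{sm}) = \{1,2,3\}\setminus\{m\}$ and similarly for $n$, we get $l \notin \{m,n\}$, so $\{l,m,n\} = \{1,2,3\}$; the key numerical observation is that therefore $l \in L_5(a_m)$ and $l \in L_5(a_n)$. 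Properness also gives that $a_{sm}$ and $a_{sn}$ are non-adjacent, and since the block $Y_s$ is defined by the isomorphism type of the triple $\{a_1,a_2,a_3\}$ together with the roles of its vertices (edgeless in $Y_0$; in $Y_k$ the unique edge joins the two indices distinct from $k$; in $Y_{i+3}$ both edges are incident to $a_i$), the fact $x_s \in Y_s$ transfers this non-adjacency: $a_m$ and $a_n$ are non-adjacent too.

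I would then conclude using $2P_3$-freeness. The paths $a_m$-$x$-$a_n$ and $a_{sm}$-$x_s$-$a_{sn}$ are induced $P_3$'s on disjoint vertex sets ($a_m, a_n \notin P$, while $a_{sm}, a_{sn} \in P$, and $x \ne x_s$), so since $G$ is $2P_3$-free there is an edge between $\{a_m, x, a_n\}$ and $\{a_{sm}, x_s, a_{sn}\}$, and I would rule out every possibility. The edge $x x_s$ is excluded by stability of $X^5_2$. An edge $x a_{s\mu}$ or $x_s a_\mu$ (for $\mu \in \{m,n\}$) would force $a_\mu = a_{s\mu} \in P$, because $x$ and $x_s$ each have a unique neighbour in $B^5_{\{1,2,3\}\setminus\{\mu\}}$; but then $x$ has a neighbour in $P$, a contradiction. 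Finally an edge $a_\mu a_{s\nu}$ (for $\mu, \nu \in \{m,n\}$) exhibits a neighbour of $a_\mu$, namely $a_{s\nu} \in P$, whose colour $l$ lies in $L_5(a_\mu)$ — exactly what the first step forbids. As all cases are impossible, $|L_6(x)| \le 2$.

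I expect the main obstacle to be bookkeeping rather than depth. The two substantive points are: (i) making the ``roles are preserved within $Y_s$'' argument precise enough to move non-adjacency from $\{a_{sm}, a_{sn}\}$ to $\{a_m, a_n\}$; and (ii) observing that the common colour $l$ of the monochromatic pair automatically lies in both $L_5(a_m)$ and $L_5(a_n)$, which is what makes \emph{every} $2P_3$-edge between the two triples trigger a propagation of length at most two that removes a colour from $x$. The degenerate cases ($x = x_s$, or $c$ improper on $G|P$) each need only a one-line remark.
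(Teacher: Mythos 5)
Your proof is correct and follows essentially the same route as the paper's: both compare $x$ with the representative $x_s$ of its block, use properness of the precoloring to extract a monochromatic (hence non-adjacent) pair among $x_s$'s anchors, restrict the cross-edges via the lists $L_5(a_m),L_5(a_n)$, and finish with $2P_3$-freeness together with the fact that block membership fixes the adjacency pattern of the anchor triple. The only difference is organizational -- the paper deduces from $2P_3$-freeness that the corresponding pair of $x$'s anchors must be adjacent and then contradicts block membership, whereas you invoke block membership first and contradict $2P_3$-freeness directly -- which is the same argument read in the other direction.
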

\begin{proof}
Suppose there exists $ y \in Y_i $ such that $|L_6(y)|=3$; let $ b_1=N(y)\cap B^5_{23}$, $b_2=N(y)\cap B^5_{13}$, and $b_3=N(y)\cap B^5_{12} $. Then $ \{a_{i1},a_{i2},a_{i3},x_i\} $ and $ \{b_1,b_2,b_3,y\} $ are disjoint sets.
Note that $ c(a_{i1}), c(a_{i2}), c(a_{i3})$ can not all be pairwise different, and so by symmetry we may assume that $ c(a_{i1})=c(a_{i2})=3 $ and $ c(a_{i3})=2 $. 
Thus, $a_{i1}a_{i2}$ is a non-edge.
By the construction of $L_6$ and since $|L_6(y)|=3$, the only possible edges
between the sets $ \{a_{i1},a_{i2},a_{i3}\} $ and $ \{b_1,b_2,b_3\} $ are $ a_{i3}b_2 $, $ a_{i1}b_3 $ and $ a_{i2}b_3 $. Recall that every vertex of $X_2^5$ has exactly three neighbors in $B^5$, and so $ N(y)\cap B^5= \{b_1,b_2,b_3\} $ and $ N(x_i)\cap B^5= \{a_{i1},a_{i2},a_{i3}\} $. Since $ G|\{a_{i1},x_i,a_{i2},b_1,y,b_2\} $ is not a $ 2P_3$, it follows that $ b_1 $ is adjacent to $ b_2 $.
But this contradicts to the fact that both $ x_i $ and $ y $ belong to $ Y_i $.  
\end{proof}

	Let $(X^1_6, X^2_6, B^6, S^6)$ be the partition with respect to $L_6$. 
For every component $C_s \subseteq X^6_1$,  let  $\{i,j,k\}=\{1,2,3\}$ be
such that there exists $x^k_s\in C_s$ with two adjacent neighbors in $B_{ij}^6$. Define  $L'_6(x_k^s)=\{k\}$; let $P$ be the set of all such vertices $x_k^s$, and   let $L'_6(v)=L_6(v)$ for every $v \not \in P$. Let $L^*$ be the list system 
obtained from $L_6'$ by  updating with respect to $P$ three times. Pick $x\in P$, then there exist $i,j \in \{1,2,3\}$ for which some $a,b\in N(x)\cap B_{ij}^6$
are adjacent. Then $L_6(a)=L_6(b)=\{i,j\}$. As a result, for every coloring $c$ of $(G|\{x,a,b\}, L_6)$, $c(x)=k$. This implies that we can apply Claim~\ref{color} to $L_6$. By Lemma~\ref{update4} and Claim~\ref{color},  it is enough to prove that $ (G,L^*) $ induces a bounded size list-obstruction. Let $(X^*_1, X^*_2, B^*, S^*)$ be the partition with respect to $L^*$. 
Then by Claim~\ref{clm:P3(2.5)}.1 $ X^*_1, X^*_2 $ are empty.  Now $(G,L^*)$ satisfies the hypotheses of Lemma~\ref{lem:2P3-list-size-2}, and this finishes the proof of Lemma~\ref{lem:2P3-reduction}.

\section{$P_4+k P_1$-free minimal list-obstructions}\label{sec:sufficiency-P4+kP1}

In this section we prove that there are only finitely many $P_4+k P_1$-free minimal list-obstructions.
This also implies that there are only finitely many $P_4+k P_1$-free 4-vertex-critical graphs.

\begin{lemma}\label{lem:size-2-P4}
Let $(G,L)$ be a minimal list-obstruction such that each list has at most two entries.
Moreover, let $G$ be $(P_4 + k P_1)$-free, for some $k \in \mathbb N$.
Then $V(G)$ is bounded from above by a constant depending only on $k$.
\end{lemma}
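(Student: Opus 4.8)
The plan is to reduce the problem to the already-established Lemma~\ref{lem:propagationpath} by bounding the length of propagation paths in $G$. So let $(G,L)$ be a minimal list-obstruction with all lists of size at most two, where $G$ is $(P_4+kP_1)$-free, and suppose for contradiction that there is a very long propagation path $P=v_1$-$v_2$-$\ldots$-$v_n$ in $G$ starting with color $\alpha$, where $n$ is large compared to $k$. Color $v_1$ with $\alpha$, update along $P$ to obtain the coloring $c$, and record for each $v_i$ its shape $S(v_i)$ and its complementary color $c'(v_i)$. The crucial structural fact is condition~\eqref{eqn:conditions0}: any chord $v_iv_j$ with $3\le i<j-1$ forces $S(v_i)=\alpha\beta$, $S(v_j)=\beta\gamma$. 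I would first observe, exactly as in the $P_6$ and $2P_3$ cases, that the consecutive triples $v_i$-$v_{i+1}$-$v_{i+2}$ are induced $P_3$'s (hence in particular $P_4$-free, but also potential $P_4$'s once we prepend or append a neighbor), and that many such triples are pairwise anticomplete along a suitable subpath. Since $G$ has no $P_4+kP_1$, a $P_4$ together with $k$ pairwise nonadjacent vertices (each nonadjacent to the $P_4$) is forbidden; thus the presence of one induced $P_4$ sharply limits how many far-apart, mutually anticomplete vertices can coexist elsewhere on $P$.

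The main step is therefore to extract from $P$ a long ``clean'' subpath $Q=v_m$-$\ldots$-$v_{m'}$ with $m'-m$ still large, on which the shapes are periodic with period $3$ (after permuting colors), using the same pigeonhole/deletion argument as in the proof of Lemma~\ref{lem:2P3-list-size-2}: if some color pattern $c(v_i)=c(v_{i+2})$ recurred too often we would find too many disjoint anticomplete $P_3$'s, and with a $P_4$ available (take $v_{i-1}$-$v_i$-$v_{i+1}$-$v_{i+2}$ where consecutive non-periodicity gives an induced $P_4$) this produces a $P_4+kP_1$ once $k$ of the later $P_3$'s contribute their ``middle'' vertices as the required isolated set. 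On $Q$, the vertices $v_m,v_{m+3},v_{m+6},\ldots$ are pairwise nonadjacent by~\eqref{eqn:conditions0} (a chord between $v_i$ and $v_{i+3\ell}$ with $\ell\ge 1$ would violate the shape condition since they would need to have shapes $\alpha\beta$ and $\beta\gamma$ but periodicity forces $S(v_i)=S(v_{i+3\ell})$), and similarly the ``colored-$1$'' vertices at positions $\equiv 1\bmod 3$ form a large stable set each anticomplete to the others. Meanwhile, an induced $P_4$ is readily found: along $Q$, a vertex $v_t$ at position $\equiv 0 \bmod 3$ together with $v_{t+1},v_{t+2},v_{t+3}$ — using~\eqref{eqn:conditions0} to check that $v_t v_{t+3}$, $v_t v_{t+2}$, $v_{t+1}v_{t+3}$ are all non-edges when $t+3\le m'$ — is an induced $P_4$, and it is anticomplete to the stable set $\{v_{t+7},v_{t+10},v_{t+13},\ldots\}$ of size $\ge k$ provided $Q$ is long enough. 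That yields a $P_4+kP_1$, the desired contradiction, so all propagation paths have length bounded by some $\lambda=\lambda(k)\ge 20$, and Lemma~\ref{lem:propagationpath} gives $|V(G)|\le 4\lambda+4$.

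I expect the main obstacle to be the bookkeeping needed to guarantee that the induced $P_4$ and the $k$ isolated vertices are genuinely anticomplete to one another: one must verify that no chord of $P$ connects the chosen $P_4$ to the chosen far-apart vertices. This is where~\eqref{eqn:conditions0} must be used carefully. A chord $v_iv_j$ with the $P_4$-vertices among $\{v_i\}$ and the isolated vertices among $\{v_j\}$ forces $S(v_i)=\alpha\beta$, $S(v_j)=\beta\gamma$ with a \emph{common} middle color $\beta$; by choosing the isolated vertices all at the same residue mod $3$ as one fixed position, and the $P_4$ at positions whose shapes (after the color-permutation) are incompatible with that $\beta$, one can rule out every such chord. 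Concretely, pick the $P_4$ so that its four shapes cycle through all three shape-types, and pick the $k$ isolated vertices all of shape $13$, say, located far to the right; then a chord from a shape-$13$ isolated vertex $v_j$ to a $P_4$-vertex $v_i$ would need $S(v_i)=\alpha\beta$ and $S(v_j)=\beta\gamma$, i.e.\ $\{\beta,\gamma\}=\{1,3\}$, forcing $\beta\in\{1,3\}$ and $S(v_i)\in\{\alpha 1,\alpha 3\}$ for the appropriate $\alpha$; one then checks this excludes at least one of the four $P_4$-vertices' shapes, but since \emph{all four} $P_4$-vertices would need a chord (to make $v_j$ nonadjacent to none of them) — actually we only need \emph{one} isolated vertex with no chord to the $P_4$, and there are $k$ of them spread over a long range, so a counting argument (at most a bounded number of positions can have a chord to the fixed $4$-element set, by~\eqref{eqn:conditions0} applied with $i$ ranging over the $P_4$) finishes it cleanly. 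Writing this counting out precisely, in parallel with the $2P_3$ argument, is the only delicate part; everything else is a direct adaptation.
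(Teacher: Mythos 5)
Your overall strategy --- bounding the length of propagation paths and then invoking Lemma~\ref{lem:propagationpath} --- is the same as the paper's, and your preliminary reductions (no long stretch using only two colors, passage to a long subpath on which the coloring is periodic with period three) are in the spirit of the actual proof. The argument breaks down, however, at your central step: you claim that on the periodic subpath a consecutive quadruple $v_t$-$v_{t+1}$-$v_{t+2}$-$v_{t+3}$ is an induced $P_4$ because \eqref{eqn:conditions0} rules out the chords $v_tv_{t+2}$, $v_tv_{t+3}$ and $v_{t+1}v_{t+3}$. With the periodic shapes $S(v_t)=32$, $S(v_{t+1})=13$, $S(v_{t+2})=21$, $S(v_{t+3})=32$, condition \eqref{eqn:conditions0} forbids only $v_tv_{t+3}$ (equal shapes); the pairs $(32,21)$ and $(13,32)$ are exactly of the permitted form $\alpha\beta$, $\beta\gamma$, so the chords $v_tv_{t+2}$ and $v_{t+1}v_{t+3}$ may well be present and the quadruple need not be induced. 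This is not a technicality: the paper's proof confronts precisely these chords and turns them to advantage --- since a chordless stretch on $4+2k$ vertices would itself contain a $P_4+kP_1$, some chord $v_iv_j$ must exist, and the induced $P_4$ is taken to be $v_{i-1}$-$v_i$-$v_j$-$v_{j+1}$ \emph{through} the chord. That choice is what makes the rest work: its vertices carry only the two shapes $\alpha\beta$ and $\beta\gamma$, so no vertex of it can have a \eqref{eqn:conditions0}-compatible chord to a far vertex colored $\alpha$, which gives the required anticompleteness to the stable set.

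Your fallback for the anticompleteness issue also does not hold up. A consecutive quadruple carries all three shape types, so for every residue class of far vertices some quadruple vertex has a shape that is \eqref{eqn:conditions0}-compatible with a chord to it (for instance a vertex of shape $21$ may be adjacent to arbitrarily many later vertices of shape $13$), and the asserted counting bound ``at most a bounded number of positions can have a chord to the fixed four-element set'' is not a consequence of \eqref{eqn:conditions0}: a single vertex can have unboundedly many chords of the allowed shape pattern, and such chords do not by themselves create a $P_4+kP_1$. Note also that you need $k$ pairwise nonadjacent vertices each anticomplete to the $P_4$, not just one. So as written the proposal has a genuine gap at its main step; repairing it essentially forces you into the paper's devices --- building the $P_4$ through a chord in the periodic case, and the type-A/type-B analysis (bounding the non-$\alpha$-compatible vertices by $3k-3$ via $3$-colorability, and the $\alpha$-compatible ones via the no-two-colored-run claim) in the doubled-pattern case --- rather than taking four consecutive vertices of the periodic subpath.
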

\begin{proof}
By Lemma~\ref{lem:propagationpath}, it suffices to prove that every propagation path in $(G,L)$ has a bounded number of vertices.
To see this, let $P=v_1$-\ldots-$v_n$ be a propagation path in $(G,L)$ starting with color $\alpha$, say.
Consider $v_1$ to be colored with $\alpha$, and update along $P$ until every vertex is colored.
Call this coloring $c$.
Suppose that $n \ge 100 k^2+100$.
Our aim is to show that this assumption is contradictory.
Recall condition~\eqref{eqn:conditions0} from the definition of propagation path: every edge $v_iv_j$ with $3 \le i < j \le n$ and $i \le j-2$ is such that 
\begin{equation*}
\mbox{$S(v_i)=\alpha\beta$ and $S(v_j)=\beta\gamma$,} 
\end{equation*}
where $\{1,2,3\} = \{\alpha,\beta,\gamma\}$.

First we suppose that there is a sequence $v_i,v_{i+1},\ldots,v_{j}$ with $2 \le i \le j \le n$ and $j-i \ge 5+2k$ such that $c(v_{i'})=c(v_{i'+2})$ for all $i'$ with $i \le i' \le j-2$.
But then~\eqref{eqn:conditions0} implies that $v_{i+1}$-$v_{i+2}$-\ldots-$v_{j}$ is an induced path, and thus $G$ is not $P_4+k P_1$-free, a contradiction.

Suppose now that there is an index $i$ with $2 \le i \le \lceil n/2\rceil-3$ such that $c(v_i)=c(v_{i+2})=\alpha$ and $c(v_{i+1})=c(v_{i+3})=\beta$.
In particular, $L(v_{i+3})=\{\alpha,\beta\}$.
Now condition~\eqref{eqn:conditions0} of the definition of a propagation path implies that there cannot be an edge between $v_i$ and $v_{i+3}$, and so $v_i$-$v_{i+1}$-$v_{i+2}$-$v_{i+3}$ is an induced $P_4$. Therefore no such sequence exists.

We now pick $k$ disjoint intervals of the form $\{j,\ldots,j+7+2k\} \subseteq \{\lceil n/2\rceil+1,\ldots,n\}$.
As shown above, each of these intervals contains an index $i'$ in its interior with $c(v_{i'})=\alpha$.
These $v_{i'}$ form a stable set and~\eqref{eqn:conditions0} implies that the induced path $v_i$-$v_{i+1}$-$v_{i+2}$-$v_{i+3}$ is anticomplete to each $v_{i'}$, a contradiction to the fact that $G$ is $P_4 + k P_1$-free.

Now suppose that there is an index $i$ with $r+1 \le i \le \lceil(r+s)/2\rceil-3$ such that $c(v_i)=c(v_{i+2})=\alpha$ and $c(v_{i+1})=\beta$.
From what we have shown above we know that $c(v_{i-1})=c(v_{i+3})=\gamma$, where $\{\alpha,\beta,\gamma\}=\{1,2,3\}$.
Thus, we have $S(v_i)=\alpha\gamma$,
$S(v_{i+1})=\beta\alpha$,
$S(v_{i+2})=\alpha\beta$, and
$S(v_{i+3})=\gamma\alpha$.
According to~\eqref{eqn:conditions0}, the path $v_i$-$v_{i+1}$-$v_{i+2}$-$v_{i+3}$ is induced.

Pick a vertex $v_j$ with $\lceil n/2\rceil +1\le j \le n$.
According to~\eqref{eqn:conditions0}, $v_j$ is anticomplete to the path $v_i$-$v_{i+1}$-$v_{i+2}$-$v_{i+3}$ unless one of the following holds.
\begin{enumerate}[(a)]
	\item $S(v_j)=\alpha\beta$,
	\item $S(v_j)=\alpha\gamma$,
	\item $S(v_j)=\beta\gamma$, or
	\item $S(v_j)=\gamma\beta$.
\end{enumerate}
Let us say that $v_j$ is of \emph{type A} if it satisfies one of the above conditions.
If $v_j$ is not of type A, we say it is of \emph{type B}.

We claim that there are at most $3k-3$ vertices of type B.
To see this, suppose there are at least $3k-2$ vertices of type B.
By definition, each vertex of type B is anticomplete to the set the path $v_i$-$v_{i+1}$-$v_{i+2}$-$v_{i+3}$.
Since $(G,L)$ is a minimal obstruction and not every vertex is of type B, the graph induced by the vertices of type B is 3-colorable.
Picking the vertices of the majority color yields a set $S$ of $k$ independent vertices of type B.
But now the set $\{v_i,\ldots,v_{i+3}\} \cup S$ induces a $P_4 + k P_1$ in $G$, a contradiction.

So, there are at most $3k-3$ vertices of type B.
Suppose there are more than $(3k-2)(7+2k)$ many vertices of type A.
Then there is an index $t \ge \lceil n/2\rceil+1$ such that $v_t+j'$ is of type A for all $j' \in \{0,\ldots,6+2k\}$.
Suppose that there is an index $j' \in \{0,\ldots,5+2k\}$ such that $c(v_{t+j'}) = \alpha$.
Then $S(v_{t+j'+1})=\cdot~\alpha$, in contradiction to the fact $v_{t+j'+1}$ is of Type A.
So, for all $j' \in \{0,\ldots,5+2k\}$ we have that $c(v_{t+j'}) \neq \alpha$, in contradiction to what we have shown above.
Summing up, $n$ is bounded by $2(3k-2)(7+2k)+1$ if there is an index $i$ with $2 \le i \le \lceil n/2\rceil-3$ such that $c(v_i)=c(v_{i+2})=\alpha$ and $c(v_{i+1})=\beta$.

Hence, our assumption $n \ge 100 k^2+100$ implies that $c(v_i)\neq c(v_{i+2})$ for all $i$ with $2 \le i \le \lceil n/2\rceil-3$.
This means that, without loss of generality,
\begin{equation}
c(v_i)=
\begin{cases}
1, & i = 1 ~(3)\\
2, & i = 2 ~(3)\\
3, & i = 0 ~(3)
\end{cases}
\end{equation}
for all $i$ with $2 \le i \le \lceil n/2\rceil-3$.

Consider the path $v_4$-$v_5$-\ldots-$v_{7+2k}$.
Since $G$ is $P_4 + k P_1$-free, this is not an induced path. 
Hence, there is an edge of the form $v_iv_j$ with $i < j$.
If $S(v_i)=\alpha\beta$, we must have $S(v_j)=\beta\gamma$, due to~\eqref{eqn:conditions0}.
Consequently, $S(v_{i-1})=\beta\gamma$, and $S(v_{j+1})=\alpha\beta$.
In particular,~\eqref{eqn:conditions0} implies that $v_{i-1}$ is non-adjacent to $v_{j+1}$, and so $v_{i-1}$-$v_i$-$v_j$-$v_{j+1}$ is an induced path.

Like above, we now pick $k$ disjoint intervals of the form $\{j,\ldots,j+7+2k\} \subseteq \{\lceil n/2\rceil+1,\ldots,n\}$.
Each of these intervals contains an index $i'$ in it's interior with $c(v_{i'})=\alpha$.
These $v_{i'}$ form a stable set and~\eqref{eqn:conditions0} implies that the induced path $v_{i-1}$-$v_i$-$v_j$-$v_{j+1}$ is anticomplete to each $v_{i'}$, a contradiction.
This completes the proof.
\end{proof}

Using the above statement, we can now derive our main lemma.

\begin{lemma}\label{lem:P4+kP1}
There are only finitely many $P_4+k P_1$-free minimal list-obstructions, for all $k \in \mathbb N$.
\end{lemma}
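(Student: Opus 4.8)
The plan is to mimic the structure used for $P_6$-free graphs in Section~\ref{sec:generalcase}, but with the much simpler structure of $(P_4+kP_1)$-free graphs at our disposal. Given a minimal list-obstruction $(G,L)$ with $G$ being $(P_4+kP_1)$-free, by Lemma~\ref{lem:size-2-P4} we already control the case where all lists have size at most two, so the entire task is to reduce to that case by precoloring a bounded set of vertices and then applying Lemma~\ref{precolor} together with Lemma~\ref{update4}. In other words, it suffices to produce a \emph{semi-dominating set} $A$ of $G$ of size bounded by a constant depending only on $k$, since then Lemma~\ref{P_4} (applied with $m$ the bound from Lemma~\ref{lem:size-2-P4}) immediately yields the desired bound on $|V(G)|$.

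First I would invoke the structure theorem, Theorem~\ref{structure}: since $P_4+kP_1$ contains an induced $P_4$, a $(P_4+kP_1)$-free graph need not be $P_t$-free for small $t$, so I cannot apply Theorem~\ref{structure} directly to $G$. Instead I would observe that $G$ cannot contain $k$ pairwise anticomplete connected subgraphs each of which contains an induced $P_4$ --- otherwise picking one vertex from each copy of $P_4$ together with the $P_4$ from another copy would build a $P_4+kP_1$. The cleaner route: note that if $G$ has $k$ pairwise-anticomplete induced $P_4$'s we get $P_4+kP_1$ directly (take one whole $P_4$ and one vertex from each of the other $k$). Hence at most $k-1$ of the ``components after deleting a $P_4$-free dominating-type set'' can be non-$P_4$-free. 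More concretely, I would take a dominating induced subgraph $D$ of each component and analyze its structure: a connected graph with no induced $P_4$ (a cograph) has the form $A$ complete to $B$ with both nonempty (the ``$P_4$-free $\Rightarrow$ join or union'' fact already used via \cite{Scheische} in the proof of Claim~\ref{c5free}), and we can extract a bounded dominating set from it exactly as in the $C_5$-free case: pick $a\in A$, $b\in B$, possibly a common neighbor $c$, forming $S_0$ of size at most $3$, so that no vertex is complete to $S_0$.

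The key step is then to bound the number of vertices that are \emph{not} dominated by $S_0$ or whose list cannot be reduced. The crucial structural input is that $G$ is $(P_4+kP_1)$-free: after precoloring $S_0$ (of bounded size) and updating three times, the only vertices with full lists of size three lie in a controlled region; any anticomplete-to-each-other family of such ``hard'' vertices has size less than $k$ (else, combined with a $P_4$ living inside the dominating region, we build a forbidden $P_4+kP_1$), and the remaining hard vertices are pairwise non-anticomplete, hence --- since $G$ has bounded clique number (no $K_4$, as we may assume $|V(G)|>8$) --- a bounded-size Ramsey argument, or simply Claim~\ref{stable} applied to an independent subset, pins down their number. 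I would then enlarge $S_0$ by a bounded number of vertices to kill the remaining size-three lists, obtaining a semi-dominating set $A$ of size bounded by a function of $k$ alone.

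The main obstacle I expect is handling the up-to-$(k-1)$ components that are not themselves cographs: these can be large, and a single bounded $S_0$ inside each need not dominate them. Here I would recurse: apply Theorem~\ref{structure} repeatedly inside each such component, or more efficiently argue that if such a component contains a long induced path or a large induced structure it already forces a $P_4+kP_1$ together with vertices chosen from the other components. In the worst case I would fall back on the reduction lemmas of Section~\ref{sec:sufficiency-P6} (Claim~\ref{reducecomponents} and Lemma~\ref{bipartite1}) to collapse large bipartite-like pieces complete to a common neighborhood, exactly as was done there, so that after all reductions the graph has a bounded semi-dominating set. Once the semi-dominating set is in hand, Lemma~\ref{P_4} with $m$ supplied by Lemma~\ref{lem:size-2-P4} finishes the proof, and the finiteness of $(P_4+kP_1)$-free minimal list-obstructions --- hence also of $(P_4+kP_1)$-free $4$-vertex-critical graphs --- follows.
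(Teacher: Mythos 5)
There is a genuine gap: the entire proof hinges on producing a semi-dominating set of size bounded in terms of $k$ alone, and your proposal never actually constructs one. You correctly identify the target (a bounded semi-dominating set, then Lemma~\ref{P_4} with $m$ from Lemma~\ref{lem:size-2-P4}), but at the critical step you only list candidate strategies --- recursing with Theorem~\ref{structure} inside the ``non-cograph'' components, an unspecified Ramsey argument on the ``hard'' vertices, or falling back on Claim~\ref{reducecomponents} and Lemma~\ref{bipartite1} --- and you explicitly flag the up-to-$(k-1)$ large non-cograph components as an unresolved obstacle. None of these routes is carried out, and the Ramsey/Claim~\ref{stable} step is not set up correctly as stated: you would first need that the vertices with full lists are anticomplete to a fixed induced $P_4$, and Claim~\ref{stable} only bounds a stable set in terms of the number of vertices mixed on it, which you have not bounded. (Also, your remark that Theorem~\ref{structure} cannot be applied to $G$ is backwards: a sufficiently long induced path contains $P_4+kP_1$, so $(P_4+kP_1)$-free graphs \emph{are} $P_t$-free for some $t$ depending on $k$; this is just not needed.)

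The idea you almost state --- that $k$ pairwise anticomplete pieces, one containing a $P_4$, force a forbidden subgraph --- is in fact all that is required, and the paper uses it in a two-line form you should compare with. If $G$ is $P_4$-free it is $P_6$-free and Lemma~\ref{lem:3downto2} finishes (a base case your write-up skips). Otherwise fix any induced $P_4$, say $Q$, let $R=V(G)\setminus N(V(Q))$, and take a maximal stable set $S$ in $R$: every vertex of $R\setminus S$ has a neighbor in $S$, and the vertices of $S$ off $Q$ form a stable set anticomplete to $Q$, so there are at most $k-1$ of them, else $Q$ together with $k$ of them is an induced $P_4+kP_1$. Hence $V(Q)\cup S$ is a dominating (so semi-dominating) set of size at most $k+3$, and Lemma~\ref{P_4} combined with Lemma~\ref{lem:size-2-P4} gives the bound. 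Your scaffolding of cograph dominating subgraphs, precolorings of $S_0$, and Section~\ref{sec:sufficiency-P6} reductions is unnecessary, and without a completed argument at the acknowledged obstacle the proposal does not constitute a proof.
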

\begin{proof}
Let $(G,L)$ be a $P_4+kP_1$-free minimal list-obstruction.
If $G$ is $P_4$-free, we are done, since there is only a finite number of $P_6$-free minimal obstructions.
So, we may assume that $G$ contains an induced $P_4$, say $v_1$-$v_2$-$v_3$-$v_4$.
Let $R=V(G)\setminus N(\{v_1,v_2,v_3,v_4\})$.
Let $S$ be a maximal stable set in $R$; then every vertex of 
$V(R) \setminus S$ has a neighbor in $S$.  Since $G$ is $P_4+kP_1$-free, it 
follows that   $|S| \leq k-1$, and so $\{v_1, v_2,v_3,v_4\} \cup S$ is a 
dominating set of size at most $k+3$ in $G$. Now Lemma~\ref{lem:P4+kP1} follows from Lemma~\ref{P_4} and Lemma~\ref{lem:size-2-P4}.
\end{proof}

\section{Necessity}\label{sec:necessity}

The aim of this section is to prove the following two statements.

\begin{lemma}\label{lem:nec-col}
There are infinitely many $H$-free 4-vertex-critical graphs if 
$H$ is a claw, a cycle, or $2P_2+P_1$.
\end{lemma}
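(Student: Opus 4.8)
```latex
\textbf{Proof proposal.}
The plan is to exhibit, for each of the three types of $H$, an explicit infinite family of $4$-vertex-critical graphs that are $H$-free. The common strategy is to start from known $4$-vertex-critical (or $4$-chromatic critical) graphs and either take disjoint-type combinations or use standard Mycielski-like/Kneser-like constructions, and then check that none of them contains the forbidden $H$ as an induced subgraph.

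First, for $H$ a \emph{claw} $K_{1,3}$: the natural family to use is the line graphs of odd cycles, i.e.\ the odd cycles themselves $C_{2k+1}$, which are $3$-vertex-critical, not $4$; so instead I would use line graphs of suitable $4$-edge-chromatic critical graphs, or more simply the family obtained by taking $C_5$ and repeatedly applying a Mycielskian-type operation that preserves claw-freeness. Concretely, the $4$-vertex-critical graphs $C_{2k+1}^2$ (the squares of odd cycles of length $\ge 7$, or more precisely the complements of $C_{2k+1}$ for $k\ge 3$) are claw-free: $\overline{C_n}$ is claw-free since a claw in $\overline{C_n}$ would be an independent set of size $3$ together with a common non-neighbor, but $\overline{C_n}$ has independence number $2$. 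One checks $\overline{C_{3k}}$ has chromatic number $\lceil 3k/2\rceil$; to force chromatic number exactly $4$ and criticality one uses $\overline{C_7}$ and verifies it is $4$-vertex-critical, and then needs a genuinely infinite family — so I would instead take the family of graphs $G_k$ built from $\overline{C_7}$ by an operation adding a claw-free gadget. The cleanest choice: use the infinite family of $4$-vertex-critical line graphs $L(G)$ where $G$ ranges over the $3$-connected cubic graphs that are critically $4$-edge-chromatic (these exist in abundance), and recall $L(G)$ is always claw-free, $\chi(L(G))=\chi'(G)=4$, and edge-criticality of $G$ translates to vertex-criticality of $L(G)$.

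Second, for $H$ a \emph{cycle} $C_t$: here I would use $C_t$-free graphs of large girth and chromatic number $4$, cut down to be critical. If $t\ge 4$, take a graph of girth $>t$ and chromatic number $4$ (Erd\H{o}s; or explicit constructions), then pass to a $4$-vertex-critical induced subgraph; it is still $C_t$-free because removing vertices cannot create short cycles, and no two such critical subgraphs obtained this way are isomorphic if we start from arbitrarily large ambient graphs — giving infinitely many. For $t=3$ the classical family is the generalized Mycielskians / the graphs $M_k$ with $M_2=C_5$, which are triangle-free, $4$-chromatic, and (after noting the Mycielskian of a vertex-critical $k$-chromatic graph is vertex-critical $(k+1)$-chromatic) $4$-vertex-critical; these are pairwise non-isomorphic, so we get infinitely many triangle-free $4$-vertex-critical graphs. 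This handles every $C_t$, $t\ge 3$.

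Third, for $H = 2P_2 + P_1$: I would take the family $G_k = C_5 + kP_1$ is \emph{not} critical, so instead use graphs whose complement structure forbids $2P_2+P_1$; note $2P_2+P_1$ has independence number $3$ and its complement is $\overline{2P_2+P_1}$, so a graph is $(2P_2+P_1)$-free iff it contains no induced $\overline{2P_2+P_1}$ in the complement — more usefully, $(2P_2+P_1)$-free graphs are exactly those in which every induced subgraph on $5$ vertices with two disjoint edges has all remaining vertices adjacent to one of the four endpoints. The cleanest infinite family: take $C_{2k+1} + K_1$? — not critical. I would instead use $\overline{C_n}$ for $n \equiv 0 \pmod 3$ and $n \ge 9$: $\overline{C_n}$ has no induced $2P_2+P_1$ because $2P_2$ already needs an induced $C_4$ in the complement, i.e.\ $\overline{C_n}$ contains induced $2P_2$ only for small $n$; checking that $\overline{C_n}$ for $n\ge 8$ is $2P_2$-free (hence $(2P_2+P_1)$-free) and choosing $n$ so that $\chi(\overline{C_n})=4$ via $\lceil n/2\rceil$ — this fails for large $n$, so the honest route is: take the family of \emph{paw-augmented} odd antiholes, or simply the Kneser-type $4$-critical graphs $K(2k+1,k)$ restricted — the Grötzsch graph plus an infinite family of its relatives. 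The main obstacle, and where I would concentrate effort, is case three: producing an \emph{infinite} family of $4$-vertex-critical graphs that is simultaneously $2P_2+P_1$-free requires a construction with bounded independence number yet unbounded size and exact chromatic criticality, and I would look for it among blow-ups or iterated constructions on $C_5$ that keep the complement connected-enough to avoid $2P_2$, verifying criticality by a short vertex-deletion argument and $(2P_2+P_1)$-freeness by a direct case analysis on where two disjoint edges can sit.
```
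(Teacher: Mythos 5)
Your first two cases are essentially sound and follow the same route as the paper. For the claw you settle on line graphs, which is exactly the paper's device (it cites line graphs of $4$-regular bipartite graphs of large girth); your variant via $4$-edge-chromatic-critical graphs of maximum degree $3$ also works, since $\chi(L(G))=\chi'(G)$ and deleting a vertex of $L(G)$ amounts to deleting an edge of $G$, so edge-criticality of $G$ does give $4$-vertex-criticality of $L(G)$, and infinitely many such base graphs are known. For cycles you use the paper's Erd\H{o}s large-girth argument; the one repair needed is your claim that criticality plus ``arbitrarily large ambient graphs'' yields non-isomorphic critical subgraphs -- a $4$-vertex-critical induced subgraph of a huge graph could a priori be small. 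The correct reason is that girth is inherited by subgraphs and every $4$-chromatic graph of girth greater than $g$ has at least $f(g)$ vertices with $f(g)\to\infty$, so taking girth exceeding $t$ and letting it grow gives $C_t$-free $4$-vertex-critical graphs of unbounded order (this also covers $t=3$, so the Mycielskian detour is unnecessary).

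The genuine gap is the third case, which is the real content of the lemma: you never produce an infinite family of $(2P_2+P_1)$-free $4$-vertex-critical graphs, and you say so yourself. None of the candidates you float can be made to work as described: $\overline{C_n}$ has chromatic number $\lceil n/2\rceil$, so it is $4$-chromatic essentially only for $n=7$; and your guiding principle that one needs bounded independence number is off target, since $(2P_2+P_1)$-freeness does not bound the independence number at all. The paper closes this case with Pokrovskiy's construction: $G_r$ has vertices $v_0,\dots,v_{3r}$, with $v_i$ adjacent to $v_{i-1}$, $v_{i+1}$ and $v_{i+3j+2}$ for $0\le j\le r-1$, indices modulo $3r+1$. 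The triangles $\{v_{3m+1},v_{3m+2},v_{3m+3}\}$ force $G_r\setminus v_0$ to have a unique $3$-coloring up to permuting colors, so $G_r$ is not $3$-colorable, and vertex-transitivity then gives $4$-vertex-criticality; and a short argument using this canonical coloring (the isolated vertex of a would-be $2P_2+P_1$ is adjacent to every vertex of one color class, and the two disjoint edges then force an impossible ordering of indices) shows $G_r$ is $(2P_2+P_1)$-free, and in fact these graphs have independence number growing with $r$. Without such an explicit family, your proposal establishes the lemma only for the claw and cycle cases.
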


Here, a \emph{claw} is the graph consisting of a central vertex plus three pairwise non-adjacent pendant vertices attached to it.
In the list-case, the following variant of this statement holds.

\begin{lemma}\label{lem:nec-list}
There are infinitely many $H$-free minimal list-obstructions if 
$H$ is a claw, a cycle, $2P_2+P_1$, or $2P_3$.
\end{lemma}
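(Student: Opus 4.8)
\textbf{Proof proposal for Lemma~\ref{lem:nec-list}.}

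The plan is to exhibit, for each of the four graphs $H$ in the statement, an explicit infinite family of $H$-free minimal list-obstructions. Since Lemma~\ref{lem:nec-col} already supplies infinitely many $H$-free $4$-vertex-critical graphs when $H$ is a claw, a cycle, or $2P_2+P_1$, and since every $4$-vertex-critical graph $G$ gives a minimal list-obstruction $(G,L)$ by setting $L(v)=\{1,2,3\}$ for all $v$ (if $G$ is $4$-chromatic then $(G,L)$ is not colorable, and if $G\setminus v$ is $3$-colorable for each $v$ then each proper induced subgraph is $L$-colorable), the first three cases follow immediately. The only genuinely new work is the case $H=2P_3$, where Lemma~\ref{lem:nec-col} does \emph{not} help — indeed the paper explicitly remarks that there are only finitely many $2P_3$-free $4$-vertex-critical graphs — so I must produce an infinite family of $2P_3$-free minimal list-obstructions directly, and this is the main obstacle.

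For $H=2P_3$, the natural candidate is a family built from long even cycles (or paths), equipped with lists that force a ``propagation'' around the cycle that fails to close up. Concretely, take a cycle $C_n=u_1\text{-}u_2\text{-}\cdots\text{-}u_n\text{-}u_1$ with $n$ large, assign to the vertices a periodic pattern of $2$-element lists so that a proper coloring is forced to follow a unique ``rotation'' of colors around the cycle, and arrange the period and the length $n$ so that this forced rotation is inconsistent at the point where we return to $u_1$. A cycle is $2P_3$-free precisely when $n\le 7$, which is too short, so instead I would use a \emph{path} $v_1\text{-}v_2\text{-}\cdots\text{-}v_n$ together with one or two extra ``anchor'' vertices of list size $1$ attached near both ends, chosen so that the whole graph is $2P_3$-free (a path is $2P_3$-free iff it has at most $7$ vertices, so again a bare path does not work — the extra structure must both force the coloring and destroy all induced $2P_3$'s). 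The cleanest route is to take a \emph{complete bipartite} backbone: recall from the excerpt (the computer-free sketch in Section~\ref{sec:size2}, and Lemma~\ref{lem:2P3-list-size-2}) that long propagation paths in $2P_3$-free list-obstructions can exist only when their vertices sit inside a large biclique. So I would let $G=K_{m,m}$ (or $K_{m,m}$ minus a perfect matching, as in the deleted Claim about $C_5$-clones) with parts $A=\{a_1,\dots,a_m\}$, $B=\{b_1,\dots,b_m\}$, plus possibly a few apex vertices; $K_{m,m}$ is trivially $2P_3$-free since any induced subgraph on six vertices meeting both sides contains a $C_4$. Then I would design lists of size $2$ on $A\cup B$ (for instance $L(a_i)=\{1,2\}$ shifting to $\{2,3\}$, etc., and complementary choices on $B$) so that the biclique constraints force contradictory colorings, add one vertex of list $\{1\}$ to break minimality obstructions, and verify that deleting any vertex restores colorability.

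The key steps, in order, would be: (1) state the reduction from $4$-vertex-critical graphs to minimal list-obstructions via the all-$\{1,2,3\}$ list system, and invoke Lemma~\ref{lem:nec-col} to dispatch $H\in\{\text{claw},\ C_t,\ 2P_2+P_1\}$; (2) define the family $(G_m,L_m)$ for $m\ge 3$ (or whatever threshold) witnessing the $2P_3$ case, based on the biclique $K_{m,m}$ with carefully chosen size-$2$ lists; (3) verify $G_m$ is $2P_3$-free, which is a short structural observation about induced subgraphs of complete bipartite graphs; (4) verify $(G_m,L_m)$ is not colorable, by tracking the forced color classes on $A$ and $B$ and deriving a clash; (5) verify minimality, i.e.\ for every vertex $x$ the pair $(G_m\setminus x,L_m)$ is colorable, by exhibiting an explicit coloring of the remainder — this is where one exploits that removing a vertex from a part of the biclique frees up a color globally. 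The hard part will be step (2)/(4)/(5) together: choosing the lists so that \emph{all three} properties (non-colorability, minimality, and $2P_3$-freeness) hold simultaneously, since non-colorability wants rigid lists while minimality wants a lot of slack, and the $2P_3$-free constraint rules out the long induced paths that would otherwise be the easiest way to force a failure. I expect the final family to be a dense bipartite-like graph rather than a sparse one, and the verification of minimality to be the most delicate bookkeeping.
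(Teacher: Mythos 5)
Your reduction of the claw, cycle, and $2P_2+P_1$ cases to Lemma~\ref{lem:nec-col} (give every vertex the full list $\{1,2,3\}$; $4$-vertex-criticality then translates directly into minimality of the list-obstruction) is correct and is exactly how the paper handles those cases. The problem is the $2P_3$ case, which is the entire content of the lemma beyond Lemma~\ref{lem:nec-col}: you never actually produce the family $(G_m,L_m)$, you only outline where it should come from, and you yourself flag steps (2), (4), (5) as open. That alone is a genuine gap. Worse, the specific direction you propose — a complete bipartite backbone $K_{m,m}$ ``plus possibly a few apex vertices'' — cannot work. In a minimal list-obstruction no vertex may dominate another (the observation in Section~\ref{sec:prelims}: if $L(u)\subseteq L(v)$ and $N(v)\subseteq N(u)$, then a coloring of $G\setminus v$ extends to $v$ by copying the color of $u$). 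In $K_{m,m}$ all vertices of one side have identical neighborhoods, so the lists on that side must form an antichain of distinct subsets of $\{1,2,3\}$, which has size at most $3$; a bounded number of apexes splits each side into boundedly many neighborhood classes, so each side has bounded size. Hence any family of this shape is finite, no matter how the lists are chosen. (Variants such as $K_{m,m}$ minus a perfect matching escape this particular objection, but you give no lists, no non-colorability argument, and no minimality argument for them, so nothing is established.)

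The paper's construction is genuinely different and is worth contrasting with your sketch: $H_r$ has vertices $v_1,\dots,v_{3r-1}$ forming a path, with additional chords $v_iv_j$ exactly when $i\le j-2$, $i\equiv 2$ and $j\equiv 1 \pmod 3$ — a ``half-graph'' (staircase) pattern between two residue classes, which makes the neighborhoods on each side nested-but-distinct and thus evades the domination obstruction while still killing every induced $2P_3$ (proved by taking a minimum counterexample $r$ and deleting $v_1,v_2,v_3$). The lists are $L(v_1)=L(v_{3r-1})=\{1\}$ and a periodic pattern of two-element lists in between; non-colorability follows by propagating the forced colors along the path from $v_1$ and clashing at $v_{3r-1}$, and minimality is checked by coloring from both ends after deleting any $v_i$. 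Note also that Lemma~\ref{lem:2P3-list-size-2} bounds $2P_3$-free obstructions with small lists only under its extra hypothesis (b); the infinite family must, and does, violate that hypothesis — so the mechanism you were hoping to import from the biclique discussion in the $P_6$ analysis does not transfer. If you want to repair your write-up, you essentially need a construction with this asymmetric, propagation-path-plus-staircase structure rather than a symmetric dense biclique.
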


We remark that Lemma~\ref{lem:nec-col} implies the following.
Whenever $H$ is a graph containing a claw, a cycle, or $2P_2+P_1$ as an induced subgraph, there are infinitely many $H$-free 4-vertex-critical graphs.
A similar statement is true with respect to Lemma~\ref{lem:nec-list} and minimal list-obstructions.

\subsection{Proof of Lemma~\ref{lem:nec-col}}

Recall that there are infinitely many 4-vertex-critical claw-free graphs.
For example, this follows from the existence of 4-regular bipartite graphs of arbitrarily large girth (cf.~\cite{LU95} for an explicit construction of these) whose line graphs are necessarily 4-chromatic.
Moreover, there are 4-chromatic graphs of arbitrarily large girth, which follows from a classical result of Erd\H{o}s~\cite{Erd59}.
This, in turn, implies that there exist 4-vertex-critical graphs of arbitrary large girth.
Putting these two remarks together, we see that if $H$ is the claw or a cycle, then there are infinitely many 4-vertex-critical graphs. 

We now recall a construction due to Pokrovskiy~\cite{Pok14} which gives an infinite family of $4$-vertex-critical $P_7$-free graphs.
It is presented in more detail in our earlier work~\cite{CGSZ15}.

For each $r \ge 1$, let $G_r$ be the graph defined on the vertex set $v_0,\ldots,v_{3r}$ with edges as follows.
For all $i \in \{0,1,\ldots,3r\}$ and $j \in \{0,1,\ldots,r-1\}$, the vertex $v_i$ is adjacent to $v_{i-1}$, $v_{i+1}$, and $v_{i+3j+2}$. Here, we consider the indices to be taken modulo $3r+1$.
The graph $G_5$ is shown in Figure~\ref{fig:G16}. 

Up to permuting the colors, there is exactly one 3-coloring of $G_r\setminus v_0$.
Indeed, we may assume that $v_i$ receives color $i$, for $i=1,2,3$, since $\{v_1,v_2,v_3\}$ forms a triangle in $G_r$.
Similarly, $v_4$ receives color 1, $v_5$ receives color 2 and so on.
Finally, $v_{3r}$ receives color 3.
It follows that $G_r$ is not 3-colorable, since $v_0$ is adjacent to all of $v_1,v_2,v_{3r}$.

As the choice of $v_0$ was arbitrary, we know that $G_r$ is 4-vertex-critical.
The graph $G_r$ is $2P_2+P_1$-free which can be seen as follows.

\begin{claim}
For all $r$ the graph $G_r$ is $2P_2+P_1$-free.
\end{claim}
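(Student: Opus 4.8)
The plan is to use that $G_r$ is a circulant graph: identifying the vertices with $\mathbb{Z}_{3r+1}$, one has $v_i\sim v_j$ if and only if $(i-j)\bmod(3r+1)$ lies in $D:=\{1,3r\}\cup\{3\ell+2:0\le\ell\le r-1\}$, and $D$ is symmetric modulo $3r+1$, so this relation is well defined. Writing every vertex index in $\{0,1,\dots,3r\}$, I would first record two elementary facts. (A) If $i\equiv j\pmod 3$, then $v_i\sim v_j$ only when $\{i,j\}=\{0,3r\}$: indeed $D$ contains no multiple of $3$ other than $3r$ and no element congruent to $1$ modulo $3$ other than $1$, while for $i>j$ in $\{0,\dots,3r\}$ with $i\equiv j\pmod 3$ one has $(i-j)\bmod(3r+1)=i-j$, a multiple of $3$. (B) If $i>j$ with $i-j\equiv 2\pmod 3$ and $2\le i-j\le 3r-1$, then $v_i\sim v_j$, since then $i-j\in\{3\ell+2:0\le\ell\le r-1\}\subseteq D$. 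I would also note that the non-neighbours of $v_0$ are exactly $\{v_k:3\le k\le 3r-2,\ k\not\equiv 2\pmod 3\}$.

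Now I would argue by contradiction: assume $G_r$ contains an induced $2P_2+P_1$. By vertex-transitivity I may assume its vertex set is $\{v_0,v_t,v_c,v_d,v_f\}$ with edges $v_0v_t$ and $v_cv_d$ and $v_f$ the isolated vertex (just map an endpoint to $v_0$); by the reflection $v_i\mapsto v_{-i}$, which fixes $v_0$, I may assume $t\le(3r+1)/2$, so that $t\in\{1,2\}$ or $t\equiv 2\pmod 3$ with $5\le t$. Each of $v_c,v_d,v_f$ is a common non-neighbour of $v_0$ and $v_t$, so I would compute this common non-neighbourhood by intersecting the set above with its cyclic shift by $t$.

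If $t\in\{1,2\}$, a direct computation shows that the common non-neighbourhood of $v_0$ and $v_t$ is contained in a single residue class modulo $3$ and inside $\{v_3,\dots,v_{3r-2}\}$; by (A) it is then a stable set, contradicting that $v_cv_d$ is an edge. If $t\equiv 2\pmod 3$ with $5\le t$, the same computation gives that the common non-neighbourhood is $P_1\cup P_2$, where $P_1=\{v_k:k\equiv 0\pmod 3,\ t<k\le 3r-2\}$ and $P_2=\{v_k:k\equiv 1\pmod 3,\ 3\le k<t\}$; each of $P_1,P_2$ is stable by (A) (no element equals $v_0$ or $v_{3r}$). Hence if $P_1$ or $P_2$ is empty there is no edge $v_cv_d$ at all, while otherwise, after swapping $c$ and $d$ if needed, $c\in P_1$ and $d\in P_2$, so $c\equiv 0\pmod 3$, $t<c\le 3r-2$, and $d\equiv 1\pmod 3$, $3\le d<t$ (in fact $c\ge t+4$ and $d\le t-4$). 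Finally $v_f$, being a common non-neighbour of $v_0$ and $v_t$ distinct from $v_c,v_d$, lies in $P_1\cup P_2$: if $v_f\in P_1$ then $f>d$, $f-d\equiv 2\pmod 3$ and $8\le f-d\le 3r-7$, so (B) gives $v_f\sim v_d$; if $v_f\in P_2$ then $f<c$, $c-f\equiv 2\pmod 3$ and $8\le c-f\le 3r-7$, so (B) gives $v_f\sim v_c$. In either case $v_f$ is adjacent to an endpoint of $v_cv_d$, a contradiction. (For the small values of $r$ where no $t$ satisfies $5\le t\le(3r+1)/2$, only the short-edge case occurs and the argument still applies; alternatively one can just note $|V(G_r)|=3r+1\le 7$ while $\overline{2P_2+P_1}$ has a vertex of degree $4$.)

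The step I expect to be the main obstacle is the routine but fiddly bookkeeping in the long-edge case: because the index arithmetic wraps around modulo $3r+1$, the condition ``$v_k$ is a non-neighbour of $v_t$'' splits into the cases $k\ge t$ and $k<t$ with different residue restrictions modulo $3$, so one must track residues and positions simultaneously in order to pin down $P_1$ and $P_2$ exactly. Once these sets are identified correctly, the contradiction via (A) and (B) is immediate.
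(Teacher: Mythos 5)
Your proof is correct, but it takes a noticeably different route from the paper's. You anchor an edge of the $2P_2$ at $v_0v_t$ (vertex-transitivity plus the reflection $v_i\mapsto v_{-i}$), compute the common non-neighbourhood of $v_0$ and $v_t$ directly from the difference set $D$, observe that it lies in at most two stable residue classes, and then use your fact (B) to show the fifth vertex must be adjacent to an endpoint of the other edge. The paper instead anchors the \emph{isolated} vertex of the $2P_2+P_1$ at $v_1$, uses the canonical proper $3$-colouring of $G_r\setminus v_0$ by residues modulo $3$, notes that $v_1$ is complete to one colour class (so the remaining four vertices use only two colours and each edge of the $2P_2$ meets both classes), and finishes with the single observation that a colour-$1$ vertex is adjacent to every colour-$2$ vertex of smaller index, which makes the index order of the two edges contradictory. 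The paper's argument is shorter and avoids all wraparound bookkeeping; yours is more self-contained at the level of the circulant structure and treats the small values of $r$ explicitly.

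Two small points to tighten, neither of which affects validity. First, in the long-edge case the common non-neighbourhood is a \emph{proper subset} of $P_1\cup P_2$ as you define them: $v_{t+1}\in P_1$ and $v_{t-1}\in P_2$ (when in range), yet both are neighbours of $v_t$; so either write ``is contained in $P_1\cup P_2$'' or build the bounds $k\ge t+4$ (in $P_1$) and $k\le t-4$ (in $P_2$) into the definitions, as your parenthetical already indicates --- your argument only uses containment, stability via (A), and fact (B), so nothing else changes. Second, the alternative small-$r$ remark is incomplete as stated: $|V(G_r)|\le 7$ alone does not exclude an induced subgraph whose complement has a degree-$4$ vertex; you need that each vertex of $G_r$ has only $2r-2\le 2$ non-neighbours when $r\le 2$, whereas the $P_1$-vertex of a $2P_2+P_1$ needs four. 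Your primary remark (that only the short-edge case occurs for small $r$) already suffices, so this aside can simply be corrected or dropped.
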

\begin{proof}
Suppose there is some $r$ such that $G_r$ is not $2P_2+P_1$-free.
Let $v_{i_1},\ldots,v_{i_5}$ be such that $G_r[\{v_{i_1},\ldots,v_{i_5}\}]$ is a $2P_2+P_1$.
Since $G_r$ is vertex-transitive, we may assume that $i_1=1$ and $N(v_{i_1}) \cap \{v_{i_2},\ldots,v_{i_5}\}=\emptyset$.
In particular, $i_2, \ldots, i_5 \neq 0$.

Consider $G_r\setminus v_0$ to be colored by the coloring $c$ proposed above, where each $v_i$ receives the color $i\mod 3$.
Due to the definition of $G_r$, $v_{i_1}$ is adjacent to every vertex of color 3, and thus $c(v_{j})\neq 3$ for all $j \in \{i_2,\ldots,i_5\}$.

We may assume that $c(v_{i_2})=c(v_{i_4})=1$, $c(v_{i_3})=c(v_{i_5})=2$, and both $v_{i_2}v_{i_3}$ and $v_{i_4}v_{i_5}$ are edges of $E(G_r)$.
For symmetry, we may further assume that $i_2 < i_4$.
Due to the definition of $G_r$, $v_{i_2}$ and $v_{i_4}$ are adjacent to every vertex of color 2 with a smaller index, and thus $i_4 < i_3$.
But now $i_2< i_4 < i_3$, a contradiction to the fact that $v_{i_2} v_{i_3} \in E(G_r)$.
This completes the proof.
\end{proof}

\begin{figure}
	\centering
		\includegraphics[width=0.40\textwidth]{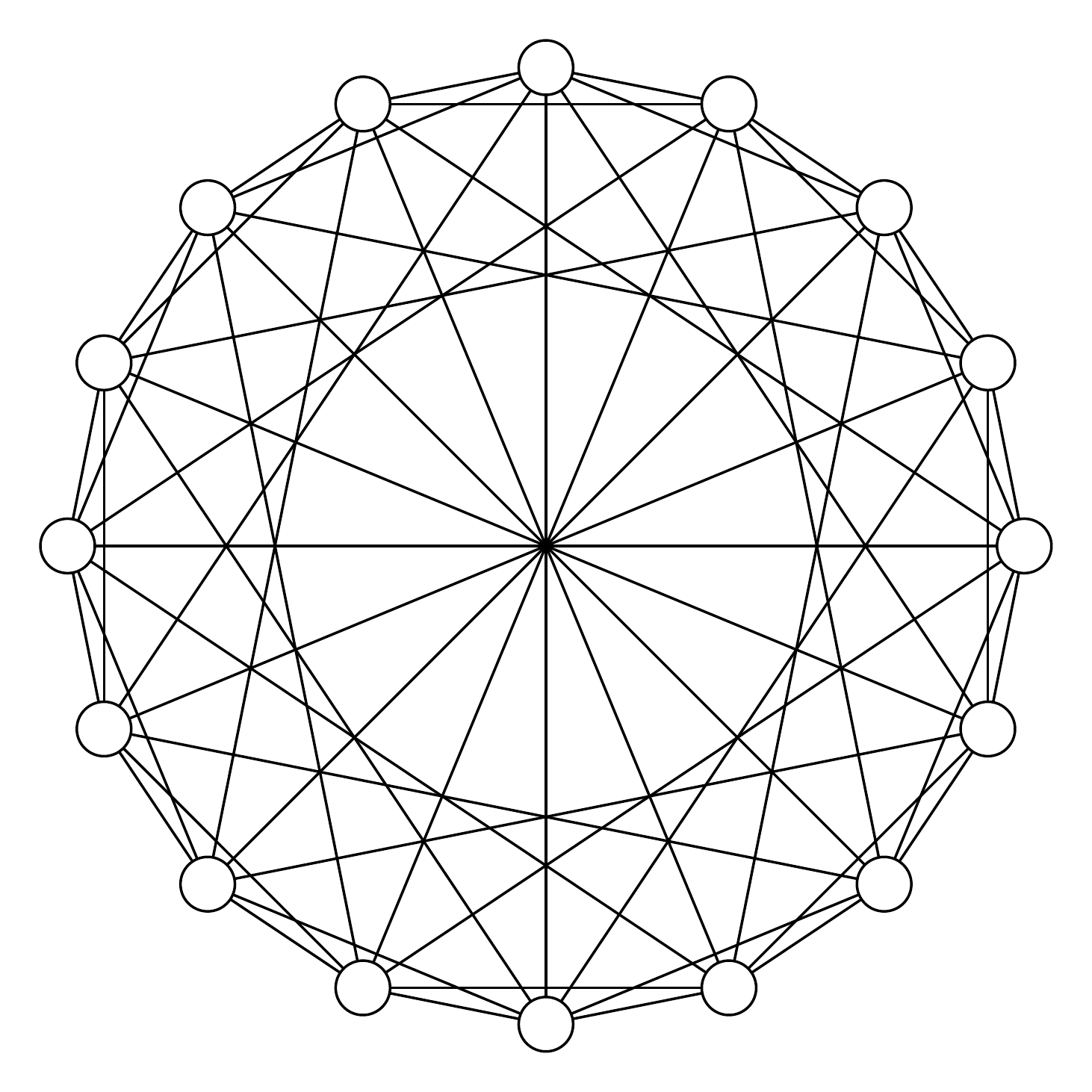}
	\label{fig:construction}
	\caption{A circular drawing of $G_{5}$}
	\label{fig:G16}
\end{figure}

Consequently, there are infinitely many $2P_2+P_1$-free 4-vertex-critical graphs, as desired.

\subsection{Proof of Lemma~\ref{lem:nec-list}}\label{sec:list-necessity}

In view of Lemma~\ref{lem:nec-col}, it remains to prove that there are infinitely many $2P_3$-free minimal list-obstructions.

For all $r \in \mathbb N$, let $H_r$ be the graph defined as follows.
The vertex set of $H_r$ is $V(H_r)=\{v_i:1\le i \le 3r-1\}$.
There is an edge from $v_1$ to $v_2$, from $v_2$ to $v_3$ and so on.
Thus, $P:=v_1$-$v_2$-\ldots-$v_{3r-1}$ is a path.
Moreover, there is an edge between a vertex $v_i$ and a vertex $v_j$ if $i \le j-2$, $i \equiv 2\mod 3$, and $j \equiv 1\mod 3$.
There are no further edges.
The graph $H_5$ is shown in Figure~\ref{fig:H5}.

The list system $L$ is defined by $L(v_1)=L(v_{3r-1})=\{1\}$ and, assuming $2 \le i \le 3r-2$,
\begin{equation*}
L(v_i)=
\begin{cases}
\{2,3\}, & \mbox{ if } i \equiv 0 \mod 3\\
\{1,3\}, & \mbox{ if } i \equiv 1 \mod 3\\
\{1,2\}, & \mbox{ if } i \equiv 2 \mod 3
\end{cases}.
\end{equation*}
Next we show that the above construction has the desired properties.

\begin{claim}
The pair $(H_r,L)$ is a minimal $2P_3$-free list-obstruction for all $r$.
\end{claim}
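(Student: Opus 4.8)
The plan is to verify, for each $r$, the three defining properties of a minimal $2P_3$-free list-obstruction: that $H_r$ contains no induced $2P_3$, that $(H_r,L)$ is not colorable, and that $(H_r\setminus x,L)$ is colorable for every vertex $x$. The first step I would carry out is a change of notation, grouping the vertices of $H_r$ by the residue of their index modulo $3$: set $u_k=v_{3k-1}$ for $1\le k\le r$, $w_k=v_{3k+1}$ for $0\le k\le r-1$, and $z_k=v_{3k}$ for $1\le k\le r-1$, and let $U,W,Z$ be the three resulting sets. Reading off the edge rule, one checks that $U$, $W$ and $Z$ are each stable, that $u_k$ is adjacent to $w_l$ if and only if $l\ge k-1$, and that $z_k$ has exactly the two neighbours $u_k$ and $w_k$; also $L(w_0)=L(u_r)=\{1\}$. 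This bookkeeping is the backbone of the argument, and everything after it is short.

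For non-colorability, note that $c(v_1)=1$ is forced, and that updating along the path from $v_1$ to $v_{3r-2}$ forces $c(v_i)$ to be $1$, $2$ or $3$ according as $i\equiv 1,2,0\pmod 3$ (at each step exactly one element of $L(v_i)$ equals the colour of $v_{i-1}$, so the other is forced). In particular $c(v_{3r-2})=1$, which contradicts $v_{3r-2}\sim v_{3r-1}$ and $L(v_{3r-1})=\{1\}$.

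For minimality, given $x=v_m$ I would colour the subpath from $v_1$ to $v_{m-1}$ by the ``forward'' rule (colours $1,2,3$ on residues $1,2,0$) and the subpath from $v_{m+1}$ to $v_{3r-1}$ by the ``backward'' rule (colours $3,2,1$ on residues $1,0,2$); when $m=1$, resp.\ $m=3r-1$, one instead uses the backward, resp.\ forward, rule on the entire remaining path. Both rules respect the lists --- in particular $v_1$ gets colour $1$ under the forward rule and $v_{3r-1}$ gets colour $1$ under the backward rule --- and are proper along each subpath, while every extra edge of $H_r$ joins a residue-$2$ vertex (coloured $2$ by the forward rule, $1$ by the backward rule) to a residue-$1$ vertex (coloured $1$ by the forward rule, $3$ by the backward rule); hence no extra edge is monochromatic, whether it lies inside one subpath or bridges the gap at $v_m$. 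This yields an $L$-colouring of $H_r\setminus v_m$.

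The main work, and the only delicate point, is $2P_3$-freeness. I would prove two structural facts: (i) every induced $P_3$ of $H_r$ contains a vertex of $U$ and a vertex of $W$; and (ii) any vertex of $U$ and any vertex of $W$ lying in a common induced $P_3$ are adjacent, so if $u_k$ and $w_l$ lie in a common induced $P_3$ then $k\le l+1$. Fact (i) holds because $U,W,Z$ are stable, so an induced $P_3$ meets at least two parts, and it cannot lie inside $Z\cup W$ or inside $Z\cup U$: the neighbourhood of a $Z$-vertex, and that of a $U$- or $W$-vertex, contains too few vertices of those sets to serve as both endpoints. Fact (ii) holds because the two endpoints of an induced $P_3$ form a non-adjacent pair with a common neighbour, and one computes $N(u_k)\cap N(w_l)\subseteq\{z_k\}$ while $z_k$ is adjacent to $u_k$ and $u_k$ is adjacent to $w_k$, so $\{u_k,w_l\}$ can never be the endpoint pair; thus one of $u_k,w_l$ must be the middle vertex, whence they are adjacent. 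Granting (i) and (ii), for any induced $P_3$ $A$ the quantities $p(A)=\max\{k:u_k\in A\}$ and $q(A)=\min\{l:w_l\in A\}$ are well defined and satisfy $p(A)\le q(A)+1$. If $A$ and $B$ were anticomplete induced $P_3$'s, then non-adjacency of $u_{p(A)}\in A$ and $w_{q(B)}\in B$ forces $q(B)\le p(A)-2\le q(A)-1$, and by symmetry $q(A)\le q(B)-1$, a contradiction. Hence $H_r$ is $2P_3$-free, so $(H_r,L)$ is a minimal $2P_3$-free list-obstruction; since $|V(H_r)|=3r-1$, this also exhibits infinitely many such obstructions.
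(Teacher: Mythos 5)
Your proof is correct. The non-colorability and minimality parts coincide with the paper's argument: colour $1$ on $v_1$ propagates forcibly along the path to give $c(v_{3r-2})=1$ against $L(v_{3r-1})=\{1\}$, and for minimality both use the forward colouring up to $v_{m-1}$ together with the backward colouring from $v_{3r-1}$ down to $v_{m+1}$; the paper checks the extra edges by the explicit three cases $j<i<k$, $j<k<i$, $i<j<k$, while you compress this into the residue bookkeeping -- the only monochromatic combination (backward on the residue-$2$ end, forward on the residue-$1$ end) is excluded because the residue-$2$ end of every extra edge has the smaller index, a fact you use implicitly and could state in one line. Where you genuinely diverge is the $2P_3$-freeness, which is the delicate part. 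The paper takes a minimal counterexample in $r$: it shows $v_2$, $v_1$ and $v_5$ cannot lie in a putative $2P_3$, invokes $H_r\setminus\{v_1,v_2,v_3\}\cong H_{r-1}$ to force $v_3,v_4$ into it, and ends with a degree contradiction inside the $2P_3$. You instead give a direct structural proof: the partition into the stable sets $U,W,Z$ exhibits $H_r$ as a half-graph between $U$ and $W$ (with $u_k\sim w_l$ iff $l\ge k-1$) plus degree-two vertices $z_k$, from which you prove that every induced $P_3$ contains a $U$-vertex and a $W$-vertex that are necessarily adjacent, and then the linear order of indices makes two anticomplete $P_3$'s impossible via the inequalities $q(B)\le q(A)-1$ and $q(A)\le q(B)-1$. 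Your route avoids induction and the self-similarity of the family, makes the global structure of $H_r$ explicit, and pinpoints exactly which index pairs can share a $P_3$; the paper's induction is shorter to state but more local in nature. Both are complete proofs of the claim.
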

\begin{proof}
Let us first show that, for any $r$, $H_r$ is not colorable.
Consider the partial coloring $c$ that assigns color $1$ to $v_1$.
Since $L(v_2)=\{1,2\}$, the coloring can be updated from $v_1$ to $v_2$ by putting $c(v_2)=2$.
Now we can update the coloring from $v_2$ to $v_3$ by putting $c(v_3)=3$.
Like this we update the coloring along $P$ until $v_{3r-2}$ is colored.
However, we have to put $c(v_{3r-2})=1$, in contradiction to the fact that $L(v_{3r-1})=\{1\}$.
Thus, $H_r$ is not colorable.

Next we verify that $(H_r,L)$ is a minimal list-obstruction.
If we delete $v_1$ or $v_{3r-1}$, the graph becomes colorable.
So let us delete a vertex $v_i$ with $2 \le i \le 3r-2$.
We can color $(H_r\setminus v_i,L)$ as follows.
Give color 1 to $v_1$ and update along $P$ up to $v_{i-1}$.
Moreover, give color 1 to $v_{3r-1}$ and update along $P$ backwards up to $v_{i+1}$.
Call this coloring $c$.

To check that $c$ is indeed a coloring, we may focus on the non-path edges for obvious reasons.
Pick an edge between a vertex $v_j$ and a vertex $v_k$ with $j \le k-2$, if any.
By definition, $j \equiv 2\mod 3$ and $k \equiv 1\mod 3$.
If $j < i < k$, $c(v_j)=2$ and $c(v_k)=3$.
Moreover, if $j<k<i$, $c(v_j)=2$ and $c(v_k)=1$.
Finally, if $i<j<k$, $c(v_j)=1$ and $c(v_k)=3$.
So, $c$ is indeed a coloring of $H_r\setminus v_i$ and it remains to prove that $H_r$ is $2P_3$-free.

Suppose this is false, and let $r$ be minimum such that $H_r$ contains an induced $2P_3$.
Let $F$ be a copy of such a $2P_3$ in $H_r$.
It is clear that $r\ge 2$.
Note that $H_r \setminus N(v_2)$ is the disjoint union of complete graphs of order 1 and 2, and so $v_2 \notin V(F)$.
Since $N(v_1)=\{v_2\}$, we know that $v_1 \notin V(F)$. 
Moreover, as $F\setminus (N(v_5) \cup \{v_1,v_2\})$ is the disjoint union of complete graphs of order 1 and 2, we deduce that $v_5 \notin V(F)$.
But $F':=F\setminus \{v_1,v_2,v_3\}$ is isomorphic to $H_{r-1}$, and thus the choice of $r$ implies that $F'$ is $2P_3$-free.
Consequently, $v_3 \in V(F)$.
Since $N(v_3)=\{v_2,v_4\}$ and $v_2 \notin V(F)$, we know that $v_4 \in V(F)$. 
Finally, the fact that $N(v_4)=\{v_2,v_3,v_5\}$ implies that 
$v_3$ and $v_4$ both have degree one in $F$, and they are adjacent, a
contradiction. 
\end{proof}

\begin{figure}
	\centering
		\includegraphics[width=0.70\textwidth]{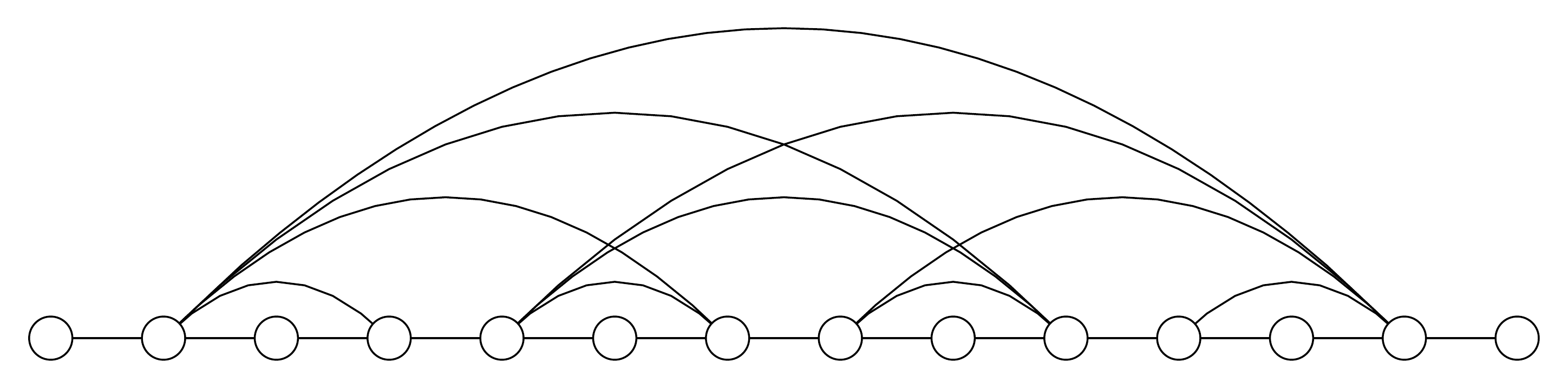}
	\caption{A drawing of $H_{5}$. The vertices $v_1$ to $v_{14}$ are shown from left to right.}
	\label{fig:H5}
\end{figure}

\section{Proof of Theorem~\ref{thm:coloring-characterization} and Theorem~\ref{thm:list-characterization}}\label{sec:together}

We now prove our main results. We start with a lemma.

\begin{lemma}
\label{graphs}
For every graph $H$, one of the following holds.
\begin{enumerate}
\item $H$ contains a cycle, a claw or $2P_2+P_1$.
\item $H=2P_3$.
\item $H$ is contained in $P_6$.
\item There exists $k>1$ such that $H$ is contained in $P_4+kP_1$.
\end{enumerate}
\end{lemma}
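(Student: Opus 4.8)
The plan is to prove the classification by elimination: assume $H$ is not as in case (1)---so $H$ contains no cycle, no claw, and no induced $2P_2+P_1$---and show that $H$ must fall under (2), (3) or (4). The first step pins down the shape of $H$. Having no cycle makes $H$ a forest; having no claw, i.e.\ no induced $K_{1,3}$, forces every vertex to have degree at most $2$. Hence $H$ is a disjoint union of paths, $H = P_{a_1} + \dots + P_{a_m}$ with $a_1 \ge \dots \ge a_m \ge 1$, and the lemma reduces to a finite-flavoured statement about linear forests.

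Second, I would record a handful of elementary containments, each verified by naming the five vertices of a $2P_2+P_1$: the path $P_7$ contains an induced $2P_2+P_1$ (use $v_1,v_2,v_4,v_5,v_7$); $P_5+P_1$ contains an induced $2P_2+P_1$ (use $v_1,v_2,v_4,v_5$ of $P_5$ together with the vertex of $P_1$); $P_4+P_2$ contains an induced $2P_2+P_1$ (use $v_1,v_2,v_4$ of $P_4$ together with both vertices of $P_2$); and, more generally, any graph with at least three connected components, at least two of which contain an edge, contains an induced $2P_2+P_1$ (one edge from each of two such components plus a vertex of a third). Feeding our $H$ through these facts gives at once: if $m\le 1$ then $a_1\le 6$; if $m\ge 2$ then $a_1\le 4$; if $m\ge 2$ and $a_1 = 4$ then $a_2 = 1$; and if $a_2\ge 2$ then $m = 2$.

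Third, I would finish with a short finite check over the linear forests that survive these constraints. If $m\le 1$, then $H$ is empty or a path on at most six vertices, hence contained in $P_6$. If $m\ge 2$ and $a_2\ge 2$, then $m = 2$ and $a_1\le 3$, so $H\in\{2P_3,\ P_3+P_2,\ 2P_2\}$; the first is exactly $2P_3$ (case (2)), while $P_3+P_2$ and $2P_2$ are contained in $P_6$ (for instance as $\{v_1,v_2,v_3,v_5,v_6\}$ and $\{v_1,v_2,v_4,v_5\}$). If $m\ge 2$ and $a_2 = 1$, then $H = P_{a_1} + (m-1)P_1$ with $1\le a_1\le 4$; the small ones ($P_4+P_1$, $P_3+P_1$, $P_2+P_1$, $P_2+2P_1$, $2P_1$, $3P_1$) are each contained in $P_6$, and every remaining such $H$ is contained in $P_4+kP_1$ for a suitable $k\ge 2$ (concretely $k = j$ for $H = P_3+jP_1$ with $j\ge 2$, for $H = P_2+jP_1$ with $j\ge 3$, and for $H = P_4+jP_1$ with $j\ge 2$, and $k = j-2$ for $H = jP_1$ with $j\ge 4$).

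I do not expect a genuine obstacle: every step is elementary and the only real work is bookkeeping. The two points that need care are \emph{completeness} of the final case split---verifying that every linear forest has been accounted for---and the requirement $k>1$ in case (4), which is in fact harmless, since each $H$ routed to case (4) already fits into $P_4+kP_1$ with $k\ge 2$ (for example $P_4+P_1\subseteq P_4+2P_1$, and $P_4+P_1\subseteq P_6$ as well). I would double-check each of the containment facts in the second step and each embedding in the third step by explicitly exhibiting the vertex sets involved.
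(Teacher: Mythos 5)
Your proposal is correct and follows essentially the same route as the paper: assume $H$ has no cycle, claw, or induced $2P_2+P_1$, deduce that $H$ is a disjoint union of paths, and then use $2P_2+P_1$-freeness to bound the number and sizes of components before a short case check. Your version simply spells out the bookkeeping (and the harmlessness of the requirement $k>1$) more explicitly than the paper does.
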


\begin{proof}
We may assume that $H$ does not contain $2P_2+P_1$, a cycle, or a claw.
It follows that every component of $H$ induces a path.
Let $H_1, H_2,\ldots H_k$ be the components of $H$, ordered so that $|H_1|\ge|H_2|\ge\ldots \ge |H_k|$.

If $|H_2| \geq 2$, then, since $H$ is $2P_2+P_1$-free, it follows that $k=2$,
$|H_1| \leq 3$, and $|H_2| \leq 3$, and so either $H$ is contained in $P_6$
or $H=2P_3$. This proves that  $|H_2|= \ldots = |H_k|=1$.

If $|H_1|\ge 5$, then since $H$ is $2P_2+P_1$-free, it follows that $k=1$, and 
$H$ is contained in  $P_6$. This proves that $|H_1| \leq 4$, and so 
$H$ is contained in $P_4+(k-1)P_1$.
This proves~Lemma~\ref{graphs}.
\end{proof}

Next  we prove Theorem~\ref{thm:coloring-characterization}, which we restate:

%

%
%

\secondthm*

\begin{proof}
If $H$ contains a cycle, a claw or $2P_2+P_1$, then  there is an infinite 
list of 4-vertex-critical graphs by Lemma~\ref{lem:nec-col}. By 
Lemma~\ref{graphs}, 
$H=2P_3$, $H$ is contained in $P_6$, or for some $k>1$, $H$ is contained in
$P_4+kP_1$, and Lemmas~\ref{lem:2P3-reduction}, \ref{lem:3downto2}
and \ref{lem:P4+kP1}, respectively,  imply that there are  there are only finitely many $H$-free $4$-vertex-critical graphs.  
\end{proof}

Finally, we prove  the list version of the result, Theorem~\ref{thm:list-characterization}, which we restate:

\thirdthm*

\begin{proof}
If $H$ contains a cycle, a claw, $2P_2+P_1$ or $2P_3$, then  there is an infinite list of obstructions by Lemma~\ref{lem:nec-list}. By Lemma~\ref{graphs}, 
$H$ is contained in $P_6$, or for some $k>1$, $H$ is contained in
$P_4+kP_1$.  Now Lemmas~\ref{lem:3downto2} and \ref{lem:P4+kP1}, respectively,
imply that there are  there are only finitely many $H$-free list-obstructions.
\end{proof}

\subsection*{Acknowledgments}

We thank Alexey Pokrovskiy for suggesting the construction of the graph $G_r$, 
and Fr\'ed\'eric Maffray for many useful discussions and suggestions.
Several of the computations for this work were carried out using the Stevin Supercomputer Infrastructure at Ghent University.
This material is based upon work supported in part by the U. S. Army  Research 
Laboratory and the U. S. Army Research Office under    grant number 
W911NF-16-1-0404.

\bibliographystyle{amsplain}
\bibliography{bnm-j,references,bnm}

\end{document}